\DeclareMathOperator{\Int}{Int} 
\DeclareMathOperator{\trd}{trd} 
\DeclareMathOperator{\nrd}{nrd} 
\DeclareMathOperator{\Emb}{Emb}
\DeclareMathOperator{\discrd}{discrd}
\DeclareMathOperator{\Eich}{Eich}
\DeclareMathOperator{\PB}{PB}
\DeclareMathOperator{\disc}{disc} 
\DeclareMathOperator{\fund}{fund}
\DeclareMathOperator{\Cl}{Cl}
\DeclareMathOperator{\Nm}{Nm}
\DeclareMathOperator{\Mat}{Mat}
\DeclareMathOperator{\SL}{SL}
\DeclareMathOperator{\PSL}{PSL}
\DeclareMathOperator{\Tr}{Tr}
\DeclareMathOperator{\Id}{Id}
\DeclareMathOperator{\sg}{sg}
\DeclareMathOperator{\ord}{ord}
\newcommand{\ZZ}{\ensuremath{\mathbb{Z}}}
\newcommand{\RR}{\ensuremath{\mathbb{R}}}
\newcommand{\QQ}{\ensuremath{\mathbb{Q}}}
\newcommand{\uhp}{\ensuremath{\mathbb{H}}} 
\newcommand{\Guhp}{\ensuremath{\Gamma\backslash\uhp}} 
\newcommand{\Ord}{\ensuremath{\mathrm{O}}}
\newcommand{\sm}[4]{\ensuremath{\left(\begin{smallmatrix} #1 & #2\\#3 & #4\end{smallmatrix}\right)}}
\newcommand{\lm}[4]{\ensuremath{\left(\begin{matrix} #1 & #2\\#3 & #4\end{matrix}\right)}}
\newcommand{\genmtx}{\sm{a}{b}{c}{d}}
\newcommand{\qord}[1]{\ensuremath{\mathcal{O}_{#1}}} 
\newcommand{\qordD}{\qord{D}} 
\newcommand{\nice}{\hyperref[def:nice]{nice}}
\newcommand{\pbad}{\hyperref[def:potentiallybad]{potentially bad}}
\newtheorem{theorem}{Theorem}
\newtheorem{conjecture}[theorem]{Conjecture}
\newtheorem{corollary}[theorem]{Corollary}
\newtheorem{lemma}[theorem]{Lemma}
\newtheorem{proposition}[theorem]{Proposition}
\theoremstyle{definition}
\newtheorem{definition}[theorem]{Definition}
\newtheorem{analogy}[theorem]{Analogy}
\newtheorem{remark}[theorem]{Remark}
\newtheorem{example}[theorem]{Example}
\numberwithin{equation}{section}
\begin{document}

\title[Intersection numbers]{Counting intersection numbers of closed geodesics on Shimura curves}
\author[J. Rickards]{James Rickards}
\address{University of Colorado Boulder, Boulder, Colorado, USA}
\email{james.rickards@colorado.edu}
\urladdr{https://math.colorado.edu/~jari2770/}
\date{\today}
\thanks{This research was supported by an NSERC Vanier Scholarship at McGill University. The author is currently partially supported by NSF-CAREER CNS-1652238 (PI Katherine E. Stange)}
\subjclass[2020]{Primary 11R52; Secondary 11Y40, 16H05}
\keywords{Shimura curve, quaternion algebra, closed geodesic, optimal embedding, real quadratic field.}
\begin{abstract}
Let $\Gamma\subseteq\text{PSL}(2, \mathbb{R})$ correspond to the group of units of norm $1$ in an Eichler order $\mathrm{O}$ of an indefinite quaternion algebra over $\mathbb{Q}$. Closed geodesics on $\Gamma\backslash\mathbb{H}$ correspond to optimal embeddings of real quadratic orders into $\mathrm{O}$. The weighted intersection numbers of pairs of these closed geodesics conjecturally relates to the work of Darmon-Vonk on a real quadratic analogue to the difference of singular moduli. In this paper, we study the total intersection number over all embeddings of a given pair of discriminants. We precisely describe the arithmetic of each intersection, and produce a formula for the total intersection. This formula is a real quadratic analogue of the work of Gross and Zagier on factorizing the difference of singular moduli. The results are fairly general, allowing for a large class of non-maximal Eichler orders, and non-fundamental/non-coprime discriminants. The paper ends with some explicit examples illustrating the results of the paper.
\end{abstract}
\maketitle

\setcounter{tocdepth}{1}
\tableofcontents

\section{Introduction}
The $\PSL(2, \ZZ)$-invariant $j-$function outputs algebraic numbers given quadratic imaginary inputs. These values generate certain ring class fields, and are known as singular moduli. In the celebrated work of Gross and Zagier in \cite{GZ85}, a formula for the factorization of a difference of singular moduli is given. More concretely, let $D_1, D_2$ be coprime negative fundamental discriminants, and they define the quantity $J(D_1, D_2)^2\in\ZZ$, which is essentially the norm to $\QQ$ of $j(\tau_1)-j(\tau_2)$, for quadratic imaginary $\tau_i$ of discriminant $D_i$. All primes $p$ dividing $J(D_1, D_2)^2$ are shown to satisfy
\[p\mid \dfrac{D_1D_2-x^2}{4}\]
for some integer $x<\sqrt{D_1D_2}$, and the exponents of such primes are computed in terms of Kronecker symbols (for more details, see \cite{GZ85} or Section \ref{sec:connection}). One proof of this result involved counting endomorphisms between elliptic curves, which boiled down to an ``intersection'' computation on definite quaternion algebras. This work was generalized to allow for any distinct negative discriminants $D_1, D_2$ by Lauter and Viray in \cite{LV15}.

On the other hand, a satisfactory analogue of the difference of singular moduli for positive discriminants has remained elusive. A programme begun by Darmon and Vonk in \cite{DV20} is to $p-$adically construct a quantity $J_p(D_1, D_2)$ for positive discriminants $D_1, D_2$, which is conjecturally algebraic and belonging to the compositum of ring class fields associated to $D_1, D_2$. Furthermore, this quantity appears to have a similar factorization to the formula of Gross and Zagier!

The conjectural analogue of $v_q\left(J(D_1, D_2)^2\right)$ is a weighted intersection number of a pair of closed geodesics on a Shimura curve. The aim of this paper is to explore these intersections in as much generality as possible. The main result is Theorem \ref{thm:maincountingthm} (a simplified version is Theorem \ref{thm:countxlinking}), which counts all intersections of geodesics corresponding to a pair of positive discriminants $D_1, D_2$. We work in a fairly general setting, and are also able to count intersections with extra arithmetic information. These formulae are a generalization of the main results in \cite{JR21}, which studied intersection numbers of closed geodesics in $\PSL(2, \ZZ)\backslash\uhp$.

Along the way, we developed algorithms in PARI/GP (\cite{PARI}) to compute (weighted) intersection numbers of closed geodesics (see Section \ref{sec:computational}). These algorithms were crucial in providing evidence towards the connection with the work of Darmon and Vonk, as well as demonstrating and verifying the main counting results of this paper.

\section{Overview of the paper}

This section is dedicated to introducing the setup and notation required to precisely present the main results. A simplified version of the main result is Theorem \ref{thm:countxlinking}, which adds a few conditions to achieve a nicer presentation. A detailed account of the connection to the work of Gross-Zagier and Darmon-Vonk is given in Section \ref{sec:connection}

\subsection{General intersection numbers}

Let $\uhp$ denote the upper half plane and let $\Gamma$ be a discrete subgroup of $\PSL(2,\RR)$, which acts on $\uhp$ via $\genmtx z:=\frac{az+b}{cz+d}$. Geodesics on the orbifold $\Guhp$ are the image of geodesics on $\uhp$, and closed geodesics correspond to elements of $\Gamma$ that are not a non-trivial power of another element of $\Gamma$, and have two distinct real roots. Call such elements \textit{primitive hyperbolic}.

Let $\gamma\in\PSL(2,\RR)$ be a hyperbolic matrix. We label one root to be the first (attracting) root $\gamma_f$, and the other to be the second (repelling) root $\gamma_s$, via the equations
\[\lim_{n\rightarrow\infty}\gamma^n(x):=\gamma_f,\qquad\lim_{n\rightarrow\infty}\gamma^{-n}(x):=\gamma_s,\]
for any $x\in\mathbb{P}^1(\RR)$ that is not a root of $\gamma$. In particular, $\gamma^{-1}$ has the same roots as $\gamma$, but with the first and second roots swapped.

For $z_1,z_2\in\overline{\uhp}:=\uhp\cup\mathbb{P}^1(\RR)$, let $\ell_{z_1,z_2}$ denote the geodesic segment running from $z_1$ to $z_2$, where we do not include the endpoints $z_1,z_2$. Define $\dot{\ell}_{z_1,z_2}:=\ell_{z_1,z_2}\cup\{z_1\}$. If $\gamma\in\Gamma$ is primitive and hyperbolic, define
\[\ell_{\gamma}:=\ell_{\gamma_s,\gamma_f},\]
which is called the \textit{root geodesic} of $\gamma$. For any $z\in\ell_{\gamma}$, the image of $\dot{\ell}_{z,\gamma z}$ in $\Guhp$ is a closed geodesic, denoted by $\tilde{\ell}_{\gamma}$. The image of $\ell_{\gamma}$ in $\Guhp$ runs over $\tilde{\ell}_{\gamma}$ infinitely many times.

If $\gamma_1$ is not conjugate to either $\gamma_2$ or $\gamma_2^{-1}$ in $\Gamma$, then the closed geodesics $\tilde{\ell}_{\gamma_1}$ and $\tilde{\ell}_{\gamma_2}$ intersect in finitely many places. Otherwise, the geodesics completely overlap each other. To get rid of such issues, we refer to \textit{transversal} intersections.

\begin{definition}\label{def:inum}
	Given primitive hyperbolic matrices $\gamma_1,\gamma_2\in\Gamma$, define
	\[\tilde{\ell}_{\gamma_1}\pitchfork\tilde{\ell}_{\gamma_2}\]
	to be the (finite) set of transversal intersections of $\tilde{\ell}_{\gamma_1}$ and $\tilde{\ell}_{\gamma_2}$ in $\Guhp$. Singular points (i.e. having non-trivial stabilizer in $\Gamma$) are counted with multiplicity: fix a local lift of $\tilde{\ell}_{\gamma_1}$, and the multiplicity is the number of local lifts of $\tilde{\ell}_{\gamma_2}$ that intersect transversely with the first lift. If $\tilde{\ell}_{\gamma_1}$ and $\tilde{\ell}_{\gamma_2}$ do not overlap, then this is the size of the stabilizer of the singular point.
	
	Let $f$ be any function defined on transversal intersections. The weighted intersection number of $\gamma_1,\gamma_2$ is defined to be
	\[\Int_{\Gamma}^f(\gamma_1,\gamma_2):=\sum_{z\in\tilde{\ell}_{\gamma_1}\pitchfork\tilde{\ell}_{\gamma_2}}f(z).\]
\end{definition}

In this paper, we consider the \textit{unsigned} intersection number ($f=1$), the \textit{signed} intersection number ($f=$ the sign of intersection), and the $q-$\textit{weighted} intersection number (see below Definition \ref{def:signlevel}).

\subsection{Optimal embeddings}\label{sec:introopt}

Let $B$ be an indefinite quaternion algebra over $\QQ$ of discriminant $\mathfrak{D}$, let $\Ord$ be an Eichler order of level $\mathfrak{M}$, fix an embedding $\iota:B\rightarrow \Mat(2,\RR)$, and let $\qordD$ be the unique quadratic order of discriminant $D$ (lying in $\QQ(\sqrt{D})$). For an integer $r$, define
\[\Ord^r:=\{z\in \Ord:\nrd(z)=r\},\]
the set of elements of reduced norm $r$ in $\Ord$. Note that $\Ord^1$ is a group under multiplication, and let
\[\Gamma=\Gamma_{\Ord}:=\iota(\Ord^1)/\{\pm 1\}\]
be the image of $\Ord^1$ in $\PSL(2, \RR)$, a discrete subgroup.

\begin{definition}
	An \textit{embedding} of $\qordD$ into $\Ord$ is a ring homomorphism $\phi:\qordD\rightarrow\Ord$. Call the embedding \textit{optimal} if it does not extend to an embedding of a larger order into $\Ord$. Note that if $D$ is a fundamental discriminant, then all embeddings of $\qordD$ into $\Ord$ are optimal.
\end{definition}

When $D<0$, call the embedding $\phi$ \textit{positive definite} if $\iota(\phi(\sqrt{D}))_{2,1}>0$ (the lower left entry of the matrix is positive), and \textit{negative definite} otherwise. This notion corresponds to whether the first root of $\iota(\phi(\sqrt{D}))$ (defined similarly to the hyperbolic case) lies in the upper half plane or not. While the individual definitenesses depend on the choice of $\iota$, whether two optimal embeddings of negative discriminants have the same or opposite definiteness is independent of $\iota$.

If $\phi,\phi'$ are optimal embeddings, we define them to be equivalent if there exists $x\in\Ord^1$ such that
\[\phi'=\phi^x:=x\phi x^{-1}.\]
Denote the equivalence class of $\phi$ by $[\phi]$. The notion of equivalence can be extended to pairs of optimal embeddings as follows:
\[(\phi_1,\phi_2)\sim(\phi_1',\phi_2')\text{ if there exists an $x\in\Ord^1$ such that }x\phi_i x^{-1}=\phi_i'\text{ for $i=1,2$.}\]
For a fixed discriminant $D$, define $\Emb(\Ord, D)$ to be the set of equivalence classes of optimal embeddings of $\qordD$ into $\Ord$, which is a finite set (see Proposition \ref{prop:countembD}).

If $D$ is a positive discriminant, let $\epsilon_D>1$ be the fundamental unit with positive norm in $\qordD$. If $\phi$ is an optimal embedding of $\qordD$ into $\Ord$, then $\iota(\phi(\epsilon_D))$ is a primitive hyperbolic element of $\Gamma$ (in fact, all primitive hyperbolic elements of $\Gamma$ arise in this fashion). Define $\ell_{\phi}$ to be $\ell_{\iota(\phi(\epsilon_D))}$.

\begin{definition}
	Let $\phi_1,\phi_2$ be optimal embeddings of positive discriminants $D_1,D_2$, and let $\gamma_i=\iota(\phi_i(\epsilon_{D_i}))$ for $i=1,2$. For any function $f$ defined on transversal intersections, the weighted intersection number of $\phi_1,\phi_2$ is defined to be
	\[\Int_{\Ord}^f(\phi_1,\phi_2):=\Int_{\Gamma_{\Ord}}^f(\gamma_1,\gamma_2).\]
\end{definition}

Note that the intersection number only depends on the equivalence classes of $\phi_1,\phi_2$. In Proposition 1.8 of \cite{JR21}, an alternate interpretation of the intersection number is given. At each transversal intersection, we can lift the point and the local geodesic to the upper half plane. This corresponds to a pair of transversely intersecting root geodesics $\ell_{\sigma_1},\ell_{\sigma_2}$, where $\sigma_i\sim\phi_i$ for $i=1,2$. The obstruction to uniqueness is the choice of lifted intersection point, which is only defined up to $\Gamma$-equivalence. Equivalently, the pair $(\sigma_1,\sigma_2)$ is only defined up to simultaneous equivalence. This is formalized in the following proposition.

\begin{proposition}\label{prop:simulconjinter}
	An alternate interpretation of the intersection number is
	\[\Int_{\Ord}^f(\phi_1,\phi_2)=\sum_{\substack{(\sigma_1,\sigma_2)\in([\phi_1]\times [\phi_2])/\sim \\ \vert\ell_{\sigma_1}\pitchfork\ell_{\sigma_2}\vert=1}}f(\sigma_1,\sigma_2).\]
\end{proposition}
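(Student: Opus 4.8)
The plan is to construct a weight-preserving bijection between the transversal intersections counted on the left-hand side and the simultaneous-equivalence classes of pairs counted on the right-hand side, by unfolding along the covering map $\pi\colon\uhp\to\Guhp$; this is the same argument used for Proposition 1.8 of \cite{JR21}, adapted to the present setting. The first ingredient I would record is a description of the preimage of a closed geodesic. Conjugating an optimal embedding by $r\in\Ord_{N=1}$ gives $\iota(\phi_i^r(\epsilon_{D_i}))=\iota(r)\,\gamma_i\,\iota(r)^{-1}$, and since conjugation by $\iota(r)$ moves the attracting and repelling fixed points by the M\"obius action while preserving their roles, it sends $\ell_{\sigma_i}$ to $\iota(r)\cdot\ell_{\sigma_i}$. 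As $\sigma_i$ ranges over $[\phi_i]$, the geodesics $\ell_{\sigma_i}$ therefore range over the $\Gamma$-translates of $\ell_{\gamma_i}$; because $\gamma_i$ is primitive, the centralizer of $\gamma_i$ and the orientation-preserving stabilizer of $\ell_{\gamma_i}$ both equal $\langle\gamma_i\rangle$, so $\sigma_i\mapsto\ell_{\sigma_i}$ is a bijection from $[\phi_i]$ onto the set of connected components of $\pi^{-1}(\tilde\ell_{\gamma_i})$.

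With this in hand I would build the bijection. A transversal intersection is a point of $\Guhp$ recorded together with the two local branches, one of $\tilde\ell_{\gamma_1}$ and one of $\tilde\ell_{\gamma_2}$, that cross there. Choosing a lift $\tilde z\in\pi^{-1}(z)$, each chosen branch lifts through $\tilde z$ to an arc of a unique component of $\pi^{-1}(\tilde\ell_{\gamma_i})$, namely a root geodesic $\ell_{\sigma_i}$ with $\sigma_i\in[\phi_i]$, and by construction $\tilde z\in\ell_{\sigma_1}\cap\ell_{\sigma_2}$. Since two distinct geodesics of $\uhp$ meet in at most one point and never tangentially, transversality of the crossing at $z$ is equivalent to $|\ell_{\sigma_1}\pitchfork\ell_{\sigma_2}|=1$. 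Replacing $\tilde z$ by another lift $r\tilde z$ with $r\in\Ord_{N=1}$ replaces $(\sigma_1,\sigma_2)$ by the simultaneously equivalent pair $(\sigma_1^r,\sigma_2^r)$, so the class of $(\sigma_1,\sigma_2)$ in $([\phi_1]\times[\phi_2])/\sim$ is independent of the chosen lift, which defines the forward map.

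Conversely, given a class represented by $(\sigma_1,\sigma_2)$ with $|\ell_{\sigma_1}\pitchfork\ell_{\sigma_2}|=1$, the unique point $\tilde z\in\ell_{\sigma_1}\cap\ell_{\sigma_2}$ projects to a transversal intersection $z=\pi(\tilde z)$ whose branches are the images of $\ell_{\sigma_1}$ and $\ell_{\sigma_2}$; replacing $(\sigma_1,\sigma_2)$ by a $\sim$-equivalent pair replaces $\tilde z$ by a $\Gamma$-translate and hence leaves $z$ unchanged, so this map too is well defined, and the two maps are visibly mutually inverse. Finally, the value of any weight $f$ at a transversal intersection depends only on the local configuration of the two crossing branches, which is precisely the data of the lifted pair $(\ell_{\sigma_1},\ell_{\sigma_2})$ at $\tilde z$; transporting $f$ along the bijection is exactly what it means to evaluate $f(\sigma_1,\sigma_2)$, so the two sums agree term by term.

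The one delicate point, which I expect to be the main obstacle, is the bookkeeping of self-intersections: the closed geodesic $\tilde\ell_{\gamma_i}$ may pass through a given point of $\Guhp$ several times, so a transversal intersection must be recorded together with its pair of branches rather than as a bare point, as in Definition \ref{def:inum}. Once intersections carry this branch data, the lifting is genuinely one-to-one, since, after fixing $\tilde z$, distinct branches through a common point lift to arcs of distinct components of $\pi^{-1}(\tilde\ell_{\gamma_i})$. The supporting verifications—that $\phi\mapsto\iota(\phi(\epsilon_D))$ identifies $[\phi_i]$ with the $\Gamma$-conjugacy class of $\gamma_i$, and that primitivity pins down the relevant stabilizer—are routine given the definitions of Section \ref{sec:introopt}.
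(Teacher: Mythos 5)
Your proof is correct and follows essentially the same route as the paper, which does not spell out a proof but instead sketches precisely this lifting/unfolding argument in the paragraph preceding the proposition and defers to Proposition 1.8 of \cite{JR21}. Your writeup supplies the details of that sketch — the identification of $[\phi_i]$ with the components of $\pi^{-1}(\tilde{\ell}_{\gamma_i})$ via primitivity of $\gamma_i$, the well-definedness up to simultaneous equivalence under change of lift, and the branch bookkeeping at self-intersections — all consistent with the paper's intended argument.
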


Each intersection point $z$ gives rise to a $\Gamma-$equivalence class of points in $\uhp$, as well as a unique intersection angle, measured from the tangent to $\ell_{\sigma_1}$ at $z$ to the tangent to $\ell_{\sigma_2}$ at $z$.

\subsection{x-linking}
Proposition \ref{prop:simulconjinter} still requires $\iota$ and $\epsilon_D$ to pass to the upper half plane. To make everything contained within the quaternion algebra, we introduce the notion of $x-$linking.

\begin{definition}
	Let $x$ be any integer such that $x^2\neq D_1D_2$. Call the pair $(\phi_1,\phi_2)$ $x-$linked if
	\[x=\dfrac{1}{2}\trd(\phi_1(\sqrt{D_1})\phi_2(\sqrt{D_2})).\]
	In particular, if $(\phi_1,\phi_2)$ is $x-$linked, then every pair in the equivalence class (of simultaneous equivalence) $[(\phi_1,\phi_2)]$ is $x-$linked.
\end{definition}

The case $x^2=D_1D_2$ is a degenerate case, and will not be relevant here. If $(\phi_1,\phi_2)$ are $x-$linked, then $x\equiv D_1D_2\pmod{2}$. Whether two optimal embeddings intersect is completely determined by their $x-$linking, as demonstrated in the following proposition (proven in Section \ref{sec:basicresults}).

\begin{proposition}\label{prop:optembint}
	Assume that $(\phi_1,\phi_2)$ are $x-$linked optimal embeddings of positive discriminants $D_1,D_2$. Then the root geodesics $\ell_{\phi_1},\ell_{\phi_2}$ intersect transversely if and only if 
	\[x^2<D_1D_2.\]
	In this case,
	\begin{enumerate}[label=(\roman*)]
		\item The intersection point is the upper half plane root of $\iota(\phi_1(\sqrt{D_1})\phi_2(\sqrt{D_2}))$, and so it corresponds to an (not necessarily optimal) embedding of the negative quadratic order $\qord{x^2-D_1D_2}$.
		\item The angle of intersection $\theta$ satisfies
		\[\cos(\theta)=\dfrac{x}{\sqrt{D_1D_2}}.\]
	\end{enumerate}
\end{proposition}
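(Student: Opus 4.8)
The plan is to translate the entire statement into the Möbius action of trace-zero quaternions and then read everything off from the single product $\iota(\mu_1\mu_2)$, where I abbreviate $\mu_i := \phi_i(\sqrt{D_i})$ and $M_i := \iota(\mu_i)$. First I would record that $\mu_i$ has $\trd(\mu_i)=0$ and $\nrd(\mu_i)=-D_i$, so $M_i$ is a real trace-zero matrix with $\det M_i=-D_i<0$ and $M_i^2=D_i\Id$. Since $\epsilon_{D_i}=u+v\sqrt{D_i}$ gives $\iota(\phi_i(\epsilon_{D_i}))=u\Id+vM_i$, the hyperbolic element $\gamma_i$ and $M_i$ share eigenvectors, so $\ell_{\phi_i}$ is exactly the geodesic whose endpoints are the two real fixed points of $M_i$ on $\mathbb{P}^1(\R)$. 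The key elementary computation is that $z\in\uhp$ lies on $\ell_{\phi_i}$ if and only if $M_iz=\bar z$ (as a Möbius action); equivalently, the anti-holomorphic map $R_i(z):=\overline{M_iz}$ is the hyperbolic reflection across $\ell_{\phi_i}$, whose fixed-point set is precisely $\ell_{\phi_i}$. I would check $R_i$ is an involution using $M_i^2=D_i\Id$ and the reality of $M_i$, and verify the fixed-point equation against the explicit semicircle centered at the midpoint of the two roots (treating the vertical-line case $c_i=0$ separately).

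The crux is the identity $R_1R_2=P$ as Möbius transformations, where $P:=\iota(\mu_1\mu_2)=M_1M_2$; this drops out in one line from $M_j(\bar w)=\overline{M_jw}$. Now $\det P=\nrd(\mu_1\mu_2)=D_1D_2>0$ and $\Tr P=\trd(\mu_1\mu_2)=2x$, so after normalizing to determinant one the trace is $2x/\sqrt{D_1D_2}$. A product of two distinct reflections is elliptic, parabolic, or hyperbolic exactly according to whether its mirrors intersect, are asymptotically parallel, or are ultraparallel; and an element of $\PSL(2,\R)$ is elliptic iff its normalized trace has absolute value $<2$. Hence $\ell_{\phi_1}$ and $\ell_{\phi_2}$ meet transversely iff $P$ is elliptic iff $x^2<D_1D_2$, and they are disjoint (resp.\ share an endpoint) iff $x^2>D_1D_2$ (resp.\ $x^2=D_1D_2$, the excluded degenerate case). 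This establishes the equivalence.

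For (i), in the elliptic case $P$ has a unique fixed point $z_0\in\uhp$, namely the intersection point of the two mirrors, i.e.\ the upper half plane root of $\iota(\mu_1\mu_2)$. To identify the order, I note that $\tfrac12(\mu_1\mu_2-x)$ lies in $\Ord$ — because $\tfrac12(D_i+\mu_i)=\phi_i(\tfrac12(D_i+\sqrt{D_i}))\in\Ord$ and $x\equiv D_1D_2\pmod 2$ — and that it has reduced trace $0$ with $(\tfrac12(\mu_1\mu_2-x))^2=\tfrac14(x^2-D_1D_2)<0$. Since a short case check on $D_1,D_2\bmod 4$ gives $x^2-D_1D_2\equiv0\pmod 4$, this realizes an embedding of $\qord{x^2-D_1D_2}$, whose fixed point is $z_0$ because $\iota(\tfrac12(\mu_1\mu_2-x))$ and $P$ share eigenvectors; it is optimal only when the discriminant is fundamental. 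For (ii), I would use that the product of reflections across two geodesics meeting at angle $\theta$ is rotation about $z_0$ by angle $2\theta$, while an elliptic element with normalized trace $2\cos\psi$ rotates by $2\psi$; matching gives $\cos\theta=x/\sqrt{D_1D_2}$ and hence $\tan\theta=\sqrt{D_1D_2-x^2}/x$, with the sign of $x$ encoding the acute/obtuse (signed) angle.

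The main obstacle is not any single computation but setting up the geometric dictionary cleanly and tracking orientation and sign conventions: which real fixed point is the attracting root, the sign of the rotation angle of $R_1R_2$ versus $R_2R_1$ (so that $\theta$ is measured from $\ell_{\sigma_1}$ to $\ell_{\sigma_2}$ with the correct sign), and compatibility with the definiteness/first-root conventions fixed earlier for $\iota$. I would also double-check the boundary case $c_i=0$ and confirm strict transversality, i.e.\ $\theta\neq0$, which holds because $x^2<D_1D_2$ forces $\sin\theta\neq0$.
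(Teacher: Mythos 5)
Your proposal is correct, but it takes a genuinely different route from the paper. The paper proves Proposition \ref{prop:optembint} as a quick corollary of the cited matrix result (Proposition 1.10 of \cite{JR21}, restated as Proposition \ref{prop:matintersect}): setting $Z_i=\frac{U_i}{2}\iota(\phi_i(\sqrt{D_i}))$, it computes $4\det(Z_1Z_2)-(\Tr(Z_1Z_2))^2=\frac{U_1^2U_2^2}{4}(D_1D_2-x^2)$ and reads off transversality, the fixed point, and the signed angle formula directly from the citation, while the integrality needed for (i) comes from $T=\phi_1(\sqrt{D_1})\phi_2(\sqrt{D_2})\in 2\Ord+p_{D_1D_2}$ --- essentially the same computation as your verification that $\frac{1}{2}(\mu_1\mu_2-x)\in\Ord$ with square $\frac{x^2-D_1D_2}{4}$. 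You instead reprove the geometric input from scratch via the reflection dictionary: $R_i(z)=\overline{M_iz}$ is the reflection across $\ell_{\phi_i}$ (an involution because $M_i^2=D_i\Id$), the identity $R_1R_2=M_1M_2$ holds, and the elliptic/parabolic/hyperbolic trichotomy for a product of reflections in distinct mirrors gives the criterion; your steps all check out, including the degenerate case (coincident mirrors force $M_1M_2$ scalar, hence $x^2=D_1D_2$, excluded by the definition of $x$-linking) and the mod-$4$ verification that $x^2-D_1D_2$ is a discriminant. What your route buys is a self-contained, geometrically transparent argument that moreover exhibits the full trichotomy ($x^2>,\ =,\ <D_1D_2$ corresponding to ultraparallel, asymptotic, and crossing geodesics), independent of the external reference. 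What the paper's citation buys is exactly the one point you flag but do not carry out: the sign in (ii). Since trace in $\PSL(2,\R)$ is only defined up to sign, your normalized-trace argument by itself yields only $\cos\theta=\pm x/\sqrt{D_1D_2}$, i.e.\ $\tan\theta$ up to sign ($\theta$ versus $\pi-\theta$); fixing the $+$ requires choosing the lift and the rotation orientation of $R_1R_2$ consistently with the counterclockwise convention from $\ell_{\sigma_1}$ to $\ell_{\sigma_2}$, which the cited formula $\tan(\theta)=\sqrt{\det(Z_1Z_2-Z_2Z_1)}/\Tr(Z_1Z_2)$ has already baked in. One parenthetical to strike: your remark that the embedding of $\qord{x^2-D_1D_2}$ ``is optimal only when the discriminant is fundamental'' is backwards --- fundamentality guarantees optimality, but optimality can also hold for non-fundamental discriminants, which is why the proposition asserts only ``not necessarily optimal.''
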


Define
\[\Emb(\Ord,\phi_1,\phi_2,x):=\{(\sigma_1,\sigma_2): \sigma_1\sim\phi_1, \sigma_2\sim\phi_2, (\sigma_1,\sigma_2)\text{ are $x-$linked}\}/\sim,\]
the equivalence classes of $x-$linked pairs of embeddings similar to $\phi_1,\phi_2$. Going further, write
\begin{align*}
	\Emb(\Ord,D_1,D_2,x):= & \{(\sigma_1,\sigma_2): [\sigma_i]\in\Emb(\Ord,D_i), (\sigma_1,\sigma_2)\text{ are $x-$linked}\}/\sim\\
	= & \bigcup_{[\phi_i]\in\Emb(\Ord,D_i)}\Emb(\Ord,\phi_1,\phi_2,x).
\end{align*}

In particular, the intersection number can be rephrased without reference to $\iota$ or the fundamental units as follows:
\[\Int_{\Ord}^f(\phi_1,\phi_2)=\sum_{\substack{x^2<D_1D_2 \\ x\equiv D_1D_2\pmod{2}}}\sum_{[(\sigma_1,\sigma_2)]\in\Emb(\Ord,\phi_1,\phi_2,x)}f(\sigma_1,\sigma_2).\]

Thus, an intersection of $\phi_1$ with $\phi_2$ can be thought of as an $x-$linked pair $(\sigma_1,\sigma_2)$, with $\vert x\vert<\sqrt{D_1D_2}$ and $\sigma_i\sim\phi_i$.

\begin{definition}\label{def:signlevel}
	Let $\sigma_1\times\sigma_2$ denote the unique optimal embedding which satisfies
	\[\sigma_1\times\sigma_2(x+\sqrt{x^2-D_1D_2})=\sigma_1(\sqrt{D_1})\sigma_2(\sqrt{D_2}).\]
	The \textit{sign} of the intersection $(\sigma_1,\sigma_2)$, denoted $\sg(\sigma_1,\sigma_2)$, is $1$ if $\sigma_1\times\sigma_2$ is positive definite, and $-1$ otherwise (it is left undefined if $x^2>D_1D_2$). The \text{level} of the intersection, denoted $\ell(\sigma_1,\sigma_2)$, is $\ell>0$ where $\sigma_1\times\sigma_2$ is an optimal embedding of discriminant $\frac{x^2-D_1D_2}{\ell^2}$.
	
	Define $\Emb(\Ord,D_1,D_2,x,\ell)$ to be the set of pairs of intersections in $\Emb(\Ord,D_1,D_2,x)$ that have level $\ell$, and if $x^2<D_1D_2$, define $\Emb^+(\Ord,D_1,D_2,x,\ell)$ to be the subset of pairs that also have positive sign.
\end{definition}

Using the notion of sign and level, we can describe three different intersection functions $f$
\begin{enumerate}
	\item When $f(\sigma_1,\sigma_2)=1$, $\Int_{\Ord}^f$ is called the unsigned intersection number, and is denoted $\Int_{\Ord}$.
	\item When $f(\sigma_1,\sigma_2)=\sg(\sigma_1,\sigma_2)$, $\Int_{\Ord}^f$ is called the signed intersection number, and is denoted $\Int_{\Ord}^{\pm}$.
	\item When $q$ is a prime and $f(\sigma_1,\sigma_2)=\sg(\sigma_1,\sigma_2)(1+v_q(\ell(\sigma_1,\sigma_2)))$, $\Int_{\Ord}^f$ is called the $q-$weighted intersection number, and is denoted $\Int_{\Ord}^{q}$.
\end{enumerate}

\begin{remark}
	The intersection sign can equivalently be defined as the topological intersection sign of the corresponding root geodesics.
\end{remark}

\subsection{Main result}
The $\epsilon$ function defined in Gross and Zagier (\cite{GZ85}) is very important for $x-$linking. We recall its definition here (see Definition \ref{def:epsilon} for a slight generalization).

\begin{definition}\label{def:epsilonoriginal}
	Let $D_1,D_2$ be coprime fundamental discriminants, and let $p$ be a prime for which $\left(\frac{D_1D_2}{p}\right)\neq -1$. Define
	\[
	\epsilon(p):=\begin{cases} \left(\dfrac{D_1}{p}\right) & \text{ if $p$ and $D_1$ are coprime;}\\\\
		\left(\dfrac{D_2}{p}\right) & \text{ if $p$ and $D_2$ are coprime.}
	\end{cases}
	\]
\end{definition}

Note that $\epsilon$ is well defined if $p\nmid D_1D_2$ (as $1=\left(\frac{D_1D_2}{p}\right)$), and it is defined on all prime factors of $\frac{D_1D_2-x^2}{4}$. 

\begin{sloppypar} The ``holy grail'' of counting intersection numbers would be to identify the constituent terms in $\Emb^+(\Ord, \phi_1, \phi_2, x, \ell)$, though this seems nonviable (at least with the current approach). Thus, we settle for the more general term, where we replace the embedding $\phi_i$ with its discriminant $D_i$. A simplified version of our main result is the following theorem.
\end{sloppypar}

\begin{theorem}\label{thm:countxlinking}
	Let $D_1,D_2$ be positive coprime fundamental discriminants, and let $x$ be any integer such that $x\equiv D_1D_2\pmod{2}$. Then there is precisely one quaternion algebra $B$ over $\QQ$ which contains a maximal order $\Ord$ such that there exists $x-$linked optimal embeddings from $\qord{D_i}$ into $O $. Furthermore, factorize
	\[\dfrac{D_1D_2-x^2}{4}=\pm\prod_{i=1}^{r}p_i^{2e_i+1}\prod_{i=1}^s q_i^{2f_i}\prod_{i=1}^t w_i^{g_i},\]
	where the $p_i$ are the primes for which $\epsilon(p_i)=-1$ that appear to an odd power, $q_i$ are the primes for which $\epsilon(q_i)=-1$ that appear to an even power, and $w_i$ are the primes for which $\epsilon(w_i)=1$. If $B$ has discriminant $\mathfrak{D}$, then:
	\begin{enumerate}[label=(\roman*)]
		\item $r$ is even and $\mathfrak{D}=\prod_{i=1}^r p_i$.
		\item The size of $\Emb(\Ord,D_1,D_2,x)$ is $2^{r+1}\prod_{i=1}^t(g_i+1)$.
		\item The set $\Emb(\Ord,D_1,D_2,x,\ell)$ is non-empty if and only if 
		\[\ell=\prod_{i=1}^{r}p_i^{e_i}\prod_{i=1}^s q_i^{f_i}\prod_{i=1}^t w_i^{g_i'},\]
		where $2g_i'\leq g_i$.
		\item Assume the above holds, and let $n$ be the number of indices for which $2g_i'<g_i$. Then 
		\[\vert\Emb(\Ord,D_1,D_2,x,\ell)\vert=2^{r+n+1}.\]
		If $x^2<D_1D_2$, then exactly half of these embeddings have positive sign.
	\end{enumerate}
\end{theorem}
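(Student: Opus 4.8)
The plan is to pass from the pair of embeddings to the pair of pure quaternions $\alpha_i:=\sigma_i(\sqrt{D_i})$, identify the ambient algebra completely, and then split the count into a global ``which algebra'' part (governed by Hilbert symbols and the $\epsilon$ function) and a purely local counting part (governed by strong approximation). Since $\trd(\alpha_i)=0$, $\alpha_i^2=D_i$, and, by $x$-linking, $\trd(\alpha_1\alpha_2)=2x$, we obtain the relation $\alpha_1\alpha_2+\alpha_2\alpha_1=2x$. As $x^2\neq D_1D_2$ the elements $\alpha_1,\alpha_2$ do not commute, so setting $i=\alpha_1$ and $j=2\alpha_1\alpha_2-2x$ one computes $i^2=D_1$, $j^2=4(x^2-D_1D_2)$, and $ij=-ji$. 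Hence any algebra admitting an $x$-linked pair is forced to be $B\cong\left(\frac{D_1,\,x^2-D_1D_2}{\Q}\right)$, which is indefinite because $D_1>0$; this already gives the uniqueness of $B$, while its existence will drop out of the local analysis below.

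For part (i), I would compute the Hilbert symbol $(D_1,x^2-D_1D_2)_p$ at every place and match it against $\epsilon$. Writing $N=D_1D_2-x^2$, at an odd prime $p\nmid D_1$ one has $(D_1,-N)_p=\left(\frac{D_1}{p}\right)^{v_p(N)}$, so $p$ ramifies exactly when $\left(\frac{D_1}{p}\right)=\epsilon(p)=-1$ and $v_p(N)$ is odd, i.e.\ exactly for the primes $p_i$. The cases $p\mid D_1$ (whence $p\nmid D_2$ by coprimality) and $p=2$ reduce, using $x\equiv D_1D_2\pmod 2$, to the same conclusion after a more delicate local computation. The parity of $r$ is then forced by Hilbert reciprocity together with the splitting of $B$ at the real place, and $\mathfrak{D}=\prod_{i=1}^r p_i$ is read off directly, proving (i).

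For the counts (ii)--(iv) the key observation is rigidity: because $\alpha_1,\alpha_2$ generate all of $B$, the pair is the same datum as an embedding of the $\Z$-order $S=\Z\langle\alpha_1,\alpha_2\rangle$ into $\Ord$, whose centralizer is only the center. Consequently strong approximation for the norm-one group $B^{(1)}$---available precisely because $B$ is indefinite---identifies the set of simultaneous-conjugacy classes with the restricted product over all $p$ of local conjugacy classes of embeddings $S_p\hookrightarrow\Ord_p$, with no class- or type-number factor intervening. This reduces $|\Emb(\Ord,D_1,D_2,x,\ell)|$ to a product of local embedding counts, in which the local level $\ell_p$ is the conductor of $\Z_p[\alpha_1\alpha_2]$ inside $\Ord_p\cap\Q_p(\alpha_1\alpha_2)$. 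I would then evaluate these local counts case by case: at $p\nmid N$ the count is $1$; at a prime $w_i$ (where $\epsilon=1$) the admissible local levels are $p^{g_i'}$ with $2g_i'\le g_i$, contributing $2$ when $2g_i'<g_i$ and $1$ when $2g_i'=g_i$; at a ramified prime $p_i$ the level is forced to $p_i^{e_i}$ with local count $2$; and at a prime $q_i$ the level is forced to $q_i^{f_i}$ with local count $1$. Multiplying these, together with a single global factor of $2$ coming from the archimedean orientation (the definiteness of $\phi_1\times\phi_2$), yields $|\Emb(\Ord,D_1,D_2,x,\ell)|=2^{r+n+1}$ at an admissible level, which is (iii) and (iv); summing over the admissible $w$-levels collapses to $2^{r+1}\prod_{i=1}^t(g_i+1)$, giving (ii). Finally, when $x^2<D_1D_2$ the product $\phi_1\times\phi_2$ embeds a negative-discriminant order whose definiteness is a free binary invariant, and an orientation-reversing involution on $\Emb(\Ord,D_1,D_2,x,\ell)$ (induced by a normalizing element reversing this definiteness) is fixed-point-free, so exactly half of the classes carry positive sign.

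The routine-looking but genuinely load-bearing step is the local computation: both the Hilbert-symbol evaluation at $p=2$ and at primes dividing $\gcd(D_i,x)$, and---more substantially---the local embedding count with its conductor/level bookkeeping at the ramified primes and at the $\epsilon=1$ primes. The clean absence of any class-number factor hinges on $S$ generating $B$, and verifying that the product of local data is always globally realizable (hence that $B$ genuinely contains such a pair) is exactly where the indefiniteness and strong approximation must be invoked with care.
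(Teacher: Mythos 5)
Your outline is viable and takes a genuinely different route from the paper's for the counting parts (ii)--(iv). The paper never computes a local-global product of embedding-class counts directly: it fixes a single $x$-linked pair, forms the order $\Ord_{\phi_1,\phi_2}$ it generates (Lemma \ref{lem:genorder}, reduced discriminant $\frac{D_1D_2-x^2}{4}$), uses the Eichler symbol and the Bruhat--Tits tree to determine which Eichler orders contain the pair optimally with intersection level exactly $\ell$ (Lemma \ref{lem:Eichlernessoforder}, Theorem \ref{thm:generalxlinkingexistence}), and then \emph{inverts}: Proposition \ref{prop:countemb} exhibits a $\left|N_{B^{\times}}(\Ord)/\Q^{\times}\Ord_{N=1}^{\times}\right|$-to-one map from $\Emb(\Ord,D_1,D_2,x,\ell)$ onto that set of orders, with the normalizer index $2^{\omega(\mathfrak{D}\mathfrak{M})+1}=2^{r+1}$ supplied by Proposition \ref{prop:stab}. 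Your identification of $B$ via the anticommuting pair $\alpha_1$, $\alpha_1\alpha_2-x$ is the paper's Remark \ref{rem:wlogD1D2xlink} and Theorem \ref{thm:onequatalgxlink}; your Hilbert-symbol evaluation of (i) matches Lemmas \ref{lem:localdivisionxlinking}--\ref{lem:localnondivisionxlinking} (the paper handles $p=2$ and $p\mid D_i$ through Pell solubility over $\Z_p$, Lemma \ref{lem:pellmodp}, rather than tame-symbol formulas); your sign argument is exactly Lemma \ref{lem:xlinkedor} (conjugation by $\omega_\infty\in\Ord_{N=-1}$); and all your local level/count numbers at the $p_i,q_i,w_i$, and the collapse of the sum over levels to $2^{r+1}\prod(g_i+1)$, agree with the paper. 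What your route buys is the conceptual point that rigidity of the noncommutative order $S=\Z\langle\alpha_1,\alpha_2\rangle$ (trivial centralizer) eliminates the class-number factor that appears in ordinary embedding counts; what the paper's route buys is that all of the delicate unit-versus-norm-one bookkeeping is concentrated in one normalizer computation.

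That bookkeeping is precisely where your proposal, as stated, has a genuine error: it is \emph{not} true that the simultaneous $\Ord_{N=1}$-conjugacy classes biject with the restricted product of local $\Ord_{p,N=1}$-conjugacy classes. Rigidity cuts both ways: since the centralizer of the pair is the center, a unit $u\in\Ord_p^{\times}$ with $\nrd(u)\notin\Z_p^{\times 2}$ moves a local pair to a \emph{different} $\Ord_{p,N=1}$-class (any conjugator differs from $u$ by $\Q_p^{\times}$, and elements of $\Q_p^{\times}\Ord_{p,N=1}\cap\Ord_p^{\times}$ have square unit norm), so there are two norm-one local classes at almost every odd prime --- in particular your claim ``at $p\nmid N$ the count is $1$'' fails for norm-one conjugacy, the restricted product is infinite, and moreover a global conjugator $g$ with, say, $\nrd(g)=3$ shifts the local norm-one class at the infinitely many primes where $3$ is a nonsquare unit, so the global classes do not even land in the restricted product. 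The repair is to take local conjugacy under the full unit groups $\Ord_p^{\times}$: then your stated local counts ($1$ at $p\nmid N$; $2$ at each ramified $p_i$, via the uniformizer; $1$ at each $q_i$; $T_{\mathrm{loc}}\in\{1,2\}$ at each $w_i$) are all correct, and a square-class argument (a global conjugator with $\nrd(g)\in\Z_p^{\times}\Q_p^{\times 2}$ for all $p$ has $\nrd(g)=\pm t^2$, and the archimedean sign resolves the $\pm$) shows the global count is exactly $2$ times the product of local unit-class counts --- your ``single global factor of $2$ at infinity.'' Your numbers implicitly assume this corrected statement, so the gap is repairable, but as written the central local-global claim is false and the surjectivity half (realizing arbitrary local data inside the fixed $\Ord$, which the paper handles in Lemma \ref{lem:localglobalxlinking} via Hasse--Minkowski, the lattice local-global principle, and conjugacy of Eichler orders of equal level) still needs to be carried out, not just invoked.
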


Theorem \ref{thm:maincountingthm} is a generalization of this result, where we allow for Eichler orders, drop the requirement of fundamentalness, and weaken the coprimality condition. By adding an additional assumption, in Corollary \ref{cor:countembwitheverything} we also consider orientations (see Section \ref{sec:optemb}) of the optimal embedding pairs in $\Emb^+(\Ord,D_1,D_2,x,\ell)$.

\subsection{Connection to other work}\label{sec:connection}

In \cite{GZ85}, Gross and Zagier take $D_1, D_2$ to be negative fundamental coprime discriminants, and define the integral quantity $J(D_1, D_2)^2$, which is essentially the norm to $\QQ$ of $j(\tau_1)-j(\tau_2)$. Their Theorem 1.3 says
\[J(D_1, D_2)^2=\pm\prod_{\substack{x^2<D_1D_2 \\ x\equiv D_1D_2\pmod{2}}} F_{\text{GZ}}\left(\dfrac{D_1D_2-x^2}{4}\right),\]
for the function
\[F_{\text{GZ}}(m)=\prod_{nn'=m, n>0}n^{\epsilon(n')}.\]

Let $D_1, D_2$ be positive coprime fundamental discriminants, and let $\Ord$ be a maximal order of an indefinite quaternion algebra $B$ of discriminant $\mathfrak{D}$ over $\QQ$. A consequence of Proposition \ref{prop:optembint} and Theorem \ref{thm:countxlinking} is that the total unsigned intersection of discriminants $D_1, D_2$ into $\Ord$ is
\begin{align*}
	\Int_{\Ord}(D_1,D_2):= & \sum_{[\phi_1]\in\Emb(D_1, \Ord)}\sum_{[\phi_2]\in\Emb(D_2, \Ord)}\Int_{\Ord}(\phi_1,\phi_2)\\
	= & \sum_{\substack{x^2<D_1D_2 \\ x\equiv D_1D_2\pmod{2}}} F\left(\dfrac{D_1D_2-x^2}{4}\right).
\end{align*}
In particular, this naturally takes the exact same form (albeit with product replaced by sum). Taking the factorization as in Theorem \ref{thm:countxlinking}, we have
\begin{itemize}
	\item $F\left(\frac{D_1D_2-x^2}{4}\right)\neq 0$ if and only if $\mathfrak{D}=\prod_{i=1}^{r}p_i$;
	\item If this holds, then
	\[F\left(\dfrac{D_1D_2-x^2}{4}\right)=2^{r+1}\prod_{i=1}^t(g_i+1)=2^{r+1}\sum_{d\mid\frac{D_1D_2-x^2}{4\mathfrak{D}}} \epsilon(d).\]
\end{itemize}
On the Gross-Zagier side, take the same factorization as above, and take $\ell$ to be a prime. Then
\begin{itemize}
	\item $v_{\ell}\left(F_{\text{GZ}}\left(\frac{D_1D_2-x^2}{4}\right)\right)\neq 0$ if and only if $\ell=\prod_{i=1}^{r}p_i$ (i.e. $r=1$ and $p_1=\ell$);
	\item If this holds, then
	\[v_{\ell}\left(F_{\text{GZ}}\left(\frac{D_1D_2-x^2}{4}\right)\right)=(e_1+1)\prod_{i=1}^t(g_i+1)=(e_1+1)\sum_{d\mid\frac{D_1D_2-x^2}{4\ell}} \epsilon(d).\]
\end{itemize}

In particular, this cements the analogy between the two situations.
\begin{analogy}
	The total intersection number of positive discriminants, $\Int_{\Ord}(D_1,D_2)$, behaves like the exponents of primes in the factorization of $J(D_1, D_2)^2$ for negative discriminants.
\end{analogy}

The individual intersection numbers $\Int_{\Ord}(\phi_1,\phi_2)$ should then have an analogy involving the exponents of primes above $\ell$ in the factorization of $j(\tau_1)-j(\tau_2)$. To make such a connection concrete, we require a real quadratic analogue of $j(\tau_1)-j(\tau_2)$, and not just the exponents. This connection is the goal of Darmon-Vonk in \cite{DV20}.

In this work, given $\tau_1,\tau_2$ real quadratic points corresponding to coprime fundamental discriminants $D_1, D_2$ and a prime $p\leq 13$, they construct a $p$-adic quantity $J_p(D_1, D_2)$, which is conjecturally algebraic and belonging to the compositum of ring class fields associated to $D_1, D_2$.

\begin{conjecture}[Conjecture 4.26 of \cite{DV20}]\label{conj:DV}
	Let $\mathfrak{q}$ lie above the integer prime $q\neq p$. If $q$ is split in $\QQ(\sqrt{D_1})$ or $\QQ(\sqrt{D_2})$, then $\ord_{\mathfrak{q}}(J_p(\tau_1,\tau_2))=0$. Otherwise, let $\Ord$ be a maximal order in the quaternion algebra ramified at $p,q$. Then there exist optimal embeddings $\phi_1,\phi_2$ of discriminants $D_1,D_2$ into $\Ord$ for which
	\[\ord_{\mathfrak{q}}(J_p(\tau_1,\tau_2))=\Int_{\Ord}^q(\phi_1,\phi_2).\]
	In other words, the exponents of primes above $q$ in the factorizations of $J_p(\tau_1,\tau_2)$ are given by $q-$weighted intersection numbers associated to optimal embeddings of $D_1,D_2$ into a maximal order in the indefinite quaternion algebra ramified at $p,q$.
\end{conjecture}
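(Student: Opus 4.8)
The plan is to reduce the $\mathfrak{q}$-adic valuation of the rigid-analytic invariant $J_p(\tau_1,\tau_2)$ to an intersection count in the indefinite quaternion algebra $B$ ramified exactly at $\{p,q\}$, and then to recognize that count as the $q$-weighted intersection number $\Int_{\Ord}^q(\phi_1,\phi_2)$ studied here. First I would unwind the rigid meromorphic cocycle construction of Darmon--Vonk: $J_p(\tau_1,\tau_2)$ arises by evaluating a cocycle for the relevant $p$-arithmetic group at the real quadratic point $\tau_2$, with divisor supported on the orbit of $\tau_1$. The zeros and poles of the associated rigid meromorphic function, read on the Bruhat--Tits tree of $\mathrm{PGL}_2(\Q_p)$, govern the archimedean size of $J_p$, whereas the $\mathfrak{q}$-adic valuations for $q\neq p$ should be controlled by the reduction modulo $q$ of the relevant Shimura curve.

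The central step is a Cerednik--Drinfeld-style comparison. The curve whose $q$-adic reduction I would analyze is $p$-adically uniformized by the quaternion algebra $B$ ramified at $p$ and $q$; its special fiber at $q$ is a Mumford curve whose dual graph is built from the $\Ord_{N=1}^{\times}$-action on the tree. In this dictionary, the intersection of the cycles attached to $\tau_1$ and $\tau_2$ on the special fiber should translate into a count of pairs of optimal embeddings of $\qord{D_1},\qord{D_2}$ into $\Ord$, weighted by the local intersection multiplicities of the corresponding components. Using Proposition \ref{prop:optembint}, these pairs are exactly the $x$-linked pairs with $x^2<D_1D_2$, and the multiplicity with which a pair contributes --- coming from the length of the chain of components separating the two cycles --- should match the weight $1+v_q(\ell(\sigma_1,\sigma_2))$, which is precisely how $\Int_{\Ord}^q$ is defined in Definition \ref{def:signlevel}. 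I would then invoke the level-by-level count of Theorem \ref{thm:maincountingthm} to organize the contributions by $\ell$ and check that the resulting arithmetic matches the factorization of $\ord_{\mathfrak{q}}(J_p)$.

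The hard part --- and the reason this remains a conjecture rather than a theorem --- is twofold. First, the very quantity $\ord_{\mathfrak{q}}(J_p(\tau_1,\tau_2))$ presupposes that $J_p(\tau_1,\tau_2)$ is algebraic and lies in the predicted compositum of ring class fields; this algebraicity is itself open and is the deepest input in the Darmon--Vonk program, so an unconditional proof must first establish it (or else work directly with $\mathfrak{q}$-adic valuations defined through the rigid construction). Second, even granting algebraicity, the comparison between the $p$-adic cocycle construction and the quaternionic intersection theory is not formal: it requires a genuine Gross--Zagier-type local factorization on the special fiber, whose archimedean analogue is exactly the geodesic intersection picture of this paper but whose $p$-adic incarnation has not been carried out. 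The results proved here supply the entire quaternionic (right-hand) side of the conjectured identity, including the subtle $q$-weighting and sign data; what is missing is the analytic bridge identifying it with the left-hand side, and I would expect that bridge --- not the combinatorics --- to be the true obstacle.
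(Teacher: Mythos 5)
What you have written is not a proof, and you say as much yourself --- correctly. The statement is Conjecture 4.26 of \cite{DV20}, and the paper reproduces it as Conjecture \ref{conj:DV} without proof: the only support offered within the paper is experimental, namely the author's computation of the $q$-weighted intersection numbers $\Int_{\Ord}^q(\phi_1,\phi_2)$ for $D_1\in\{5,13\}$ and $D_2\leq 1000$, matched against Vonk's independent computation of the $\mathfrak{q}$-adic valuations of $J_p(\tau_1,\tau_2)$. So there is no paper proof to compare your argument against; the only fair assessment is whether your roadmap is sound and whether the gaps you concede are the real ones. On both counts you are essentially right: unconditionally, $\ord_{\mathfrak{q}}(J_p(\tau_1,\tau_2))$ is not even defined until the (open) algebraicity of $J_p$ is established, and the identification with quaternionic data would require a genuinely new $p$-adic analogue of the Gross--Zagier special-fiber computation \cite{GZ85}. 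The present paper deliberately supplies only the right-hand side of the conjectured identity --- the sign and level data of Definition \ref{def:signlevel}, the counts of Theorem \ref{thm:maincountingthm} and Corollary \ref{cor:countembwitheverything}, and the structural analogy with the Gross--Zagier factorization --- and never claims more.

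One concrete caution about the middle step of your sketch. The invariant $J_p(\tau_1,\tau_2)$ is a $p$-adic period obtained by evaluating a rigid meromorphic cocycle for $\SL(2,\Z[1/p])$ at a real quadratic point; it is not known to be the value of any function on a curve with a moduli interpretation, so there is no scheme whose special fiber at $q$ you can analyze --- the Cerednik--Drinfeld comparison you invoke has no established starting point here, and this is precisely the bridge you identify as missing. Moreover, as stated your uniformization dictionary mixes up the interchange of invariants: the dual graph of the mod-$q$ special fiber of the Shimura curve attached to the indefinite algebra $B$ ramified at $\{p,q\}$ is the quotient of the Bruhat--Tits tree of $\PGL(2,\Q_q)$ by a unit group arising from the \emph{definite} quaternion algebra ramified at $\{p,\infty\}$, not by the group $\Gamma_{\Ord}$ coming from $\Ord_{N=1}$ that acts in the archimedean picture of this paper. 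Neither point damages your overall conclusion, which matches the paper's framing: the quaternionic and combinatorial side of the conjecture is complete and proved here, while the analytic identification with $\ord_{\mathfrak{q}}(J_p)$ remains the true obstacle.
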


Besides the compelling analogy between Gross-Zagier, Darmon-Vonk, and this work, we have extensive computational evidence. I computed the intersection numbers $\Int_{\Ord}^q(\phi_1,\phi_2)$ for all pairs with $D_1=5,13$ and $D_2\leq 1000$, and compiled it into a 600 page document. On the other side, Jan Vonk computed the $q-$adic valuations of $J_p(\tau_1,\tau_2)$ for many of these examples, and the data matched perfectly.

\subsection{Computational aspects}\label{sec:computational}

Everything described in this paper has been implemented by the author in PARI/GP (\cite{PARI}), and the corresponding package can be found on GitHub at \cite{Qquad}. In particular, this includes algorithms to:
\begin{itemize}
	\item Initialize a quaternion algebra $B$ over $\QQ$ of a specified ramification, as well as an Eichler order $\Ord$ of a given level;
	\item Compute representatives of the equivalence classes in $\Emb(\Ord, D)$, divide them into classes by their orientation, and sort these classes by the action of $\Cl^+(D)$;
	\item Compute the sets $\Emb(\Ord, \phi_1, \phi_2, x)$, and the corresponding signs and levels.
	\item Compute all non-trivial unsigned, signed, and $q-$weighted intersection numbers of a given pair of discriminants $D_1, D_2$.
\end{itemize}

As mentioned in the last section, these computations were essential to establishing the connection to the work of Darmon and Vonk.

\subsection{Plan of attack}
Section \ref{sec:qback} recalls and proves some basic results on quaternion algebras that will be useful later. Section \ref{sec:basicresults} covers some basic results on intersection numbers. Section \ref{sec:xlink} studies the conditions on which there exist $x-$linked optimal embeddings of a given pair of discriminants. In Section \ref{sec:countEO}, we count the Eichler orders containing a given pair of $x-$linked embeddings. Section \ref{sec:mainproof} assembles all of the ingredients to prove the generalization of Theorem \ref{thm:countxlinking}. The paper ends by providing some explicit examples demonstrating the main results.

\section{Quaternionic background}\label{sec:qback}
In this section we recall properties of quaternion algebras and Eichler orders that are required in Sections \ref{sec:basicresults} and beyond. The main focus will be on optimal embeddings. For a full exposition on quaternion algebras, see \cite{JV21}.

\subsection{Local and global quaternion algebras}
Let $F$ be a field of characteristic $0$, and $a,b\in F^{\times}$. Take $B=\left(\frac{a,b}{F}\right)$ to be the quaternion algebra associated to $a,b,F$. As an additive vector space, this is of dimension $4$ over $F$, with basis $1,i,j,k$, and general element of the form
\[x=e+fi+gj+hk,\text{ where }e,f,g,h\in F.\]
The multiplicative structure is determined by the standard equations
\[i^2=a,\qquad j^2=b,\qquad k=ij=-ji.\]
The standard involution on $B$ is denoted by an overline, and explicitly defined by
\[\overline{x}:=e-fi-gj-hk.\]
The quaternion algebra also comes equipped with the reduced trace $\trd:B\rightarrow F$ and the reduced norm $\nrd:B\rightarrow F$, defined by
\begin{align*}
	\nrd(x):= & x\overline{x}=e^2-af^2-bg^2+abh^2;\\
	\trd(x):= & x+\overline{x}=2e.
\end{align*}

When $F=\RR$, there are exactly two quaternion algebras up to isomorphism: $\Mat(2,\RR)$, and the Hamilton quaternions $\left(\frac{-1,-1}{\RR}\right)$ (which is a division algebra). Similarly, over $\QQ_p$, there are two quaternion algebras up to isomorphism: $\Mat(2,\QQ_p)$, and a division algebra. The division algebra can be written as $\left(\frac{p,e}{\QQ_p}\right)$, where $e$ is any integer such that $\left(\frac{e}{p}\right)=-1$, and $\left(\frac{\cdot}{p}\right)$ is the Kronecker symbol.

Let $B=\left(\frac{a,b}{\QQ}\right)$ be a quaternion algebra over $\QQ$. Much of the structure of $B$ is determined by its local behaviour, i.e. the local quaternion algebras $B_v=B\otimes\QQ_v=\left(\frac{a,b}{\QQ_v}\right)$, where $v$ is a place of $\QQ$ and $\QQ_{\infty}=\RR$. Call $v$ \textit{ramified} in $B$ if $B_v$ is division, and call $v$ \textit{split} otherwise. Define the Hilbert symbol $(a,b)_v$ to be $1$ if $p$ is split in $B$, and $-1$ if $B$ is ramified. The set of ramified places is both finite and of even size, and we say that $B$ has discriminant $\mathfrak{D}$, where $\mathfrak{D}$ is the product of all ramifying places.

The quaternion algebra $B$ over $\QQ$ is uniquely determined (up to isomorphism) by the set of ramifying places, and furthermore, any finite even sized set of places corresponds to a quaternion algebra over $\QQ$. We call $B$ indefinite if $\infty$ is split, hence $B$ is ramified at an even number of finite primes. We will generally be working with indefinite quaternion algebras over $\QQ$, although some results work in more generality.

An order $\Ord$ of $B$ is a lattice that is also a subring. A maximal order is an order which is not properly contained within another order. All maximal orders of $\Mat(2,\QQ_p)$ are conjugate, whereas the division quaternion algebra over $\QQ_p$ has a unique maximal order, consisting of all integral elements. Globally, all maximal orders in an indefinite quaternion algebra over $\QQ$ are conjugate.

If $F=\QQ,\QQ_p$, then an order $\Ord$ is always a dimension four $\mathcal{O}_F=\ZZ,\ZZ_p-$module (respectively). Let $\alpha_1,\alpha_2,\alpha_3,\alpha_4$ be a basis of $\Ord$, and define the discriminant of $\Ord$ to be
\[\disc(\Ord)=-d(\alpha_1,\alpha_2,\alpha_3,\alpha_4):=-\det(\trd(\alpha_i\alpha_j)_{i,j}).\]
This is always a square, and the reduced discriminant of $\Ord$ is defined by
\[\discrd(\Ord)^2=\disc(\Ord).\]
The reduced discriminant is only defined up to $\mathcal{O}_F^{\times}$, so over $\QQ$ we take the convention that it is positive, and over $\QQ_p$ we take it to be of the form $p^e$ with $e\geq 0$. It follows that if $F=\QQ$, then
\[\discrd(\Ord)=\prod_p\discrd(\Ord_p),\]
where the product is taken over all primes $p$ and $\Ord_p=\Ord\otimes\ZZ_p$ is the corresponding local order in $B_p$.

Over $F=\QQ$, an order is maximal if and only if its reduced discriminant is equal to the finite part of $\mathfrak{D}$, the discriminant of the quaternion algebra. A general order $\Ord$ will have $\discrd(\Ord)=\mathfrak{D}\mathfrak{M}$, where $\mathfrak{M}$ is called the level of the order.

Working locally will be essential, so we will state the local-global correspondence for lattices (which also holds for orders in a quaternion algebra).

\begin{theorem}[Variant of Theorem 9.1.1 of \cite{JV21}]\label{thm:JVlocalglobal}
	Let $V$ be a finite dimensional $\QQ-$vector space, and let $M\subset V$ be a $\ZZ$-lattice. Then the map $N\rightarrow (N_p)_{p}$ gives a bijection between $\ZZ-$lattices $N\subset V$ and collections of $\ZZ_p$-lattices $(N_p)_{p}$ indexed by the primes which satisfy $M_p=N_p$ for all but finitely many primes $p$.
\end{theorem}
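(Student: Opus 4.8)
The plan is to write down an explicit candidate inverse to the localization map and verify that the two compositions are the identity. Given an admissible collection $(N_p)_p$, I would set
\[\widehat{N} := V \cap \bigcap_p N_p = \{x\in V : x\in N_p \text{ for all primes } p\},\]
each $N_p$ being regarded inside $V_p=V\otimes\Q_p$ and $V$ diagonally. Before anything else, the forward map $N\mapsto(N_p)_p$ must land in admissible collections: any two full $\Z$-lattices $M,N\subset V$ are commensurable, so $mM\subseteq N\subseteq m^{-1}M$ for some nonzero integer $m$, and then $M_p=N_p$ for every prime $p\nmid m$, leaving only finitely many local discrepancies.

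For the composition that starts and ends with a lattice $N$, fix a $\Z$-basis $e_1,\dots,e_n$ of $N$; it is also a $\Z_p$-basis of each $N_p$, so $x=\sum_i a_ie_i\in V$ lies in $N_p$ exactly when all $a_i\in\Z_p$. Because $\bigcap_p\Z_p=\Z$ inside $\Q$, lying in every $N_p$ forces all $a_i\in\Z$, i.e.\ $x\in N$; hence $\widehat{N}=N$, which in particular yields injectivity of the forward map.

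The real content is the reverse composition. Starting from an admissible $(N_p)_p$, let $S$ be the finite set of primes with $N_p\neq M_p$, choose $k_p\geq1$ with $p^{k_p}M_p\subseteq N_p\subseteq p^{-k_p}M_p$ for $p\in S$, and put $c=\prod_{p\in S}p^{k_p}$. A prime-by-prime check (using that $c\in\Z_q^{\times}$ for $q\notin S$) gives $cM\subseteq\widehat{N}\subseteq c^{-1}M$, so $\widehat{N}$ is a $\Z$-submodule of $V$ trapped between two full lattices, hence finitely generated of full rank and itself a $\Z$-lattice. What remains, and what I expect to be the crux, is the identity $(\widehat{N})_p=N_p$ at every prime $p$: the inclusion $(\widehat{N})_p\subseteq N_p$ is immediate, but the reverse requires recovering each prescribed local lattice from the single global intersection. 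To do this I would reduce modulo $cM$ and exploit the primary decomposition of the finite module $c^{-1}M/cM\cong\bigoplus_{p\in S}p^{-k_p}M_p/p^{k_p}M_p$. The image of $\widehat{N}$ in this quotient automatically splits along the coprime summands, and an element of the $p$-primary summand always has a representative lying in $c'M$ with $c'=c/p^{k_p}$, which is contained in $N_q$ for every $q\neq p$; thus at the $p$-primary summand the only surviving constraint is membership in $N_p$. Unwinding this decoupling shows that the $p$-part of $\widehat{N}$ realizes all of $N_p$, giving $N_p\subseteq(\widehat{N})_p$ and hence equality. The delicate point throughout is precisely this decoupling — ensuring that imposing the local condition at $p$ does not silently disturb the lattice at the other bad primes — which is exactly what the coprimality of the primary parts guarantees.
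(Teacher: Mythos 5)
The paper does not prove this statement at all --- it is imported verbatim from the literature (Theorem 9.1.1 of \cite{JV21}, established there via the local-global theory of lattices), so your proposal can only be judged against the standard argument, and it is essentially that argument: the explicit inverse $\widehat{N}=V\cap\bigcap_p N_p$, commensurability of full lattices to see the forward map lands in admissible collections, $\bigcap_p\Z_p=\Z$ applied coordinatewise to get $\widehat{N}=N$ (hence injectivity), the sandwich $cM\subseteq\widehat{N}\subseteq c^{-1}M$, and a CRT/primary-decomposition argument to recover each prescribed $N_p$. The skeleton is sound and the decoupling idea at the crux is the right one.

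One step is wrong as literally written, though it has an immediate repair. You claim every element of the $p$-primary summand of $c^{-1}M/cM\cong\bigoplus_{q\in S}q^{-k_q}M_q/q^{k_q}M_q$ has a representative in $c'M$ with $c'=c/p^{k_p}$. But the class of any $x\in c'M$ is killed by $p^{k_p}$ (since $p^{k_p}c'M=cM$), whereas the summand $p^{-k_p}M_p/p^{k_p}M_p$ has exponent $p^{2k_p}$; so whenever $N_p\not\subseteq M_p$, the classes $\bar{y}$ with $y\in N_p\setminus M_p$ have order exceeding $p^{k_p}$ and are unreachable from $c'M$ --- and these are exactly the classes you must realize. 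The fix: representatives of $p$-primary classes lie in $p^{-k_p}c'M$, and this module still has the property your argument actually needs, namely $p^{-k_p}c'M\subseteq N_q$ for every $q\neq p$ (the scalar $p^{-k_p}c'$ has $q$-valuation $k_q$ for $q\in S\setminus\{p\}$, so $p^{-k_p}c'M_q=q^{k_q}M_q=cM_q\subseteq N_q$, and valuation $0$ for $q\notin S$), while surjectivity of $p^{-k_p}c'M\to p^{-k_p}M_p/p^{k_p}M_p$ follows from the surjection $M\twoheadrightarrow M_p/p^{2k_p}M_p$ (density of $\Z$ in $\Z_p$). With this substitution your decoupling closes up cleanly: given $y\in N_p$, choose $x\in p^{-k_p}c'M$ with $x\equiv y\pmod{p^{k_p}M_p}$; then $x\in N_p$ (as $p^{k_p}M_p\subseteq N_p$), $x\in N_q$ for all $q\neq p$, so $x\in\widehat{N}$, and its class in the quotient is purely $p$-primary (its $q$-components land in $cM_q$) and equals $\bar{y}$, giving $N_p\subseteq(\widehat{N})_p+p^{k_p}M_p=(\widehat{N})_p$. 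One further small omission: you must also verify $(\widehat{N})_p=N_p$ at primes $p\notin S$, but that is instantaneous from your sandwich, since $c\in\Z_p^{\times}$ there forces $M_p\subseteq(\widehat{N})_p\subseteq M_p$.
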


An Eichler order $\Ord$ of $B$ is an order that is the intersection of two (uniquely determined) maximal orders. Over $F=\QQ_p$, if $B$ is division there is exactly one maximal order, hence this is the only Eichler order. Otherwise, $B=\Mat(2,\QQ_p)$, and there exist Eichler orders of levels $p^e$ for all $e\geq 0$. They are all conjugate, and we define the standard Eichler order of level $p^e$ to be
\[\lm{\ZZ_p}{\ZZ_p}{p^e\ZZ_p}{\ZZ_p}.\]
Following the local-global principle of orders, when $F=\QQ$, an order $\Ord$ is Eichler if and only if $\Ord_p$ is Eichler for all primes $p$.

Furthermore, if $B$ is indefinite, a consequence of strong approximation is that all Eichler orders of the same level are conjugate over $B^{\times}$.

\subsection{Normalizer of an Eichler order}
Take $B$ to be a quaternion algebra over $F=\QQ$ or $F=\QQ_p$.

\begin{definition}
	Let $\Ord$ be an order in $B$, and define the subgroup of $x\in B^{\times}$ for which $x\Ord x^{-1}=\Ord$ to be $N_{B^{\times}}(\Ord)$, the normalizer group of the order $\Ord$.
\end{definition}

Clearly $F^{\times}\Ord^{\times}\subseteq N_{B^{\times}}(\Ord)$. As we will see in Proposition \ref{prop:localstab}, this is a finite index subgroup.

\begin{lemma}\label{lem:eltstab}
	Let $x\in B-F$. Then the set $C_{B}(x):=\{v\in B: vx=xv\}$ is an $F-$algebra and a two dimensional $F-$vector space spanned by $1,x$. We call it the centralizing algebra of $x$.
\end{lemma}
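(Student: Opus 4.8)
The plan is to prove the two inequalities $\dim_F C_B(x) \geq 2$ and $\dim_F C_B(x) \leq 2$ separately, since the algebra structure and the lower bound are immediate, while the upper bound carries all of the content. First I would dispatch the easy assertions: the set $C_B(x)$ is an $F$-subspace, contains $1$, and is closed under multiplication (if $v,w$ commute with $x$ then so does $vw$), so it is an $F$-subalgebra. Because $F$ is the center of $B$ we have $F\cdot 1\subseteq C_B(x)$, and trivially $x\in C_B(x)$; since $x\notin F$, the elements $1$ and $x$ are $F$-linearly independent. Hence $F\oplus Fx\subseteq C_B(x)$ and $\dim_F C_B(x)\geq 2$, and everything reduces to the reverse bound $\dim_F C_B(x)\leq 2$.

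For the upper bound I would realize $C_B(x)$ as the kernel of the $F$-linear endomorphism $v\mapsto xv-vx$ of $B$, and exploit that the dimension of the kernel of a linear map is unchanged under extension of scalars to the algebraic closure $\overline{F}$. Since $B$ is a quaternion algebra, $B\otimes_F\overline{F}\cong\Mat(2,\overline{F})$, and because $x\notin F$ its image is a non-scalar $2\times 2$ matrix $M$. It is then standard that the centralizer of a non-scalar $2\times 2$ matrix is exactly $\overline{F}[M]$, of dimension $2$. This yields $\dim_F C_B(x)=2$, and combined with the lower bound shows $C_B(x)=F\oplus Fx$, as claimed.

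The main subtlety I anticipate is the degenerate case in which $x-\tfrac{1}{2}\trd(x)$ is nilpotent (which forces $B$ to be split), where $F[x]\cong F[t]/(t^2)$ is not semisimple. This is precisely the situation in which the slicker argument via the double-centralizer theorem for central simple algebras—which would give $\dim_F C_B(x)=\dim_F B/\dim_F F[x]=2$ directly—requires the subalgebra $F[x]$ to be separable and so does not apply verbatim. The base-change argument sidesteps this entirely, because even a nonzero nilpotent matrix in $\Mat(2,\overline{F})$ has a two-dimensional centralizer; the single reduction to $\Mat(2,\overline{F})$ therefore handles the split, ramified, separable, and nilpotent cases uniformly. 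I would favor the base-change route precisely to avoid separating out this degenerate case.
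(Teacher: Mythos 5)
Your proof is correct, and it takes a genuinely different route from the paper: the paper's entire proof of this lemma is a citation, ``this follows immediately from Proposition 7.7.8 of \cite{JV21},'' which is essentially the statement that the centralizer of $x\in B\setminus F$ is $F[x]$, proved once and for all in Voight's book. Your argument is self-contained: the easy inclusion $F\oplus Fx\subseteq C_B(x)$ (valid since $x\notin F$ makes $1,x$ independent), then the upper bound by viewing $C_B(x)$ as $\ker(v\mapsto xv-vx)$, extending scalars to $\overline{F}$ (kernel dimension is preserved because matrix rank is), using $B\otimes_F\overline{F}\simeq\Mat(2,\overline{F})$, checking that $x\otimes 1$ is non-scalar precisely because $x\notin F$, and invoking the standard fact that a non-scalar $2\times 2$ matrix $M$ has centralizer $\overline{F}[M]$ of dimension $2$ --- a fact that, as you note, holds even when $M-\tfrac{1}{2}\Tr(M)$ is nilpotent. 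Your closing remark is also accurate and worth having on record: the double-centralizer theorem for central simple algebras does not apply verbatim when the minimal polynomial of $x$ is $(t-a)^2$, since then $F[x]\cong F[t]/(t^2)$ is not semisimple, and the base-change computation sidesteps exactly this degenerate (split) case. What each approach buys: the paper's citation is shorter and keeps the exposition lean, while your argument is elementary, uniform across the split/division and separable/nilpotent cases, and makes the lemma independent of the reference.
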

\begin{proof}
	This follows immediately from Proposition 7.7.8 of \cite{JV21}.
\end{proof}

\begin{corollary}\label{cor:eltconj}
\begin{sloppypar}	Let $x_1,x_2\in B^{\times}-F^{\times}$ have the same separable minimal polynomial. Then the set $C_{B}(x_1,x_2):=\{v\in B: vx_1=x_2v\}$ is a two dimensional $F-$vector space.
\end{sloppypar}
\end{corollary}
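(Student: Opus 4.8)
The plan is to reduce the two-sided centralizer $C_B(x_1,x_2)$ to the ordinary centralizer $C_B(x_1)$ treated in Lemma \ref{lem:eltstab} by producing a single invertible conjugating element. First I observe that $C_B(x_1,x_2)$ is automatically an $F$-vector space, being the kernel of the $F$-linear endomorphism $v\mapsto vx_1-x_2v$ of $B$; the entire content of the statement is therefore just the computation of its dimension.

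The key input is the Skolem--Noether theorem. Since $x_1,x_2\in B^{\times}-F^{\times}$, each of them is not in $F$ yet satisfies its degree-two reduced characteristic polynomial, so the common minimal polynomial has degree exactly two, and the assignment $x_1\mapsto x_2$ extends to an isomorphism $F[x_1]\cong F[x_2]$ of separable quadratic $F$-algebras. Applying Skolem--Noether (in the form valid for quaternion algebras, e.g.\ as in \cite{JV21}) to these two embeddings of a separable $F$-algebra into the central simple algebra $B$ produces an element $u\in B^{\times}$ with $ux_1u^{-1}=x_2$, equivalently $ux_1=x_2u$. In particular $u\in C_B(x_1,x_2)$, so the set is nonzero.

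With $u$ in hand I claim $C_B(x_1,x_2)=u\cdot C_B(x_1)$. For one inclusion, if $w\in C_B(x_1)$ then $(uw)x_1=ux_1w=x_2(uw)$, so $uw\in C_B(x_1,x_2)$. Conversely, given $v\in C_B(x_1,x_2)$, set $w=u^{-1}v$; using $vx_1=x_2v$ and $u^{-1}x_2=x_1u^{-1}$ we get $wx_1=u^{-1}x_2v=x_1u^{-1}v=x_1w$, so $w\in C_B(x_1)$ and $v=uw$. Since left multiplication by the invertible element $u$ is an $F$-linear bijection of $B$, it carries the two-dimensional space $C_B(x_1)$ of Lemma \ref{lem:eltstab} isomorphically onto $C_B(x_1,x_2)$, which is therefore also two-dimensional.

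The only delicate point is the appeal to Skolem--Noether: one must check that the separability hypothesis is exactly what forces equal minimal polynomials to yield an $F$-algebra isomorphism $F[x_1]\cong F[x_2]$ (covering both the case where this algebra is a quadratic field and the split case $F\times F$), so that the conjugating unit $u$ genuinely exists. Once $u$ is produced, everything else is formal linear algebra.
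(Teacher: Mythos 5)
Your proposal is correct and is essentially the paper's own argument: both produce a conjugating unit from the equality of minimal polynomials (the paper cites Corollary 7.7.3 of \cite{JV21}, which is exactly the Skolem--Noether consequence you re-derive, including the split case), and then identify $C_B(x_1,x_2)$ with a translate of a single-element centralizer so that Lemma \ref{lem:eltstab} gives dimension two. The only cosmetic difference is that you write $C_B(x_1,x_2)=u\,C_B(x_1)$ while the paper uses the right translate $C_B(x_2)w$, which is the same computation.
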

\begin{proof}
	By Corollary 7.7.3 of \cite{JV21}, the equality of the minimal polynomials of $x_1,x_2$ implies that there exists a $w\in B^{\times}$ with $wx_1w^{-1}=x_2$. Thus
	\[v\in C_B(x_1,x_2)\Leftrightarrow vw^{-1} x_2= x_2 vw^{-1},\]
	so the corollary follows from Lemma \ref{lem:eltstab}.
\end{proof}

We now describe the normalizer groups of Eichler orders over $\QQ_p$.

\begin{proposition}\label{prop:localstab}
	Let $B$ be a quaternion algebra over $\QQ_p$ with Eichler order $\Ord$. If $B$ is division, we have
	\[N_{B^{\times}}(\Ord)=B^{\times}.\]
	Otherwise, write $B=\Mat(2,\QQ_p)$ and take $\Ord=\sm{\ZZ_p}{\ZZ_p}{p^e\ZZ_p}{\ZZ_p}$. Let $\omega:=\sm{0}{1}{-p^e}{0}$, and then
	\[N_{B^{\times}}(\Ord)=\QQ_p^{\times}\Ord^{\times}\langle\omega\rangle.\]
\end{proposition}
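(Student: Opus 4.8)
The plan is to handle the two cases separately. When $B$ is division, there is a unique maximal order (consisting of all integral elements), and conjugation by any $x\in B^{\times}$ is an algebra automorphism that preserves integrality, so it must send the unique maximal order to itself; hence $N_{B^{\times}}(\Ord)=B^{\times}$ immediately. The substance of the proposition lies in the split case $B=\Mat(2,\Q_p)$ with $\Ord=\sm{\Z_p}{\Z_p}{p^e\Z_p}{\Z_p}$, which I would attack by a direct computation combined with a counting/index argument.

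For the split case, I would first verify the inclusion $\Q_p^{\times}\Ord^{\times}\langle\omega\rangle\subseteq N_{B^{\times}}(\Ord)$. Central scalars obviously normalize $\Ord$, units of $\Ord$ normalize $\Ord$ by definition, and for $\omega=\sm{0}{1}{-p^e}{0}$ one checks by an explicit $2\times 2$ conjugation that $\omega\genmtx\omega^{-1}\in\Ord$ for a general $\genmtx\in\Ord$; one also notes $\omega^2=-p^e\Id\in\Q_p^{\times}$, so $\langle\omega\rangle$ together with the scalars is well-behaved. The harder inclusion is the reverse one. Here the plan is to take an arbitrary $g\in N_{B^{\times}}(\Ord)$, scale it by an element of $\Q_p^{\times}$ so that its entries are $p$-integral with at least one entry a unit, and then use the conjugation condition $g\Ord g^{-1}=\Ord$ to pin down the valuations of the entries of $g$. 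Concretely, conjugation must permute the two maximal orders whose intersection is $\Ord$ (equivalently, it acts on the two vertices of the corresponding edge in the Bruhat--Tits tree), so $g$ either fixes both vertices — forcing $g\in\Q_p^{\times}\Ord^{\times}$ — or swaps them, in which case $g\omega^{-1}$ fixes both vertices and hence lies in $\Q_p^{\times}\Ord^{\times}$, giving $g\in\Q_p^{\times}\Ord^{\times}\omega$.

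The main obstacle, and the step requiring the most care, is establishing that the stabilizer of the edge decomposes exactly as ``fix both endpoints'' versus ``swap them'' with no other possibilities, and that the fix-both-endpoints stabilizer is precisely $\Q_p^{\times}\Ord^{\times}$ rather than something larger. I would make this rigorous either through the Bruhat--Tits tree language (the normalizer of an Eichler order is the stabilizer of an edge, whose stabilizer in $\mathrm{PGL}_2(\Q_p)$ is generated by the pointwise edge-stabilizer and the element swapping the two vertices) or, if I prefer to stay elementary, by writing $g=\sm{\alpha}{\beta}{\gamma}{\delta}$ and directly solving the integrality constraints imposed by $g\Ord g^{-1}=\Ord$ and $g^{-1}\Ord g=\Ord$ on the valuations $v_p(\alpha),v_p(\beta),v_p(\gamma),v_p(\delta)$. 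The elementary route is more computational but self-contained, and either way the key point is that the two boundary cases of these valuation inequalities correspond exactly to the cosets $\Q_p^{\times}\Ord^{\times}$ and $\Q_p^{\times}\Ord^{\times}\omega$.
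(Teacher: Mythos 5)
Your proof is correct, but it takes a genuinely different route from the paper's: the paper disposes of the division case exactly as you do (conjugates of the unique maximal order are maximal, so it is preserved) and then simply cites Proposition 23.4.14 of \cite{JV21} for the split case, remarking only that $\omega$ has been adjusted to have positive norm. Your split-case argument is a self-contained substitute for that citation, and its skeleton is sound: writing $\Ord=\Ord_1\cap\Ord_2$ with $\Ord_1=\Mat(2,\Z_p)$ and $\Ord_2=\sm{\Z_p}{p^{-e}\Z_p}{p^e\Z_p}{\Z_p}$ the uniquely determined pair of maximal orders, any $g\in N_{B^{\times}}(\Ord)$ permutes $\{\Ord_1,\Ord_2\}$, and a direct computation shows $\omega\Ord\omega^{-1}=\Ord$ and $\omega\Ord_1\omega^{-1}=\Ord_2$ (with $\omega^2=-p^e$ central), reducing everything to the fix-both-vertices case. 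Two ingredients you flag as ``requiring care'' should be made explicit in a write-up. First, the vertex stabilizer $N_{B^{\times}}(\Mat(2,\Z_p))=\Q_p^{\times}\GL(2,\Z_p)$ follows cleanly from $\Mat(2,\Z_p)=\mathrm{End}_{\Z_p}(L)$ for $L=\Z_p^2$ together with $g\,\mathrm{End}(L)\,g^{-1}=\mathrm{End}(gL)$, equality holding if and only if $gL=\lambda L$ for some $\lambda\in\Q_p^{\times}$. Second, for the claim that fixing both vertices forces $g\in\Q_p^{\times}\Ord^{\times}$: scale so that $g\in\Ord_1^{\times}$, write $g=\lambda u$ with $u\in\Ord_2^{\times}$ and $\lambda\in\Q_p^{\times}$, and compare determinants to get $\lambda^2\in\Z_p^{\times}$, hence $g\in\Ord_1^{\times}\cap\Ord_2^{\times}=\Ord^{\times}$, since an element that is a unit of both orders lies, together with its inverse, in their intersection. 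The degenerate case $e=0$, where your two endpoints coincide and the ``swap'' coset collapses, is consistent because there $\omega\in\Ord^{\times}$. As for what each approach buys: the paper's citation keeps Section 2 brief, while your argument is self-contained and dovetails with the Bruhat--Tits tree language the paper introduces in Section \ref{sec:towersEichler} anyway, at the cost of roughly a page of routine verification that the paper chose to outsource.
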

\begin{proof}
	If $B$ is division, then $\Ord$ is the unique maximal order. Since conjugates of $\Ord$ are also maximal orders, it is stabilized under conjugation by all of $B^{\times}$. When $B$ is not division, this is Proposition 23.4.14 of \cite{JV21} (the definition of $\omega$ has been adjusted so that it has positive norm). 
\end{proof}

Note that if $B$ is not division and $\Ord$ is maximal, then $N_{B^{\times}}(\Ord)=\QQ_p^{\times}\Ord^{\times}$. Translating the above proposition into the global case yields the following proposition.

\begin{proposition}\label{prop:stab}
	Let $B$ be an indefinite quaternion algebra over $\QQ$ with discriminant $\mathfrak{D}$, and let $\Ord$ be an Eichler order of $B$ of level $\mathfrak{M}$. Then there exists a collection of elements $\{\omega_p: p\mid\mathfrak{D}\mathfrak{M}\infty\}$ with $\nrd(\omega_p)=p^{v_p(\mathfrak{D}\mathfrak{M})}$ for $p<\infty$ and $\nrd(\omega_{\infty})=-1$ for which
	\[\dfrac{N_{B^{\times}}(\Ord)}{\QQ^{\times}\Ord^1}=\langle\omega_p\rangle_{p\mid\mathfrak{D}\mathfrak{M}\infty}\simeq\prod_{p\mid\mathfrak{D}\mathfrak{M}\infty}\dfrac{\ZZ }{2\ZZ }.\]
\end{proposition}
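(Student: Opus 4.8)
The plan is to detect the quotient by a single homomorphism assembled from the local normalizers of Proposition \ref{prop:localstab}. For each finite $p\mid\mathfrak{D}\mathfrak{M}$ the local quotient $N_{B_p^\times}(\Ord_p)/\Q_p^\times\Ord_p^\times$ is isomorphic to $\Z/2\Z$ (it is a group since $\Q_p^\times\Ord_p^\times$ is normal in the normalizer): in the division case it is detected by $v_p(\nrd(\cdot))\bmod 2$, and in the split case it is generated by the class of $\omega_p=\sm{0}{1}{-p^{v_p(\mathfrak{M})}}{0}$, which satisfies $\omega_p^2\in\Q_p^\times$. Since $\alpha\in N_{B^\times}(\Ord)$ localizes to $\alpha_p\in N_{B_p^\times}(\Ord_p)$ for every $p$ by Theorem \ref{thm:JVlocalglobal}, and since this local quotient is trivial whenever $p\nmid\mathfrak{D}\mathfrak{M}$, these maps assemble, together with the sign of $\nrd$ at $\infty$, into a homomorphism
\[\Phi\colon N_{B^\times}(\Ord)\longrightarrow\prod_{p\mid\mathfrak{D}\mathfrak{M}\infty}\Z/2\Z.\]

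I would first identify $\ker\Phi$, which I expect to be the routine half. Both $\Q^\times$ and $\Ord_{N=1}^\times$ clearly lie in the kernel. Conversely, if $\alpha\in\ker\Phi$ then $\alpha_p\in\Q_p^\times\Ord_p^\times$ for every finite $p$ and $\nrd(\alpha)>0$; the first condition makes $v_p(\nrd(\alpha))$ even for all $p$, so with positivity $\nrd(\alpha)=c^2$ for some $c\in\Q^\times$. Rescaling by $c^{-1}\in\Q^\times$ I may assume $\nrd(\alpha)=1$, and then writing $\alpha_p=q_pu_p$ with $q_p\in\Q_p^\times$, $u_p\in\Ord_p^\times$ and comparing norms forces $q_p^2\in\Z_p^\times$, hence $q_p\in\Z_p^\times$ and $\alpha_p\in\Ord_p^\times$ at every finite $p$. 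By the local-global correspondence for lattices this yields $\alpha\in\Ord^\times$ with $\nrd(\alpha)=1$, i.e.\ $\alpha\in\Ord_{N=1}^\times$. Thus $\ker\Phi=\Q^\times\Ord_{N=1}^\times$, giving an injection of the quotient into $\prod_{p\mid\mathfrak{D}\mathfrak{M}\infty}\Z/2\Z$.

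It remains to prove $\Phi$ surjective and to exhibit the generators $\omega_p$, and this is the step I expect to be the main obstacle, since it is exactly where global input beyond the local picture is required. For the place $\infty$ I would invoke Eichler's norm theorem (a consequence of strong approximation, available because $B$ is indefinite): it gives $\nrd(\Ord^\times)=\{\pm1\}$, so there is a unit $\omega_\infty\in\Ord^\times$ of norm $-1$, which maps to the generator at $\infty$ and is trivial at every finite place. For each finite $p\mid\mathfrak{D}\mathfrak{M}$ I would take the two-sided $\Ord$-ideal $I_p$ with $\nrd(I_p)=p^{v_p(\mathfrak{D}\mathfrak{M})}$ (the Jacobson radical when $p\mid\mathfrak{D}$, the ideal generated by the local $\omega_p$ when $p\mid\mathfrak{M}$). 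The essential point is that $I_p$ is principal, say $I_p=\omega_p\Ord=\Ord\omega_p$: this is where I would use that an indefinite $B/\Q$ has trivial ideal class group by strong approximation, $\Q$ having narrow class number one. Any generator of a principal two-sided ideal normalizes $\Ord$, has $\nrd(\omega_p)=\pm p^{v_p(\mathfrak{D}\mathfrak{M})}$, and is a local unit away from $p$; multiplying by $\omega_\infty$ if necessary fixes the sign to $+$ and removes any $\infty$-component, so that $\Phi(\omega_p)$ is precisely the standard generator at $p$.

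With the $\omega_v$ in hand $\Phi$ is surjective, so the induced map $N_{B^\times}(\Ord)/\Q^\times\Ord_{N=1}^\times\to\prod_{p\mid\mathfrak{D}\mathfrak{M}\infty}\Z/2\Z$ is an isomorphism sending the classes of the $\omega_v$ to a basis; in particular these classes generate the quotient and each squares into $\ker\Phi=\Q^\times\Ord_{N=1}^\times$, which is the assertion. The only genuine inputs are Proposition \ref{prop:localstab}, the local-global principle, and the two strong-approximation facts, namely principality of two-sided ideals and the surjectivity of $\nrd\colon\Ord^\times\to\{\pm1\}$.
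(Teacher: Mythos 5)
Your proof is correct, and it is substantially more self-contained than the paper's, which is essentially a citation: the paper combines Proposition 18.5.3 and Equation 23.4.20 of \cite{JV21} with the class number one property of $\Ord$ to obtain the isomorphism $N_{B^{\times}}(\Ord)/\Q^{\times}\Ord^{\times}\simeq\prod_{p\mid\mathfrak{D}\mathfrak{M}}\Z/2\Z$ in one stroke, then adjusts generators locally via Proposition \ref{prop:localstab} to achieve the prescribed norms, and only at the end splits off the infinite place using $\Ord^{\times}=\Ord_{N=1}\cup\Ord_{N=-1}$ and $\Ord_{N=-1}=\omega_{\infty}\Ord_{N=1}$. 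You instead build $\infty$ into the homomorphism $\Phi$ from the outset and reprove the content of the cited results directly: your kernel computation (even norm valuations, rescale to norm $1$, conclude $\alpha_p\in\Ord_p^{\times}$ at every finite place, apply Theorem \ref{thm:JVlocalglobal}) and your surjectivity step (a locally defined two-sided ideal $I_p$, principal by class number one, whose generator normalizes $\Ord$ and is a local unit away from $p$, with Eichler's norm theorem supplying $\omega_{\infty}$) are precisely the mechanisms hidden inside Voight's statements. What your route buys is transparency about where strong approximation enters, namely twice --- triviality of the right ideal class set and the existence of a unit of reduced norm $-1$ --- at the cost of length; the paper's route buys brevity by deferring to references. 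One small point you assert without verification: when $p\mid\mathfrak{M}$ with $e=v_p(\mathfrak{M})\geq 1$ even, the nontriviality of the class of $\omega_p$ in $N_{B_p^{\times}}(\Ord_p)/\Q_p^{\times}\Ord_p^{\times}$ is not detected by norm valuations (as $\nrd(\omega_p)=p^e$ has even valuation) and needs the one-line check that $p^{-e/2}\omega_p\notin\Ord_p$, its upper-right entry being $p^{-e/2}\notin\Z_p$; this nontriviality is needed both for your component maps to land onto $\Z/2\Z$ and for $\Phi(\omega_p)$ to be the standard generator at $p$, but it is easily filled and does not affect correctness.
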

\begin{proof}
	By combining Proposition 18.5.3 and Equation 23.4.20 of \cite{JV21} with the fact that $\Ord$ has class number one, we get the isomorphism
	\[\dfrac{N_{B^{\times}}(\Ord)}{\QQ ^{\times}\Ord^{\times}}\simeq\prod_{p\mid\mathfrak{D}\mathfrak{M}}\dfrac{\ZZ}{2\ZZ}.\]
	By taking a set of generators and looking locally, we can use Proposition \ref{prop:localstab} to show that we can find an equivalent set of generators $\{\omega_p\}_{p\mid\mathfrak{D}\mathfrak{M}}$ which satisfy $\nrd(\omega_p)=p^{v_p(\mathfrak{D}\mathfrak{M})}$ for $p<\infty$. Finally, we can pull out the $\infty$ by using $\Ord^{\times}=\Ord^1\cup\Ord^{-1}$, and $\Ord^{-1}=\omega_\infty\Ord^1$ for any $\omega_{\infty}\in\Ord^{-1}$.
\end{proof}

\subsection{Towers of Eichler orders}\label{sec:towersEichler}
The Bruhat-Tits tree provides a combinatorial aspect to the theory of maximal/Eichler orders of $B=\Mat(2,\QQ_p)$. Vertices of the graph are maximal orders in $B$, and there exists an edge between $\Ord$ and $\Ord'$ if and only if $\Ord\cap\Ord'$ is an Eichler order of level $p$. A summary of the main facts of the graph (see Section 23.5 of \cite{JV21}) are:
\begin{itemize}
	\item The graph is connected and has no cycles, hence it is a tree (as the name implies);
	\item Every vertex has degree $p+1$;
	\item Let $\Ord_1,\Ord_2$ be maximal orders, and let $\Ord=\Ord_1\cap\Ord_2$ be the corresponding Eichler order of level $p^e$. Then $\Ord$ corresponds to the unique path between $\Ord_1$ and $\Ord_2$. This path has length $e$, and the vertices on the path are precisely the $e+1$ maximal orders which contain $\Ord$.
\end{itemize}

Focusing on one Eichler order $\Ord$ of level $p^e$, we define the ``inverted triangle'' of superorders of $\Ord$ as follows:
\begin{itemize}
	\item It is a graph consisting of all (necessarily Eichler) superorders $\Ord'\supseteq\Ord$ as vertices;
	\item The vertices are arranged into $e+1$ rows, where the $i^{\text{th}}$ row from the top (starting with row $0$) consists of the Eichler orders of level $p^i$ containing $\Ord$.
	\item There is an edge between orders $\Ord_1,\Ord_2$ if and only if one order contains the other and they are in adjacent rows.
\end{itemize}
It follows directly from the Bruhat-Tits tree that there are $e+1-i$ vertices in the $i^{\text{th}}$ row, and the graph can be drawn in the plane so that each vertex (besides those in row $0$) is connected to the two closest vertices in the row above it. An Eichler order is the intersection of the two orders it is connected to in the above row. As an example, the inverted triangle for an Eichler order of level $p^5$ is displayed in Figure \ref{fig:invertedtriangle}.

\begin{figure}[htbp]
	\centering
	\includegraphics{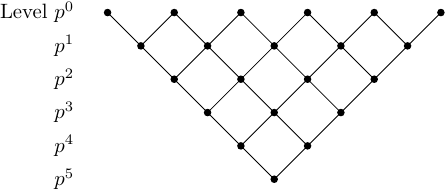}
	\caption{Inverted triangle of level $p^5$.}\label{fig:invertedtriangle}
\end{figure}

The inverted triangle of $\Ord$ allows one to count superorders of $\Ord$ of a specified level which do not contain certain given superorders (which is required in Section \ref{sec:localxwithlevel}). 

\begin{remark}
	The inverted triangle of $\Ord$ is essentially the same concept as branches of orders, as found in \cite{AC13} and \cite{AAC18}.
\end{remark}

\subsection{Optimal embeddings}\label{sec:optemb}
Let $B$ be a quaternion algebra over $F=\QQ$ or $F=\QQ_p$, and let $\Ord$ be an order in $B$. If $F=\QQ_p$, we call an embedding of $\qordD$ into $\Ord$ optimal if it does not extend to an embedding of $\qord{D/p^2}$ (which is automatic if $D/p^2$ is not a discriminant). In particular, if $F=\QQ$, an embedding $\phi$ into $\Ord$ is optimal if and only if the corresponding embeddings $\phi_p$ into $\Ord_p$ are optimal for all primes $p$. 

\begin{definition}
	For a discriminant $D$, define $p_D\in\{0,1\}$ to be the parity of $D$. Let the field discriminant of $\QQ(\sqrt{D})$ be $D^{\fund}$.
\end{definition}

Since $\qordD=\ZZ \left[\frac{p_D+\sqrt{D}}{2}\right]$, an embedding of $\qordD$ into $\Ord$ is equivalent to picking an element $x=\phi\left(\frac{p_D+\sqrt{D}}{2}\right)\in\Ord$ which has the same characteristic polynomial as $\frac{p_D+\sqrt{D}}{2}$, i.e. an element $x$ satisfying $x^2-p_Dx+\frac{p_D-D}{4}=0$.

In certain proofs, it will be useful to assume that an optimal embedding takes a certain form. Corollary \ref{cor:optembgoestoi} allows us to do this.

\begin{lemma}[Exercise 2.5 of \cite{JV21}]\label{lem:wlogxisi}
	Let $B$ be a quaternion algebra over a field $F$ of characteristic not equal to $2$, and assume $x\in B\backslash F$ satisfies $x^2=n\in F^{\times}$. Then there exists an $m\in F^{\times}$ and an isomorphism $\theta:B\rightarrow\left(\frac{n,m}{F}\right)$ satisfying $\theta(x)=i$.
\end{lemma}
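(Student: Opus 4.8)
The plan is to produce an element $y$ that anticommutes with $x$ and squares into $F^\times$, so that $\{1,x,y,xy\}$ becomes a standard quaternionic basis. Since $x^2=n$ lies in the center $F$, conjugation by $x$ defines an $F$-linear involution $\sigma\colon v\mapsto xvx^{-1}$ of $B$ with $\sigma^2=\mathrm{id}$. Because $\mathrm{char}\,F\neq 2$, this involution is diagonalizable and splits $B$ into its $\pm 1$ eigenspaces $B=B^+\oplus B^-$, where $B^+=C_B(x)$ is the centralizer of $x$ and $B^-=\{v:xv=-vx\}$ is the anticommuting part. By Lemma \ref{lem:eltstab}, $B^+$ is the two-dimensional space spanned by $1$ and $x$; since $B$ is four-dimensional, $B^-$ is also two-dimensional, and in particular nonzero.

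First I would show that every $y\in B^-$ is traceless. Using the standard involution together with $\trd(x)=0$ (which holds since $x^2=n$ forces the minimal polynomial $t^2-n$), one checks that $\bar y$ again anticommutes with $x$, so $\bar y\in B^-$; as $\trd(y)=y+\bar y$ lies in $F\subseteq B^+$ while also lying in $B^-$, and $B^+\cap B^-=0$, we conclude $\trd(y)=0$. For a traceless element this gives $y^2=-\nrd(y)\in F$, so every element of $B^-$ squares into $F$.

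The key point is then to arrange $y^2\neq 0$. For this I would use the reduced norm form and its associated bilinear pairing $\langle u,v\rangle=\trd(u\bar v)$, which is nondegenerate on $B$. A short computation, using $\trd(y)=0$ and $\trd(xy)=0$ for $y\in B^-$ (the latter following from symmetry of the trace and $xy=-yx$), shows $C_B(x)\perp B^-$. Since the norm form is nondegenerate on $B$ and restricts to the nondegenerate form $a^2-nb^2$ on $C_B(x)=F\oplus Fx$, the orthogonal complement $B^-$ must also carry a nondegenerate quadratic form; hence $\nrd$ is not identically zero on $B^-$, and we may choose $y\in B^-$ with $m:=y^2=-\nrd(y)\in F^\times$.

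Finally I would verify that $\{1,x,y,xy\}$ is an $F$-basis: $1,x$ span $C_B(x)$, while $y$ and $xy$ lie in $B^-$ and are linearly independent (otherwise $xy=\lambda y$ would force $x=\lambda\in F$). These generators satisfy $x^2=n$, $y^2=m$, and $xy=-yx$, which are exactly the defining relations of $\left(\frac{n,m}{F}\right)$. Thus the assignment $i\mapsto x$, $j\mapsto y$ extends to an $F$-algebra homomorphism from $\left(\frac{n,m}{F}\right)$ to $B$ carrying a basis to a basis, hence an isomorphism; its inverse $\theta$ gives the desired isomorphism with $\theta(x)=i$. The main obstacle is the nondegeneracy argument guaranteeing an anticommuting element with nonzero square; everything else is a routine eigenspace and basis computation.
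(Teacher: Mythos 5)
Your proof is correct, but it takes a different route from the paper's. The paper's proof is a two-line Gram--Schmidt argument: pick any $y$ that generates $B$ together with $x$ as an $F$-algebra, orthogonalize $y$ against $1$ and $x$ under the pairing $\langle u,v\rangle=\frac{1}{2}\trd(u\overline{v})$, and observe that the resulting orthogonality relations force $\trd(y)=0$ and $xy=-yx$, hence $y^2=m\in F^{\times}$. You instead diagonalize the inner involution $v\mapsto xvx^{-1}$ (using $\mathrm{char}\,F\neq 2$), identify the $+1$-eigenspace with $C_B(x)=F\oplus Fx$ via Lemma \ref{lem:eltstab}, and then locate an anisotropic vector in the $2$-dimensional anticommuting eigenspace $B^-$ by showing $B^-=C_B(x)^{\perp}$ (your dimension count makes this an equality, not just an inclusion) and invoking nondegeneracy of the norm form on an orthogonal complement. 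The two arguments rest on the same underlying fact --- orthogonality to $F[x]$ under the trace pairing is equivalent to tracelessness plus anticommutation with $x$ --- but your version is more self-contained on the one point the paper elides: the paper simply asserts $y^2=m\in F^{\times}$, whereas ruling out $y^2=0$ actually requires an argument (in the paper's setup it follows because an isotropic anticommuting $y$ would be orthogonal to all of $\langle 1,x,y,xy\rangle=B$, contradicting nondegeneracy; the hypothesis that $y$ generates $B$ with $x$ is what makes this work). Your nondegeneracy argument handles this cleanly and avoids choosing a generator at all, at the modest cost of the eigenspace machinery; the paper's version is shorter but leans on the reader to fill that gap.
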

\begin{proof}
	Consider the inner product defined as $\langle u, v\rangle=\frac{1}{2}\trd(u\overline{v})$. Pick any $y$ such that $B$ is generated as an $F-$algebra by $x,y$, and by applying the Gram-Schmidt orthogonalization process, we can assume that $0=\langle 1, y\rangle=\langle x, y\rangle$. This implies that $y^2=m\in F^{\times}$ and $xy=-yx$, whence we have the result.
\end{proof}

\begin{corollary}\label{cor:optembgoestoi}
	Let $\phi:\qordD\rightarrow\Ord$ be an (optimal) embedding into an order of the quaternion algebra $B$. Then there exists a quaternion algebra $B'$ with order $\Ord'$ and an isomorphism $\theta:B\rightarrow B'$ taking $\Ord$ to $\Ord'$ such that $\theta\circ\phi:\qordD\rightarrow\Ord'$ is an (optimal) embedding with $\theta\circ\phi(\sqrt{D})=i_{B'}$. In particular, given an (optimal) embedding, we can choose coordinates so that the image of $\sqrt{D}$ is $i$.
\end{corollary}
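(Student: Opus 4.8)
\emph{The plan} is to apply Lemma~\ref{lem:wlogxisi} directly to the element $x := \phi(\sqrt{D})$. Since $D$ is a discriminant it is a nonzero element of $F$, and $x^2 = \phi(\sqrt{D})^2 = \phi(D) = D \in F^\times$. Moreover $x \notin F$: were $x = c \in F$, we would have $c^2 = D$, contradicting that $D$ is not a square in $F$ (this is automatic over $\Q$, where $\qordD$ lies in the genuine quadratic field $\Q(\sqrt{D})$, and it is the relevant case for the local applications, where the embedded field is non-split). Thus $x$ satisfies the hypotheses of Lemma~\ref{lem:wlogxisi} with $n = D$, which furnishes an $m \in F^\times$ together with an $F$-algebra isomorphism $\theta\colon B \to B' := \left(\frac{D,m}{F}\right)$ satisfying $\theta(x) = i_{B'}$.

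Next I would transport the order. Setting $\Ord' := \theta(\Ord)$, the map $\theta$ is simultaneously an $F$-vector space isomorphism and a ring isomorphism, so it carries $\Ord$---a full lattice that is also a subring---to a subring of $B'$ that is again a full lattice; hence $\Ord'$ is an order and $\theta$ restricts to an isomorphism $\Ord \xrightarrow{\ \sim\ } \Ord'$. The composite $\theta \circ \phi\colon \qordD \to \Ord'$ is then an injective ring homomorphism, i.e.\ an embedding, and by construction $\theta \circ \phi(\sqrt{D}) = \theta(x) = i_{B'}$. This is exactly the embedding asserted in the statement, and adopting $B', \Ord'$ as the new coordinates proves the final sentence.

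It remains to justify the parenthetical ``(optimal)'', i.e.\ that optimality passes through $\theta$. Here I would use that $\theta$ is an isomorphism of the pair $(B, \Ord)$ onto $(B', \Ord')$ and that optimality is intrinsic to such a pair: any extension of $\phi$ to an embedding of a larger quadratic order $\mathcal{O}' \supsetneq \qordD$ into $\Ord$ yields, upon composing with $\theta$, an extension of $\theta \circ \phi$ into $\Ord'$, and running the same argument through $\theta^{-1}$ gives the converse. Hence $\phi$ fails to extend precisely when $\theta \circ \phi$ does, so the two embeddings are optimal together. I expect no genuine obstacle in this corollary---the substance is entirely contained in Lemma~\ref{lem:wlogxisi}---so the only points demanding care are recording why $x \notin F$ and checking this transport of optimality, both of which are routine.
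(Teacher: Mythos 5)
Your proposal is correct and follows exactly the paper's own (two-line) proof: apply Lemma~\ref{lem:wlogxisi} to $x=\phi(\sqrt{D})$, set $\Ord'=\theta(\Ord)$, and observe that optimality is preserved since $\theta$ is an isomorphism of the pair $(B,\Ord)$ onto $(B',\Ord')$. The extra checks you record (that $x\notin F$ and that optimality transports) are exactly the routine details the paper leaves implicit; if you want the $x\notin F$ step to be uniform even in a split local setting, note it also follows directly from injectivity of $\phi$, since $\phi(\sqrt{D})\in F$ would force the image of the rank-two ring $\qordD$ into the rank-one ring $\mathcal{O}_F$.
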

\begin{proof}
	Take $x=\phi(\sqrt{D})$ in Lemma \ref{lem:wlogxisi}, and consider the corresponding map $\theta$. Let $\Ord'=\theta(\Ord)$, and then $\Ord'$ is an isomorphic order for which $\theta\circ\phi$ is an (optimal) embedding into.
\end{proof}

We would like to count the set $\Emb(\Ord, D)$, and Chapter 30 of \cite{JV21} provides an excellent exposition of this in a more general context. We now restate the relevant results in our setting, and expand upon the notion of equivalence classes of the localized embeddings (which we refer to as orientation). If $\Ord$ is an Eichler order in a quaternion algebra $B$ over $\QQ_p$ or $\RR$ ($\Ord=B$ if over $\RR$), define $\Emb(\Ord, D)$ analogously to over $\QQ$ (Section \ref{sec:introopt}).

\begin{proposition}\label{prop:countembD}
	Let $D$ be a discriminant, let $B$ an indefinite quaternion algebra over $\QQ$, and let $\Ord$ an Eichler order. Let $h^+(D)$ denote the narrow class number of discriminant $D$. Then,
	\[\vert\Emb(\Ord, D)\vert=h^+(D)\prod_{v}\vert\Emb(\Ord_v, D)\vert,\]
	where the product is over all places of $\QQ$.
\end{proposition}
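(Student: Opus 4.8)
The plan is to deduce this from the general adelic theory of optimal embedding numbers (Eichler's theory, as developed in Chapter 30 of \cite{JV21}), specializing to the indefinite-over-$\Q$ situation and to the particular equivalence relation (conjugation by $\Ord_{N=1}^{\times}$) used here. The two features that collapse the general machinery to the stated clean formula are: (i) an indefinite quaternion algebra over $\Q$ satisfies the Eichler condition, so by strong approximation $\Ord$ has class number one; and (ii) the archimedean/norm bookkeeping turns the class group of $\qordD$ into the \emph{narrow} class group, of order $h^+(D)$.

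First I would set up the count adelically. Using the local-global bijection for lattices (Theorem \ref{thm:JVlocalglobal}) together with the already-noted fact that optimality is a purely local condition (an embedding into $\Ord$ is optimal iff each $\phi_p$ into $\Ord_p$ is optimal), a global optimal embedding is the same data as a compatible family of local optimal embeddings. Concretely, fixing a base embedding, the set of global optimal embeddings that are everywhere locally conjugate to it, taken up to global $\Ord_{N=1}^{\times}$-conjugacy, is computed by a double coset of the shape $K^{\times}\backslash\hat{K}^{\times}/(\text{stabilizer})$, where $K=\Q(\sqrt{D})$; this is where a class group of $\qordD$ enters. Running over the finitely many local conjugacy types then produces the product $\prod_v|\Emb(\Ord_v,D)|$ of local embedding numbers.

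Next I would invoke strong approximation. Because $B$ is indefinite, $B_{\infty}^{\times}$ is noncompact and the Eichler condition holds, so the class set of $\Ord$ is trivial; this class-number-one fact is exactly the input used in Proposition \ref{prop:stab}. This is what makes the global count a single product over places rather than a sum over a class set of orders: the fibers of the localization map from global conjugacy classes to adelic conjugacy classes form torsors under a single class group of $\qordD$, and there is only one order class to account for.

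The heart of the matter, and the step I expect to be the main obstacle, is identifying that class group precisely as the narrow class group $\Cl^+(D)$ rather than the ordinary class group. The reduced norm sends $\Ord^{\times}$ onto $\{\pm 1\}$ with kernel $\Ord_{N=1}^{\times}$; since we conjugate only by norm-$1$ units, the archimedean component is constrained to positive reduced norm (equivalently, via $\iota$, to the orientation-preserving group corresponding to $\SL(2,\R)$ rather than the full $\GL$-normalizer), which is exactly the totally-positive condition cutting out narrow ideal classes in the real quadratic order $\qordD$. Carefully matching the sign at the infinite place --- the definiteness/orientation data of Section \ref{sec:introopt} --- with the totally-positive condition, and confirming that this holds uniformly for non-fundamental $D$ (using optimality at the ramified and level primes) and for non-maximal Eichler $\Ord$, is the delicate bookkeeping that yields $h^+(D)$ in place of $h(D)$ and completes the formula.
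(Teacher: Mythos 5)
Your proposal is correct and follows essentially the same route as the paper: the paper's proof is simply the observation that every Eichler order over $\Q$ has class number one (strong approximation, since $B$ is indefinite) together with a citation of Theorem 30.7.3 of \cite{JV21}, and your adelic sketch --- locality of optimality, collapse of the class set to one order, and fibers of the localization map forming torsors under $\Cl^+(D)$ because conjugation is restricted to norm-one units --- is precisely the content of that cited theorem specialized to this setting. In particular, your identification of the \emph{narrow} class group via the sign constraint at the archimedean place is exactly the bookkeeping that the reference (and Sections 4.4--4.5 of \cite{JRthe}) carries out.
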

\begin{proof}
	The class number of any Eichler order over $\QQ$ is one, and the result then follows from Theorem 30.7.3 of \cite{JV21} and the surrounding results. See also Sections 4.4 and 4.5 of \cite{JRthe} for an alternate presentation.
\end{proof}

In particular, $\vert\Emb(\Ord, D)\vert$ is $h^+(D)$ up to local factors. The local factors are as follows.

\begin{proposition}\label{prop:countlocalfactors}
	Let $D$ be a discriminant, and let $B$ a quaternion algebra over $\RR$ or $\QQ_p$.
	\begin{enumerate}[label=(\roman*)]
		\item If $B=\Ord=\Mat(2,\RR)$, then 
		\[\vert\Emb(\Ord,D)\vert=1+\mathds{1}_{D<0}.\]
		\item If $B$ is division over $\QQ_p$ with maximal order $\Ord$, then
		\[\vert\Emb(\Ord,D)\vert=\begin{cases}
			0 & \text{if }p^2\mid\frac{D}{D^{\fund}};\\
			1-\left(\dfrac{D}{p}\right) & \text{else}.
		\end{cases}\]
		\item If $B=\Mat(2,\QQ_p)$, $\Ord$ is an Eichler order of level $p^e$, and $\gcd(p^e, D)=1$, then
		\[\vert\Emb(\Ord,D)\vert=\begin{cases}
			1 & \text{if }e=0;\\
			1+\left(\dfrac{D}{p}\right) & \text{if }e>0.
		\end{cases}\]
	\end{enumerate}
\end{proposition}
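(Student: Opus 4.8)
The plan is to treat the three cases separately, in each reducing the count to a concrete orbit problem via Corollary \ref{cor:optembgoestoi}, which normalizes $\phi(\sqrt D)$. Recall throughout that an embedding of $\qordD\otimes\Z_v$ amounts to a choice of integral $x$ with the characteristic polynomial of $\frac{p_D+\sqrt D}{2}$, that optimality means the pullback of $\Ord$ is exactly $\qordD\otimes\Z_v$, and that equivalence is conjugation by norm-one units (following Section \ref{sec:introopt}), with the orientation bookkeeping kept consistent with the $h^+(D)$ appearing in Proposition \ref{prop:countembD}. For (i), with $B=\Ord=\Mat(2,\R)$, an embedding is determined by $J=\phi(\sqrt D)$, which has trace $0$ and determinant $-D$, hence characteristic polynomial $X^2-D$. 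When $D>0$ the eigenvalues $\pm\sqrt D$ are real and distinct, the centralizer $\R[J]^\times\cong\R^\times\times\R^\times$ has determinant map surjective onto $\R^\times$, so the single $\GL(2,\R)$-class is one $\SL(2,\R)$-class, giving $1$ embedding; when $D<0$ the centralizer $\R[J]^\times\cong\C^\times$ has reduced norm landing in $\R_{>0}$, so the $\GL(2,\R)$-class splits into two $\SL(2,\R)$-classes, distinguished by the sign of the $(2,1)$-entry (i.e.\ by definiteness). This yields $1+\mathds{1}_{D<0}$.

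For (ii), with $B$ division over $\Q_p$ and $\Ord$ the unique maximal order (the integral elements), every embedding of $\qordD\otimes\Z_p$ lands in $\Ord$ automatically, and is optimal precisely when $\qordD\otimes\Z_p$ is already the maximal order of $K_p:=\Q_p(\sqrt D)$; this fails exactly when $p^2\mid D/D^{\fund}$, giving the first branch ($0$ embeddings). In the remaining cases an embedding exists iff $K_p$ is a field, and by Skolem--Noether all embeddings of $K_p$ form one $B^\times$-orbit with stabilizer $K_p^\times$, so the number of classes is $|\Ord^\times\backslash B^\times/K_p^\times|$. Setting $w:=v_p\circ\nrd$, which identifies $B^\times/\Ord^\times\cong\Z$ with $w(\Q_p^\times\Ord^\times)=2\Z$, the count becomes $|\Z/w(K_p^\times)|$: when $K_p/\Q_p$ is unramified ($(D/p)=-1$) one has $w(K_p^\times)=2\Z$, hence $2$ classes; when ramified ($(D/p)=0$) one has $w(K_p^\times)=\Z$, hence $1$ class; when split ($(D/p)=1$) there is no field embedding, hence $0$. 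All three agree with $1-(D/p)$.

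For (iii), with $B=\Mat(2,\Q_p)$, $\Ord$ Eichler of level $p^e$, and $p\nmid D$, the order $\qordD\otimes\Z_p$ is already maximal, so optimality is governed by the position of the associated torus on the Bruhat--Tits tree of Section \ref{sec:towersEichler}. When $e=0$ ($\Ord=\Mat(2,\Z_p)$, a vertex) a maximal quadratic order embeds uniquely up to conjugacy, giving $1$. When $e>0$, $\Ord$ corresponds to a path of $e$ edges, and the embedding is optimal exactly when the torus fixes this path: a split torus ($(D/p)=1$) fixes an apartment (a bi-infinite line), and the two orderings of the idempotents (reflections of the line) give $2$ distinct classes for $e>0$, whereas a non-split unramified torus ($(D/p)=-1$) fixes a single vertex and no positive-length path, giving $0$. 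Together this is $1+(D/p)$.

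The main obstacle will be pinning down the factors of two consistently across the real, ramified, and Eichler cases, so that the local counts match the convention under which Proposition \ref{prop:countembD} holds with the narrow class number $h^+(D)$. The delicate points are verifying that the centralizer determinant in (i), the reduced-norm valuation $w|_{K_p^\times}$ in (ii), and the path-fixing analysis in (iii) each control exactly how a single Skolem--Noether orbit breaks into norm-one conjugacy classes; the existence statements and the optimality criteria ($p^2\nmid D/D^{\fund}$, resp.\ $p\nmid D$) are comparatively routine.
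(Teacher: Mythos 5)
Your route is genuinely different from the paper's: the paper disposes of all three parts by citation, invoking Propositions 30.5.3 and 30.6.12 of \cite{JV21} and adding only the observation that in the division case any embedding of a non-maximal local quadratic order extends (whence the $0$ branch of (ii)). You instead reprove the local counts from scratch: (i) by splitting the single Skolem--Noether orbit according to the image of the determinant on the centralizer, (ii) via the valuation $w=v_p\circ\nrd$ and the double coset space $\Ord^{\times}\backslash B^{\times}/K_p^{\times}$, and (iii) via fixed-point sets on the Bruhat--Tits tree of Section \ref{sec:towersEichler}. This is a legitimate self-contained alternative (it is essentially the proof of Voight's propositions) and has the merit of making visible where each factor of $2$ comes from.

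Two points, however, need repair. First, the unit-group convention in (ii) is not a piece of bookkeeping you can defer: your preamble declares equivalence by norm-one units, but your double coset is taken by $\Ord^{\times}$, and in the ramified case these genuinely differ. Since an element of the division algebra with $\nrd=1$ automatically lies in $\Ord$, the norm-one classes of embeddings are in bijection with $\Q_p^{\times}/\Nm(K_p^{\times})$, which has order $2$ for \emph{every} quadratic field $K_p$; so under your stated convention the ramified count would be $2$, contradicting the claimed value $1-\left(\frac{D}{p}\right)=1$. The counts in the proposition (and in Voight's cited results, and what is required for Proposition \ref{prop:countembD} and the orientation formalism of Section \ref{sec:optemb} to be consistent) are taken modulo the full local unit group $\Ord_p^{\times}$; once you say this explicitly, your $\Ord^{\times}$-computation in (ii) is exactly correct, but as written the proposal proves a statement that is false under its own declared equivalence. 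Second, in (iii) the hypothesis $\gcd(p^e,D)=1$ is vacuous when $e=0$, so the $e=0$ claim covers all discriminants, including those with $p\mid\frac{D}{D^{\fund}}$, for which $\qordD\otimes\Z_p$ is \emph{not} maximal; your argument (``a maximal quadratic order embeds uniquely up to conjugacy'') addresses only the maximal case. To close this you need the standard fact that every quadratic $\Z_p$-order admits exactly one optimal embedding class into $\Mat(2,\Z_p)$ modulo units --- precisely the content of Proposition 30.5.3 of \cite{JV21}, which the paper cites for this purpose. With those two fixes the proposal stands.
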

\begin{proof}
	The first part follows easily from the Skolem-Noether theorem. The second part follows from Proposition 30.5.3 of \cite{JV21} in the case of $p^2\nmid\frac{D}{D^{\fund}}$. Otherwise, since $\Ord$ is the set of all integral elements in $B$, any embedding of $\qordD$ extends to an embedding of $\qord{D^{\fund}}$. The third part follows from Propositions 30.5.3 and 30.6.12 of \cite{JV21}.
\end{proof}

The above proposition omits the case of $B=\Mat(2,\QQ_p)$, $\Ord$ is an Eichler order of level $p^e$ with $e>0$ and $p\mid D$. This case is much more complicated, and its description will not be of use to us. If desired, see Lemma 30.6.17 of \cite{JV21} for the details.

\begin{definition}
	Assume $B$ is indefinite over $\QQ$, and let $\phi$ be an optimal embedding into an Eichler order $\Ord$. For all places $v$, let $o_v(\phi)$ denote the local equivalence class of $\phi_v$. The orientation of $\phi$ is defined to be
	\[o(\phi):=(o_v(\phi))_v:\text{$v$ is a place},\]
	the set of equivalence classes of the corresponding local embeddings.
	
	If $\gcd(D,\mathfrak{M})=1$, then all local embedding equivalence classes have size either $1$ or $2$. In particular, write $o_v(\phi)=0$ or $o_v(\phi)=\pm 1$ for the one or two local equivalence classes (this is non-canonical and depends on an initial choice when there are two local classes). 
\end{definition}

\begin{definition}
	For each orientation $o$ of an optimal embedding of $\qordD$ into $\Ord$, we denote by $\Emb_o(\Ord,D)$ the equivalence classes of optimal embeddings with orientation $o$.
\end{definition}

Note that we can restrict the orientation to places $p\mid\mathfrak{D}\mathfrak{M}\infty$, since Proposition \ref{prop:countlocalfactors} implies that there is one local orientation at all other places. At those places, it will be useful to have a more explicit way to determine orientation.

\begin{lemma}\label{lem:sameorlocalconditions}
	Let $B$ be a quaternion algebra over $\QQ_p$ with Eichler order $\Ord$ of level $\mathfrak{M}$, let $D$ be a discriminant, and let $\phi:\qordD\rightarrow\Ord$ be an optimal embedding.
	\begin{enumerate}[label=(\roman*)]
		\item If $B$ is division, let $\mathfrak{p}$ be the maximal ideal of $\Ord$. Then the orientation of $\phi$ is determined by $\phi\left(\frac{p_D+\sqrt{D}}{2}\right)\pmod{\mathfrak{p}}$.
		\item If $B=\Mat(2, \QQ_p)$, $\Ord$ is the standard Eichler order of level $p^e$ with $e>0$, and $p\nmid D$, then the orientation of $\phi$ is determined by $\phi\left(\frac{p_D+\sqrt{D}}{2}\right)_{1,1}\pmod{p^e}$.
	\end{enumerate}
\end{lemma}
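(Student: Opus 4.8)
The plan is to show, in each case, that the stated residue datum is first a well-defined function on orientations (i.e. invariant under the conjugation defining local equivalence) and then a \emph{complete} invariant (separating the distinct orientations). For the second step I would use that Proposition \ref{prop:countlocalfactors} pins the number of orientations down to $0$, $1$, or $2$, together with the normalizer $N_{B^\times}(\Ord)$ of Proposition \ref{prop:localstab} to realize the nontrivial permutation of orientations when there are two. Throughout write $x=\phi\left(\frac{p_D+\sqrt{D}}{2}\right)$, so that $x$ satisfies $x^2-p_Dx+\frac{p_D-D}{4}=0$, a polynomial of discriminant $D$.

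For part (i), the maximal order $\Ord$ of the division algebra has $\Ord/\mathfrak{p}\cong\mathbb{F}_{p^2}$, a commutative field. Since local equivalence is conjugation by units of $\Ord$, and any such unit reduces into the commutative quotient, conjugation acts trivially modulo $\mathfrak{p}$; hence $\bar{x}:=x\bmod\mathfrak{p}$ depends only on the orientation. The reduction $\bar{x}$ is a root of $t^2-p_Dt+\frac{p_D-D}{4}$ in $\mathbb{F}_{p^2}$, and when $\left(\frac{D}{p}\right)=-1$ this polynomial has exactly two (Galois-conjugate) roots, matching the two orientations counted by Proposition \ref{prop:countlocalfactors}(ii) (when $p\mid D$ there is a single root and a single orientation, so the claim is trivial). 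To prove completeness in the two-orientation case, I would conjugate by a uniformizer $\pi$ of $B$: this normalizes $\Ord$, lies outside $\Q_p^\times\Ord^\times$, and induces the Frobenius $t\mapsto t^p$ on $\mathbb{F}_{p^2}$, sending $\bar{x}$ to the other root. Thus the two orientations take the two distinct values $\bar{x},\bar{x}^p$, so $\bar{x}$ determines the orientation.

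For part (ii), with $\Ord=\sm{\Z_p}{\Z_p}{p^e\Z_p}{\Z_p}$, every unit of $\Ord$ reduces modulo $p^e$ to an upper-triangular matrix with unit diagonal, and a direct two-by-two computation shows that conjugation within the upper-triangular group preserves the $(1,1)$-entry. Since $x_{21}\in p^e\Z_p$, the reduction $x_{11}\bmod p^e$ is then a well-defined invariant of the orientation, and it is a root of $t^2-p_Dt+\frac{p_D-D}{4}$ modulo $p^e$. As $p\nmid D$, Hensel's lemma gives exactly two simple roots modulo $p^e$ when $\left(\frac{D}{p}\right)=1$, matching the two orientations from Proposition \ref{prop:countlocalfactors}(iii); conjugation by $\omega=\sm{0}{1}{-p^e}{0}$, which lies in $N_{B^\times}(\Ord)$ but outside $\Q_p^\times\Ord^\times$ by Proposition \ref{prop:localstab}, carries $x_{11}$ to $x_{22}$, the other root. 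Hence the invariant again separates the two orientations.

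The residue computations (that conjugation fixes the diagonal entry, respectively the reduction modulo $\mathfrak{p}$, and the explicit action of $\omega$) are routine; I expect the crux to be the completeness argument, namely matching the orientation count from Proposition \ref{prop:countlocalfactors} with the two roots of the characteristic polynomial and verifying that the normalizer realizes the swap. The one external input for part (i) is the standard structure theory of the local division quaternion algebra—that conjugation by a uniformizer induces Frobenius on the residue field $\mathbb{F}_{p^2}$—which I would cite from \cite{JV21}.
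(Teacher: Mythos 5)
Your proposal is correct and shares the paper's overall skeleton: show the residue datum is constant on equivalence classes, then combine the count of exactly two classes from Proposition \ref{prop:countlocalfactors} with one exhibited embedding realizing the other residue value to conclude the invariant is complete. The invariance computations (commutativity of $\Ord/\mathfrak{p}\simeq\mathbb{F}_{p^2}$ in the division case; the upper-triangular conjugation fixing the $(1,1)$-entry modulo $p^e$ in the split case) are exactly the paper's. Where you genuinely diverge is the device producing the second value: the paper uses the conjugate embedding $\overline{\phi}$, defined by $\overline{\phi}(x):=\overline{\phi(x)}$, whose image of $\frac{p_D+\sqrt{D}}{2}$ has residue $p_D-\bar{x}$ (respectively $(1,1)$-entry $p_D-a$), distinct from $\bar{x}$ precisely because $p\nmid D$ --- this needs no structure theory and no normalizer input. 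You instead conjugate by normalizer elements: a uniformizer $\pi$ in the division case, invoking the extra fact from \cite{JV21} that $\pi$-conjugation induces Frobenius on $\mathbb{F}_{p^2}$, and $\omega=\sm{0}{1}{-p^e}{0}$ in the split case. Since the quadratic has only two roots, $\phi^{\pi}$ (resp.\ $\phi^{\omega}$) carries the same residue data as $\overline{\phi}$, so the two routes coincide in effect; yours essentially front-loads the computation the paper performs later in Proposition \ref{prop:conjorient} (whose proof cites this lemma, so your doing the $\omega$-computation directly avoids any circularity). Two minor remarks: the observations that $\pi,\omega\notin\Q_p^{\times}\Ord^{\times}$ are not actually needed, since inequivalence of $\phi$ and its conjugate already follows from the distinct residue values once invariance is established; and the Hensel count of exactly two roots modulo $p^e$, while correct (including at $p=2$, where $D\equiv 1\pmod{8}$ makes the reduction split with simple roots), is likewise optional --- all that is required is that $a$ and $p_D-a$ are distinct modulo $p^e$, which follows from $(2a-p_D)^2\equiv D\pmod{p^e}$ and $p\nmid D$, the same computation the paper uses.
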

\begin{proof}
	If $B$ is division, then $\Ord/\mathfrak{p}\simeq\mathbb{F}_{p^2}$ is commutative. Thus, equivalent embeddings give the same value of $\phi\left(\frac{p_D+\sqrt{D}}{2}\right)\pmod{\mathfrak{p}}$. If $p\mid D$ we are done, and otherwise, note that $\overline{\phi}$ (defined by $\overline{\phi}(x):=\overline{\phi(x)}$) is an optimal embedding with 
	\[\overline{\phi}\left(\dfrac{p_D+\sqrt{D}}{2}\right)\not\equiv\phi\left(\dfrac{p_D+\sqrt{D}}{2}\right)\pmod{\mathfrak{p}},\]
	since this is equivalent to $\phi(\sqrt{D})\not\equiv 0\pmod{\mathfrak{p}}$. As there are two equivalence classes of optimal embeddings, it follows that the class is determined by $\phi\left(\frac{p_D+\sqrt{D}}{2}\right)\pmod{\mathfrak{p}}$.
	
	If $B=\Mat(2, \QQ_p)$, then a direct computation shows that $\phi\left(\frac{p_D+\sqrt{D}}{2}\right)_{1,1}\equiv\phi^u\left(\frac{p_D+\sqrt{D}}{2}\right)_{1,1}\pmod{p^e}$ for $u\in\Ord^1$ (see Equations \eqref{eqn:conjugateembor} and \eqref{eqn:conjugateembor2} for this computation). As in the previous case, $\overline{\phi}$ is an optimal embedding with
	\[\overline{\phi}\left(\dfrac{p_D+\sqrt{D}}{2}\right)\not\equiv\phi\left(\dfrac{p_D+\sqrt{D}}{2}\right)\pmod{p^e},\]
	since $p\nmid D$. As there are two equivalence classes of optimal embeddings, it follows that the class is determined by $\phi\left(\frac{p_D+\sqrt{D}}{2}\right)_{1,1}\pmod{p^e}$.
\end{proof}

For $p\mid\mathfrak{D}\mathfrak{M}\infty$, we can use the elements $\omega_p\in N_B^{\times}(\Ord)$ as described in Proposition \ref{prop:stab} to pass between orientations.

\begin{proposition}\label{prop:conjorient}
	Let $B$ be an indefinite quaternion algebra over $\QQ$ of discriminant $\mathfrak{D}$ with Eichler order $\Ord$ of level $\mathfrak{M}$, and let $\phi:\qordD\rightarrow\Ord$ be an optimal embedding. Then we have
	\begin{itemize}
		\item $o_{v}(\phi^{\omega_p})=o_{v}(\phi)$ for all places $v\neq p$;
		\item $o_{p}(\phi^{\omega_p})=-o_{p}(\phi)$ if $p\nmid\gcd(D,\mathfrak{M})$.
	\end{itemize}
	In other words, the optimal embedding $\phi^{\omega_p}$ only swaps orientation at $p$.
\end{proposition}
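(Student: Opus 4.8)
The plan is to reduce everything to a place-by-place analysis, since orientation is by definition a local invariant and conjugation acts locally: for $g\in N_{B^\times}(\Ord)$ the embedding $\phi^g=g\phi g^{-1}$ is again an optimal embedding into $\Ord$, and its orientation at $v$ is the local class of $g_v\phi_v g_v^{-1}$. The key structural input is the construction of $\omega_p$ in Proposition \ref{prop:stab}: the isomorphism there is the product of the local normalizer quotients of Proposition \ref{prop:localstab}, and $\omega_p$ is chosen to be the generator of the $p$-factor. Hence $\omega_{p,v}\in\Q_v^\times\Ord_v^\times$ for every place $v\neq p$, while $\omega_{p,p}$ represents the nontrivial class of $N_{B_p^\times}(\Ord_p)/\Q_p^\times\Ord_p^\times$; moreover $\nrd(\omega_p)=p^{v_p(\mathfrak{D}\mathfrak{M})}>0$, so $\omega_{p,\infty}$ has positive determinant. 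The two bullet points then become: (i) conjugation by $\Q_v^\times\Ord_v^\times$ fixes $o_v$, and (ii) conjugation by the local generator at $p$ negates $o_p$ when $p\nmid\gcd(D,\mathfrak{M})$.

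For (i), conjugation by the central factor $\Q_v^\times$ is trivial, so it suffices to treat $\Ord_v^\times$ (and, at $\infty$, the positive-determinant elements). When $B_v$ is division this is immediate from Lemma \ref{lem:sameorlocalconditions}(i): the residue ring $\Ord_v/\mathfrak{p}\cong\mathbb{F}_{p^2}$ is commutative, so conjugation leaves $\phi(\frac{p_D+\sqrt{D}}{2})\bmod\mathfrak{p}$ unchanged. When $B_v=\Mat(2,\Q_v)$ with $\Ord_v$ of level $p^e$, a one-line computation shows $(uxu^{-1})_{11}\equiv x_{11}\pmod{p^e}$ for any unit $u$, so the invariant of Lemma \ref{lem:sameorlocalconditions}(ii) is preserved; at $\infty$ a positive determinant preserves the definiteness of the upper-half-plane root. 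In each case $o_v(\phi^{\omega_p})=o_v(\phi)$ for $v\neq p$.

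For (ii) I analyze the local generator at $p$ in two cases. If $B_p$ is division (so $p\mid\mathfrak{D}$, $p\nmid\mathfrak{M}$, and the hypothesis holds automatically), then $\nrd(\omega_{p,p})=p$ has odd valuation, so $\omega_{p,p}$ is a uniformizer times a unit; conjugation by a uniformizer induces the Frobenius automorphism on $\Ord_p/\mathfrak{p}\cong\mathbb{F}_{p^2}$ (one checks $iji^{-1}=-j=j^p$ in the standard presentation), which sends the class-determining residue $\phi(\frac{p_D+\sqrt{D}}{2})\bmod\mathfrak{p}$ to that of $\overline{\phi}$, the opposite orientation. If $B_p=\Mat(2,\Q_p)$ with $\Ord_p$ of level $p^e>0$ and $p\nmid D$, take $\omega=\sm{0}{1}{-p^e}{0}$; writing $x=\phi(\frac{p_D+\sqrt{D}}{2})$ with $\trd(x)=p_D$, a direct computation gives $(\omega x\omega^{-1})_{11}\equiv p_D-x_{11}\pmod{p^e}$, which is exactly the invariant of $\overline{\phi}$ and is distinct from $x_{11}$ precisely because $p\nmid D$. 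Thus conjugation swaps the two orientation classes, i.e.\ $o_p(\phi^{\omega_p})=-o_p(\phi)$; the analogous statement at $p=\infty$ is that $\omega_\infty$, having negative determinant, reverses definiteness.

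The main obstacle is step (ii), and within it the division-algebra case: one must identify conjugation by an odd-valuation element with the residue-field Frobenius and verify that Frobenius interchanges the two orientation classes (equivalently, that it realizes the involution $\phi\mapsto\overline{\phi}$). The bookkeeping that $\omega_{p,v}$ is locally trivial away from $p$, read off from the product structure in Proposition \ref{prop:stab}, is the other point requiring care; the unit computations in (i) and the $\Mat(2)$ computation in (ii) are routine once the characterizations of Lemma \ref{lem:sameorlocalconditions} are in hand.
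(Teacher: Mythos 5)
Your proposal is correct and follows essentially the same route as the paper's proof: reduce to local statements via the $\omega_p$ of Proposition \ref{prop:stab}, use the invariants of Lemma \ref{lem:sameorlocalconditions} to show that conjugation by $\Q_v^{\times}\Ord_v^{\times}$ (and positive-determinant elements at $\infty$) preserves orientation, and perform the explicit conjugations at $p$ --- the paper computes $iji^{-1}=-j$ in $B_p=\left(\frac{p,D}{\Q_p}\right)$ in the ramified case and $(\omega_p\, x\,\omega_p^{-1})_{11}\equiv p_D-x_{11}\pmod{p^e}$ in the Eichler case, exactly as you do. Your Frobenius framing of the division case is a cosmetic variant of the paper's direct computation; just note that the inline check $-j=j^p$ is an odd-$p$ identity (at $p=2$ one has $\overline{j}\in\mathbb{F}_2$, and Frobenius must instead be applied to the residue of $\phi\left(\frac{p_D+\sqrt{D}}{2}\right)$, which your residue formulation already covers correctly).
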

\begin{proof}
	If $v\mid\mathfrak{D}$, let $\mathfrak{v}$ be the maximal order of $\Ord_v$. Since $\Ord/\mathfrak{v}\simeq\mathbb{F}_{v^2}$ is commutative, the result follows for $p\neq v$ as $\nrd(\omega_p)\in\ZZ_v^{\times}$. If $p=v$, then we can assume that $p\nmid D$, as the result is trivial otherwise. By Proposition \ref{prop:countlocalfactors}, $\left(\frac{D}{p}\right)=-1$, whence we can write $B_p=\left(\frac{p, D}{\QQ_p}\right)$. It suffices to prove the proposition for $\omega_p=i$ and $\phi_p(\sqrt{D})=j$, and we indeed find that
	\[\phi_p^i\left(\dfrac{p_D+\sqrt{D}}{2}\right)=\dfrac{p_D+iji^{-1}}{2}=\dfrac{p_D-j}{2}\not\equiv \dfrac{p_D+j}{2}=\phi_p\left(\dfrac{p_D+\sqrt{D}}{2}\right)\pmod{\mathfrak{p}}.\]
	By Lemma \ref{lem:sameorlocalconditions}, the embeddings have opposite orientation.
	
	Next, take $v\mid\mathfrak{M}$, and assume that $\Ord_v$ is the standard Eichler order of level $v^e$. If $p\neq v$, then the computations in Equations \eqref{eqn:conjugateembor} and \eqref{eqn:conjugateembor2} still hold true, and so we are done by Lemma \ref{lem:sameorlocalconditions}. If $p=v$, then it suffices to take $\omega_p=\sm{0}{1}{-p^e}{0}$. If $\phi_v\left(\frac{p_D+\sqrt{D}}{2}\right)=\sm{a}{b}{p^ec}{p_D-a}$, then a direct computation shows that
	\[\phi_v^{\omega_p}=\lm{p_D-a}{-c}{-p^eb}{a},\]
	whence by Lemma \ref{lem:sameorlocalconditions}, the embeddings have the opposite orientation if and only if $a\not\equiv p_D-a\pmod{p^e}$. Assume otherwise, so that $2a-p_D\equiv 0\pmod{p^e}$. If $p=2$, then $D$ is odd, and this is not possible. If $p$ is odd, then by doubling the matrix expression for $\phi$, we see that $-(2a-p_D)^2\equiv D\pmod{p^e}$, hence this cannot be zero, as desired.
	
	Finally, if $v=\infty$, then this follows directly by definition and an explicit computation.
\end{proof}

If $\gcd(D,\mathfrak{M})=1$, then by successively conjugating an embedding by the elements $\omega_p$ for $p\mid\mathfrak{D}\mathfrak{M}\infty$, we can pass between all possible orientations. In particular, this implies that for all orientations $o$,
\[\vert\Emb_o(\Ord,D)\vert=h^+(D).\]
In fact, more is true: there is a simply transitive action of the narrow class group $\Cl^+(D)$ on $\Emb_o(\Ord,D)$, valid for all discriminants $D$ for which $\Emb(\Ord,D)$ is non-empty. See Section 4.5 of \cite{JRthe}, or the discussion below Definition 4.22 of \cite{DV20} for more details.

\section{Basic results on intersection numbers}\label{sec:basicresults}

With the background out of the way, we turn our focus to intersection numbers. Proposition 1.10 of \cite{JR21} gives nice descriptions of when root geodesics of hyperbolic matrices in $\SL(2,\RR)$ intersect. We state the relevant parts here (and change the expression for $\tan(\theta)$ into $\cos(\theta)$).

\begin{proposition}[Proposition 1.10 of \cite{JR21}]\label{prop:matintersect}
	Let $M_1,M_2\in \SL(2,\RR)$ be hyperbolic matrices with respective upper half plane root geodesics $\ell_1,\ell_2$, and let $Z_i=M_i-\frac{\Tr(M_i)}{2}\Id$ for $i=1,2$. Then
	\begin{enumerate}[label=(\roman*)]
		\item $\ell_1,\ell_2$ intersect transversely if and only if
		\[\det(M_1M_2-M_2M_1)>0.\]
		\item In all cases,
		\[\det(M_1M_2-M_2M_1)=\det(Z_1Z_2-Z_2Z_1)=4\det(Z_1Z_2)-(\Tr(Z_1Z_2))^2.\]
		\item If $\ell_1,\ell_2$ intersect transversely, then 
		\begin{enumerate}
			\item the intersection point is the fixed point of $Z_1Z_2$ that lies in $\uhp$.
			\item the intersection angle $\theta$ (measured counterclockwise from the tangent to $\ell_1$ to the tangent to $\ell_2$) satisfies 
			\[\cos(\theta)=\dfrac{\Tr(Z_1Z_2)}{2\sqrt{\det(Z_1Z_2)}}.\]
		\end{enumerate}
	\end{enumerate}
\end{proposition}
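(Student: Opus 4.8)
The plan is to exploit that the whole statement is equivariant under conjugation by $\SL(2,\R)$: such a conjugation acts on $\uhp$ as an orientation-preserving isometry, hence carries root geodesics to root geodesics and preserves transversality, intersection points, and signed intersection angles, while $\det$ and $\Tr$ are conjugation-invariant and $Z_i\mapsto gZ_ig^{-1}$. I may therefore conjugate $M_1$ to the diagonal form $\sm{\lambda}{0}{0}{\lambda^{-1}}$ with $\lambda>1$, so that $\ell_1$ is the upward-oriented imaginary axis and $Z_1=\mu_1\sm{1}{0}{0}{-1}$ with $\mu_1=\tfrac{\lambda-\lambda^{-1}}{2}>0$. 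Writing the traceless matrix $Z_2=\sm{a}{b}{c}{-a}$, each of the four claims becomes an explicit computation in $a,b,c,\mu_1$, which I then transport back by equivariance.

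For part (ii) I would first note that the scalar shifts cancel in the commutator: since $M_i=Z_i+\tfrac{\Tr M_i}{2}\Id$, expanding gives $M_1M_2-M_2M_1=Z_1Z_2-Z_2Z_1$ as an identity of matrices, which yields the first equality. For the second, I would apply the $2\times 2$ polarization identity $\det(P-Q)=\det P+\det Q-\Tr(P)\Tr(Q)+\Tr(PQ)$ with $P=Z_1Z_2$ and $Q=Z_2Z_1$. Here $\det P=\det Q=\det(Z_1Z_2)$ and $\Tr P=\Tr Q=\Tr(Z_1Z_2)$, while Cayley--Hamilton for traceless matrices, $Z_i^2=-\det(Z_i)\Id$, gives $PQ=Z_1Z_2^2Z_1=\det(Z_1Z_2)\Id$, hence $\Tr(PQ)=2\det(Z_1Z_2)$. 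Substituting yields $4\det(Z_1Z_2)-\Tr(Z_1Z_2)^2$, with no need for the normal form.

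Parts (i) and (iii)(a) I would settle in the normal form. The endpoints of $\ell_2$ on $\mathbb{P}^1(\R)$ are the projective fixed points of $Z_2$, i.e. the roots of $cz^2-2az-b$, whose product is $-b/c$; the semicircle $\ell_2$ crosses the imaginary axis $\ell_1$ transversely precisely when these endpoints have opposite signs, i.e. when $bc>0$ (the degenerate case $c=0$ gives parallel or coincident geodesics and is handled directly). Since a direct computation gives $\det(Z_1Z_2-Z_2Z_1)=4\mu_1^2 bc$, this matches the criterion of (i), and equivariance promotes it to the general statement. For (iii)(a) I would compute the projective fixed points of $Z_1Z_2$, which in the transverse case are $\pm i\sqrt{b/c}$, and check that the one in $\uhp$ lies on $\ell_1$ (immediate) and on $\ell_2$ (by intersecting the semicircle with the imaginary axis); equivariance of fixed points under conjugation then gives the general claim.

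The main obstacle is the angle formula (iii)(b), where the orientation and sign bookkeeping is delicate. The plan is to compute the oriented unit tangents of $\ell_1$ and $\ell_2$ at the intersection point, with orientations dictated by the root-geodesic convention (from the repelling to the attracting fixed point, equivalently toward the $+\mu_i$-eigendirection of $Z_i$), and then recover $\theta$ from the signed sine and cosine of the angle between them, given by the cross and dot products of the tangents. In the normal form the tangent to $\ell_1$ is vertical and that of $\ell_2$ is proportional to $(\sqrt{bc},a)$, so the dot and cross products of the two tangents are proportional to $a$ and to $\sqrt{bc}$; forming $\tan\theta$ as their ratio and comparing with $\sqrt{\det(Z_1Z_2-Z_2Z_1)}=2\mu_1\sqrt{bc}$ and $\Tr(Z_1Z_2)=2a\mu_1$ gives $\tan\theta=\pm\tfrac{\sqrt{\det(Z_1Z_2-Z_2Z_1)}}{\Tr(Z_1Z_2)}$. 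The real work is pinning down the orientation of the tangents so that the signed angle carries the sign in the stated convention; once that is fixed the identity follows from elementary trigonometry, and equivariance extends it to arbitrary $M_1,M_2$.
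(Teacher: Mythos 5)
The paper itself does not prove this proposition: it is imported verbatim from \cite{JR21} (Proposition 1.10 there), so there is no in-paper argument to compare routes with, and your attempt has to stand on its own. On that footing, your treatment of (i), (ii) and (iii)(a) is correct. Part (ii) is complete and coordinate-free: $M_1M_2-M_2M_1=Z_1Z_2-Z_2Z_1$ since scalar shifts cancel, the $2\times 2$ identity $\det(P-Q)=\det P+\det Q-\Tr(P)\Tr(Q)+\Tr(PQ)$ is valid, and Cayley--Hamilton for traceless matrices gives $\Tr(Z_1Z_2\cdot Z_2Z_1)=2\det(Z_1Z_2)$, yielding the claim. The normal-form computations for (i) and (iii)(a) also check out: with $Z_1=\mu_1\sm{1}{0}{0}{-1}$, $\mu_1>0$, and $Z_2=\sm{a}{b}{c}{-a}$ one gets $Z_1Z_2-Z_2Z_1=\mu_1\sm{0}{2b}{-2c}{0}$ of determinant $4\mu_1^2bc$, the endpoints of $\ell_2$ have product $-b/c$, and the fixed points of $Z_1Z_2$ are $\pm i\sqrt{b/c}$, which indeed lie on both geodesics; the equivariance reduction is legitimate since conjugation preserves attracting/repelling roots, orientation, and angles.

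The genuine gap is in (iii)(b), and it sits exactly where you deferred the work. Your claim that the oriented tangent to $\ell_2$ at the crossing is proportional to $(\sqrt{bc},a)$ is false in half the cases: the attracting and repelling fixed points of $M_2$ are $z_{\pm}=(a\pm\sqrt{a^2+bc})/c$, so $\ell_2$ is traversed left-to-right precisely when $c>0$, and the correctly oriented tangent is a positive multiple of $(\sign(c)\sqrt{bc},\,a)$. Carrying your dot/cross recipe through then gives $\tan\theta=-\sign(c)\sqrt{bc}/a$ for the counterclockwise angle from $t_1$ to $t_2$, which agrees with the stated right-hand side $\sqrt{bc}/a=\sqrt{\det(Z_1Z_2-Z_2Z_1)}/\Tr(Z_1Z_2)$ only for one sign of $c$. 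Moreover, the sign cannot be ``fixed at the end by elementary trigonometry'': under $M_2\mapsto M_2^{-1}$ one has $Z_2\mapsto -Z_2$, so the stated right-hand side is odd, while $\tan$ of the angle between tangent directions is unchanged when a tangent is reversed (the angle shifts by $\pi$). So the two sides transform differently under a symmetry your setup treats as harmless, which shows the unresolved orientation bookkeeping is the entire content of (iii)(b) and interacts with the paper's first/second-root and sign conventions (compare the definiteness sign $\sg$ in Definition \ref{def:signlevel}, which in your normal form is governed by $\sign(c)$ via the $(2,1)$-entry of the traceless part of $Z_1Z_2$). As written, your proposal proves (iii)(b) only up to the substitution $\theta\mapsto\pi-\theta$; a correct completion must pin down $\sign(c)$ against the precise conventions of \cite{JR21}, which is where that paper's proof does its real work.
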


In particular, Proposition \ref{prop:optembint} is a corollary of this proposition.

\begin{proof}[Proof of Proposition \ref{prop:optembint}]
	Assume that $(\phi_1,\phi_2)$ are $x-$linked optimal embeddings of positive discriminants $D_1, D_2$. Let $M_i=\iota(\phi_i(\epsilon_{D_i}))$ for $i=1,2$, where the fundamental units can be written as $\epsilon_{D_i}=\frac{T_i+U_i\sqrt{D_i}}{2}$, with $(T_i,U_i)$ being the smallest positive integer solution to $T^2-D_iU^2=4$. In particular, $\ell_{\phi_i}=\ell_i$ for $i=1,2$. It follows that $Z_i=\frac{U_i}{2}\iota(\phi_i(\sqrt{D_i}))$, hence
	\[\det(Z_i)=\dfrac{-U_i^2D_i}{4},\qquad \Tr(Z_1Z_2)=\dfrac{U_1U_2x}{2}.\]
	Therefore
	\[4\det(Z_1Z_2)-(\Tr(Z_1Z_2))^2=\dfrac{U_1^2U_2^2}{4}(D_1D_2-x^2),\]
	and the root geodesics intersect transversely if and only if $x^2<D_1D_2$. This proves the first claim.
	
	Assume the root geodesics intersect transversely, and let $T=\phi_1(\sqrt{D_1})\phi_2(\sqrt{D_2})$; the intersection point is the upper half plane fixed point of $\iota(T)$. Since $T$ satisfies $T^2-2xT+D_1D_2=0$, $T$ acts as $x+\sqrt{x^2-D_1D_2}$. As $T\in(2\Ord+p_{D_1})(2\Ord+p_{D_2})\subset 2\Ord+p_{D_1D_2}$, $T$ corresponds to an embedding of $\qord{x^2-D_1D_2}$ into $\Ord$, which is part i. This also implies that $x\equiv D_1D_2\pmod{2}$.
	
	Finally, the angle of intersection satisfies
	\[\cos(\theta)=\dfrac{U_1U_2x/2}{2\sqrt{U_1^2U_2^2D_1D_2/16}}=\dfrac{x}{\sqrt{D_1D_2}},\]
	and the proof is finished.
\end{proof}

This implies that we can replace ``study intersections of $\ell_{\phi_1},\ell_{\phi_2}$'' by ``study $\Emb(\Ord,\phi_1,\phi_2,x)$ for $x^2<D_1D_2$.'' 

While the sets $\Emb(\Ord,\phi_1,\phi_2,x)$ can be computed in practice, it is a much harder task to access their theoretical properties. Instead, from now on we will focus on $\Emb(\Ord,D_1, D_2,x)$, for positive discriminants $D_1, D_2$, which captures all possible $x-$linking of optimal embeddings of discriminants $D_1, D_2$ into $\Ord$.

While we will eventually characterize and count $\Emb(\Ord,D_1,D_2,x)$, we can already prove a strong necessary condition for this set to be non-empty.

\begin{lemma}\label{lem:primesdivnorm}
	Let $v_1,v_2\in\Ord$. Then 
	\[\mathfrak{D}\mathfrak{M}\mid\nrd(v_1v_2-v_2v_1).\]
\end{lemma}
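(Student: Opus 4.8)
The plan is to reduce the global divisibility to a collection of local statements, one for each prime dividing $\mathfrak{D}\mathfrak{M}$, and then treat the ramified primes and the level primes by two rather different arguments. Both $\mathfrak{D}\mathfrak{M}=\discrd(\Ord)$ and $\nrd(v_1v_2-v_2v_1)$ are integers (the commutator lies in $\Ord$, so its reduced norm is integral), so it suffices to compare $p$-adic valuations. Using $\discrd(\Ord)=\prod_p\discrd(\Ord_p)$ together with the fact that $v_1,v_2\in\Ord_p$ for every $p$, I would reduce the claim to showing
\[
v_p\big(\nrd(v_1v_2-v_2v_1)\big)\geq v_p(\discrd(\Ord_p))
\]
for each prime $p$. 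For $p\nmid\mathfrak{D}\mathfrak{M}$ the right-hand side is $0$ and there is nothing to prove, so only the ramified primes and the primes dividing the level remain.

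For a ramified prime $p\mid\mathfrak{D}$ one has $v_p(\discrd(\Ord_p))=1$, and $\Ord_p$ is the unique maximal order in the division algebra $B_p$, with maximal ideal $\mathfrak{p}$ and commutative residue field $\Ord_p/\mathfrak{p}\simeq\mathbb{F}_{p^2}$. The key observation is that commutativity of the residue field forces $v_1v_2-v_2v_1\in\mathfrak{p}$. Since every element of $\mathfrak{p}$ has reduced norm of positive valuation (a uniformizer has reduced norm of valuation $1$), this yields $p\mid\nrd(v_1v_2-v_2v_1)$, exactly matching $v_p(\discrd(\Ord_p))=1$.

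For a prime $p\mid\mathfrak{M}$ dividing the level, $\Ord_p$ may be taken to be the standard Eichler order $\sm{\Z_p}{\Z_p}{p^e\Z_p}{\Z_p}$ with $e=v_p(\mathfrak{M})=v_p(\discrd(\Ord_p))$. Here I would compute the commutator of two general elements of $\Ord_p$ directly: its diagonal entries and its lower-left entry are all divisible by $p^e$, so that the determinant—which equals the reduced norm, the commutator being traceless—has valuation at least $e$. This is a routine $2\times 2$ calculation.

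I expect the ramified case to be the conceptual heart of the argument, namely the passage from ``the residue field is commutative'' to ``the commutator has reduced norm of positive valuation,'' while the level case is purely computational. The only care needed in the reduction step is to confirm that $\nrd$ and the $p$-adic valuation are compatible with the inclusions $\Ord\hookrightarrow\Ord_p$, which is immediate.
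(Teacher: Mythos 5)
Your proposal is correct and follows essentially the same route as the paper: reduce to local statements at each prime $p\mid\mathfrak{D}\mathfrak{M}$, use commutativity of the residue field $\Ord_p/\mathfrak{p}\simeq\mathbb{F}_{p^2}$ to place the commutator in $\mathfrak{p}$ at ramified primes, and do the explicit upper-triangular computation modulo $p^e$ in the standard Eichler order at level primes. Your spelled-out observation that the diagonal and lower-left entries of the commutator are divisible by $p^e$ is exactly the content of the paper's remark that the diagonal of a product of upper-triangular matrices is unchanged under swapping the factors.
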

\begin{proof}
	Let $p\mid\mathfrak{D}\mathfrak{M}$, and consider completing $B$ at $p$. We can assume that the completion $\Ord_p$ is either the unique maximal order if $B_p$ is division, or the standard Eichler order of level $p^e$ otherwise. In the first case, let the unique maximal ideal of $\Ord_p$ be $\mathfrak{p}$, and then $\frac{\Ord_p}{\mathfrak{p}}\simeq \mathbb{F}_{p^2}$ is a field. Thus
	\[v_1v_2\equiv v_2v_1\pmod{\mathfrak{p}},\]
	which implies that $v_1v_2-v_2v_1\in\mathfrak{p}$, and so $p\mid\nrd(v_1v_2-v_2v_1)$.
	
	The second case follows from the fact that looking modulo $p^e$, we have upper triangular matrices. The diagonal of their product is unchanged when we swap the order of multiplication, and the result follows.
\end{proof}

\begin{corollary}\label{cor:finitequat}
	If $(\phi_1,\phi_2)$ are $x-$linked, then 
	\[\mathfrak{D}\mathfrak{M}\mid\dfrac{D_1D_2-x^2}{4}.\]
	In particular, for a fixed pair of discriminants $D_1,D_2$, there is a finite set of non-isomorphic pairs $(B, \Ord)$ of an indefinite quaternion algebra $B$ over $\QQ$ with Eichler order $\Ord$ for which there exist optimal embeddings of $D_1,D_2$ into $\Ord$ giving a non-zero unweighted intersection number.
\end{corollary}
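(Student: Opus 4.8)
The statement has two halves---the divisibility $\mathfrak{D}\mathfrak{M}\mid\frac{D_1D_2-x^2}{4}$ and the ensuing finiteness---and my plan is to obtain the first as a direct application of Lemma \ref{lem:primesdivnorm} and then read off the second by elementary counting.

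For the divisibility, the strategy is to feed Lemma \ref{lem:primesdivnorm} a commutator of two elements of $\Ord$ whose reduced norm is exactly $\frac{D_1D_2-x^2}{4}$. First I would record the norm of the ``obvious'' commutator: writing $v_i=\phi_i(\sqrt{D_i})\in\Ord$ and $u=v_1v_2$, the $x$-linking hypothesis together with $v_i^2=D_i$ gives $\trd(u)=2x$ and $\nrd(u)=D_1D_2$ (exactly as in the proof of Proposition \ref{prop:optembint}). Hence $v_2v_1=\overline{u}=2x-u$, so $v_1v_2-v_2v_1=2(u-x)$, and using $u^2=2xu-D_1D_2$ one finds $\nrd(u-x)=-(u-x)^2=D_1D_2-x^2$, giving $\nrd(v_1v_2-v_2v_1)=4(D_1D_2-x^2)$.

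The point I expect to need the most care is that applying Lemma \ref{lem:primesdivnorm} to $v_1,v_2$ only yields $\mathfrak{D}\mathfrak{M}\mid 4(D_1D_2-x^2)$, which is weaker than wanted by a factor of $16$. The remedy is to instead use the integral generators $w_i=\phi_i\!\left(\frac{p_{D_i}+\sqrt{D_i}}{2}\right)\in\Ord$. Since a commutator kills the scalar parts, $w_1w_2-w_2w_1=\frac{1}{4}(v_1v_2-v_2v_1)$, so $\nrd(w_1w_2-w_2w_1)=\frac{1}{16}\cdot 4(D_1D_2-x^2)=\frac{D_1D_2-x^2}{4}$, which is automatically an integer because $w_1w_2-w_2w_1\in\Ord$. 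Now Lemma \ref{lem:primesdivnorm}, applied to $w_1,w_2$, delivers the divisibility on the nose; landing on $\frac{D_1D_2-x^2}{4}$ rather than a proper multiple of it is the crux of the argument.

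For the finiteness, a non-zero unweighted intersection number forces, via Proposition \ref{prop:optembint}, a transversal intersection, hence some $x$ with $x^2<D_1D_2$, so that $\frac{D_1D_2-x^2}{4}$ is a positive integer. There are only finitely many admissible $x$ (those with $x^2<D_1D_2$ and $x\equiv D_1D_2\pmod 2$), so the divisibility confines $\mathfrak{D}\mathfrak{M}=\discrd(\Ord)$ to the divisors of finitely many positive integers, a finite set. Finally, for each admissible value of $\discrd(\Ord)$ there are only finitely many factorizations into a squarefree discriminant $\mathfrak{D}$ (a product of an even number of primes) and a coprime level $\mathfrak{M}$, and each determines $(B,\Ord)$ up to isomorphism---$B$ by $\mathfrak{D}$, and the Eichler order of that level uniquely up to conjugacy by strong approximation---so only finitely many pairs $(B,\Ord)$ survive.
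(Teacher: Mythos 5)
Your proposal is correct and follows essentially the same route as the paper: the paper likewise applies Lemma \ref{lem:primesdivnorm} to the integral elements $\phi_i\bigl(\frac{p_{D_i}+\sqrt{D_i}}{2}\bigr)$, whose commutator has reduced norm $\frac{1}{16}\nrd\bigl(\phi_1(\sqrt{D_1})\phi_2(\sqrt{D_2})-\phi_2(\sqrt{D_2})\phi_1(\sqrt{D_1})\bigr)=\frac{D_1D_2-x^2}{4}$, and then deduces finiteness from the finitely many $x$ with $x^2<D_1D_2$ exactly as you do. Your only (harmless) deviation is computing the commutator norm directly from the quadratic relation satisfied by $u=v_1v_2$ rather than invoking the computation of Proposition \ref{prop:matintersect}ii.
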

\begin{proof}
	Let $v_i=\frac{p_{D_i}+\sqrt{D_i}}{2}$, and using Lemma \ref{lem:primesdivnorm} and a computation analogous to Proposition \ref{prop:matintersect}ii, we compute
	\begin{align*}
		\mathfrak{D}\mathfrak{M} & \mid \nrd(\phi_1(v_1)\phi_2(v_2)-\phi_2(v_2)\phi_1(v_1))\\
		& \quad = \dfrac{\nrd(\phi_1(\sqrt{D_1})\phi_2(\sqrt{D_2})-\phi_2(\sqrt{D_2})\phi_1(\sqrt{D_1}))}{16}=\dfrac{D_1D_2-x^2}{4}.
	\end{align*}
	Intersections come from the finite set of $x$ for which $x^2<D_1D_2$, and this calculation shows that for each such $x$ there are finitely many pairs $(\mathfrak{D},\mathfrak{M})$ that satisfy the divisibility condition (in Theorem \ref{thm:onequatalgxlink} we will show that $\mathfrak{D}$ is in fact uniquely determined from $D_1,D_2,x$). Therefore, there are finitely many Eichler orders for which there exist intersections of optimal embeddings of discriminants $D_1,D_2$.
\end{proof}

\section{Existence of x-linked pairs}\label{sec:xlink}

Rather than study the set $\Emb(\Ord, D_1, D_2, x)$ directly, we invert the setup. That is, we start with a pair of $x-$linked embeddings into $B$, and consider the possible Eichler orders which admit these (optimal) embeddings. We study this problem locally, and show how to lift the local results to global results. In this section, we we start this process by studying which quaternion algebras admit $x-$linked embeddings. 

\subsection{Simultaneous conjugation}
The fact that we are only allowing conjugation by elements of $\Ord^1$ and not all of $B^{\times}$ is crucial to $x-$linking.

\begin{lemma}\label{lem:pairsconjugate}
	Let $B$ be a quaternion algebra over a field $F$, and let $(x_1,x_2)$ and $(y_1,y_2)$ be pairs of elements of $B^{\times}$ for which:
	\begin{itemize}
		\item $x_i,y_i\notin F$ for $i=1,2$;
		\item $x_i$ and $y_i$ have the same irreducible minimal polynomial over $F$ for $i=1,2$;
		\item $x_1x_2$ and $y_1y_2$ have the same minimal polynomial over $F$.
	\end{itemize}
	Then the pairs are simultaneously conjugate over $B^{\times}$, i.e. there exists an $r\in B^{\times}$ for which $rx_1r^{-1}=x_2$ and $ry_1r^{-1}=y_2$.
\end{lemma}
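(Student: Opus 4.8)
The plan is to produce a single $r\in B^{\times}$ with $rx_1r^{-1}=y_1$ and $rx_2r^{-1}=y_2$, in two stages: first reduce to the case $x_1=y_1$ using Skolem--Noether, and then find the conjugator for the second pair inside the centralizing field, where its existence comes down to Hilbert 90. Throughout I use that the minimal polynomials are separable, which is automatic in the characteristic-zero setting of interest.

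First I would handle the reduction. Since $x_1$ and $y_1$ have the same irreducible minimal polynomial, Skolem--Noether (exactly as in the proof of Corollary \ref{cor:eltconj}) supplies $w\in B^{\times}$ with $wx_1w^{-1}=y_1$. Replacing $x_2$ by $\tilde{x}_2:=wx_2w^{-1}$ preserves the hypotheses: $\tilde{x}_2$ still shares the minimal polynomial of $y_2$, and $y_1\tilde{x}_2=w(x_1x_2)w^{-1}$ still shares that of $y_1y_2$. If I can find $s\in B^{\times}$ centralizing $a:=y_1$ with $s\tilde{x}_2 s^{-1}=y_2$, then $r:=sw$ finishes the job. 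So it suffices to treat $x_1=y_1=:a$ and to look for the conjugator inside $C_B(a)$.

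By Lemma \ref{lem:eltstab}, $C_B(a)=F[a]$, which is a quadratic field extension $K/F$ since $a$ has irreducible minimal polynomial. Writing $B=K\oplus Kb$ as a quaternion algebra relative to $K$, with $b^2=c\in F^{\times}$ and $b\lambda=\overline{\lambda}b$ for $\lambda\in K$ (where $\overline{\cdot}$ is the nontrivial automorphism of $K/F$), I would expand $x_2=\alpha+\beta b$ and $y_2=\gamma+\delta b$ with $\alpha,\beta,\gamma,\delta\in K$. A direct computation gives $\trd(x_2)=\Tr_{K/F}(\alpha)$ and $\nrd(x_2)=\Nm_{K/F}(\alpha)-c\,\Nm_{K/F}(\beta)$, and from $ax_2=(a\alpha)+(a\beta)b$ one gets $\trd(ax_2)=\Tr_{K/F}(a\alpha)$ while $\nrd(ax_2)=\Nm_{K/F}(a)\,\nrd(x_2)$. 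The equalities of minimal polynomials thus reduce to $\Tr_{K/F}(\alpha)=\Tr_{K/F}(\gamma)$, $\Tr_{K/F}(a\alpha)=\Tr_{K/F}(a\gamma)$, and $\Nm_{K/F}(\alpha)-c\,\Nm_{K/F}(\beta)=\Nm_{K/F}(\gamma)-c\,\Nm_{K/F}(\delta)$, the norm condition on $ax_2,ay_2$ being automatic. Setting $\mu:=\alpha-\gamma$, the first two conditions say $\Tr_{K/F}(\mu)=\Tr_{K/F}(a\mu)=0$; since $\{1,a\}$ is an $F$-basis of $K$ and the trace form is nondegenerate, $\mu$ is orthogonal to all of $K$, forcing $\alpha=\gamma$, after which the third condition collapses to $\Nm_{K/F}(\beta)=\Nm_{K/F}(\delta)$.

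To finish, for $r\in K^{\times}$ one computes $rx_2r^{-1}=\alpha+(r/\overline{r})\beta b$, so I need $(r/\overline{r})\beta=\delta$. When $\beta\neq 0$ this means $r/\overline{r}=\delta/\beta$, an element of norm $\Nm_{K/F}(\delta)/\Nm_{K/F}(\beta)=1$, so Hilbert 90 produces the required $r$; when $\beta=0$ the norm equality forces $\delta=0$, hence $x_2=y_2$ and $r=1$ works. I expect the main obstacle to be the bookkeeping that isolates $\alpha=\gamma$: the two trace conditions must be read as orthogonality against $\{1,a\}$ so that nondegeneracy of the trace form applies, and one must notice that it is precisely the product hypothesis $x_1x_2\sim y_1y_2$ (through $\trd(ax_2)=\trd(ay_2)$) that supplies the second, independent linear condition --- the lone place where conjugacy of the \emph{pair}, rather than of the individual elements, is used. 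Everything after $\alpha=\gamma$ is a routine norm-one Hilbert 90 problem.
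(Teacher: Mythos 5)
Your proof is correct, but it takes a genuinely different route from the paper's. The paper argues in one stroke: it observes that $F[x_1,x_2]$ and $F[y_1,y_2]$ are simple $F$-subalgebras of $B$ of dimension $2$ or $4$, asserts that the assignment $x_i\mapsto y_i$ extends to a well-defined isomorphism $\theta:F[x_1,x_2]\rightarrow F[y_1,y_2]$ on account of the three minimal-polynomial equalities, and then applies Skolem--Noether once to make $\theta$ inner. You instead apply Skolem--Noether only to the first coordinates and solve the residual problem inside the centralizer $K=F[y_1]$, using the presentation $B=K\oplus Kb$, nondegeneracy of the trace form of $K/F$ to force $\alpha=\gamma$, and Hilbert 90 to produce the conjugator; all your computations check out ($\trd(x_2)=\Tr_{K/F}(\alpha)$, $\trd(ax_2)=\Tr_{K/F}(a\alpha)$, $\nrd(x_2)=\Nm_{K/F}(\alpha)-c\,\Nm_{K/F}(\beta)$, and $rx_2r^{-1}=\alpha+(r/\overline{r})\beta b$ for $r\in K^{\times}$ are all right, and equality of minimal polynomials does give equality of $\trd$ and $\nrd$ even in the degenerate case $x_1x_2\in F$). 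The trade-off is instructive: the paper's proof is shorter, but the well-definedness of $\theta$ is left as an assertion, and your trace/norm bookkeeping is essentially the verification that assertion requires, so your version is more self-contained; it also isolates exactly where the product hypothesis enters (the condition $\trd(ax_2)=\trd(ay_2)$), handles the dimension-$2$ degeneration $x_2\in F[x_1]$ transparently as $\beta=\delta=0$, and yields the sharper structural output that the conjugator can be taken of the form $rw$ with $r\in F[y_1]^{\times}$, anticipating the description of the conjugating set $V$ as $r(F+\phi(\sqrt{D})F)$ used in Corollary \ref{cor:allpairsconj}. Two minor remarks: your $B=K\oplus Kb$ presentation and Hilbert 90 require $K/F$ separable (you correctly flag this; it is automatic in the paper's characteristic-zero setting, while the paper's Skolem--Noether argument is formally uniform), and the lemma's displayed conclusion contains a typo ($rx_1r^{-1}=x_2$ and $ry_1r^{-1}=y_2$ should read $rx_ir^{-1}=y_i$ for $i=1,2$), which you correctly interpreted in the intended sense.
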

\begin{proof}
	The $F-$algebras $F[x_1,x_2]$ and $F[y_1,y_2]$ are $F-$subalgebras of $B$ of (equal) dimension $2$ or $4$. If they have dimension $4$, then they are equal to $B$, and are thus simple. Otherwise, they are equal to $F[x_1]$ and $F[y_1]$, which are again simple algebras since the minimal polynomials were irreducible.
	
	Consider the map $\theta:F[x_1,x_2]\rightarrow F[y_1,y_2]$ defined by $\theta(x_i)=y_i$ for $i=1,2$. The equality of the minimal polynomials of $x_i,y_i$ and $x_1x_2,y_1y_2$ implies that the map is indeed a well defined isomorphism. By the Skolem-Noether theorem, this map is inner in $B$ (Corollary 7.7.2 of \cite{JV21}), and this implies the result.
\end{proof}

Applying Lemma \ref{lem:pairsconjugate} to optimal embeddings produces the following corollary.

\begin{corollary}\label{cor:allpairsconj}
	Let $B$ be a quaternion algebra over $F=\QQ$ or $\QQ_p$, and let $(\phi_1,\phi_2),(\phi_1',\phi_2')$ be pairs of $x-$linked embeddings from $\qord{D_1},\qord{D_2}$ respectively into $B$. Then $V=\{v\in B: v\phi_n=\phi_n'v\text{ for $n=1,2$}\}$ is a $1$-dimensional $F$-vector space, generated by an element of $B$ with non-zero norm. In particular, the pairs of embeddings are simultaneously conjugate over $B^{\times}$.
\end{corollary}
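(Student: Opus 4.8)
The plan is to reduce the statement to producing a single invertible element of $V$, and then to read off both the dimension and the non-vanishing of the norm from the non-degeneracy hypothesis $x^2\neq D_1D_2$. Write $x_n=\phi_n(\sqrt{D_n})$ and $y_n=\phi_n'(\sqrt{D_n})$ for $n=1,2$, so that $V=\{v\in B:vx_n=y_nv\text{ for }n=1,2\}$ and an invertible $r\in V$ is precisely a simultaneous conjugator $rx_nr^{-1}=y_n$. By injectivity of the embeddings each $x_n$ (and each $y_n$) is a non-central element with $x_n^2=D_n$, hence has the separable minimal polynomial $t^2-D_n$ (separable as $D_n\neq 0$). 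Since $\trd(x_n)=\trd(y_n)=0$ and both pairs are $x$-linked, $\trd(x_1x_2)=\trd(y_1y_2)=2x$, while multiplicativity of the reduced norm gives $\nrd(x_1x_2)=\nrd(y_1y_2)=D_1D_2$; thus $x_1x_2$ and $y_1y_2$ share the characteristic polynomial $t^2-2xt+D_1D_2$.

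First I would establish that $x_1,x_2$ generate all of $B$. As $x_1,x_2$ are pure quaternions, $x_1x_2+x_2x_1=\trd(x_1x_2)=2x$, so $\langle 1,x_1,x_2,x_1x_2\rangle$ is closed under multiplication and equals $F[x_1,x_2]$. The hypothesis $x^2\neq D_1D_2$ makes $t^2-2xt+D_1D_2$ separable, so $x_1x_2$ is \emph{not} a scalar; were $x_1,x_2$ to commute, then (using $x_2\notin F$) one finds $x_1x_2\in F$, contradicting this. Hence $x_1,x_2$ do not commute and $F[x_1,x_2]=B$, and likewise $F[y_1,y_2]=B$. This step is where the non-degeneracy hypothesis does its work, and I expect it to be the main obstacle: it is exactly what forces us into the four-dimensional (and therefore central simple) case, which is what lets the argument go through uniformly over $\Q$ and over $\Q_p$ — in particular in the split local case, where $t^2-D_n$ may fail to be irreducible and so Lemma \ref{lem:pairsconjugate} cannot be applied merely on the strength of its stated hypotheses.

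With this in hand I would run the simple-algebra case of Lemma \ref{lem:pairsconjugate} directly. The structure constants of $B=F[x_1,x_2]$ in the basis $1,x_1,x_2,x_1x_2$ are determined by $x_1^2=D_1$, $x_2^2=D_2$ and $x_2x_1=2x-x_1x_2$, and $y_1,y_2$ satisfy the identical relations (again using $\trd(y_1y_2)=2x$). Therefore $x_i\mapsto y_i$ extends to a well-defined $F$-algebra isomorphism $B\to B$, which the Skolem--Noether theorem makes inner since $B$ is central simple, producing $r\in B^{\times}$ with $rx_nr^{-1}=y_n$ for $n=1,2$. In particular $r\in V$ and $\nrd(r)\neq 0$.

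Finally I would pin down the dimension. Given any $v\in V$, the element $r^{-1}v$ satisfies $r^{-1}vx_n=r^{-1}y_nv=x_nr^{-1}v$ for $n=1,2$, so $r^{-1}v$ commutes with both $x_1$ and $x_2$, hence with all of $B=F[x_1,x_2]$; thus $r^{-1}v\in Z(B)=F$ and $v\in Fr$. Consequently $V=Fr$ is one-dimensional with invertible generator of non-zero norm, and the asserted simultaneous conjugacy of the pairs over $B^{\times}$ is precisely the existence of this invertible $r\in V$.
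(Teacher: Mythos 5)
Your proof is correct, and while it runs on the same engine as the paper's --- the trace identity that converts $x$-linkage into algebra structure, plus Skolem--Noether --- it is organized genuinely differently. The paper proves the corollary by intersecting the two solution spaces $V_n=\{v\in B: v\phi_n=\phi_n'v\}$, each two-dimensional by Corollary \ref{cor:eltconj}, showing $V_1\neq V_2$ via the same $x^2=D_1D_2$ contradiction you use, and then invoking Lemma \ref{lem:pairsconjugate} to produce an invertible element of the intersection; you instead inline the four-dimensional case of that lemma, first proving that $x^2\neq D_1D_2$ forces $F[x_1,x_2]=F[y_1,y_2]=B$, and then obtaining $\dim V=1$ directly from $Z(B)=F$ (once the Skolem--Noether conjugator $r$ exists, any $v\in V$ has $r^{-1}v$ centralizing generators of $B$). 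What your route buys is real: as you observe, Lemma \ref{lem:pairsconjugate} hypothesizes \emph{irreducible} minimal polynomials, which fails over $F=\Q_p$ when $D_n\in\Q_p^{\times 2}$ (then $t^2-D_n$ splits and $F[x_n]\simeq\Q_p\times\Q_p$ is not simple), so the paper's assertion that ``the minimal polynomials satisfy the requirements'' is not literally true in the split local case; it is rescued only because the two-dimensional branch of that lemma's proof never arises under $x$-linkage, which is exactly the point your argument makes explicit. What the paper's route buys is modularity (the auxiliary lemma and corollary are stated for reuse) and the a priori bound $\dim V\leq 1$ from $V=V_1\cap V_2$ before any conjugator is produced. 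One small compression on your side: passing from ``$x_1,x_2$ do not commute'' to ``$F[x_1,x_2]=B$'' needs the linear independence of $1,x_1,x_2,x_1x_2$, not just multiplicative closure of their span; this follows by the normalization $x_2'=x_2-\frac{x}{D_1}x_1$, which anticommutes with $x_1$ and satisfies $x_2'^2=\frac{D_1D_2-x^2}{D_1}\neq 0$, so that $1,x_1,x_2',x_1x_2'$ is a standard quaternion basis --- the paper's Lemma \ref{lem:pairsconjugate} glosses the same dimension dichotomy, so this is a shared, easily repaired ellipsis rather than a gap.
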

\begin{proof}
	Let $V_n=\{v\in B: v\phi_n=\phi_n'v\}$ for $n=1,2$; by Corollary \ref{cor:eltconj}, this is a two dimensional $F$-vector space. Furthermore, we have $V_n=r_n(F+\phi_n(\sqrt{D_n})F)$ for $n=1,2$ for some $r_1,r_2\in B^{\times}$. We claim that $V_1$ and $V_2$ are distinct: otherwise, right multiplication by $\phi_1(\sqrt{D_1})$ on $V_1$ remains in $V_1$, hence it is true for $V_2$ as well. This implies that $\phi_1(\sqrt{D_1})\in F+\phi_2(\sqrt{D_2})F$, and therefore $\phi_1(\sqrt{D_1})$ is a scalar multiple of $\phi_2(\sqrt{D_2})$ (by taking traces). Writing $\phi_1(\sqrt{D_1})=f\phi_2(\sqrt{D_2})$ for $f\in F^{\times}$, squaring gives us $D_1=f^2D_2$ and $x=\frac{1}{2}\trd\left(\phi_1(\sqrt{D_1})\phi_2(\sqrt{D_2})\right)=fD_2$. Thus $x^2=f^2D_2^2=D_1D_2$, which is a contradiction by definition of $x-$linkage.
	
	Since $V=V_1\cap V_2$, $V$ has dimension $0$ or $1$ as $V_1,V_2$ are distinct. We apply Lemma \ref{lem:pairsconjugate} to the images of $\sqrt{D_1},\sqrt{D_2}$ under $(\phi_1,\phi_2)$ and $(\phi_1',\phi_2')$ respectively. The minimal polynomials satisfy the requirements, whence the lemma implies that $V$ has an invertible element. Thus $V$ has dimension $1$, as desired.
\end{proof}

\subsection{Orders containing x-linked pairs}

Given a pair of embeddings $\phi_i:\qord{D_i}\rightarrow B$ ($i=1,2$), there does not need to be an order that contains the images of both $\qord{D_i}$. The following definition and lemma describe when there is such an order.

\begin{definition}
	Let $(D_1, D_2, x)$ be a triple of integers. We call the triple \textit{admissible} if the following hold:
	\begin{itemize}
		\item $D_1$ and $D_2$ are positive discriminants;
		\item $x\equiv D_1D_2\pmod{2}$ and $x^2\neq D_1D_2$.
	\end{itemize}
\end{definition}

A consequence of the following lemma is that there exists an order containing given $x-$linked embeddings of discriminants $D_1, D_2$ if $(D_1, D_2, x)$ is admissible.

\begin{lemma}\label{lem:genorder}
	Let $F=\QQ$ or $\QQ_p$, and let $B$ be a quaternion algebra over $F$. Let $\phi_i:\qord{D_i}\rightarrow B$ be embeddings of the orders of discriminants $D_1,D_2$ into $B$, and take $v_i=\phi_i\left(\frac{p_{D_i}+\sqrt{D_i}}{2}\right)$ for $i=1,2$. Assume that $x=\frac{1}{2}\trd(\phi_1(\sqrt{D_1})\phi_2(\sqrt{D_2}))\in p_{D_1D_2}+2\mathcal{O}_F$ and $x^2\neq D_1D_2$. Then
	\[\Ord_{\phi_1,\phi_2}:=\langle 1,v_1,v_2,v_1v_2\rangle_{\mathcal{O}_F}\]
	is an order of $B$, necessarily the smallest order of $B$ for which both $\phi_1$ and $\phi_2$ embed into. Furthermore, 
	\[\discrd(\Ord_{\phi_1,\phi_2})=\frac{D_1D_2-x^2}{4}.\]
\end{lemma}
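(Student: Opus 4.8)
The plan is to establish all three assertions --- that $\Ord_{\phi_1,\phi_2}$ is an order, that it is the smallest one into which $\phi_1,\phi_2$ embed, and the discriminant formula --- through a single explicit computation of the trace form, which simultaneously verifies full rank and yields the reduced discriminant. Throughout I write $u_i=\phi_i(\sqrt{D_i})$, so that $\trd(u_i)=0$, $u_i^2=D_i$, and $\trd(u_1u_2)=2x$ by the definition of $x$-linking, while $v_i=\tfrac{p_{D_i}+u_i}{2}$.

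First I would compute the discriminant. Since the change of basis from $\{1,u_1,u_2,u_1u_2\}$ to $\{1,v_1,v_2,v_1v_2\}$ is triangular with determinant $\tfrac12\cdot\tfrac12\cdot\tfrac14=\tfrac1{16}$, it suffices to compute $\disc\langle 1,u_1,u_2,u_1u_2\rangle=-\det(\trd(w_iw_j))$ and scale by $16^{-2}$. Using $\trd(u_i)=0$, $u_i^2=D_i$, cyclic invariance of the trace, and the identity $\trd(\alpha^2)=\trd(\alpha)^2-2\nrd(\alpha)$ applied to $\alpha=u_1u_2$ (with $\nrd(u_1u_2)=D_1D_2$), the Gram matrix is
\[
\bigl(\trd(w_iw_j)\bigr)=\begin{pmatrix} 2 & 0 & 0 & 2x \\ 0 & 2D_1 & 2x & 0 \\ 0 & 2x & 2D_2 & 0 \\ 2x & 0 & 0 & 4x^2-2D_1D_2 \end{pmatrix}.
\]
It block-diagonalizes along the index sets $\{1,4\}$ and $\{2,3\}$, with the two $2\times 2$ determinants equal to $-4(D_1D_2-x^2)$ and $4(D_1D_2-x^2)$, so the full determinant is $-16(D_1D_2-x^2)^2$. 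Hence $\disc(\Ord_{\phi_1,\phi_2})=\bigl(\tfrac{D_1D_2-x^2}{4}\bigr)^2$, which is nonzero precisely because $x^2\neq D_1D_2$; this shows $1,v_1,v_2,v_1v_2$ are $F$-linearly independent (so the module is a full-rank lattice) and gives the stated reduced discriminant.

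Next I would verify closure under multiplication. Each $v_i$ satisfies $v_i^2=p_{D_i}v_i-\nrd(v_i)$ with $\nrd(v_i)\in\mathcal{O}_F$, so $v_i^2\in\langle 1,v_i\rangle$. The one nontrivial relation is expressing $v_2v_1$ in the module: from $\overline{v_1v_2}=\overline{v_2}\,\overline{v_1}=(p_{D_2}-v_2)(p_{D_1}-v_1)$ one obtains $v_2v_1=\trd(v_1v_2)-p_{D_1}p_{D_2}+p_{D_2}v_1+p_{D_1}v_2-v_1v_2$. Here the only coefficient not manifestly integral is $\trd(v_1v_2)=\tfrac{p_{D_1}p_{D_2}+x}{2}$, and this is exactly where the hypothesis $x\in p_{D_1D_2}+2\mathcal{O}_F$ (i.e.\ $x\equiv D_1D_2\pmod 2$) is essential: it forces $p_{D_1}p_{D_2}+x$ to be even, so $v_2v_1\in\Ord_{\phi_1,\phi_2}$. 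Given $v_i^2\in\langle 1,v_i\rangle$ and $v_2v_1\in\Ord_{\phi_1,\phi_2}$, every product of the four basis elements reduces into the module by repeatedly substituting these two relations, so $\Ord_{\phi_1,\phi_2}$ is a subring containing $1$, hence an order.

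Finally, minimality is immediate: $\phi_i(\qord{D_i})=\mathcal{O}_F[v_i]$, so any order $\Ord'$ into which both $\phi_1,\phi_2$ embed contains $v_1$ and $v_2$, and therefore $1,v_1,v_2,v_1v_2$, whence $\Ord_{\phi_1,\phi_2}\subseteq\Ord'$. I expect the main obstacle to be the bookkeeping in the closure step --- specifically, isolating the single integrality condition $\trd(v_1v_2)\in\mathcal{O}_F$ as the crux --- rather than the discriminant computation, which becomes clean once one passes to the $u_i$ basis.
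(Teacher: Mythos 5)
Your proposal is correct and takes essentially the same route as the paper: the identical Gram matrix for $\{1,u_1,u_2,u_1u_2\}$ with determinant $-16(D_1D_2-x^2)^2$, the same triangular change of basis of determinant $\tfrac{1}{16}$, and the same crux relation for $v_2v_1$ (your conjugation identity is algebraically equivalent to the paper's expansion of $v_1v_2+v_2v_1$, with integrality resting on the same parity hypothesis $x\equiv D_1D_2\pmod 2$). The full-rank-from-nonzero-discriminant observation and the minimality argument also match the paper's proof.
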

\begin{proof}
	For ease of notation write $\Ord=\Ord_{\phi_1,\phi_2}$. First, 
	\[\trd(v_1v_2)=\dfrac{p_{D_1}p_{D_2}+x}{2}=p_{D_1D_2}+\dfrac{x-p_{D_1D_2}}{2}\in\mathcal{O}_F,\]
	and $\nrd(v_1v_2)=\nrd(v_1)\nrd(v_2)\in\mathcal{O}_F$, whence $v_1v_2$ is integral. We will demonstrate that $v_2v_1\in\Ord$, and the rest of the equations to prove that $\Ord_{\phi_1,\phi_2}$ is closed under multiplication can be deduced from this and the minimal polynomials for $v_1,v_2$. We compute
	\begin{align*}
		v_1v_2+v_2v_1 = & \dfrac{p_{D_1D_2}+p_{D_1}\phi_2(\sqrt{D_2})+p_{D_2}\phi_1(\sqrt{D_1})}{2}\\
		& + \dfrac{\phi_1(\sqrt{D_1})\phi_2(\sqrt{D_2})+\phi_2(\sqrt{D_2})\phi_1(\sqrt{D_1})}{4}\\
		= & p_{D_1}v_2+p_{D_2}v_1+\dfrac{x-p_{D_1D_2}}{2},
	\end{align*}
	whence $v_2v_1$ lies in $\Ord$, as claimed. 
	
	The fact that $\Ord$ is an order will follow from computing its reduced discriminant, and seeing that it is non-zero. To ease our calculations, write
	\[\left(\begin{matrix}1\\ \phi_1(\sqrt{D_1})\\ \phi_2(\sqrt{D_2}) \\ \phi_1(\sqrt{D_1})\phi_2(\sqrt{D_2})\end{matrix}\right)
	=\left(\begin{matrix} 1 & 0 & 0 & 0 \\ -p_{D_1} & 2 & 0 & 0 \\ -p_{D_2} & 0 & 2 & 0 \\ p_{D_1}p_{D_2} & -2p_{D_2} & -2p_{D_1} & 4 \end{matrix}\right)
	\left(\begin{matrix} 1\\ v_1\\ v_2 \\ v_1v_2\end{matrix}\right),
	\]
	and we have the equation 
	\[d(1,\phi_1(\sqrt{D_1}), \phi_2(\sqrt{D_2}),\phi_1(\sqrt{D_1})\phi_2(\sqrt{D_2}))=\det(M)^2 d(1,v_1,v_2,v_1v_2),\]
	where $M$ is the transition matrix above. We compute $\det(M)=16$ and 
	\begin{align*}
		d(1,\phi_1(\sqrt{D_1}), \phi_2(\sqrt{D_2}),\phi_1(\sqrt{D_1})\phi_2(\sqrt{D_2})) & = \det\left(\begin{matrix} 2 & 0 & 0 & 2x\\ 0 & 2D_1 & 2x & 0\\ 0 & 2x & 2D_2 & 0\\ 2x & 0 & 0 & 4x^2-2D_1D_2\end{matrix}\right)\\
		& = -16(D_1D_2-x^2)^2.
	\end{align*}
	Since $\discrd(\Ord)^2=-d(1,v_1,v_2,v_1v_2)$, the reduced discriminant is as claimed (and is non-zero by the assumption of $x^2\neq D_1D_2$).
	
	It is immediate that $\Ord$ is the smallest order for which $\phi_1,\phi_2$ embed into, as such an order must contain $\{1,v_1,v_2\}$, and $\Ord$ is generated as an $\mathcal{O}_F$ algebra by these elements.
\end{proof}

Lemma \ref{lem:genorder} has some historical connections. The proof of Theorem 2' in \cite{Kaneko89} details a similar computation in a definite quaternion algebra. Furthermore, as noted by Gross, this definite computation leads to a simple argument that a prime $p$ dividing $\Nm(j(\tau_1)-j(\tau_2))$ must satisfy $p\mid\frac{D_1D_2-x^2}{4}$ for $x^2<D_1D_2$ (see Theorem 65 and Proposition 66 of \cite{franCMnotes} for the full argument).

Our first application of Lemma \ref{lem:genorder} is to show that $x-$linked pairs of embeddings can be detected locally.

\begin{lemma}\label{lem:localglobalxlinking}
\begin{sloppypar}
	Let $B$ be an indefinite quaternion algebra over $\QQ$, let $\Ord$ be an Eichler order in $B$, and let $(D_1, D_2, x)$ be an admissible triple. Then the set $\Emb(\Ord,D_1,D_2,x)$ is non-empty if and only if $\Emb(\Ord_p,D_1,D_2,x)$ is non-empty for all finite primes $p$.
\end{sloppypar}
\end{lemma}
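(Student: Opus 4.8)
The plan is to prove a local-global statement for the existence of $x$-linked optimal embedding pairs, and the natural strategy is to combine the local-global correspondence for orders (Theorem \ref{thm:JVlocalglobal}) with the global-to-local decomposition of embedding counts already established (Proposition \ref{prop:countembD}). The forward direction is immediate: if $(\phi_1,\phi_2)$ is a globally $x$-linked pair of optimal embeddings into $\Ord$, then completing at each prime $p$ yields $x$-linked optimal embeddings $(\phi_{1,p},\phi_{2,p})$ into $\Ord_p$, since optimality is a local condition (stated in Section \ref{sec:optemb}) and the trace condition defining $x$-linking is preserved under base change to $\Q_p$. So the content is entirely in the reverse direction.

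For the reverse direction, suppose we are given $x$-linked optimal embedding pairs into $\Ord_p$ for every finite prime $p$. The plan is to first produce a single \emph{global} $x$-linked pair of embeddings $(\psi_1,\psi_2)$ into $B$ (ignoring the order), and then to show that after conjugating appropriately, this global pair lands optimally inside $\Ord$. To get the global pair, I would invoke Corollary \ref{cor:allpairsconj}: any two $x$-linked pairs of embeddings of $\qord{D_1},\qord{D_2}$ into $B$ are simultaneously conjugate over $B^{\times}$, and admissibility of $(D_1,D_2,x)$ guarantees (via Lemma \ref{lem:genorder}, whose discriminant computation shows $\Ord_{\phi_1,\phi_2}$ is a genuine order precisely when $x^2 \neq D_1D_2$) that such a pair exists at all. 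Concretely, pick any global embeddings $\psi_1,\psi_2$ of the two orders realizing the correct trace condition $x = \tfrac12\trd(\psi_1(\sqrt{D_1})\psi_2(\sqrt{D_2}))$; the existence over $\Q$ follows because $B$ splits/ramifies compatibly at each place by the local hypotheses, and Corollary \ref{cor:allpairsconj} then shows this pair is, locally at each $p$, conjugate to the given local optimal pair.

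The main obstacle is the optimality and the order-membership at \emph{all} places simultaneously: having a global pair $(\psi_1,\psi_2)$ does not yet mean it embeds optimally into the \emph{fixed} Eichler order $\Ord$, only into the minimal order $\Ord_{\psi_1,\psi_2}$ from Lemma \ref{lem:genorder}. The strategy here is to work place by place. At each finite $p$, the local hypothesis gives an $x$-linked optimal pair into $\Ord_p$; by Corollary \ref{cor:allpairsconj} (applied over $\Q_p$) this local pair is conjugate by some $g_p \in B_p^\times$ to the completion $(\psi_{1,p},\psi_{2,p})$, and hence $(\psi_{1,p},\psi_{2,p})$ embeds optimally into the conjugate order $g_p^{-1}\Ord_p g_p$. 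This produces, for each $p$, a local Eichler order $\Ord_p'$ of the correct level into which $\psi_1,\psi_2$ embed optimally, with $\Ord_p' = \Ord_p$ for almost all $p$ (since at primes away from $\mathfrak{D}\mathfrak{M}$ and the relevant discriminant factors the conjugation is inessential). I would then glue these $\Ord_p'$ via Theorem \ref{thm:JVlocalglobal} into a global Eichler order $\Ord'$, necessarily conjugate to $\Ord$ because all Eichler orders of a given level in an indefinite $B$ are conjugate (strong approximation, as recalled in Section \ref{sec:qback}). Transporting $(\psi_1,\psi_2)$ by that global conjugating element produces an $x$-linked optimal pair into $\Ord$ itself, giving a nonempty $\Emb(\Ord,D_1,D_2,x)$. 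The delicate bookkeeping is ensuring the conjugations agree up to the single global conjugacy and that optimality (a local statement) is preserved by the gluing, which is exactly what Theorem \ref{thm:JVlocalglobal} together with the locality of optimality delivers.
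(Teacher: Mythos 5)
Your overall architecture is the same as the paper's: the forward direction by localization, and the reverse direction by producing one global $x$-linked pair, conjugating it at each prime onto the given local optimal pairs via Corollary \ref{cor:allpairsconj}, gluing the resulting collection of conjugated local orders through Theorem \ref{thm:JVlocalglobal}, and finishing with the fact that all Eichler orders of a given level in an indefinite $B$ are conjugate. Your bookkeeping at the end (trivial conjugation at almost all primes, and using the given local pairs at primes dividing $D_1D_2$ so that optimality, a purely local condition, is inherited by the glued order) is also exactly the paper's choice of $s_p$.

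The genuine gap is at the first move: the existence of the global $x$-linked pair $(\psi_1,\psi_2)$. You attribute it to Corollary \ref{cor:allpairsconj} together with admissibility ``via Lemma \ref{lem:genorder}'', but neither result produces a pair: Corollary \ref{cor:allpairsconj} only compares two pairs that are already given, and Lemma \ref{lem:genorder} only says that the lattice generated by an already existing pair is an order. Admissibility alone cannot suffice either: by Theorem \ref{thm:onequatalgxlink}, exactly one quaternion algebra over $\Q$ admits $x$-linked embeddings of $\qord{D_1},\qord{D_2}$ for a given admissible triple, so in a generic $B$ no such pair exists even though the triple is admissible. Your sentence that ``the existence over $\Q$ follows because $B$ splits/ramifies compatibly at each place by the local hypotheses'' is precisely the assertion that requires proof, and it is the one genuinely global input of the lemma. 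The paper supplies it as follows: nonemptiness of the local sets together with Proposition \ref{prop:countembD} yields a global embedding $\phi_1$ of $\qord{D_1}$ into $B$; Corollary \ref{cor:optembgoestoi} normalizes coordinates so that $\phi_1(\sqrt{D_1})=i$; writing $\phi_2(\sqrt{D_2})=fi+gj+hk$, the trace condition forces $f=\frac{x}{D_1}$, and the existence of $\phi_2$ reduces to the solubility in $g,h$ over $\Q$ of the quadratic equation
\[\nrd\left(\frac{x}{D_1}\,i+gj+hk\right)+D_2=0,\]
which follows from the Hasse--Minkowski principle because it is soluble over $\R$ (indefiniteness of $B$) and over every $\Q_p$ (the local hypotheses). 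Without this Hasse-principle step your argument never obtains the pair on which all subsequent conjugation and gluing operates; the remaining steps of your proposal are sound and coincide with the paper's proof.
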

\begin{proof}
	If such a pair $(\phi_1,\phi_2)\in\Emb(\Ord,D_1,D_2,x)$ exists, then the corresponding maps to the completions gives elements of $\Emb(\Ord_p,D_1,D_2,x)$ for all $p$. 
	
	To prove the opposite direction, assume that $(\alpha_p,\beta_p)\in\Emb(\Ord_p,D_1,D_2,x)$ for all $p$. A consequence of Proposition \ref{prop:countembD} is that there exists an embedding $\phi_1$ of $\mathcal{O}_{D_1}$ into $B$. By Corollary \ref{cor:optembgoestoi}, we can assign coordinates so that $\phi_1(\sqrt{D_1})=i$. In this case, we are considering the existence of a map $\phi_2$ such that $\phi_2(\sqrt{D_2})=fi+gj+hk$, where
	\[\nrd(fi+gj+hk)=-D_2\qquad\text{and}\qquad 2x=\trd(i(fi+gj+hk))=2fD_1.\]
	With the substitution of $f=\frac{x}{D_1}$, the equation $\nrd\left(\frac{x}{D_1}i+gj+hk\right)+D_2=0$ is a quadratic form in $g,h$. This will have a solution in $\RR$ since $B$ is indefinite, and it will have a solution in $\QQ_p$ for all $p$ since $\Emb(\Ord_p,D_1,D_2,x)$ is non-empty. By Hasse's principle, it has a solution over $\QQ$; let the corresponding map be $\phi_2$.
	
	Following Lemma \ref{lem:genorder}, let $\Ord'=\Ord_{\phi_1,\phi_2}$ be the smallest order for which $\phi_1,\phi_2$ embed into. By Corollary \ref{cor:allpairsconj}, for all finite primes $p$ there exists an $r_p\in B_p^{\times}$ for which $r_p(\alpha_p,\beta_p)r_p^{-1}=(\phi_{1,p},\phi_{2,p})$. By the definition of $\Ord'$, it follows that $\Ord_p'\subseteq r_p\Ord_p r_p^{-1}$. For all primes $p$,
	\begin{itemize}
		\item let $s_p=r_p$ if $\Ord_p'\neq\Ord_p$ or $p\mid D_1D_2$;
		\item let $s_p=1$ otherwise.
	\end{itemize}
	Consider the sequence of local orders $\{s_p\Ord_p s_p^{-1}\}_p$. Since $\Ord_p'=\Ord_p$ holds for all but finitely many primes, by Theorem \ref{thm:JVlocalglobal} there exists an order $\Ord''$ of $B$ which completes to $s_p\Ord_p s_p^{-1}$ for all primes $p$. In particular, we note that $\Ord''$ is an Eichler order of level $\mathfrak{M}$, and $\phi_1,\phi_2$ give embeddings into $\Ord''$. When $p\mid D_1D_2$ the local embeddings are optimal since $(\alpha_p,\beta_p)$ were optimal, hence $\phi_1,\phi_2$ are optimal embeddings into $\Ord''$. Since all Eichler orders of the same level are conjugate, let $r\Ord''r^{-1}=\Ord$, and then $[r(\phi_1,\phi_2)r^{-1}]\in\Emb(\Ord,D_1,D_2,x)$, as required.
\end{proof}

In particular, the non-emptyness of $\Emb(\Ord,D_1,D_2,x)$ can be studied locally.

\subsection{Local x-linking}
While we were concerned with orders in Lemma \ref{lem:localglobalxlinking}, we will drop this for now and instead consider embeddings into the entire quaternion algebra.

\begin{definition}
	Let $(D_1, D_2, x)$ be an admissible triple, and define $\Emb(B,D_1,D_2,x)$ to be the set of all pairs $(\phi_1,\phi_2)$ of $x-$linked embeddings of discriminants $D_1,D_2$ into $B$.
\end{definition}

Note that Lemma \ref{lem:localglobalxlinking} also applies to the sets $\Emb(B,D_1,D_2,x)$ and $\Emb(B_p,D_1,D_2,x)$. Our next goal is to determine when $\Emb(B_p,D_1,D_2,x)$ is non-empty. Before getting into these local computations, we require a lemma about the solutions to Pell's equation over $\ZZ_p$.

\begin{lemma}\label{lem:pellmodp}
	Let $p$ be a prime, let $A$ be a non-zero integer, and let $D$ be a positive discriminant coprime to $p$. Then the equation 
	\begin{equation}\label{eqn:pell}
		X^2-DY^2=A
	\end{equation}
	has a solution $(X,Y)\in\ZZ_p^2$ if and only if one of the following conditions hold:
	\begin{itemize}
		\item $\left(\frac{D}{p}\right)=1$, and if $p=2$ we additionally have $v_2(A)\neq 1$;
		\item $\left(\frac{D}{p}\right)=-1$ and $v_p(A)$ is even.
	\end{itemize}
\end{lemma}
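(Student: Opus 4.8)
The plan is to treat this as a local representation question for the binary quadratic form $q(X,Y) = X^2 - DY^2$, which is the norm form of the $\Z_p$-algebra $\Z_p[\sqrt{D}]$, and to split into cases according to the value of $\left(\frac{D}{p}\right)$ and whether $p$ is odd or $p=2$. Since $D$ is coprime to $p$, the symbol $\left(\frac{D}{p}\right)$ is $\pm 1$; the case $\left(\frac{D}{p}\right)=1$ means $D$ is a square in $\Z_p^{\times}$ (for $p=2$ this is $D\equiv 1\pmod 8$, using that $D\equiv 1\pmod 4$), while $\left(\frac{D}{p}\right)=-1$ means $D$ is a nonsquare unit, so $\Q_p(\sqrt{D})$ is the unramified quadratic extension.

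First I would handle the split case $\left(\frac{D}{p}\right)=1$. Writing $D=d^2$ with $d\in\Z_p^{\times}$, the form factors as $q(X,Y)=(X-dY)(X+dY)$, and the substitution $u=X-dY$, $v=X+dY$ is invertible over $\Z_p[\tfrac12]$. For odd $p$ the factor of $2$ is a unit, so $(X,Y)\mapsto(u,v)$ is a bijection of $\Z_p^2$ and $q$ represents every $A$ (take $u=A$, $v=1$); this gives unconditional solvability. For $p=2$ one needs $u\equiv v\pmod 2$ for $(X,Y)$ to be integral, so solvability is equivalent to factoring $A=uv$ with $u\equiv v\pmod 2$; since $v_2(u)+v_2(v)=v_2(A)$, such a factorization exists exactly when $v_2(A)\neq 1$ (both factors odd when $v_2(A)=0$, both even when $v_2(A)\geq 2$, and the parities are forced to differ when $v_2(A)=1$). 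This is the source of the extra $2$-adic condition.

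Next I would treat the inert case $\left(\frac{D}{p}\right)=-1$, where the claim is that solvability is equivalent to $v_p(A)$ being even. For necessity I would show $v_p(q(X,Y))$ is always even: if $v_p(X)\neq v_p(Y)$ then $v_p(q)=2\min(v_p(X),v_p(Y))$, while if $v_p(X)=v_p(Y)=m$ one factors out $p^{2m}$ and checks that the unit part does not vanish modulo $p$ --- precisely because $D$ is a nonsquare unit, $x^2\equiv Dy^2\pmod p$ is impossible when $x,y$ are not both divisible by $p$. For $p=2$ this last step is replaced by a computation modulo $8$: with $D\equiv 5\pmod 8$ and $x,y$ odd one gets $x^2-Dy^2\equiv 1-5\equiv 4\pmod 8$, so the valuation jumps by exactly $2$ and stays even. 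For sufficiency, write $v_p(A)=2k$; it then suffices to represent the unit $A/p^{2k}$ and scale by $p^k$. For odd $p$ the reduction of $q$ modulo $p$ is the anisotropic norm form of $\mathbb{F}_{p^2}/\mathbb{F}_p$, hence surjective onto $\mathbb{F}_p^{\times}$, and Hensel's lemma lifts any such representation (the gradient is nonzero away from the origin). For $p=2$ a direct check shows $q$ attains every odd residue modulo $8$, and $2$-adic Hensel's lemma again lifts.

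The main obstacle, and the part requiring genuine care, is the prime $p=2$ in both subcases: the appearance of the exceptional condition $v_2(A)\neq 1$ in the split case, and the fact that in the inert case $\Z_2[\sqrt{D}]$ is not the maximal order, so the clean ``nonsquare unit'' valuation argument must be replaced by an explicit computation modulo $8$ both to see that every represented value has even valuation and that every odd unit is represented. Everything for odd $p$ is routine given the norm-form interpretation; it is only the dyadic bookkeeping that needs to be carried out by hand.
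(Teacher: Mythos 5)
Your proposal is correct, but it reaches the lemma by a genuinely different route than the paper in the two places where the content lies. In the split case $\left(\frac{D}{p}\right)=1$, the paper never factors the form: for odd $p$ it runs a pigeonhole count on quadratic residues (the $\frac{p+1}{2}$ values $DY^2+A$ cannot all be non-residues) followed by a Hensel lift, handling $v_p(A)>0$ separately, and for $p=2$ it enumerates $X^2-DY^2$ modulo $8$ and splits into the cases $e=0$, $e=2$, $e\geq 3$ with an ad hoc division by $4$. Your observation that $D$ is a square in $\Z_p^{\times}$ (for $p=2$ because $D\equiv 1\pmod 4$ together with the Kronecker condition forces $D\equiv 1\pmod 8$), so that the question becomes the existence of a factorization $A=uv$ with $u\equiv v\pmod 2$ when $p=2$, is cleaner: it disposes of all valuations of $A$ at once, needs no Hensel for odd $p$, and explains the exceptional condition $v_2(A)\neq 1$ structurally (the parities of $v_2(u)$ and $v_2(v)$ are forced to differ) rather than by inspection of residues mod $8$. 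In the inert case, your necessity argument --- $v_p(X^2-DY^2)$ is always even, by comparing $v_p(X)$ with $v_p(Y)$ and, at $p=2$, using the exact jump $1-5\equiv 4\pmod 8$ --- is a tidier packaging of the paper's descent (``$p\mid X,Y$, repeat''), and for sufficiency you invoke surjectivity of the norm $\mathbb{F}_{p^2}^{\times}\rightarrow\mathbb{F}_p^{\times}$ where the paper proves the same fact self-containedly by iterating ``a QNR plus $A$ is again a QNR'' $p$ times; only your dyadic inert computation (all odd classes mod $8$ attained, lift, scale by $2^k$) coincides with the paper's. One point to make explicit in a final write-up: the Hensel lifts at $p=2$ should be reduced to one variable (freeze the even coordinate and solve for the odd one), where a solution modulo $8$ suffices because the derivative has $2$-adic valuation $1$; your proposal gestures at this, and it is precisely the bookkeeping the paper carries out in detail.
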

\begin{proof}
	If $\left(\frac{D}{p}\right)=1$, then $\sqrt{D}\in\ZZ_p$ (noting that if $p=2$ then $D\equiv 1\pmod{8}$ as it is a discriminant). By factoring $(X-\sqrt{D}Y)(X+\sqrt{D}Y)=A=uv$, this will always have a solution if $p$ is odd. If $p=2$, then we require $u$ and $v$ to have opposite parity, which gives $v_2(A)\neq 1$.
	
	Otherwise, $\QQ_p(\sqrt{D})$ is the unramified degree $2$ extension of $\QQ_p$, and $X^2-DY^2$ is the norm form from $\ZZ_p[\sqrt{D}]$ to $\ZZ_p$. The result follows, e.g. by Chapter 2, Section 4 of \cite{Lang94}.
\end{proof}

We start the local calculations by considering the division algebra case. Recall the Hilbert symbol $(a,b)_p$, which has an alternate characterization via Hilbert's criterion: $(a,b)_p=1$ if and only if $ax^2+by^2=1$ has solutions with $x,y\in\mathbb{Q}_p$ (see Section 12.4 of \cite{JV21}).

\begin{lemma}\label{lem:localdivisionxlinking}
\begin{sloppypar}
	Let $(D_1, D_2, x)$ be an admissible triple, and let $B$ be the division algebra over $\QQ_p$.Then $\Emb(B,D_1,D_2,x)$ is non-empty if and only if
	\[(D_1,x^2-D_1D_2)_p=-1.\]
	If $p\nmid D_1$, this is equivalent to
	\[\left(\frac{D_1}{p}\right)=-1 \text{ and } v_p\left(\dfrac{D_1D_2-x^2}{4}\right)\text{ is odd.}\]
\end{sloppypar}
\end{lemma}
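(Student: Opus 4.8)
\emph{The plan is to} reduce both equivalences to identifying $B$ as a specific quaternion algebra. For the first equivalence I would work intrinsically inside $B$. Suppose $(\phi_1,\phi_2)\in\Emb(B,D_1,D_2,x)$, and set $i=\phi_1(\sqrt{D_1})$, $w=\phi_2(\sqrt{D_2})$ and $T=iw$; exactly as in the proof of Proposition \ref{prop:optembint}, $T$ satisfies $T^2-2xT+D_1D_2=0$, so $S:=T-x$ is trace zero with $S^2=x^2-D_1D_2\neq 0$. The key computation is that $i$ and $S$ anticommute: both are trace zero, and for trace-zero elements $u,v$ one has $uv+vu=\trd(uv)$, while $\trd(iS)=\trd(i(iw-x))=D_1\trd(w)=0$. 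Since $i,S$ are anticommuting trace-zero elements with nonzero squares $D_1$ and $x^2-D_1D_2$, they generate a subalgebra isomorphic to $\left(\frac{D_1,\,x^2-D_1D_2}{\Q_p}\right)$; as this is a central simple algebra of dimension $4$ it must equal $B$, and because $B$ is division this forces $(D_1,x^2-D_1D_2)_p=-1$.

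For the converse I would run the argument backwards. If $(D_1,x^2-D_1D_2)_p=-1$, then $B\cong\left(\frac{D_1,\,x^2-D_1D_2}{\Q_p}\right)$, with standard generators $i,S$ satisfying $i^2=D_1$, $S^2=x^2-D_1D_2$ and $iS=-Si$. Setting $\phi_1(\sqrt{D_1})=i$ and $w:=i^{-1}(x+S)$, one checks directly that $\trd(iw)=\trd(x+S)=2x$, that $\nrd(iw)=x^2-S^2=D_1D_2$ so $\nrd(w)=-D_2$, and that $\trd(w)=0$ (using $\trd(iS)=0$); hence $\phi_2(\sqrt{D_2})=w$ defines an embedding and $(\phi_1,\phi_2)$ is an $x$-linked pair. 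This establishes $\Emb(B,D_1,D_2,x)\neq\emptyset\iff(D_1,x^2-D_1D_2)_p=-1$.

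For the second equivalence, under the hypothesis $p\nmid D_1$ I would simply evaluate the Hilbert symbol using the explicit local formulas. When $p$ is odd and $v_p(D_1)=0$, the standard formula gives $(D_1,x^2-D_1D_2)_p=\left(\frac{D_1}{p}\right)^{v_p(x^2-D_1D_2)}$, and since $v_p(x^2-D_1D_2)=v_p\!\left(\frac{D_1D_2-x^2}{4}\right)$ for odd $p$, this is $-1$ exactly when $\left(\frac{D_1}{p}\right)=-1$ and that valuation is odd. The case $p=2$ is the fiddliest point: here $2\nmid D_1$ forces $D_1\equiv 1\pmod 4$, so the cross term in the dyadic Hilbert symbol formula vanishes and one is left with $(D_1,x^2-D_1D_2)_2=\left(\frac{D_1}{2}\right)^{v_2(x^2-D_1D_2)}$; since $\left(\frac{D_1}{2}\right)^2=1$, replacing $v_2(x^2-D_1D_2)$ by $v_2\!\left(\frac{D_1D_2-x^2}{4}\right)=v_2(x^2-D_1D_2)-2$ leaves the value unchanged, yielding the claimed criterion.

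I expect the only genuine obstacle to be conceptual rather than computational: spotting that $S=\phi_1(\sqrt{D_1})\phi_2(\sqrt{D_2})-x$ is the right trace-zero element anticommuting with $\phi_1(\sqrt{D_1})$, which instantly pins down $B$ as $\left(\frac{D_1,\,x^2-D_1D_2}{\Q_p}\right)$. Everything after that is bookkeeping, with the $p=2$ Hilbert symbol evaluation the one place requiring care because of the factor of $4$.
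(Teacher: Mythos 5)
Your proposal is correct, but it takes a genuinely different route from the paper's. For the first equivalence, the paper fixes coordinates via Corollary \ref{cor:optembgoestoi} (writing $B=\left(\frac{D_1,e}{\Q_p}\right)$ with $\phi_1(\sqrt{D_1})=i$, after first disposing of the case where $\qord{D_1}$ does not embed at all), reduces the existence of $\phi_2$ to solvability of $g^2-D_1h^2=\frac{D_1D_2-x^2}{eD_1}$ over $\Q_p$, and converts that---after a small Hensel perturbation to arrange $g\neq 0$---into Hilbert's criterion. You instead identify $B$ structurally: the trace-zero element $S=\phi_1(\sqrt{D_1})\phi_2(\sqrt{D_2})-x$ anticommutes with $\phi_1(\sqrt{D_1})$ and has square $x^2-D_1D_2$, so the pair generates all of $B$ and forces $B\cong\left(\frac{D_1,\,x^2-D_1D_2}{\Q_p}\right)$; your converse construction $\phi_2(\sqrt{D_2})=i^{-1}(x+S)$ is exactly the explicit pair the paper records only later, in Remark \ref{rem:wlogD1D2xlink}. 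This buys uniformity: no case split on whether $\qord{D_1}$ embeds, no Hensel trick, and the same argument simultaneously yields the split case (Lemma \ref{lem:localnondivisionxlinking}) and Corollary \ref{cor:onlyonexlinked}, since the identification of $B$ with the symbol algebra does not care whether $B$ is division. For the second equivalence, the paper re-derives solubility integrally via its Pell lemma (Lemma \ref{lem:pellmodp}), which it reuses elsewhere, whereas you evaluate $(D_1,x^2-D_1D_2)_p$ by the standard explicit local formulas; your $p=2$ computation is right, since $2\nmid D_1$ forces $D_1\equiv 1\pmod 4$, killing the cross term, and shifting the valuation by $2$ leaves the value unchanged because $\left(\frac{D_1}{2}\right)^2=1$. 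The one point you should make explicit is the normalization that $\phi_i(\sqrt{D_i})$ has reduced trace zero (equivalently, that local embeddings are injective on $\Q(\sqrt{D_i})\otimes\Q_p$ and hence have noncentral image): your computations of $\trd(iS)$ and of the minimal polynomial of $T=\phi_1(\sqrt{D_1})\phi_2(\sqrt{D_2})$ both use it, though this convention is standard and is equally implicit in the paper's appeal to Corollary \ref{cor:optembgoestoi}.
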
 
\begin{proof}
	If there does not exist an embedding of $\qord{D_1}$ into $B$, then the same is true for $\qord{D_1^{\fund}}$. By Proposition \ref{prop:countlocalfactors}ii, $\left(\frac{D_1^{\fund}}{p}\right)=1$, and therefore by Hilbert's criterion, $(D_1,N)_p=1$ for all $N\neq 0$. In particular, $(D_1,x^2-D_1D_2)_p\neq -1$, as desired.
	
	Otherwise, by Corollary \ref{cor:optembgoestoi} we can write $B=\left(\frac{D_1,e}{\QQ_p}\right)$ for some non-zero $e\in\ZZ_p$, where $\phi_1(\sqrt{D_1})=i$ and $(D_1,e)_p=-1$. Writing $\phi_2(\sqrt{D_2})=fi+gj+hk$, it suffices to solve the equations
	\[D_1f^2+eg^2-D_1eh^2=D_2,\qquad x=fD_1.\]
	Therefore $f=\frac{x}{D_1}$, and the first equation rearranges to 
	\begin{equation}\label{eqn:localdivisionxlink}
		g^2-D_1h^2=\dfrac{D_1D_2-x^2}{eD_1}.
	\end{equation}
	If this has a solution with $h=h_1$, then by Hensel's lemma there will be a solution with $h=h_1+p^k$ for large enough $k$. In particular, they correspond to distinct $g$'s, so we can solve the equation with the assumption that $g\neq 0$. Equation \eqref{eqn:localdivisionxlink} then rearranges to
	\[D_1(h/g)^2+\dfrac{D_1D_2-x^2}{eD_1}(1/g)^2=1,\]
	which is in the format of Hilbert's criterion. The properties of the Hilbert symbol imply that
	\[1=\left(D_1,\frac{D_1D_2-x^2}{eD_1}\right)_p=(D_1,(x^2-D_1D_2)e)_p=-(D_1,x^2-D_1D_2)_p,\]
	from which the first result follows.
	
	If $p\nmid D_1$, then $\left(\frac{D_1}{p}\right)=-1$, and we claim that $v_p(e)$ is odd. If $p$ is odd this follows immediately, since $(a,b)_p=1$ if $p\nmid ab$. If $p=2$, then $D_1\equiv 5\pmod{8}$, and this follows by computing $(D_1,b)_2$ for all $b\in \QQ_2^{\times}/Q_2^{\times 2}$, and seeing that $(D_1,b)_2=1$ if $v_2(b)$ is even (see Table 12.4.16 of \cite{JV21} for this computation).
	
	Scaling Equation \eqref{eqn:localdivisionxlink} by powers of $p$, it is equivalent to solve
	\begin{equation}\label{eqn:localdivisionsuff}
		g^2-D_1h^2=\dfrac{D_1D_2-x^2}{eD_1}p^{2r},
	\end{equation}
	for $r\geq 0$ and $g,h\in\ZZ_p$. Lemma \ref{lem:pellmodp} implies that Equation \eqref{eqn:localdivisionsuff} has a solution if and only if \[v_p\left(\dfrac{D_1D_2-x^2}{eD_1}p^{2r}\right)\]
	is even, which is equivalent to our condition.
\end{proof}

Now we consider non-division algebras.

\begin{lemma}\label{lem:localnondivisionxlinking}
	Let $(D_1, D_2, x)$ be an admissible triple. Then $\Emb(\Mat(2,\QQ_p),D_1,D_2,x)$ is non-empty if and only if 
	\[(D_1,x^2-D_1D_2)_p=1.\]
	If $p\nmid D_1$ this is equivalent to either
	\[\left(\dfrac{D_1}{p}\right)=1\qquad\text{ or }\qquad \left(\frac{D_1}{p}\right)=-1\text{ and } v_p\left(\frac{D_1D_2-x^2}{4}\right)\text{ is even.}\]
\end{lemma}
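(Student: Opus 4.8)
The plan is to run the argument of Lemma \ref{lem:localdivisionxlinking} essentially verbatim, flipping the single sign that distinguishes the split algebra from the division algebra. The one structural simplification is that, unlike in the division case, an embedding of $\qord{D_1}$ into $\Mat(2,\Q_p)$ always exists: the split algebra contains an element $i$ with $i^2 = D_1$ (for instance $\sm{0}{1}{D_1}{0}$), so there is no need to dispose of the ``no embedding of $\qord{D_1}$'' edge case that opens the division proof. Consequently I may always invoke Corollary \ref{cor:optembgoestoi} to fix coordinates in which $B = \left(\frac{D_1, e}{\Q_p}\right)$ for some nonzero $e \in \Z_p$ with $\phi_1(\sqrt{D_1}) = i$. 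Since $B$ is now the \emph{split} algebra, $(D_1, e)_p = 1$; this is the only place the argument diverges from the division case.

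Next I would parametrize the second embedding by $\phi_2(\sqrt{D_2}) = fi + gj + hk$. Imposing that this be a trace-zero element of reduced norm $-D_2$, together with the $x$-linking relation $x = \frac{1}{2}\trd(\phi_1(\sqrt{D_1})\phi_2(\sqrt{D_2}))$, forces $f = x/D_1$ and reduces the problem, exactly as in Equation \ref{eqn:localdivisionxlink}, to solving
\[
g^2 - D_1 h^2 = \frac{D_1 D_2 - x^2}{e D_1}
\]
over $\Q_p$. The same Hensel-lemma maneuver that allows us to assume $g \neq 0$ then recasts solvability, via Hilbert's criterion, as $\left(D_1, \frac{D_1 D_2 - x^2}{e D_1}\right)_p = 1$. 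The remaining step is pure Hilbert-symbol bookkeeping: using bilinearity, the identity $(D_1, D_1)_p = (D_1, -1)_p$, and $(D_1, e)_p = 1$, the left-hand symbol collapses to $(D_1, x^2 - D_1 D_2)_p$, and setting this equal to $1$ gives the first assertion. In the division case the identical collapse produced $-(D_1, x^2 - D_1 D_2)_p$ because there $(D_1, e)_p = -1$; this is precisely the sign flip, and it also shows cleanly that (since $x^2 \neq D_1 D_2$) $x$-linked pairs exist in exactly one of the two local algebras.

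For the reformulation when $p \nmid D_1$, I would argue that the stated disjunction is exactly the complement of the division-case condition. If $\left(\frac{D_1}{p}\right) = 1$ then $D_1$ is a square in $\Q_p$, so $(D_1, \cdot)_p \equiv 1$ and the criterion holds unconditionally; if $\left(\frac{D_1}{p}\right) = -1$ then $\Q_p(\sqrt{D_1})$ is the unramified quadratic extension, whence $(D_1, c)_p = (-1)^{v_p(c)}$ and the criterion holds iff $v_p(x^2 - D_1 D_2)$ is even, a parity unchanged by dividing by $4$. Equivalently, one can feed the displayed Pell equation directly into Lemma \ref{lem:pellmodp}.

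I do not anticipate a serious obstacle, since the entire lemma is Lemma \ref{lem:localdivisionxlinking} with one sign reversed; the two things to get right are tracking that sign through the Hilbert-symbol computation and confirming that the $\qord{D_1}$-embedding edge case genuinely disappears in the split setting. The only delicate point is the prime $p = 2$ in the reformulation, where ``$\left(\frac{D_1}{2}\right) = 1$'' must be read as $D_1 \equiv 1 \pmod 8$ (squareness in $\Q_2$) and the parity comparison between $v_2(D_1 D_2 - x^2)$ and $v_2\!\left(\frac{D_1 D_2 - x^2}{4}\right)$ must be verified; but this is already subsumed by the $p = 2$ analysis carried out in Lemma \ref{lem:pellmodp}.
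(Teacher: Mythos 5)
Your proposal is correct and is essentially the paper's approach: the paper's proof also fixes the first embedding (taking $\phi_1(\sqrt{D_1})=\sm{0}{D_1}{1}{0}$ and parametrizing $\phi_2(\sqrt{D_2})$ as a trace-zero matrix $\sm{e}{f}{g}{-e}$ in $\Mat(2,\Q_p)$), reduces to the same norm equation $X^2-D_1Y^2=\frac{D_1D_2-x^2}{D_1}$, and finishes exactly as in Lemma \ref{lem:localdivisionxlinking} via Hilbert's criterion and Lemma \ref{lem:pellmodp}. The only (cosmetic) divergence is that the explicit matrix model avoids the auxiliary parameter $e$ altogether, so the paper never needs your observations that $(D_1,e)_p=1$ or that $v_p(e)$ is even in the unramified case; your norm-group evaluation $(D_1,c)_p=(-1)^{v_p(c)}$, together with the correct reading of $\left(\frac{D_1}{2}\right)=1$ as $D_1\equiv 1\pmod 8$, is a valid substitute for that bookkeeping.
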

\begin{proof}
	Since embeddings of a fixed discriminant are all conjugate over $B^{\times}$, we can fix the first embedding to be $\phi_1(\sqrt{D_1})=\sm{0}{D_1}{1}{0}$, and write $\phi_2(\sqrt{D_2})=\sm{e}{f}{g}{-e}\in\Mat(2,\QQ_p)$. We will have a solution if and only if
	\[e^2+fg=D_2,\qquad D_1g+f=2x.\]
	This implies that $f=2x-D_1g$, and plugging this into the first equation and rearranging gives
	\[e^2-D_1\left(g-\dfrac{x}{D_1}\right)^2=\dfrac{D_1D_2-x^2}{D_1}.\]
	Let $X=e$ and $Y=g-\frac{x}{D_1}$, and then the equation is
	\[X^2-D_1Y^2=\dfrac{D_1D_2-x^2}{D_1}.\]
	The rest of the proof is analogous to Lemma \ref{lem:localdivisionxlinking}, where Lemma \ref{lem:pellmodp} completes the characterization of the solubility when $p\nmid D_1$.
\end{proof}

Lemmas \ref{lem:localdivisionxlinking} and \ref{lem:localnondivisionxlinking} immediately imply the following corollary

\begin{corollary}\label{cor:onlyonexlinked}
	Let $(D_1, D_2, x)$ be an admissible triple, and let $B$ be the division algebra over $\QQ_p$. Then exactly one of $B$ and $\Mat(2,\QQ_p)$ admits $x-$linked embeddings of $\qord{D_1},\qord{D_2}$, and which one is determined by if $(D_1,x^2-D_1D_2)_p$ is $-1$ or $1$, respectively.
\end{corollary}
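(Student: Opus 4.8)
The plan is to read this off directly from the two preceding lemmas together with the classification of quaternion algebras over $\Q_p$. First I would recall that, up to isomorphism, there are exactly two quaternion algebras over $\Q_p$: the split algebra $\Mat(2,\Q_p)$ and the division algebra $B$. Lemma \ref{lem:localdivisionxlinking} tells us that $\Emb(B,D_1,D_2,x)$ is non-empty precisely when $(D_1,x^2-D_1D_2)_p=-1$, while Lemma \ref{lem:localnondivisionxlinking} tells us that $\Emb(\Mat(2,\Q_p),D_1,D_2,x)$ is non-empty precisely when $(D_1,x^2-D_1D_2)_p=1$. Thus both existence questions are governed by the single Hilbert symbol $(D_1,x^2-D_1D_2)_p$.

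The only point that needs checking is that this Hilbert symbol is actually well-defined and takes a definite value in $\{+1,-1\}$. This is where I would invoke admissibility of the triple $(D_1,D_2,x)$: since $D_1$ is a positive discriminant it is a nonzero element of $\Q_p^{\times}$, and admissibility guarantees $x^2\neq D_1D_2$, so $x^2-D_1D_2$ is likewise a nonzero element of $\Q_p^{\times}$. Hence $(D_1,x^2-D_1D_2)_p$ is defined and equals either $+1$ or $-1$.

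Combining these observations completes the argument. The value $(D_1,x^2-D_1D_2)_p$ is exactly one of $\pm 1$; by Lemma \ref{lem:localdivisionxlinking} the division algebra $B$ admits $x$-linked embeddings of $\qord{D_1},\qord{D_2}$ exactly in the case $-1$, and by Lemma \ref{lem:localnondivisionxlinking} the split algebra $\Mat(2,\Q_p)$ admits them exactly in the case $+1$. Since these two algebras exhaust all quaternion algebras over $\Q_p$ and the two cases are mutually exclusive and exhaustive, exactly one of $B$ and $\Mat(2,\Q_p)$ admits such embeddings, with the choice determined by the sign of the Hilbert symbol as claimed.

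There is no genuine obstacle at this stage: all the analytic content—the reduction to a Pell-type equation over $\Z_p$ and the case analysis on $\left(\tfrac{D_1}{p}\right)$ and the parity of $v_p\!\left(\tfrac{D_1D_2-x^2}{4}\right)$—has already been carried out inside Lemmas \ref{lem:localdivisionxlinking} and \ref{lem:localnondivisionxlinking}. The corollary merely repackages their complementary conclusions into the clean dichotomy, so I expect the write-up to be only a few lines.
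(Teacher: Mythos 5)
Your proposal is correct and matches the paper exactly: the paper derives this corollary immediately from Lemmas \ref{lem:localdivisionxlinking} and \ref{lem:localnondivisionxlinking}, whose complementary Hilbert-symbol criteria give the dichotomy. Your extra remark that admissibility ($x^2\neq D_1D_2$, $D_1\neq 0$) ensures the symbol $(D_1,x^2-D_1D_2)_p$ is well-defined is a small point the paper leaves implicit, but it is the same argument.
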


\begin{remark}
	The first half of Lemmas \ref{lem:localdivisionxlinking}, \ref{lem:localnondivisionxlinking} and Corollary \ref{cor:onlyonexlinked} still holds when $p=\infty$, where $\QQ_{\infty}=\RR$.
\end{remark}

\subsection{Global x-linking}

Fix an admissible triple $(D_1, D_2, x)$. Corollary \ref{cor:onlyonexlinked} combined with Lemma \ref{lem:localglobalxlinking} implies that there is precisely one quaternion algebra $B$ over $\QQ$ for which there exist embeddings $\phi_i$ of $\qord{D_i}$ into $B$ that are $x-$linked, and it can be given by
\[\left(\dfrac{D_1,x^2-D_1D_2}{\QQ}\right).\]
We describe the ramification of this quaternion algebra by using a generalization of the $\epsilon$ function (Definition \ref{def:epsilonoriginal}).

\begin{definition}\label{def:epsilon}
	Let $D_1,D_2$ be discriminants, and let $p$ be any prime such that 
	\[p\nmid\gcd(D_1^{\fund},D_2^{\fund})\qquad\text{and}\qquad\left(\frac{D_1^{\fund}D_2^{\fund}}{p}\right)\neq -1.\]
	Define
	\[
	\epsilon(p):=\begin{cases} \left(\dfrac{D_1^{\fund}}{p}\right) & \text{ if $p$ and $D_1^{\fund}$ are coprime;}\\\\
		\left(\dfrac{D_2^{\fund}}{p}\right) & \text{ if $p$ and $D_2^{\fund}$ are coprime.}
	\end{cases}
	\]
\end{definition}

\begin{theorem}\label{thm:onequatalgxlink}
	Let $(D_1, D_2, x)$ be an admissible triple. Then the only quaternion algebra over $\QQ$ that admits $x-$linked embeddings from $\qord{D_1},\qord{D_2}$ is 
	\[B=\left(\dfrac{D_1,x^2-D_1D_2}{\QQ}\right).\]
	Furthermore, let $N=\gcd(D_1^{\fund},D_2^{\fund})$, and factorize
	\[\dfrac{D_1D_2-x^2}{4}=\pm N'\prod_{i=1}^{r}p_i^{2e_i+1}\prod_{i=1}^s q_i^{2f_i}\prod_{i=1}^t w_i^{g_i},\]
	where $N'$ is minimal so that $\frac{D_1D_2-x^2}{4N'}$ is coprime to $N$, $p_i$ are the primes for which $\epsilon(p_i)=-1$ that appear to an odd power, $q_i$ are the primes for which $\epsilon(q_i)=-1$ that appear to an even power, and $w_i$ are the primes for which $\epsilon(w_i)=1$. Then $B$ is ramified at
	\[\{p_1,p_2,\ldots,p_r\}\cup\{p:p\mid N', (D_1,x^2-D_1D_2)_p=-1\}.\]
\end{theorem}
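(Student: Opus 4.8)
The plan is to prove the two assertions separately: first that the listed algebra is the unique one admitting $x$-linked embeddings, and then to compute its ramification prime by prime by evaluating the Hilbert symbol $(D_1,x^2-D_1D_2)_p$.

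\textbf{Identifying $B$.} By Corollary \ref{cor:onlyonexlinked} together with the remark extending it to $p=\infty$, at every place $v$ of $\Q$ exactly one of the two quaternion algebras over $\Q_v$ admits $x$-linked embeddings of $\qord{D_1},\qord{D_2}$, and it is the division algebra precisely when $(D_1,x^2-D_1D_2)_v=-1$. Lemma \ref{lem:localglobalxlinking} (applied to the sets $\Emb(B,\cdot)$, as noted after its statement) says a global $B$ admits such embeddings if and only if all of its completions do; hence the unique admissible $B$ is the one ramified exactly at $\{v:(D_1,x^2-D_1D_2)_v=-1\}$. This set is finite of even size by Hilbert reciprocity, and since $D_1>0$ we have $(D_1,\cdot)_\infty=1$, so $B$ is split at infinity and indefinite. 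By the classification of quaternion algebras over $\Q$ via their ramification, this forces $B=\left(\frac{D_1,x^2-D_1D_2}{\Q}\right)$.

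\textbf{Reducing the ramification computation.} Writing $m=\frac{D_1D_2-x^2}{4}$, the algebra $B$ ramifies at $p$ iff $(D_1,x^2-D_1D_2)_p=-1$, and by Corollary \ref{cor:finitequat} this forces $p\mid m$. Since $N'=\prod_{p\mid N}p^{v_p(m)}$, the primes dividing $N'$ are exactly the $p\mid m$ with $p\mid N$, and these contribute precisely the set $\{p\mid N':(D_1,x^2-D_1D_2)_p=-1\}$ of the statement, with nothing further to prove. So I would next treat $p\mid m$ with $p\nmid N$ (equivalently $p\mid m/N'$) and identify the ramified ones with the $p_i$. The central computation: since the Hilbert symbol is unchanged when an argument is scaled by a square and $D_i/D_i^{\fund}$ and $4$ are squares, $(D_1,x^2-D_1D_2)_p=(D_1^{\fund},-4m)_p$; the identity $x^2-D_1D_2=N_{\Q_p(\sqrt{D_1D_2})/\Q_p}(x+\sqrt{D_1D_2})$ gives $(D_1D_2,x^2-D_1D_2)_p=1$, hence $(D_1,x^2-D_1D_2)_p=(D_2,x^2-D_1D_2)_p$, so (using $p\nmid N$, after possibly swapping $D_1,D_2$) I may assume $p\nmid D_1^{\fund}$. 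For odd $p$ the unit formula yields $(D_1^{\fund},-4m)_p=\left(\frac{D_1^{\fund}}{p}\right)^{v_p(m)}$, and for $p=2$ the same identity with the Kronecker symbol follows from a short $2$-adic evaluation using that the odd fundamental discriminant $D_1^{\fund}$ is $\equiv 1\pmod 4$.

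\textbf{Matching with $\epsilon$.} Next I would observe that $x^2\equiv D_1D_2\pmod p$ forces $\left(\frac{D_1D_2}{p}\right)\neq-1$; when additionally $p\nmid D_1^{\fund}D_2^{\fund}$ and $\left(\frac{D_1^{\fund}D_2^{\fund}}{p}\right)\neq-1$, the symbol $\epsilon(p)$ of Definition \ref{def:epsilon} is defined and equals $\left(\frac{D_1^{\fund}}{p}\right)$, whence $(D_1,x^2-D_1D_2)_p=\epsilon(p)^{v_p(m)}$. This is $-1$ exactly when $\epsilon(p)=-1$ and $v_p(m)$ is odd, i.e. for the primes $p_i$, while the $q_i$ and $w_i$ give $+1$ and do not ramify. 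Assembling the three groups of primes then yields the asserted ramification set.

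\textbf{The edge case and main obstacle.} The one configuration left open is $\left(\frac{D_1^{\fund}D_2^{\fund}}{p}\right)=-1$, where $\epsilon(p)$ is a priori undefined; this is the part I expect to be delicate. Because $x^2\equiv D_1D_2\pmod p$ gives $\left(\frac{D_1D_2}{p}\right)\neq-1$ while $\left(\frac{D_1^{\fund}D_2^{\fund}}{p}\right)=-1$, such a $p$ must divide a conductor $f_i$ (with $D_i=f_i^2D_i^{\fund}$); comparing $p$-adic valuations in $x^2=D_1D_2-4m$, and using that $D_1^{\fund}D_2^{\fund}$ is a nonresidue mod $p$ to rule out $v_p(m)>v_p(D_1D_2)$, forces $v_p(m)$ to be even. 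Then $(D_1,x^2-D_1D_2)_p=\left(\frac{D_1^{\fund}}{p}\right)^{v_p(m)}=1$, so $B$ is split at $p$ and the prime is correctly absent from $\{p_i\}$, keeping the conclusion consistent. The hard part throughout is exactly this bookkeeping separating $D_i$ from $D_i^{\fund}$ at primes dividing the conductors, together with the explicit $2$-adic Hilbert symbol evaluation; the remaining steps are direct applications of the local results already established.
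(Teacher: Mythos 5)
Your proposal is correct, and while it follows the paper's skeleton for the first claim (Corollary \ref{cor:onlyonexlinked} plus the local-global Lemma \ref{lem:localglobalxlinking} and Hilbert reciprocity pin down $B$ exactly as you say), your computation of the ramification takes a genuinely different route. The paper reads off $(D_1,x^2-D_1D_2)_p$ from its local existence lemmas (Lemmas \ref{lem:localdivisionxlinking} and \ref{lem:localnondivisionxlinking}, which rest on the Pell-equation Lemma \ref{lem:pellmodp}), and it disposes of conductor primes $p\mid\gcd(D_1,D_2)$ with $p\nmid D_1^{\fund}$ by the iterated reduction $(D_1,D_2,x)\mapsto(D_1/p^2,D_2,x/p)$ of Lemma \ref{lem:nonfundamentalxlink}, under which the Hilbert symbol is invariant. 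You instead evaluate the symbol directly: square-invariance reduces to $(D_i^{\fund},-4m)_p$ with $m=\frac{D_1D_2-x^2}{4}$, the norm-form identity justifies $(D_1,x^2-D_1D_2)_p=(D_2,x^2-D_1D_2)_p$ (which the paper asserts without proof), and the standard unit formulas at odd $p$ and at $2$ finish. Your route is self-contained and handles all conductor primes in one pass; the paper's reuses machinery it needs anyway and avoids the explicit $2$-adic symbol evaluation. (One small citation mismatch: Corollary \ref{cor:finitequat} is stated for optimal embeddings into Eichler orders; at the algebra level the fact that ramified primes divide $m$ comes from Lemma \ref{lem:genorder}, since $\Ord_{\phi_1,\phi_2}$ has reduced discriminant $m$ --- the same computation, so this is cosmetic.)

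Your ``edge case'' is a genuine improvement rather than mere caution. The theorem's factorization tacitly assumes $\epsilon(p)$ is defined on every prime of $m/N'$, and this can fail: for $D_1=5$, $D_2=117=3^2\cdot 13$, $x=3$ one gets $m=144$, $N=N'=1$, $3\mid m$, yet $\left(\frac{5\cdot 13}{3}\right)=-1$, so $\epsilon(3)$ is undefined and $3$ fits none of the buckets $p_i,q_i,w_i$. The paper's proof still computes the symbol correctly at such primes (via $\left(\frac{D_1}{p}\right)$ in Lemmas \ref{lem:localdivisionxlinking}--\ref{lem:localnondivisionxlinking}), but it never verifies that they cannot be ramified, which is exactly what the stated ramification set requires; your parity argument ($v_p(m)$ is forced even whenever $\left(\frac{D_1^{\fund}D_2^{\fund}}{p}\right)=-1$, since $v_p(m)>v_p(D_1D_2)$ would make a nonresidue a square) supplies the missing verification. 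Your sketch is sound for odd $p$; when writing it up, do carry out the $p=2$ case separately (there $D_1^{\fund}D_2^{\fund}\equiv 5\pmod 8$, and a mod-$8$ analysis of $x^2=D_1D_2-4m$ rules out all configurations except $v_2(4m)\leq v_2(D_1D_2)$ or $v_2(4m)=v_2(D_1D_2)+2$, each giving $v_2(m)$ even), since your write-up only gestures at it.
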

\begin{proof}
	It suffices to compute $(D_1,x^2-D_1D_2)_p$ for $p\mid\frac{D_1D_2-x^2}{4}$ satisfying $p\nmid N'$. If $p\nmid D_1$, Lemmas \ref{lem:localdivisionxlinking} and \ref{lem:localnondivisionxlinking} imply that the Hilbert symbol is $-1$ if and only $\epsilon(p)=-1$ and $v_p\left(\frac{D_1D_2-x^2}{4}\right)$ is odd, i.e. $p=p_i$ for some $i$. Since
	\[(D_1,x^2-D_1D_2)_p=(D_2,x^2-D_1D_2)_p,\]
	the same holds for $p\nmid D_2$. As we assume that $p\nmid N'$, the final case is (without loss of generality) $p\nmid D_1^{\fund}$ and $p\mid D_1,D_2$. By Lemma \ref{lem:nonfundamentalxlink}, we can replace $(D_1,D_2,x)$ by $(D_1/p^2,D_2,x/p)$, and repeat.
\end{proof}

In particular, if $\gcd(D_1^{\fund},D_2^{\fund})=1$, then $B$ is ramified at exactly $\{p_1,p_2,\ldots,p_r\}$.

\begin{remark}
	The value of $(D_1, x^2-D_1D_2)_p$ for $p\mid N'$ is full of technical casework, and there is little benefit in listing the cases out.
\end{remark}

\begin{remark}\label{rem:wlogD1D2xlink}
	To work with an explicit $x-$linked pair, take $B=\left(\frac{D_1,x^2-D_1D_2}{\QQ}\right)$, and define
	\[\phi_1(\sqrt{D_1})=i, \qquad \phi_2(\sqrt{D_2})= \dfrac{xi+k}{D_1}.\]
	This pair is $x-$linked and corresponds to $\phi_1\times\phi_2(\sqrt{x^2-D_1D_2})=j$.
\end{remark}

\section{Counting Eichler orders containing x-linked pairs}\label{sec:countEO}
Thanks to Theorem \ref{thm:onequatalgxlink}, we have a good description of quaternion algebras that exhibit $x-$linking. We now turn our focus to describing Eichler orders that admit $x-$linked pairs, i.e. Eichler superorders of $\Ord_{\phi_1,\phi_2}$. Once again, it suffices to do this locally.

\subsection{Local Eichler orders containing x-linked pairs}

Up until now, we have mostly worked in full generality. However, as evidenced by the end of Theorem \ref{thm:onequatalgxlink}, this generality can (and will) start to make results rather unwieldy. As such, we would like to find a middle ground between a pleasant exposition and full generality. The following definition is our choice for such a middle ground.

\begin{definition}\label{def:nice}
	Given an an admissible triple $(D_1, D_2, x)$, we call it \textit{nice} if
	\[\gcd\left(D_1,D_2,D_1D_2-x^2\right)=1.\]
	Note that a nice triple has at least one of $D_1, D_2$ being odd.
	
	More generally, if $p$ is a prime, we call $p$ \textit{nice} (with respect to $(D_1,D_2,x)$) if
	\[p\nmid\gcd\left(D_1,D_2,D_1D_2-x^2\right).\]
\end{definition}

From now on, we will mostly be working with nice triples/nice primes.

In order to determine if an order is Eichler or not, we consider the Eichler symbol (see Section 24.3 of \cite{JV21}). Working in $B=\left(\frac{a,b}{\QQ_p}\right)$, for $\alpha\in B$, define
\[\Delta(\alpha)=\trd(\alpha)^2-4\nrd(\alpha)=4(af^2+bg^2-abh^2),\]
where $\alpha=e+fi+gj+hk$. For an order $\Ord$ of $B$, define $(\Ord,p)$ to be the set of values that $\left(\frac{\Delta(\alpha)}{p}\right)$ takes as $\alpha$ ranges over $\Ord$, where $\left(\frac{\cdot}{p}\right)$ is the Kronecker symbol.

\begin{lemma}\label{lem:Eichlercharacterization}
	The set $(\Ord,p)$ determines the possible Eichler superorders of $\Ord$ as follows:
	\begin{itemize}
		\item The order $\Ord$ is Eichler and non-maximal if and only if $(\Ord,p)=\{0,1\}$ (i.e. $\Ord$ is ``residually split'').
		\item If $-1\in(\Ord,p)$, then $\Ord$ is contained in precisely one maximal order.
	\end{itemize}
\end{lemma}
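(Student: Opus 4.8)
The plan is to translate the set $(\Ord,p)$ into the language of the semisimple reduction of $\Ord$ and then appeal to the structure theory of local quaternion orders in Section 24.3 of \cite{JV21}. The starting observation is that for $\alpha\in\Ord$ the value $\left(\frac{\Delta(\alpha)}{p}\right)$ records the splitting behaviour of $\Q_p[\alpha]$: it is $1$ when the reduced characteristic polynomial $t^2-\trd(\alpha)t+\nrd(\alpha)$ has two distinct roots in $\mathbb{F}_p$ (so $\Q_p[\alpha]$ is split), it is $-1$ when this polynomial is irreducible over $\mathbb{F}_p$ (so $\Q_p[\alpha]$ is the unramified quadratic field), and it is $0$ when $p\mid\Delta(\alpha)$. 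I would first show that, for $p$ odd, $\Delta(\alpha)\bmod p$ depends only on the image of $\alpha$ in the semisimple quotient $\Ord/J$, where $J$ is the Jacobson radical: since $J$ is a two-sided nil ideal, every $n\in J$ and every product $\alpha n\in J$ is nilpotent and hence has reduced trace and norm $0$, from which $\trd(\alpha+n)=\trd(\alpha)$ and $\nrd(\alpha+n)=\nrd(\alpha)$ follow, so $\Delta$ factors through $\Ord/J$.

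Next I would invoke the classification of local quaternion orders: $\Ord/J$ is isomorphic to exactly one of $\mathbb{F}_p\times\mathbb{F}_p$ (residually split), $\mathbb{F}_{p^2}$ (residually inert), $\mathbb{F}_p$ (residually ramified), or $\Mat(2,\mathbb{F}_p)$ (which forces $\Ord$ to be a maximal order in the split algebra). Reading off the achievable discriminant classes in each case then gives $(\Ord,p)$ directly; note that $0\in(\Ord,p)$ always, since $\Delta(1)=0$. In the residually split case both roots of any reduced characteristic polynomial lie in $\mathbb{F}_p$, so only $0$ and nonzero squares occur and $(\Ord,p)=\{0,1\}$; in the residually inert case an element either reduces into $\mathbb{F}_p$ (giving $0$) or generates $\mathbb{F}_{p^2}$ (giving an irreducible quadratic, hence $-1$), so $(\Ord,p)=\{0,-1\}$; in the residually ramified case every reduced characteristic polynomial is a square $(t-c)^2$, so $(\Ord,p)=\{0\}$; and for $\Mat(2,\mathbb{F}_p)$ all three types of element occur, so $(\Ord,p)=\{-1,0,1\}$.

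With these computations in hand both statements follow. For the first, an order is Eichler and non-maximal precisely when it is residually split (maximal orders reduce instead to $\Mat(2,\mathbb{F}_p)$ or to $\mathbb{F}_{p^2}$, and the only Eichler order in the division algebra is its maximal order), and residually split is exactly the unique case in which $(\Ord,p)=\{0,1\}$, giving the ``if and only if''. For the second, $-1\in(\Ord,p)$ occurs precisely in the residually inert and the $\Mat(2,\mathbb{F}_p)$ cases; in the latter $\Ord$ is maximal and so is contained in exactly one maximal order, while in the former I would cite the fact from \cite{JV21} that a residually inert order is local and therefore contained in a unique maximal order.

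The step I expect to be the main obstacle is the case $p=2$, where the Kronecker symbol $\left(\frac{\Delta(\alpha)}{2}\right)$ depends on $\Delta(\alpha)\bmod 8$ rather than merely on $\Delta(\alpha)\bmod 2$, so the clean reduction to $\Ord/J$ above does not by itself pin down the value. Here I would instead compute $\Delta(\alpha)=(a-d)^2+2^{e+2}bc$ directly on the standard Eichler order $\sm{\Z_2}{\Z_2}{2^e\Z_2}{\Z_2}$ (and the analogous normal forms in the inert and ramified cases), and verify that the resulting value sets are exactly those listed above; the conclusions are unchanged, but this bookkeeping must be done by hand. The other input requiring care is the cited structural fact that residually inert orders admit a unique maximal overorder, which is what ultimately powers the second statement.
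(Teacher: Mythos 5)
Your proposal is correct in substance but takes a genuinely different route. The paper disposes of the first bullet by citing Lemma 24.3.6 of \cite{JV21} outright, and then proves the second bullet by a short formal argument: $(\Ord,p)\subseteq(\Ord',p)$ for any superorder $\Ord'$, so if $-1\in(\Ord,p)$ no superorder can have symbol set $\{0,1\}$, i.e.\ no superorder is a non-maximal Eichler order; since the intersection of two distinct maximal orders containing $\Ord$ would be exactly such an order, uniqueness follows from the first bullet. You instead rebuild the dictionary between the paper's set-valued symbol and Voight's residual Eichler symbol (showing $\Delta(\alpha)\bmod p$ factors through $\Ord/J$ and computing the value sets $\{0,1\}$, $\{0,-1\}$, $\{0\}$, $\{-1,0,1\}$ in the split/inert/ramified/matrix cases), and then handle the second bullet by case analysis, citing that a residually inert order has a unique maximal overorder. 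Your route proves more (it recovers the full residual classification from $(\Ord,p)$, which the lemma does not need), but it carries a heavier citation burden: the residually-inert uniqueness fact is true but is exactly what the paper's monotonicity trick avoids having to invoke, and the hard content of the first bullet (residually split $\Rightarrow$ Eichler) is still being outsourced to Section 24.3 of \cite{JV21}, the same source the paper cites.

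Two soft spots, both fixable. First, $J$ is not a nil ideal of $\Ord$ (e.g.\ $p\in J$ is not nilpotent, and $\trd(p)=2p\neq 0$); the correct statement is that elements of $J$ are nilpotent modulo $p\Ord$, whence $\trd(n)\equiv\nrd(n)\equiv 0\pmod{p}$ for $n\in J$, and $\nrd(\alpha+n)\equiv\nrd(\alpha)\pmod p$ follows from $\nrd(\alpha+n)=\nrd(\alpha)+\trd(\alpha\overline{n})+\nrd(n)$ with $\alpha\overline{n}\in J$. Second, your $p=2$ repair via normal forms works for the forward direction (on $\sm{\Z_2}{\Z_2}{2^e\Z_2}{\Z_2}$ one gets $\Delta\equiv(a-d)^2\pmod{2^{e+2}}$, hence $\equiv 1\pmod 8$ when $a-d$ is odd), but it cannot carry the converse: residually inert and especially residually ramified orders at $p=2$ do not admit a single normal form, and in the converse direction you are handed only the hypothesis $(\Ord,2)=\{0,1\}$, not a presentation. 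Fortunately your reduction-mod-$J$ argument survives at $p=2$ after one extra observation: if $\trd(\alpha)$ is odd then $\Delta(\alpha)=\trd(\alpha)^2-4\nrd(\alpha)\equiv 1-4\nrd(\alpha)\pmod 8$, so the Kronecker symbol is $+1$ or $-1$ according to $\nrd(\alpha)\bmod 2$, while if $\trd(\alpha)$ is even then $4\mid\Delta(\alpha)$ and the symbol is $0$; since $\trd$ and $\nrd$ mod $2$ factor through $\Ord/J$, the value sets are determined residually exactly as in the odd case, and no normal forms are needed.
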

\begin{proof}
	The first point is a direct consequence of Lemma 24.3.6 of \cite{JV21}. For the second point, if $\Ord'$ is a superorder of $\Ord$, then $( \Ord,p)\subseteq(\Ord',p)$. In particular, no superset has $(\Ord,p)=\{0,1\}$, whence $\Ord$ is not contained in a non-maximal Eichler order. If $\Ord$ were contained in two maximal orders, it would be contained in their intersection, a non-maximal Eichler order, contradiction.
\end{proof}

Lemma \ref{lem:Eichlercharacterization} allows us to compute the Eichler orders containing $\Ord_{\phi_1,\phi_2}$.

\begin{lemma}\label{lem:Eichlernessoforder}
	Let $(D_1,D_2,x)$ be admissible, and let $\phi_1,\phi_2$ be $x-$linked embeddings of discriminants $D_1,D_2$ into $B$, a quaternion algebra over $\QQ_p$, where $p$ is \nice. Let $\Ord=\Ord_{\phi_1,\phi_2}$, and then:
	\begin{enumerate}[label=(\roman*)]
		\item If $p\nmid\frac{D_1D_2-x^2}{4}$, then $\Ord$ is maximal;
		\item If $\epsilon(p)=-1$, then $\Ord$ is contained in a unique maximal order;
		\item If $\epsilon(p)=1$, then $\Ord$ is Eichler.
	\end{enumerate}
\end{lemma}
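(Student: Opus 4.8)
The plan is to exploit the explicit description of $\Ord = \Ord_{\phi_1,\phi_2} = \langle 1, v_1, v_2, v_1 v_2 \rangle$ from Lemma \ref{lem:genorder}, together with the Eichler symbol criterion of Lemma \ref{lem:Eichlercharacterization}. The reduced discriminant is $\frac{D_1 D_2 - x^2}{4}$, so part (i) is immediate: if $p \nmid \frac{D_1 D_2 - x^2}{4}$ then $\discrd(\Ord)$ is a $p$-adic unit, which forces $\Ord$ to be maximal. For parts (ii) and (iii) the goal is to determine the set $(\Ord, p)$ of values of $\left(\frac{\Delta(\alpha)}{p}\right)$ as $\alpha$ ranges over $\Ord$, and then read off Eichlerness from Lemma \ref{lem:Eichlercharacterization}: if $-1 \in (\Ord,p)$ then $\Ord$ sits in a unique maximal order (case $\epsilon(p) = -1$), while if $(\Ord,p) = \{0,1\}$ then $\Ord$ is residually split, hence a non-maximal Eichler order (case $\epsilon(p) = 1$).

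First I would normalize coordinates using Remark \ref{rem:wlogD1D2xlink} and Corollary \ref{cor:optembgoestoi}, writing $B = \left(\frac{D_1, x^2 - D_1 D_2}{\Q_p}\right)$ with $\phi_1(\sqrt{D_1}) = i$ and $\phi_2(\sqrt{D_2}) = \frac{xi + k}{D_1}$, so that $\phi_1 \times \phi_2(\sqrt{x^2 - D_1 D_2}) = j$. In these coordinates $v_1, v_2, v_1 v_2$ have completely explicit entries in terms of $D_1, D_2, x$ and the parities $p_{D_1}, p_{D_2}$. A general element $\alpha \in \Ord$ is then an $\Z_p$-combination $\alpha = a + b v_1 + c v_2 + d\, v_1 v_2$, and $\Delta(\alpha) = \trd(\alpha)^2 - 4\nrd(\alpha) = 4(D_1 f^2 + (x^2 - D_1 D_2) g^2 - D_1(x^2 - D_1 D_2) h^2)$ becomes an explicit integral quadratic form in $a, b, c, d$ once I expand the coordinates $f, g, h$ of $\alpha$ in the standard basis $1, i, j, k$. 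The niceness hypothesis $p \nmid \gcd(D_1, D_2, D_1 D_2 - x^2)$ is what guarantees this form has good reduction properties modulo $p$ and is needed to control degeneracies.

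The key step is to evaluate the set of Kronecker symbols $\left(\frac{\Delta(\alpha)}{p}\right)$ for this form, and here the dichotomy $\epsilon(p) = \pm 1$ enters. Since $\frac{D_1 D_2 - x^2}{4}$ has positive $p$-valuation in cases (ii) and (iii), one of $D_1, D_2$ is (without loss of generality) a $p$-adic unit; its image in $\uhp$, i.e. whether $D_1$ or $D_2$ is a square mod $p$, is exactly what $\epsilon(p)$ records. When $\epsilon(p) = -1$, the relevant unit discriminant is a nonresidue, and I expect to produce an $\alpha \in \Ord$ with $\Delta(\alpha)$ a nonzero nonresidue mod $p$ (so $-1 \in (\Ord,p)$), giving part (ii) via Lemma \ref{lem:Eichlercharacterization}. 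When $\epsilon(p) = 1$, the form should only represent residues and multiples of $p$, yielding $(\Ord, p) = \{0, 1\}$, which is residual splitness and gives part (iii). The main obstacle will be the case analysis at $p = 2$ and the bookkeeping of parities $p_{D_1}, p_{D_2}$ in the explicit quadratic form, since the Kronecker symbol and the notion of residue are more delicate there; the computations underlying Lemma \ref{lem:pellmodp} should handle this, and I would isolate the $p = 2$ subcase separately rather than forcing it into the odd-prime argument.
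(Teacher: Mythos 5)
Your plan is correct and follows essentially the same route as the paper's proof: part (i) from the reduced discriminant computed in Lemma \ref{lem:genorder}; then, using niceness to assume without loss of generality $p\nmid D_1$ and the explicit coordinates of Remark \ref{rem:wlogD1D2xlink}, an expansion of $\Delta(\alpha)$ over the basis of $\Ord_{\phi_1,\phi_2}$ showing $(\Ord,p)=\{0,\epsilon(p)\}$, and the conclusion via Lemma \ref{lem:Eichlercharacterization}. The only difference is cosmetic: rather than isolating $p=2$ as a separate subcase (Lemma \ref{lem:pellmodp} is not actually needed here), the paper handles all primes uniformly by noting $\Delta(\alpha)\equiv D_1\left(A+\tfrac{Bx}{D_1}\right)^2\pmod{p^{1+2v_2(p)}}$, which pins down the Kronecker symbol even at $p=2$ since it is determined modulo $8$ there.
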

\begin{proof}
	By Lemma \ref{lem:genorder}, the reduced discriminant of $\Ord$ is $\frac{D_1D_2-x^2}{4}$. Thus if $p\nmid\frac{D_1D_2-x^2}{4}$, $\Ord$ is maximal.
	
	Now, assume that $p\mid\frac{D_1D_2-x^2}{4}$, which implies that $p^{1+2v_2(p)}\mid x^2-D_1D_2$. As $p$ is nice, it follows that $p\nmid\gcd(D_1,D_2)$, so without loss of generality assume that $p\nmid D_1$. Take $B,\phi_i$ as in Remark \ref{rem:wlogD1D2xlink}, and then by Lemma \ref{lem:genorder}, a general element of $\Ord$ is of the form 
	\begin{align*}
		\alpha=A_0+A\dfrac{p_{D_1}+i}{2}+ & B\dfrac{p_{D_2}+(xi+k)/D_1}{2}+C\dfrac{j}{2}\\
		= & \left(A_0+A\dfrac{p_{D_1}}{2}+B\dfrac{p_{D_2}}{2}\right)+\left(\dfrac{A}{2}+\dfrac{Bx}{2D_1}\right)i+\dfrac{C}{2}j+\dfrac{B}{2D_1}k,
	\end{align*}
	for $A_0,A,B,C\in\ZZ_p$. Therefore
	\[\Delta(\alpha)= D_1\left(A+\frac{Bx}{D_1}\right)^2+(x^2-D_1D_2)C^2-D_1(x^2-D_1D_2)\frac{B^2}{D_1^2},\]
	whence
	\[\Delta(\alpha)\equiv D_1\left(A+\frac{Bx}{D_1}\right)^2\pmod{p^{1+2v_2(p)}}.\]
	Thus $(\Ord,p)=\{0,\epsilon(p)\}$, which by Lemma \ref{lem:Eichlercharacterization} completes the second and third points.
\end{proof}

Lemma \ref{lem:Eichlernessoforder} implies that locally, there is a minimal Eichler order containing $\Ord_{\phi_1,\phi_2,p}$, which is either the order itself, or the unique maximal order it is contained within. Therefore the result is true globally, and we make this a definition.

\begin{definition}
	Let $\phi_1,\phi_2$ be $x-$linked embeddings of discriminants $D_1,D_2$ into $B$, an indefinite quaternion algebra over $\QQ$ or $\QQ_p$, where $(D_1,D_2,x)$ is \nice. Then there exists a minimal Eichler order containing $\Ord_{\phi_1,\phi_2}$, denoted $\Ord_{\phi_1,\phi_2}^{\Eich}$.
\end{definition}

Since we are concerned with the optimality of embeddings, we need to determine which orders containing $\Ord_{\phi_1,\phi_2}^{\Eich}$ admit $\phi_1,\phi_2$ as optimal embeddings.

\begin{lemma}\label{lem:nonfundamentalxlink}
	Let $(D_1, D_2, x)$ be admissible, and let $\phi_1,\phi_2$ be $x-$linked embeddings of $D_1,D_2$ into $B$, an indefinite quaternion algebra over $\QQ$. Let $p$ be a prime for which $p\mid\frac{D_1}{D_1^{\fund}}$, and let $\phi_1'$ be the corresponding embedding of $\qord{D_1/p^2}$ into $B$ that agrees with $\phi_1$ on $\qord{D_1}$. Then $(\phi_1',\phi_2)$ are $\frac{x}{p}-$linked embeddings into $B$ if and only if $p\mid\frac{D_1D_2-x^2}{4}$.
\end{lemma}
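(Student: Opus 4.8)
The plan is to pin down the only possible linking number of the new pair and then reduce the statement to an elementary divisibility equivalence, splitting on the parity of $p$. Throughout write $D_1'':=D_1/p^2$ and $m:=\frac{D_1D_2-x^2}{4}$, noting that $m\in\Z$ because $(D_1,D_2,x)$ is admissible. First I would record what the hypothesis $p\mid\frac{D_1}{D_1^{\fund}}$ provides: since $\frac{D_1}{D_1^{\fund}}$ is the square of the conductor $f$ of $\qordD$, having $p\mid f^2$ forces $p\mid f$, hence $p^2\mid D_1$ and $D_1''=D_1^{\fund}(f/p)^2$ is again a discriminant, so that $\phi_1'$ is genuinely defined. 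Because $\phi_1$ and $\phi_1'$ agree on $\qordD$ and $\sqrt{D_1}=p\sqrt{D_1''}$, we have $\phi_1'(\sqrt{D_1''})=\frac1p\phi_1(\sqrt{D_1})$, so by linearity of $\trd$
\[\frac12\trd\bigl(\phi_1'(\sqrt{D_1''})\phi_2(\sqrt{D_2})\bigr)=\frac1p\cdot\frac12\trd\bigl(\phi_1(\sqrt{D_1})\phi_2(\sqrt{D_2})\bigr)=\frac{x}{p}.\]
Thus the only linking number $(\phi_1',\phi_2)$ can have is $\frac{x}{p}$, and $(\phi_1',\phi_2)$ are $\frac{x}{p}$-linked -- equivalently, by Lemma \ref{lem:genorder}, the images generate a common order $\Ord_{\phi_1',\phi_2}$ -- exactly when $(D_1'',D_2,\frac{x}{p})$ is admissible, i.e. when $\frac{x}{p}\in\Z$ and $\frac{x}{p}\equiv D_1''D_2\pmod 2$ (the nondegeneracy $(\frac{x}{p})^2\neq D_1''D_2$ is inherited from $x^2\neq D_1D_2$).

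It then remains to prove the arithmetic equivalence: $\frac{x}{p}\in\Z$ together with $\frac{x}{p}\equiv D_1''D_2\pmod 2$ holds if and only if $p\mid m$. Substituting $D_1=p^2D_1''$ gives $4m=p^2D_1''D_2-x^2$. For odd $p$ this is clean: since $p\nmid 4$ and $p\mid p^2D_1''D_2$, we get $p\mid m\iff p\mid x^2\iff p\mid x$; and when $p\mid x$ the parity is automatic, because $p$ and $p^2$ being odd force $\frac{x}{p}\equiv x\equiv D_1D_2\equiv D_1''D_2\pmod 2$. Hence for odd $p$ both sides collapse to $p\mid x$ and the equivalence is immediate.

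The case $p=2$ is where the real content lies, and I expect it to be the main obstacle, precisely because integrality of $\frac{x}{p}$ becomes automatic so the whole statement is carried by a parity computation. Indeed $p^2\mid D_1$ gives $4\mid D_1$, so $D_1D_2$ is even and admissibility of $(D_1,D_2,x)$ forces $x$ even; thus $\frac{x}{2}\in\Z$ always and the integrality half of the linking condition is vacuous. Writing $m=D_1''D_2-(x/2)^2$, one gets $2\mid m\iff D_1''D_2\equiv(x/2)^2\equiv\frac{x}{2}\pmod 2$, which is exactly the parity condition defining $\frac{x}{2}$-linkedness. Hence $2\mid m$ if and only if $(\phi_1',\phi_2)$ is $\frac{x}{2}$-linked, completing the equivalence. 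The point I would stress in the write-up is that ``linked'' must incorporate the admissibility (parity) constraint, and not merely integrality of the linking number: when $p=2$ one can exhibit pairs with $\frac{x}{2}\in\Z$ but $2\nmid m$, and these fail the parity condition (equivalently, $\Ord_{\phi_1',\phi_2}$ is not an order), so conflating the two notions would break the statement.
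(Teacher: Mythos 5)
Your proof is correct. You and the paper share the same first reduction: both translate ``$(\phi_1',\phi_2)$ are $\frac{x}{p}$-linked'' into the condition that $\frac{x}{p}$ is an integer congruent to $\frac{D_1D_2}{p^2}$ modulo $2$ (and you are right to stress, as the paper implicitly does via admissibility, that the parity constraint is part of what ``linked'' must mean here; mere integrality of the linking number is not enough). The execution of the equivalence, however, genuinely differs in two places. For the direction ``linked $\Rightarrow p\mid\frac{D_1D_2-x^2}{4}$,'' the paper invokes Lemma \ref{lem:genorder}: the order $\Ord_{\phi_1',\phi_2}$ has reduced discriminant $\frac{D_1D_2-x^2}{4p^2}$, whose integrality yields the (slightly stronger) conclusion $p^2\mid\frac{D_1D_2-x^2}{4}$ uniformly in $p$; you instead argue by bare divisibility, which is more elementary and self-contained, at the cost of not exposing the order-theoretic reason the divisibility holds. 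At $p=2$, where the paper works hardest, it splits into subcases ($D_2$ even or $8\mid D_1$, versus $4\mid\mid D_1$ with $D_2$ odd, using $D_1/4\equiv 1\pmod 4$ to force $x\equiv 2\pmod 4$), whereas your single identity
\[\frac{D_1D_2-x^2}{4}=D_1''D_2-\left(\frac{x}{2}\right)^2,\]
combined with $n^2\equiv n\pmod 2$ and the observation that $4\mid D_1$ makes $x$ even (so integrality of $x/2$ is automatic), delivers the full biconditional in one line. Your treatment of $p=2$ is cleaner than the paper's and would be a legitimate simplification; the paper's route via Lemma \ref{lem:genorder} buys the extra fact $p^2\mid\frac{D_1D_2-x^2}{4}$ and keeps the argument aligned with the order $\Ord_{\phi_1',\phi_2}$, which is the object actually used when the lemma is applied in Propositions \ref{prop:Eichlerlevelandoptimality} and \ref{prop:optembwithlevel}.
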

\begin{proof}
	Since
	\[\dfrac{1}{2}\trd\left(\phi_1'\left(\sqrt{D_1/p^2}\right)\phi_2\left(\sqrt{D_2}\right)\right)=\dfrac{x}{p},\]
	$(\phi_1',\phi_2)$ are $\frac{x}{p}-$linked embeddings if and only if $\frac{x}{p}$ is an integer congruent to $\frac{D_1D_2}{p^2}$ modulo $2$.
	
	If this is the case, then by Lemma \ref{lem:genorder} the reduced discriminant of $\Ord_{\phi_1',\phi_2}$ is $\frac{D_1D_2-x^2}{4p^2}$, which implies that $p\mid\frac{D_1D_2-x^2}{4}$, as required.
	
	If $p\mid\frac{D_1D_2-x^2}{4}$, first assume that $p$ is odd. Then $p\mid x^2$, whence $p\mid x$, and $\frac{x}{p}$ is an integer with the same parity as $\frac{D_1D_2}{p^2}$, as required. 
	
	If $p=2$, then $8\mid D_1D_2-x^2$. If $D_2$ is even or $8\mid D_1$, then $8\mid D_1D_2$, so $8\mid x^2$, and hence $4\mid x$. Therefore $\frac{x}{2}\equiv 0\equiv \frac{D_1D_2}{2^2}\pmod{2}$, as required. Otherwise, $4\mid\mid D_1$ and $D_2$ is odd. As $D_1/4$ is a discriminant, it is equivalent to $1\pmod{4}$, and so $D_1D_2\equiv 4\pmod{16}$. This implies that $x^2\equiv 4\pmod{8}$, and so $x\equiv 2\pmod{4}$. Then $\frac{x}{2}\equiv 1\equiv\frac{D_1D_2}{2^2}\pmod{2}$, which completes the proof.
\end{proof}

We are now able to study the optimality of embeddings in $\Ord_{\phi_1,\phi_2}^{\Eich}$, as well as the level of this order.

\begin{definition}\label{def:potentiallybad}
	Let $D_1,D_2$ be discriminants. Define a prime $p$ to be \textit{potentially bad} (with respect to $D_1,D_2$) if
	\[p\mid\dfrac{D_1D_2}{D_1^{\fund}D_2^{\fund}}.\]
	Define $\PB(D_1,D_2)$ to be the product of all potentially bad primes. In particular, $D_1$ and $D_2$ are both fundamental if and only if $\PB(D_1,D_2)=1$.
\end{definition}

It suffices to consider the optimality of $(\phi_1, \phi_2)$ at potentially bad primes.

\begin{proposition}\label{prop:Eichlerlevelandoptimality}
	Let $\phi_1,\phi_2$ be $x-$linked embeddings of discriminants $D_1,D_2$ into $B$, an indefinite quaternion algebra over $\QQ$, where $(D_1,D_2,x)$ is \nice. Factorize
	\[\dfrac{D_1D_2-x^2}{4}=\pm\prod_{i=1}^{r}p_i^{2e_i+1}\prod_{i=1}^s q_i^{2f_i}\prod_{i=1}^t w_i^{g_i},\]
	where $p_i$ are the primes for which $\epsilon(p_i)=-1$ that appear to an odd power, $q_i$ are the primes for which $\epsilon(q_i)=-1$ that appear to an even power, and $w_i$ are the primes for which $\epsilon(w_i)=1$. Then
	\begin{enumerate}[label=(\roman*)]
		\item The order $\Ord_{\phi_1,\phi_2}^{\Eich}$ is Eichler of level $\prod_{i=1}^t w_i^{g_i}$;
		\item The embeddings $\phi_1,\phi_2$ are optimal embeddings into $\Ord_{\phi_1,\phi_2}^{\Eich}$ if and only if none of primes $p_i$ and $q_i$ are \pbad.
	\end{enumerate}
\end{proposition}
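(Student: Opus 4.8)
The plan is to verify both claims locally at each prime, using Lemma~\ref{lem:genorder} (which records $\discrd(\Ord_{\phi_1,\phi_2})=\frac{D_1D_2-x^2}{4}$) together with Lemma~\ref{lem:Eichlernessoforder} for the local structure of $\Ord_{\phi_1,\phi_2,p}$ and its minimal Eichler superorder. For part (i) I would compute $\discrd(\Ord_{\phi_1,\phi_2}^{\Eich})$ prime by prime, since the level equals $\discrd(\Ord_{\phi_1,\phi_2}^{\Eich})/\mathfrak{D}$. At a prime $w_i$ with $\epsilon(w_i)=1$, Lemma~\ref{lem:Eichlernessoforder}(iii) says $\Ord_{\phi_1,\phi_2,p}$ is already Eichler, so it equals $\Ord_{\phi_1,\phi_2,p}^{\Eich}$ and has local reduced discriminant $w_i^{g_i}$; niceness forces $N'=1$ in Theorem~\ref{thm:onequatalgxlink}, so $w_i$ is unramified and this $w_i^{g_i}$ is pure level. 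At a prime $p_i$ or $q_i$ with $\epsilon=-1$, Lemma~\ref{lem:Eichlernessoforder}(ii) makes $\Ord_{\phi_1,\phi_2,p}^{\Eich}$ the unique maximal order above $\Ord_{\phi_1,\phi_2,p}$, whose local reduced discriminant is $p_i$ (ramified) or $1$ (split) and therefore contributes only to $\mathfrak{D}$, not the level. Primes away from $\frac{D_1D_2-x^2}{4}$ give maximal orders by Lemma~\ref{lem:Eichlernessoforder}(i). Assembling the local data yields $\discrd(\Ord_{\phi_1,\phi_2}^{\Eich})=\mathfrak{D}\prod_i w_i^{g_i}$, i.e.\ level $\prod_i w_i^{g_i}$.

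For part (ii) I would work prime by prime again: $\phi_1$ fails to be optimal at $p$ exactly when $D_1/p^2$ is a discriminant and the image under the extension $\phi_1'$ of the generator of $\qord{D_1/p^2}$ lies in $\Ord_{\phi_1,\phi_2,p}^{\Eich}$ (symmetrically for $\phi_2$). At a prime that is not potentially bad, neither $D_1/p^2$ nor $D_2/p^2$ is a discriminant, so optimality is automatic; hence only potentially bad primes are in play, and by niceness such a prime divides at most one of $D_1,D_2$ once it divides $\frac{D_1D_2-x^2}{4}$. The crucial case is a potentially bad prime $p$ with $\epsilon(p)=-1$ and $p\mid\frac{D_1D_2-x^2}{4}$, i.e.\ $p\in\{p_i,q_i\}$, say with $p\mid\frac{D_1}{D_1^{\fund}}$; here I claim $\phi_1$ is \emph{not} optimal. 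Since $p\mid\frac{D_1D_2-x^2}{4}$, Lemma~\ref{lem:nonfundamentalxlink} makes $(\phi_1',\phi_2)$ a $\frac{x}{p}$-linked pair, so $\Ord_{\phi_1',\phi_2}$ is an order containing both $\Ord_{\phi_1,\phi_2}$ and the image under $\phi_1'$ of the generator of $\qord{D_1/p^2}$. As $\epsilon(p)$ is unchanged under $D_1\mapsto D_1/p^2$, Lemma~\ref{lem:Eichlernessoforder}(ii) puts $\Ord_{\phi_1',\phi_2,p}$ inside a unique maximal order, which must be the unique maximal order $\Ord_{\phi_1,\phi_2,p}^{\Eich}$ lying over $\Ord_{\phi_1,\phi_2,p}$; thus this generator image lands in $\Ord_{\phi_1,\phi_2,p}^{\Eich}$ and $\phi_1$ is not optimal.

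Conversely, at a potentially bad prime that is not a $p_i$ or $q_i$ (so either $p\nmid\frac{D_1D_2-x^2}{4}$ or $\epsilon(p)=1$, whence $\Ord_{\phi_1,\phi_2,p}^{\Eich}=\Ord_{\phi_1,\phi_2,p}$ by Lemma~\ref{lem:Eichlernessoforder}(i),(iii)), I would show optimality. When $p$ is potentially bad through $D_1$ with $p\nmid D_2$, I would use the coordinates of Remark~\ref{rem:wlogD1D2xlink} with the roles of $D_1,D_2$ exchanged (so $1/D_2\in\Z_p$) and the explicit $\Z_p$-basis $1,v_1,v_2,v_1v_2$ of $\Ord_{\phi_1,\phi_2,p}$ from the proof of Lemma~\ref{lem:Eichlernessoforder}; solving for the coordinates of the generator image $\frac{p_{D_1/p^2}}{2}+\frac{1}{2p}\phi_1(\sqrt{D_1})$ forces the coefficient of $v_1$ to equal $1/p\notin\Z_p$, so the element is not in the order and $\phi_1$ is optimal. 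The only leftover configuration is $p\mid\gcd(D_1,D_2)$, which by niceness forbids $p=2$ and forces $p\nmid\frac{D_1D_2-x^2}{4}$, hence $p\nmid x$; a one-line trace computation then finishes it, since non-optimality of $\phi_1$ would place $\phi_1'(\sqrt{D_1/p^2})\phi_2(\sqrt{D_2})$ in the order and force $\trd=\frac{2x}{p}\in\Z_p$, contradicting $p\nmid x$. Collecting cases, optimality fails somewhere if and only if some $p_i$ or $q_i$ is potentially bad.

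The main obstacle is the bookkeeping in part (ii): one must test optimality against the correct element, namely the generator $\frac{p_{D_1/p^2}+\sqrt{D_1/p^2}}{2}$ of the extended order rather than $\frac{1}{p}\phi_1(\sqrt{D_1})$ itself (these differ crucially when $p=2$), and one must separately dispatch $p=2$ and $p\mid\gcd(D_1,D_2)$ by invoking niceness to rule out the genuinely degenerate overlaps. The non-optimality argument in the $\epsilon(p)=-1$ case, which identifies the unique maximal order above $\Ord_{\phi_1',\phi_2,p}$ with $\Ord_{\phi_1,\phi_2,p}^{\Eich}$, is the step where the interaction of Lemmas~\ref{lem:nonfundamentalxlink} and \ref{lem:Eichlernessoforder} does the real work.
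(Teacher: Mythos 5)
Your proposal is correct, and for part (i) and the non-optimality half of part (ii) it matches the paper's argument closely: local reduced-discriminant bookkeeping via Lemmas \ref{lem:genorder} and \ref{lem:Eichlernessoforder} for (i), and, at a potentially bad prime with $\epsilon(p)=-1$, the combination of Lemma \ref{lem:nonfundamentalxlink} with the uniqueness of the maximal order over $\Ord_{\phi_1,\phi_2,p}$ to force $\phi_1'$ into $\Ord_{\phi_1,\phi_2}^{\Eich}$ --- exactly the paper's mechanism. Where you genuinely diverge is the converse half of (ii). The paper argues softly and uniformly: if $\phi_1$ fails optimality at $p$, then $\Ord_{\phi_1,\phi_2}\subseteq\Ord_{\phi_1',\phi_2}\subseteq\Ord_{\phi_1,\phi_2}^{\Eich}$ with the middle order of reduced discriminant $\frac{D_1D_2-x^2}{4p^2}$ by Lemma \ref{lem:genorder}, which forces $p\in\{p_i,q_i,w_i\}$ (this silently dispatches your $p\mid\gcd(D_1,D_2)$ case, since niceness prevents such $p$ from dividing $\frac{D_1D_2-x^2}{4}$), and the case $p=w_i$ is killed because $\Ord_{\phi_1,\phi_2,p}=\Ord_{\phi_1,\phi_2,p}^{\Eich}$ locally, contradicting the discriminant drop. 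You instead verify optimality head-on: the explicit $\Z_p$-coordinate computation (the $j$-coefficient forces the $v_1v_2$-coordinate to vanish and the $k$-coefficient then forces the $v_1$-coordinate of $\phi_1'\left(\frac{p_{D_1/p^2}+\sqrt{D_1/p^2}}{2}\right)$ to be $1/p\notin\Z_p$), plus the trace obstruction $\trd\left(\phi_1'(\sqrt{D_1/p^2})\phi_2(\sqrt{D_2})\right)=2x/p\notin\Z_p$ at primes dividing $\gcd(D_1,D_2)$. Both routes are sound; the paper's buys brevity and avoids coordinates entirely (no $p=2$ casework, no appeal to Remark \ref{rem:wlogD1D2xlink} or Corollary \ref{cor:allpairsconj} to normalize the pair), while yours is more self-contained and exhibits the concrete basis coefficient that obstructs the extension, which the paper's discriminant-drop contradiction encodes abstractly. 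One small repair to your non-optimality step: when $p=q_i$ with $f_i=1$, Lemma \ref{lem:Eichlernessoforder}(ii) does not apply to the new order $\Ord_{\phi_1',\phi_2,p}$, whose local reduced discriminant is then prime to $p$, so part (i) of that lemma makes it maximal outright; your conclusion nevertheless stands, because any maximal order containing $\Ord_{\phi_1',\phi_2,p}\supseteq\Ord_{\phi_1,\phi_2,p}$ must be the unique maximal order over the latter, which is precisely how the paper phrases the identification.
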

\begin{proof}
	Let $\Ord=\Ord_{\phi_1,\phi_2}$ and $\Ord^{\Eich}=\Ord_{\phi_1,\phi_2}^{\Eich}$. Lemma \ref{lem:genorder} computes the reduced discriminant of $\Ord$ to be $\frac{D_1D_2-x^2}{4}$, so it suffices to compute the change in reduced discriminant between $\Ord$ and $\Ord^{\Eich}$, which can be done locally. Lemma \ref{lem:Eichlernessoforder} implies that $\Ord_p=\Ord_p^{\Eich}$ for $p=w_i$, hence those prime factors remain in the level. For $p=p_i,q_i$, $\Ord_p$ is contained in a unique maximal order, hence those prime factors disappear. This completes the first point.
	
	For optimality, assume that $\phi_1$ is not optimal with respect to $\Ord^{\Eich}$. Thus there exists a $p\mid\frac{D_1}{D_1^{\fund}}$ for which $\phi(\qord{D_1/p^2})$ lands inside $\Ord^{\Eich}$. Let $\phi_1'$ denote this embedding (which agrees with $\phi$ on $\qordD$), and then $(\phi_1',\phi_2)$ are $\frac{x}{p}-$linked. By definition, we have 
	\[\Ord\subseteq \Ord_{\phi_1',\phi_2}\subseteq \Ord^{\Eich},\]
	and Lemma \ref{lem:genorder} says that the reduced discriminant of $\Ord_{\phi_1',\phi_2}$ is $\frac{D_1D_2-x^2}{4p^2}$. Therefore $p=p_i,q_i,w_i$, so assume that $p=w_i$. By Lemma \ref{lem:Eichlernessoforder}, $\Ord_p=\Ord_p^{\Eich}$, hence this is equal to $\Ord_{\phi_1',\phi_2,p}$ as well, which contradicts the fact that the level of $\Ord_{\phi_1',\phi_2}$ differs from the level of $\Ord$ by the factor $p^2$. Therefore $p=p_i$ or $p=q_i$, as claimed.
	
	To finish, it suffices to show that if $p\mid\frac{D_1}{D_1^{\fund}},\frac{D_1D_2-x^2}{4}$ satisfies $\epsilon(p)=-1$, then the embedding $\phi_1$ is not optimal into $\Ord^{\Eich}$. As above, let $\phi_1'$ denote the embedding of $\qord{D_1/p^2}$ corresponding to $\phi$. By Lemma \ref{lem:nonfundamentalxlink}, $(\phi_1',\phi_2)$ are $\frac{x}{p}-$linked, so by Lemma \ref{lem:genorder}, $\Ord_{\phi_1',\phi_2}$ is an order of reduced discriminant $\frac{D_1D_2-x^2}{4p^2}$. Since $\Ord\subseteq\Ord_{\phi_1',\phi_2}$ and $\Ord_p$ is contained in a unique maximal order, this must be the same maximal order that contains $\Ord_{\phi_1',\phi_2,p}$. Therefore $\Ord_p^{\Eich}=\Ord_{\phi_1',\phi_2,p}^{\Eich}$, and so $\phi_1'$ embeds into $\Ord_p^{\Eich}$, hence it embeds into $\Ord^{\Eich}$, which proves that $\phi_1$ is not optimal.
\end{proof}

To finish off with optimality, we need to consider the optimality of $\phi_1,\phi_2$ into superorders $\Ord'$ of $\Ord_{\phi_1,\phi_2}^{\Eich}$. Assume that none of the $p_i,q_i$ are potentially bad, so that $\phi_1,\phi_2$ are optimal in $\Ord_{\phi_1,\phi_2}^{\Eich}$. The only way that $\phi_1$ would fail optimality in $\Ord'$ is if $\Ord'$ admitted the embedding $\phi_1'$ of discriminant $\qord{D_1/w_j^2}$ (some $1\leq j\leq t$) that agrees with $\phi$ on $\qord{D_1}$. The pair $(\phi_1',\phi_2)$ is $\frac{x}{w_j}-$linked by Lemma \ref{lem:nonfundamentalxlink}, and $\Ord_{\phi_1',\phi_2}^{\Eich}$ is an Eichler order of level $\frac{1}{w_j^2}\prod_{i=1}^t w_i^{g_i}$ by Proposition \ref{prop:Eichlerlevelandoptimality}i. Therefore $\Ord'$ admits $\phi_1$ as an optimal embedding if and only if $\Ord'\not\supseteq\Ord_{\phi_1',\phi_2}^{\Eich}$.

\begin{definition}
	With notation and assumptions as above, let $S_{\phi_1,\phi_2}$ be the (possibly empty) set of orders $\Ord_{\phi_1',\phi_2}^{\Eich}$ and $\Ord_{\phi_1,\phi_2'}^{\Eich}$, each of which corresponds to a $1\leq j\leq t$ for which $w_j\mid\frac{D_1}{D_1^{\fund}}$ or $w_j\mid\frac{D_2}{D_2^{\fund}}$ respectively.
\end{definition}

The above discussion is the proof of the following proposition.

\begin{proposition}\label{prop:whensuperorderadmitsopt}
	Take the notation as in Proposition \ref{prop:Eichlerlevelandoptimality}, and assume that none of $p_i,q_i$ are \pbad. Then a superorder $\Ord'$ of $\Ord_{\phi_1,\phi_2}^{\Eich}$ admits $\phi_1,\phi_2$ as optimal embeddings if and only if it does not contain any order in $S_{\phi_1,\phi_2}$.
\end{proposition}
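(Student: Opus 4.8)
The plan is to handle $\phi_1$ and $\phi_2$ separately and symmetrically. Since the standing hypothesis (none of the $p_i,q_i$ potentially bad) guarantees via Proposition \ref{prop:Eichlerlevelandoptimality}(ii) that $\phi_1,\phi_2$ are already optimal in $\Ord_{\phi_1,\phi_2}^{\Eich}$, the only way a superorder $\Ord'$ can fail the conclusion is for one of them to lose optimality in $\Ord'$. I would first isolate exactly when $\phi_1$ fails to be optimal in $\Ord'$. Failure means $\phi_1$ extends to an embedding of a strictly larger quadratic order, and any such extension forces an extension to some minimal overorder $\qord{D_1/p^2}$ with $p\mid\frac{D_1}{D_1^{\fund}}$; the extension $\phi_1'$ is forced to send $\sqrt{D_1/p^2}\mapsto\frac1p\phi_1(\sqrt{D_1})$. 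Hence failure of optimality of $\phi_1$ in $\Ord'$ is equivalent to: for some conductor prime $p$ of $D_1$, the order $\Ord'$ admits $\phi_1'$.

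The technical heart is to show that the only primes $p$ capable of producing such a failure are the $w_j$ dividing $\frac{D_1}{D_1^{\fund}}$, which I would argue locally at $p$. If $p\nmid\frac{D_1D_2-x^2}{4}$, then by Lemma \ref{lem:Eichlernessoforder}(i) the order $\Ord_{\phi_1,\phi_2,p}$ is maximal, so both $\Ord_{\phi_1,\phi_2,p}^{\Eich}$ and $\Ord_p'$ coincide with this maximal order; but $\phi_1$ is optimal in $\Ord_{\phi_1,\phi_2}^{\Eich}$, hence locally optimal at $p$, so $\phi_1'$ cannot land in $\Ord_p'$. This discards every $p$ outside the factorization of $\frac{D_1D_2-x^2}{4}$. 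A prime $p=p_i$ or $p=q_i$ is excluded by the hypothesis: since $p\mid\frac{D_1}{D_1^{\fund}}$ forces $p\mid\frac{D_1D_2}{D_1^{\fund}D_2^{\fund}}$, such a $p$ would be potentially bad, contradicting the assumption that none of the $p_i,q_i$ are. Thus only the $w_j$ survive.

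For a surviving prime $p=w_j$, Lemma \ref{lem:nonfundamentalxlink} shows $(\phi_1',\phi_2)$ is genuinely $\frac{x}{w_j}$-linked, so $\Ord_{\phi_1',\phi_2}$ and its Eichler hull $\Ord_{\phi_1',\phi_2}^{\Eich}$ are defined. I would then prove the clean equivalence that $\Ord'$ admits $\phi_1'$ if and only if $\Ord'\supseteq\Ord_{\phi_1',\phi_2}^{\Eich}$: one direction is immediate from $\Ord_{\phi_1',\phi_2}^{\Eich}\supseteq\Ord_{\phi_1',\phi_2}\ni\phi_1'(\qord{D_1/w_j^2})$, and for the other I use that $\Ord'\supseteq\Ord_{\phi_1,\phi_2}$ already contains $\phi_2(\qord{D_2})$, so admitting $\phi_1'$ forces $\Ord'\supseteq\Ord_{\phi_1',\phi_2}$, whence minimality of the Eichler hull (together with the fact that any superorder of an Eichler order is again Eichler) gives $\Ord'\supseteq\Ord_{\phi_1',\phi_2}^{\Eich}$. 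Assembling, $\phi_1$ fails optimality in $\Ord'$ exactly when $\Ord'$ contains one of the orders $\Ord_{\phi_1',\phi_2}^{\Eich}$ with $w_j\mid\frac{D_1}{D_1^{\fund}}$, and symmetrically $\phi_2$ fails exactly when $\Ord'$ contains some $\Ord_{\phi_1,\phi_2'}^{\Eich}$ with $w_j\mid\frac{D_2}{D_2^{\fund}}$; these are precisely the members of $S_{\phi_1,\phi_2}$. I expect the main obstacle to be the prime-by-prime reduction of the second paragraph—cleanly invoking local optimality and the ``not potentially bad'' hypothesis to eliminate every prime except the $w_j$—rather than the concluding bookkeeping.
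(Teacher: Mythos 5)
Your proof is correct and takes essentially the same route as the paper, whose own ``proof'' is the discussion immediately preceding the proposition: reduce failure of optimality in $\Ord'$ to the forced extension $\phi_1'$ at a conductor prime, eliminate every prime except the $w_j$ via niceness and the not-potentially-bad hypothesis, and identify admission of $\phi_1'$ with containment of $\Ord_{\phi_1',\phi_2}^{\Eich}$. If anything, you spell out steps the paper leaves implicit, namely the local elimination of primes not dividing $\frac{D_1D_2-x^2}{4}$ using Lemma \ref{lem:Eichlernessoforder}, and the backward direction of the containment equivalence via Eichlerness of superorders together with minimality of the Eichler hull.
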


\subsection{Local x-linking with level}\label{sec:localxwithlevel}
Given an admissible triple $(D_1,D_2,x)$, Theorem \ref{thm:onequatalgxlink} determines the unique quaternion algebra for which there exists $x-$linked optimal embeddings. Under the additional restriction of niceness, Propositions \ref{prop:Eichlerlevelandoptimality} and \ref{prop:whensuperorderadmitsopt} determine the possible Eichler orders that an $x-$linked pair of embeddings becomes optimal in. In this section, we study the possible levels of such embeddings.

\begin{lemma}\label{lem:threeorders}
	Let $B$ be a quaternion algebra over $F=\QQ$ or $\QQ_p$. Let $v_1,v_2,v_3\in B$ be such that $\langle 1,v_i,v_j,v_iv_j\rangle_{\mathcal{O}_F}$ is an order for $(i,j)=(1,2),(1,3),(2,3)$. Then 
	\[\Ord=\langle 1,v_1,v_2,v_3,v_1v_2,v_1v_3,v_2v_3,v_1v_2v_3\rangle_{\mathcal{O}_F}\]
	is an order.
\end{lemma}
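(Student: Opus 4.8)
The plan is to separate the real content---closure under multiplication---from the lattice condition, which is automatic here. Since $\Ord$ contains $\Ord_{23}:=\langle 1,v_2,v_3,v_2v_3\rangle_{\mathcal{O}_F}$, a full-rank lattice by the $(2,3)$ hypothesis, and $\Ord$ is generated by finitely many elements of the $4$-dimensional space $B$, it is automatically an $\mathcal{O}_F$-lattice of full rank. Thus it suffices to prove that $\Ord$ is a subring; integrality of its elements then follows automatically. I would avoid expanding all products of the eight generators directly, since that forces a lengthy straightening argument whose termination must be separately justified.

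Instead, the key device is the decomposition $\Ord=\Ord_{23}+v_1\Ord_{23}$, valid because $v_1\Ord_{23}$ is spanned by $v_1,v_1v_2,v_1v_3,v_1v_2v_3$. I would prove that $\Ord$ is stable under left multiplication by each $v_i$, the engine being to put each product $v_iv_1$ into the normal form
\[ v_iv_1=\alpha_i+v_1\beta_i,\qquad \alpha_i,\beta_i\in\Ord_{23}. \]
For $i=1$ this is integrality of $v_1$: $v_1^2=\trd(v_1)v_1-\nrd(v_1)$, so $\alpha_1=-\nrd(v_1)$ and $\beta_1=\trd(v_1)\in\mathcal{O}_F\subseteq\Ord_{23}$. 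For $i=2$, closure of the ring $\Ord_{12}=\langle 1,v_1,v_2,v_1v_2\rangle_{\mathcal{O}_F}$ gives $v_2v_1=a+bv_1+cv_2+dv_1v_2=(a+cv_2)+v_1(b+dv_2)$, where $\alpha_2=a+cv_2$ and $\beta_2=b+dv_2$ both lie in $\langle 1,v_2\rangle\subseteq\Ord_{23}$; the case $i=3$ is identical using $\Ord_{13}$.

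With these normal forms, for a general element $\omega+v_1\omega'$ of $\Ord$ (with $\omega,\omega'\in\Ord_{23}$) I would compute
\[ v_i(\omega+v_1\omega')=v_i\omega+(\alpha_i+v_1\beta_i)\omega'=\bigl(v_i\omega+\alpha_i\omega'\bigr)+v_1\bigl(\beta_i\omega'\bigr). \]
For $i=2,3$ one has $v_i\in\Ord_{23}$, so every parenthesised term lies in $\Ord_{23}$ by the ring property $\Ord_{23}\cdot\Ord_{23}\subseteq\Ord_{23}$, placing the whole expression in $\Ord_{23}+v_1\Ord_{23}=\Ord$; for $i=1$ the analogous regrouping $v_1(\omega+v_1\omega')=-\nrd(v_1)\omega'+v_1(\omega+\trd(v_1)\omega')$ again lands in $\Ord$. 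Hence $v_i\Ord\subseteq\Ord$ for all $i$.

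To conclude, since $\Ord$ is the $\mathcal{O}_F$-span of $1$ together with products of the $v_i$, and $1\in\Ord$, stability under left multiplication by each $v_i$ shows by induction on word length that the entire $\mathcal{O}_F$-algebra generated by $v_1,v_2,v_3$ is contained in $\Ord$; the reverse containment is clear, so $\Ord$ equals this algebra and is therefore a subring. Being also a full-rank lattice, $\Ord$ is an order. The only genuine obstacle is the closure step, and the decomposition $\Ord=\Ord_{23}+v_1\Ord_{23}$ together with the normal form for $v_iv_1$ is exactly what reduces it to the single identity $\Ord_{23}\cdot\Ord_{23}\subseteq\Ord_{23}$, sidestepping any termination bookkeeping.
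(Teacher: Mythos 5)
Your proof is correct, and it reorganizes the argument in a genuinely different (and cleaner) way than the paper. The paper proves by induction on word length that every word $v_{i_1}\cdots v_{i_k}$ lies in $\Ord$: it locates the last occurrence of $v_1$ and migrates it leftward using $v_jv_1\in\langle 1,v_1,v_j,v_1v_j\rangle_{\mathcal{O}_F}$, expanding in that basis and applying the inductive hypothesis to the shorter words produced, until the word begins with $v_1$ and contains no other $v_1$, whence it is $v_1$ times an element of $\langle 1,v_2,v_3,v_2v_3\rangle_{\mathcal{O}_F}$ and so lies in $\Ord$. That is in effect a double induction (on length and on the position of the last $v_1$) whose termination the paper leaves implicit --- precisely the bookkeeping you set out to avoid. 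You use the same two ingredients (the relations $v_iv_1=\alpha_i+v_1\beta_i$ with $\alpha_i,\beta_i\in\langle 1,v_i\rangle_{\mathcal{O}_F}\subseteq\Ord_{23}$, extracted from the pairwise orders, and the fact that $\Ord_{23}$ is itself a ring), but you package them into the module identity $\Ord=\Ord_{23}+v_1\Ord_{23}$ and a once-and-for-all verification that $v_i\Ord\subseteq\Ord$ for each $i$, after which closure under multiplication is immediate since the generators of $\Ord$ are words in the $v_i$. What your route buys is a termination-free closure step and a sharper accounting of where each hypothesis enters; the paper's rewriting argument is more hands-on but fussier. Two points to make explicit in a write-up: $\trd(v_1),\nrd(v_1)\in\mathcal{O}_F$ is legitimate because $v_1$ lies in the order $\Ord_{12}=\langle 1,v_1,v_2,v_1v_2\rangle_{\mathcal{O}_F}$ and elements of orders over $\Q$ or $\Q_p$ are integral, which for quaternion algebras means integral reduced trace and norm; and your lattice remark is right --- $\Ord$ is finitely generated and contains the full-rank lattice $\Ord_{23}$, so only ring closure needs proof, matching the paper's implicit treatment.
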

\begin{proof}
	It suffices to show that any product $v=v_{i_1}\cdots v_{i_k}$ lands in $\Ord$ for any sequence $i_1,\ldots,i_k$ with $i_j\in\{1,2,3\}$ for all $j$. This is accomplished via induction: the base case of $k=0$ is trivial. For the inductive step, assume it is true up to $k-1\geq 0$. If $i_j\neq 1$ for all $j$, then $v\in\langle 1,v_2,v_3,v_2v_3\rangle _{\mathcal{O}_F}$ (as this is an order), and we are done. Otherwise, take the last occurrence of $1$, say $i_m$. If $i_{m-1}=1$, then $v_{i_{m-1}}v_{i_m}=v_1^2\in \langle 1,v_1\rangle _{\mathcal{O}_F}$, and by replacing it we are done by induction. Otherwise, if $m>1$, then $i_{m-1}=j\neq 1$, hence $v_{i_{m-1}}v_{i_m}=v_jv_1\in \langle 1,v_1,v_j,v_1v_j\rangle _{\mathcal{O}_F}$. By writing $v_{i_{m-1}}v_{i_m}$ in this basis and using induction, we see that it suffices to prove the claim when we swap $v_{i_m}$ and $v_{i_{m-1}}$. By successively repeating this process, we can assume that $v$ starts with a $v_1$ and has no other terms $v_1$. But then $v_{i_2}\cdots v_{i_k}$ lies in $\langle 1,v_2,v_2,v_2v_3\rangle_{\mathcal{O}_F}$, and a left multiplication by $v_1$ still lands us in $\Ord$, as desired.
\end{proof}

The generalization of $\Ord_{\phi_1,\phi_2}$ is the following.

\begin{definition}
	Let $\phi_1,\phi_2$ be $x-$linked embeddings from $\qord{D_1},\qord{D_2}$ to $B$. Let $\ell\in\ZZ^+$ be such that $\frac{x^2-D_1D_2}{\ell^2}$ is a discriminant, and define $\Ord_{\phi_1,\phi_2}(\ell)$ to be the smallest order for which $\qord{D_1},\qord{D_2},\qord{(x^2-D_1D_2)/\ell^2}$ embed into via $\phi_1,\phi_2,\phi_1\times\phi_2$ respectively, if it exists. 
\end{definition}

\begin{lemma}\label{lem:genorderlevel}
	Let $F=\QQ$, and assume $(D_1, D_2, x)$ is \nice. Then $\Ord_{\phi_1,\phi_2}(\ell)$ exists if and only if $\ell^2\mid \frac{D_1D_2-x^2}{4}$, and when it does, it has reduced discriminant $\frac{D_1D_2-x^2}{4\ell^2}$.
\end{lemma}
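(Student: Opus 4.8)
The plan is to separate the two assertions: the existence criterion and the value of the reduced discriminant. Throughout write $\iota_i=\phi_i(\sqrt{D_i})$, $T=\iota_1\iota_2$ and $\mu=\phi_1\times\phi_2(\sqrt{x^2-D_1D_2})=T-x$, so that $\trd(\mu)=0$ and $\mu^2=x^2-D_1D_2$. Put $E=\frac{x^2-D_1D_2}{\ell^2}$ (a discriminant, by the standing hypothesis on $\ell$) and $w=\phi_1\times\phi_2\!\left(\frac{p_E+\sqrt{E}}{2}\right)=\frac{p_E+\mu/\ell}{2}$. Since the triple is admissible, $x\equiv D_1D_2\pmod 2$ forces $4\mid D_1D_2-x^2$, so $\frac{D_1D_2-x^2}{4}\in\Z$; a short congruence then shows that $\ell^2\mid\frac{D_1D_2-x^2}{4}$ is equivalent to $4\mid E$, i.e. to $p_E=0$, whereas if $\ell^2\nmid\frac{D_1D_2-x^2}{4}$ then necessarily $E\equiv 1\pmod 4$ and $p_E=1$. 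This reduces the stated dichotomy to the parity of $E$.

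For the non-existence direction, suppose $p_E=1$. Any order containing the images of $\phi_i$ and $\phi_1\times\phi_2$ must contain $v_i=\phi_i\!\left(\frac{p_{D_i}+\sqrt{D_i}}{2}\right)$ and $w$, hence the product $v_iw$, which would have to be integral. Using $\trd(\iota_i)=0$, $\trd(\mu)=0$ and $\trd(\iota_i\mu)=\tfrac1\ell\trd(\iota_iT)=0$ (the last from $\iota_1T=D_1\iota_2$ and $\trd(\iota_2T)=\trd(\iota_1\iota_2^2)=D_2\trd(\iota_1)=0$), one computes $\trd(v_iw)=\frac{p_{D_i}}{2}$. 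As the triple is nice, at least one of $D_1,D_2$ is odd; taking that index gives $\trd(v_iw)=\tfrac12\notin\Z$, so $v_iw$ is not integral and $\Ord_{\phi_1,\phi_2}(\ell)$ does not exist.

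For existence, suppose $p_E=0$. The pairs $(\phi_1,\phi_1\times\phi_2)$ and $(\phi_2,\phi_1\times\phi_2)$ are $0$-linked, and because $E$ is even the hypothesis of Lemma \ref{lem:genorder} holds ($0\equiv p_{D_iE}\pmod 2$ and $0\neq D_iE$); hence $\langle 1,v_i,w,v_iw\rangle=\Ord_{\phi_i,\phi_1\times\phi_2}$ is an order of reduced discriminant $\tfrac{D_i(D_1D_2-x^2)}{4\ell^2}$ (up to sign). Together with $\Ord_{\phi_1,\phi_2}=\langle 1,v_1,v_2,v_1v_2\rangle$, Lemma \ref{lem:threeorders} applied to $v_1,v_2,w$ shows that the $\mathcal{O}_F$-module spanned by $1,v_1,v_2,w$ and their pairwise and triple products is an order; since it contains $\phi_1,\phi_2,\phi_1\times\phi_2$ and every product these force, it is the smallest such order $\Ord_{\phi_1,\phi_2}(\ell)$.

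It remains to compute $\discrd(\Ord_{\phi_1,\phi_2}(\ell))=\prod_p\discrd(\Ord_{\phi_1,\phi_2}(\ell)_p)$. For $p\nmid\ell$ one has $w=\tfrac1\ell\cdot\frac{T-x}{2}\in\Ord_{\phi_1,\phi_2,p}$ (as $\frac{T-x}{2}\in\Ord_{\phi_1,\phi_2}$ and $\ell$ is a local unit), so the local order is unchanged there. For $p\mid\ell$, niceness gives $p\nmid D_i$ for some $i$, and the containment $\Ord_{\phi_i,\phi_1\times\phi_2}\subseteq\Ord_{\phi_1,\phi_2}(\ell)$ already yields $v_p(\discrd\Ord_{\phi_1,\phi_2}(\ell))\le v_p\!\left(\frac{D_1D_2-x^2}{4}\right)-2v_p(\ell)$. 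For the matching lower bound I would pass to the explicit model of Remark \ref{rem:wlogD1D2xlink} (arranged locally so that the $p$-unit discriminant occupies the $i$-slot): for odd $p$ the local order becomes $\Z_p\langle 1,i,j/\ell,k/\ell\rangle$, and a direct Gram-determinant computation gives exactly $v_p(\discrd)=v_p\!\left(\frac{D_1D_2-x^2}{4}\right)-2v_p(\ell)$. Multiplying over all $p$ produces $\discrd(\Ord_{\phi_1,\phi_2}(\ell))=\frac{D_1D_2-x^2}{4\ell^2}$. The main obstacle is the prime $p=2$ in this last step: there halves are not units, so the generators $\frac{p_{D_i}+\sqrt{D_i}}{2}$ and $w$ must be tracked by hand, and one must verify directly that the resulting $\Z_2$-lattice is closed under multiplication before computing its Gram determinant.
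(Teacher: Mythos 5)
Your existence criterion is correct and is essentially the paper's own argument: you reduce everything to the parity of $E=(x^2-D_1D_2)/\ell^2$, rule out the odd case by the non-integral trace $\trd(v_iw)=p_{D_i}p_E/2=1/2$ at an index with $D_i$ odd (the paper phrases the identical computation as $\langle 1,v_i,v_3,v_iv_3\rangle_{\Z}$ failing to be an order), and in the even case you build the order from Lemma \ref{lem:genorder} applied to the $0$-linked pairs $(\phi_i,\phi_1\times\phi_2)$ together with Lemma \ref{lem:threeorders}, exactly as in the paper. Your discriminant computation, by contrast, takes a genuinely different route: the paper computes the $\Z$-lattice spanned by the eight products globally, via the transition matrix to $(1,w_1,w_2,w_3)$ and the Hermite-normal-form determinant of Appendix \ref{app:hnfcalc}, whereas you localize. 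Away from $\ell$ your argument is fine once one actually checks $\mu/2\in\Ord_{\phi_1,\phi_2}$ (true: $\mu/2=2v_1v_2-p_{D_1}v_2-p_{D_2}v_1+\tfrac{p_{D_1}p_{D_2}-x}{2}$, integral since $x\equiv p_{D_1}p_{D_2}\pmod 2$), and at odd $p\mid\ell$ the passage to the model of Remark \ref{rem:wlogD1D2xlink} is legitimate (Corollary \ref{cor:allpairsconj} plus Theorem \ref{thm:onequatalgxlink}, $\discrd$ being conjugation-invariant), yielding $\Ord_{\phi_1,\phi_2}(\ell)_p=\Z_p\langle 1,i,j/\ell,k/\ell\rangle$ with Gram determinant of the right $p$-adic valuation. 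For odd primes this is cleaner and more conceptual than the paper's global calculation.

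The genuine gap is the prime $2$ when $2\mid\ell$. There you prove only the upper bound $v_2\bigl(\discrd\Ord_{\phi_1,\phi_2}(\ell)\bigr)\le v_2\bigl(\tfrac{D_1D_2-x^2}{4}\bigr)-2v_2(\ell)$ from the containment of $\Ord_{\phi_i,\phi_1\times\phi_2}$, and you explicitly defer the matching lower bound to an unexecuted ``by hand'' verification. That deferred step is not a routine afterthought; it is where the substance of the lemma lives. The paper's Appendix \ref{app:hnfcalc} devotes almost all of its length to exactly this $2$-adic analysis, splitting into subcases according to $v_2(\ell)\in\{0,1,\ge 2\}$, the parity of $D_2$, and whether $2\mid\mid x$ or $4\mid x$, precisely because over $\Z_2$ the halves occurring in $v_1,v_2,w$ are not units, the spanned lattice does not diagonalize as in the odd case, and whether certain rows are redundant depends delicately on these valuations. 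Until that $\Z_2$-lattice is exhibited and its determinant computed, the claim $\discrd(\Ord_{\phi_1,\phi_2}(\ell))=\tfrac{D_1D_2-x^2}{4\ell^2}$ is established by your argument only for odd $\ell$; the even-$\ell$ case remains open in your write-up.
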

\begin{proof}
	Let $D_3=\frac{x^2-D_1D_2}{\ell^2}$, and let $\phi_3:\qord{D_3}\rightarrow B$ be the embedding induced by $\phi_1\times\phi_2$. Let $w_i=\phi_i(\sqrt{D_i})$ and $v_i=\phi_i\left(\frac{p_{D_i}+\sqrt{D_i}}{2}\right)$ for $i=1,2,3$, and let $x=\frac{1}{2}\trd(w_1w_2)\equiv D_1D_2\pmod{2}$ by assumption. We have $w_3=\frac{w_1w_2-x}{\ell}$, whence
	\[\dfrac{1}{2}\trd(w_1w_3)=\dfrac{1}{2}\trd\left(\dfrac{D_1w_2-xw_1}{\ell}\right)=0.\]
	Similarly, $\frac{1}{2}\trd(w_2w_3)=0$. If $D_3$ is odd, then since $D_1$ or $D_2$ is odd, $p_{D_iD_3}=1\not\equiv 0\pmod{2}$ for $i=1$ or $2$, whence $\left\langle 1,v_i,v_3,v_iv_3\right\rangle_{\ZZ}$ is not an order, and $\Ord_{\phi_1,\phi_2}(\ell)$ does not exist. Since $D_3$ is a discriminant, if it is not odd it must be a multiple of $4$. In particular, we have that $\ell^2\mid \frac{D_1D_2-x^2}{4}$. In this case, $0\equiv D_iD_3\pmod{2}$ for $i=1,2$, and so by Lemma \ref{lem:genorder}, $\langle 1,v_i,v_j,v_iv_j\rangle_{\ZZ}$ is an order for $(i,j)=(1,2),(1,3),(2,3)$. Thus by Lemma \ref{lem:threeorders}, $\Ord=\langle 1,v_1,v_2,v_3,v_1v_2,v_1v_3,v_2v_3,v_1v_2v_3\rangle_{\ZZ}$ is an order, necessarily the smallest order for which $\phi_i$ embeds into for all $i=1,2,3$.
	
	Let $p_i=p_{D_i}$, and compute
	\[\left(\begin{matrix}1\\v_1\\v_2\\v_3\\v_1v_2\\v_1v_3\\v_2v_3\\v_1v_2v_3\end{matrix}\right)=\left(\begin{matrix} 1 & 0 & 0 & 0\\ \frac{p_1}{2} & \frac{1}{2} & 0 & 0\\ \frac{p_2}{2} & 0 & \frac{1}{2} & 0\\0 & 0 & 0 & \frac{1}{2} \\\frac{p_1p_2+x}{4} & \frac{p_2}{4} & \frac{p_1}{4} & \frac{\ell}{4}\\0 & \frac{-x}{4\ell} & \frac{D_1}{4\ell} & \frac{p_1}{4}\\0 & \frac{-D_2}{4\ell} & \frac{x}{4\ell} & \frac{p_2}{4}\\ \frac{x^2-D_1D_2}{8\ell} & \frac{-p_2x-p_1D_2}{8\ell} & \frac{p_1x+p_2D_1}{8\ell} & \frac{p_1p_2+x}{8}\\ \end{matrix}\right)\left(\begin{matrix}1\\w_1\\w_2\\w_3\end{matrix}\right).\]
	Let this transition matrix be $M$. From the calculation in Lemma \ref{lem:genorder}, we can compute that $d(1,w_1,w_2,w_3)=\left(\frac{1}{\ell}\right)^2d(1,w_1,w_2,w_1w_2)=-\frac{16(D_1D_2-x^2)^2}{\ell^2}$. It suffices to show that the rows of $M$ generate a $\ZZ-$lattice with determinant $\frac{1}{16\ell}$, as then we have the discriminant of $\Ord$ being $\frac{(D_1D_2-x^2)^2}{16\ell^4}$, whence the reduced discriminant is $\frac{D_1D_2-x^2}{4\ell^2}$, as desired. The calculation of the rowspace is done by hand in Appendix \ref{app:hnfcalc}.
\end{proof}

\begin{remark}
	The statement $\ell^2\mid\frac{D_1D_2-x^2}{4}$ only requires $(D_1, D_2, x)$ to be \nice\, at $p=2$. If it is not nice at $p=2$, then this does not need to hold. For example, take $D_1=20$, $D_2=68$, $x=2$, and $B$ to be ramified at $3, 113$. Then $\Ord_{\phi_1, \phi_2}(2)$ exists, but $2^2\nmid\frac{D_1D_2-x^2}{4}=339$.
\end{remark}

Since $\Ord_{\phi_1,\phi_2}\subseteq\Ord_{\phi_1,\phi_2}(\ell)$, the inclusion holds when we complete at $p$. Considering Lemma \ref{lem:Eichlernessoforder}, we find that
\begin{itemize}
	\item If $p\nmid\frac{D_1D_2-x^2}{4}$, then $\Ord_{\phi_1,\phi_2,p}(\ell)$ is maximal;
	\item If $\epsilon(p)=-1$, then $\Ord_{\phi_1,\phi_2,p}(\ell)$ is contained in a unique maximal order, necessarily the same maximal order as the one containing $\Ord_{\phi_1,\phi_2,p}$;
	\item If $\epsilon(p)=1$, then $\Ord_{\phi_1,\phi_2,p}(\ell)$ is Eichler.
\end{itemize}
In particular, this implies that there exists a minimal Eichler order containing $\Ord_{\phi_1,\phi_2}(\ell)$, denoted $\Ord_{\phi_1,\phi_2}^{\Eich}(\ell)$. Factorize
\[\dfrac{D_1D_2-x^2}{4}=\pm\prod_{i=1}^{r}p_i^{2e_i+1}\prod_{i=1}^s q_i^{2f_i}\prod_{i=1}^t w_i^{g_i},\]
where $p_i$ are the primes for which $\epsilon(p_i)=-1$ that appear to an odd power, $q_i$ are the primes for which $\epsilon(q_i)=-1$ that appear to an even power, and $w_i$ are the primes for which $\epsilon(w_i)=1$. The local conditions imply that
\[\Ord_{\phi_1,\phi_2}^{\Eich}=\Ord_{\phi_1,\phi_2}^{\Eich}\left(\prod_{i=1}^r p_i^{e_i}\prod_{i=1}^s q_i^{f_i}\right),\]
i.e. that the maximum possible level always occurs at the prime factors $p$ of $\frac{D_1D_2-x^2}{4}$ for which $\epsilon(p)=-1$. The analogous assessment of the prime factors $p$ for which $\epsilon(p)=1$ leads to the following proposition.

\begin{proposition}\label{prop:optembwithlevel}
	Let $(D_1, D_2, x)$ be \nice, and let $\ell=\prod_{i=1}^{r}p_i^{e_i'}\prod_{i=1}^s q_i^{f_i'}\prod_{i=1}^t w_i^{g_i'}$, where $e_i'\leq e_i$, $f_i'\leq f_i$, and $2g_i'\leq g_i$. Then the Eichler order $\Ord_{\phi_1,\phi_2}^{\Eich}(\ell)$ has level $\prod_{i=1}^t w_i^{g_i-2g_i'}$. Furthermore, assume that all the $p_i,q_i$ are not \pbad. Let 
	\[S=\left\{w_i:w_i\mid\PB(D_1,D_2)\right\}\]
	be the set of potentially bad primes among the $w_i$. Then a superorder $\Ord'$ of $\Ord_{\phi_1,\phi_2}^{\Eich}(\ell)$ admits $\phi_1,\phi_2$ as optimal embeddings if and only if $\Ord'$ does not contain $\Ord_{\phi_1,\phi_2}^{\Eich}(w_i)$ for all $w_i\in S$. This implies $g_i'=0$ for all $i$ such that $w_i\in S$.
\end{proposition}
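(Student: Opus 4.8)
The plan is to establish the three assertions in turn---the level formula, the optimality criterion, and the forced vanishing $g_i'=0$ for $w_i\in S$---each by reducing to a local computation that the preceding results already handle.

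First, for the level I would start from Lemma \ref{lem:genorderlevel}, which gives $\discrd(\Ord_{\phi_1,\phi_2}(\ell))=\frac{D_1D_2-x^2}{4\ell^2}=\pm\prod_i p_i^{2e_i+1-2e_i'}\prod_i q_i^{2f_i-2f_i'}\prod_i w_i^{g_i-2g_i'}$, and then pass to the minimal Eichler order prime by prime using the three local descriptions displayed just before the proposition. At $p=p_i$ the algebra $B$ is ramified (Theorem \ref{thm:onequatalgxlink}), so the passage to the unique maximal order leaves exactly the local part of $\mathfrak{D}$ and contributes nothing to the level; at $p=q_i$ the algebra is split and $\Ord_{\phi_1,\phi_2,p}(\ell)$ again sits in a unique maximal order, now of trivial reduced discriminant, so it too contributes nothing; and at $p=w_i$ the algebra is split and $\Ord_{\phi_1,\phi_2,p}(\ell)$ is already Eichler, contributing the factor $w_i^{g_i-2g_i'}$. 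Assembling these gives level $\prod_{i=1}^t w_i^{g_i-2g_i'}$.

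Next, for optimality I would argue, exactly as in the discussion preceding Proposition \ref{prop:whensuperorderadmitsopt} but now with $\Ord_{\phi_1,\phi_2}^{\Eich}(\ell)$ as the base order, that $\phi_1$ can fail to be optimal in an Eichler superorder $\Ord'$ only through a potentially bad prime $p\mid\frac{D_1}{D_1^{\fund}}$ for which the extension $\phi_1'$ of $\phi_1$ to $\qord{D_1/p^2}$ lands in $\Ord'$. Since $\phi_2$ already embeds in $\Ord'$, such a $\phi_1'$ forces $\phi_1'$ and $\phi_2$ to lie in a common order, so by Lemma \ref{lem:genorder} the pair must be $\frac{x}{p}$-linked (otherwise a product such as $v_1'v_2$ has non-integral reduced trace), which by Lemma \ref{lem:nonfundamentalxlink} happens precisely when $p\mid\frac{D_1D_2-x^2}{4}$. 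Under the hypothesis that no $p_i,q_i$ is potentially bad, these primes are exactly the $w_i\in S$. As $\Ord'$ is itself Eichler, it admits $\phi_1'$ if and only if $\Ord'\supseteq\Ord_{\phi_1',\phi_2}$, equivalently $\Ord'\supseteq\Ord_{\phi_1',\phi_2}^{\Eich}$ (the Eichler closure is contained in every Eichler superorder, by the same local analysis used above), and symmetrically for $\phi_2$ with $\phi_2'$.

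The crux is then to identify $\Ord_{\phi_1',\phi_2}^{\Eich}$ with $\Ord_{\phi_1,\phi_2}^{\Eich}(w_j)$, and this is the step I expect to be the main obstacle, since the two orders are defined by visibly different generating sets. The plan is to avoid matching generators and instead use reduced discriminants: by Lemma \ref{lem:nonfundamentalxlink} the pair $(\phi_1',\phi_2)$ is $\frac{x}{w_j}$-linked, so $\frac{x}{w_j}\in\Z$, and Lemma \ref{lem:genorder} gives $\discrd(\Ord_{\phi_1',\phi_2})=\frac{D_1D_2-x^2}{4w_j^2}=\discrd(\Ord_{\phi_1,\phi_2}(w_j))$. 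Writing $\alpha_i=\phi_i(\sqrt{D_i})$, the generator $\frac{\alpha_1\alpha_2-x}{w_j}$ of $\Ord_{\phi_1,\phi_2}(w_j)$ equals $\phi_1'(\sqrt{D_1/w_j^2})\,\alpha_2-\frac{x}{w_j}\in\Ord_{\phi_1',\phi_2}$, so $\Ord_{\phi_1,\phi_2}(w_j)\subseteq\Ord_{\phi_1',\phi_2}$; equal reduced discriminants then force equality, hence equality of the Eichler closures. Combining this with the previous paragraph yields the stated criterion. Finally, if $g_i'\geq 1$ for some $w_i\in S$ then $w_i\mid\ell$, and $\frac{\alpha_1\alpha_2-x}{w_i}=\frac{\ell}{w_i}\cdot\frac{\alpha_1\alpha_2-x}{\ell}\in\Ord_{\phi_1,\phi_2}(\ell)$ shows $\Ord_{\phi_1,\phi_2}^{\Eich}(\ell)\supseteq\Ord_{\phi_1,\phi_2}^{\Eich}(w_i)$; applying the criterion with $\Ord'=\Ord_{\phi_1,\phi_2}^{\Eich}(\ell)$ would then make $\phi_1,\phi_2$ non-optimal there, so consistency forces $g_i'=0$.
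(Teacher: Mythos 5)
Your overall route matches the paper's: the level statement is obtained from Lemma \ref{lem:genorderlevel} together with the local trichotomy stated just before the proposition (your prime-by-prime assembly at the $p_i,q_i,w_i$ is exactly the paper's "above discussion"), and the optimality criterion is obtained by reducing any failure of optimality to a potentially bad $w_j\mid\frac{D_1D_2-x^2}{4}$ via Lemmas \ref{lem:genorder} and \ref{lem:nonfundamentalxlink}, then identifying $\Ord_{\phi_1',\phi_2}$ with $\Ord_{\phi_1,\phi_2}(w_j)$. The paper carries out this identification locally at $w_j$ (equal level plus one inclusion); you do it globally (inclusion plus equal reduced discriminant). These are the same idea, and your reduction steps, including the observation that an Eichler superorder contains $\Ord_{\phi_1',\phi_2}$ if and only if it contains $\Ord_{\phi_1',\phi_2}^{\Eich}$, are sound.

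However, there is a genuine gap precisely at the step you flagged as the crux. The extra generator of $\Ord_{\phi_1,\phi_2}(w_j)$ beyond $\Ord_{\phi_1,\phi_2}$ is not $\frac{\alpha_1\alpha_2-x}{w_j}$ but $v_3=\frac{\alpha_1\alpha_2-x}{2w_j}$: by definition $\Ord_{\phi_1,\phi_2}(w_j)$ must contain the image of $\qord{D_3}=\Z\left[\tfrac{1}{2}\sqrt{D_3}\right]$ with $D_3=\frac{x^2-D_1D_2}{w_j^2}$, and whenever this order exists one has $D_3\equiv 0\pmod 4$ (see the proof of Lemma \ref{lem:genorderlevel}, whose transition matrix has fourth row $\left(0,0,0,\tfrac{1}{2}\right)$, i.e. $v_3=\tfrac{1}{2}w_3$). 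Your identity $\frac{\alpha_1\alpha_2-x}{w_j}=\phi_1'(\sqrt{D_1/w_j^2})\,\alpha_2-\frac{x}{w_j}$ therefore only places $2v_3$ in $\Ord_{\phi_1',\phi_2}$, and since $\Z[2v_3]\simeq\qord{4D_3}\subsetneq\qord{D_3}$, neither the containment $\Ord_{\phi_1,\phi_2}(w_j)\subseteq\Ord_{\phi_1',\phi_2}$ nor the discriminant-forced equality follows as written; the discrepancy lives exactly at the prime $2$, which can itself be one of the $w_i$ (e.g.\ $\epsilon(2)=1$ in the paper's second example), so it cannot be dismissed. The repair is short: set $u=\phi_1'(\sqrt{D_1/w_j^2})$ and $v=\phi_2(\sqrt{D_2})$; both are traceless with $uv+vu=\trd(uv)=\frac{2x}{w_j}$, so $v_1'v_2-v_2v_1'=\frac{uv-vu}{4}=\frac{1}{2}\left(uv-\frac{x}{w_j}\right)=\frac{\alpha_1\alpha_2-x}{2w_j}=v_3$, which lies in $\Ord_{\phi_1',\phi_2}$ since that set is an order; with $v_3$ in hand, your inclusion-plus-discriminant argument goes through. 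The same factor-of-two slip appears in your final paragraph, but there it is only notational: the corrected identity $\frac{\alpha_1\alpha_2-x}{2w_i}=\frac{\ell}{w_i}\cdot\frac{\alpha_1\alpha_2-x}{2\ell}$ holds verbatim, so the deduction that $g_i'=0$ for $w_i\in S$ is unaffected.
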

\begin{proof}
	The first half of the proposition has been proven in the above discussion. For the second half, the optimality of $\phi_1,\phi_2$ can only fail if we have a $w_j$ for which $\phi_1$ (without loss of generality) descends to an embedding of $\qord{D_1/w_j^2}$. Call this embedding $\phi_1'$, and as in Proposition \ref{prop:Eichlerlevelandoptimality} the order $\Ord_{\phi_1',\phi_2}^{\Eich}$ has level $\frac{1}{w_j^2}\prod_{i=1}^t w_i^{g_i}$. It suffices to show that $\Ord_{\phi_1',\phi_2,w_j}=\Ord_{\phi_1,\phi_2,w_j}(w_j)$, as this means that picking up a factor of $w_j$ in the level is equivalent to killing optimality.
	
	These Eichler orders have the same level, so it suffices to show inclusion only. However this is immediate, as the embedding $\phi_1'\times\phi_2$ corresponds to an embedding of discriminant $\frac{x^2-D_1D_2}{p^2}$ induced from 
	\[\phi_1'(\sqrt{D_1/p^2})\phi_2(\sqrt{D_2})=\frac{1}{p}\phi_1(\sqrt{D_1})\phi_2(\sqrt{D_2}).\]
\end{proof}

An embedding pair having level exactly $\ell$ in $\Ord'$ is equivalent to $\Ord'$ containing $\Ord_{\phi_1,\phi_2}(\ell)$ but not containing $\Ord_{\phi_1,\phi_2}(p\ell)$ for any prime $p$. At long last, we can describe the levels and counts of Eichler orders admitting $\phi_1,\phi_2$ as optimal embeddings.

\begin{theorem}\label{thm:generalxlinkingexistence}
	Let $\phi_1,\phi_2$ be $x-$linked embeddings of discriminants $D_1,D_2$ into $B$, an indefinite quaternion algebra over $\QQ$, let $\ell$ be a positive integer, and assume that $(D_1,D_2,x)$ is \nice. Factorize
	\[\dfrac{D_1D_2-x^2}{4}=\pm\prod_{i=1}^{r}p_i^{2e_i+1}\prod_{i=1}^s q_i^{2f_i}\prod_{i=1}^t w_i^{g_i},\]
	where $p_i$ are the primes for which $\epsilon(p_i)=-1$ that appear to an odd power, $q_i$ are the primes for which $\epsilon(q_i)=-1$ that appear to an even power, and $w_i$ are the primes for which $\epsilon(w_i)=1$. Then,
	\begin{enumerate}[label=(\roman*)]
		\item This setup is possible if and only if $B$ is ramified at exactly $p_1,p_2,\ldots p_r$;
		\item There exists an Eichler order of level $\mathfrak{M}$ for which $\phi_1,\phi_2$ are optimal embeddings into if and only if both of the following are satisfied:
		\begin{itemize}
			\item None of the $p_i,q_i$ are \pbad;
			\item $\mathfrak{M}=\prod_{i=1}^t w_i^{g_i'}$ with $g_i'\leq g_i$.
		\end{itemize}
		\item Let $\mathfrak{M}$ satisfy the above. The number of Eichler orders of level $\mathfrak{M}$ for which $\phi_1,\phi_2$ are optimal embeddings into is
		\[\displaystyle\prod_{i=1}^t
		\begin{cases} g_i+1-g_i' & \text{if $w_i\nmid\PB(D_1,D_2)$;}\\
			2 & \text{if $w_i\mid\PB(D_1,D_2)$ and $g_i'<g_i$;}\\
			1 & \text{if $w_i\mid\PB(D_1,D_2)$ and $g_i'=g_i$.}
		\end{cases}\]
		\item There exists an Eichler order of level $\mathfrak{M}$ for which $\phi_1,\phi_2$ are optimal embeddings of into of level exactly $\ell$ if and only we have
		\[\ell=\prod_{i=1}^{r}p_i^{e_i}\prod_{i=1}^s q_i^{f_i}\prod_{i=1}^t w_i^{g_i''},\]
		where $2g_i''\leq g_i-g_i'$ and $g_i''=0$ if $w_i\mid\PB(D_1,D_2)$.
		\item Let $\mathfrak{M},\ell$ satisfy the above. Let $n$ be the number of indices $i$ for which $2g_i''<g_i-g_i'$. Then the number of Eichler orders of level $\mathfrak{M}$ for which $\phi_1,\phi_2$ are optimal embeddings into of level exactly $\ell$ is $2^n$.
	\end{enumerate}
\end{theorem}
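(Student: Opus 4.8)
\emph{Overall strategy.} The plan is to prove everything locally and then multiply local contributions, using the local--global correspondence for orders (Theorem \ref{thm:JVlocalglobal}) together with the fact that optimality is a local condition. Parts (i)--(ii) are assembled from what precedes. Part (i) is Theorem \ref{thm:onequatalgxlink}: one only needs to note that niceness forces the factor $N'$ there to equal $1$, since any prime dividing $\gcd(D_1^{\fund},D_2^{\fund})$ cannot divide $\frac{D_1D_2-x^2}{4}$ when $(D_1,D_2,x)$ is nice, so $B$ ramifies at exactly $p_1,\dots,p_r$. For part (ii), any Eichler order admitting $\phi_1,\phi_2$ contains the minimal Eichler superorder $\Ord_{\phi_1,\phi_2}^{\Eich}$, which by Proposition \ref{prop:Eichlerlevelandoptimality} has level $\prod_i w_i^{g_i}$ and admits $\phi_1,\phi_2$ optimally precisely when no $p_i$ or $q_i$ is potentially bad; if optimality fails in $\Ord_{\phi_1,\phi_2}^{\Eich}$ it fails in every superorder, since the descended embedding still lands inside, which gives the necessity of that condition. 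Locally at each $p_i,q_i$ this minimal order is already maximal, so the level of any superorder is supported only on the $w_i$, giving $\mathfrak{M}=\prod_i w_i^{g_i'}$ with $g_i'\le g_i$; existence for all such $\mathfrak{M}$ will follow from the positive count in (iii).

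\emph{The local counting framework.} At each split prime $w_i$ (where $\epsilon(w_i)=1$, so $B_{w_i}\cong\Mat(2,\Q_{w_i})$), I would record $\Ord_{\phi_1,\phi_2,w_i}^{\Eich}$ by its path $P=(M_0,\dots,M_{g_i})$ in the Bruhat--Tits tree, whose vertices are the $g_i+1$ maximal orders containing it (Section \ref{sec:towersEichler}). Eichler superorders of level $w_i^{g_i'}$ are exactly the contiguous sub-paths of length $g_i'$, and containment among these superorders corresponds to reverse inclusion of paths. Beyond the level formula I need one nesting fact: because the generator $\sqrt{(x^2-D_1D_2)/w_i^{2c+2}}$ of the level-$w_i^{c+1}$ order maps to $w_i^{-1}$ times that of the level-$w_i^{c}$ order, one gets $\Ord_{\phi_1,\phi_2}(w_i^{c})\subseteq\Ord_{\phi_1,\phi_2}(w_i^{c+1})$ and hence $\Ord_{\phi_1,\phi_2}^{\Eich}(w_i^{c})\subseteq\Ord_{\phi_1,\phi_2}^{\Eich}(w_i^{c+1})$; by Proposition \ref{prop:optembwithlevel} their paths have lengths $g_i-2c$ and $g_i-2c-2$ and are nested. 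From this an elementary lemma follows: if $P'\subseteq P''$ are contiguous sub-paths with $|P''|=L$ and $|P'|=L-2$, then the number of length-$g_i'$ sub-paths of $P''$ not contained in $P'$ equals $(L-g_i'+1)-\max(L-2-g_i'+1,0)$, which is $2$ when $g_i'<L$ and $1$ when $g_i'=L$. Crucially the answer depends only on the two lengths, not on how $P'$ sits inside $P''$.

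\emph{Parts (iii) and (iv).} Part (iii) is this lemma with $P''=P$ (so $L=g_i$): for $w_i\nmid\PB(D_1,D_2)$ every sub-path is admissible, giving $g_i-g_i'+1$, while for $w_i\mid\PB(D_1,D_2)$ Proposition \ref{prop:whensuperorderadmitsopt} forbids containing $\Ord_{\phi_1,\phi_2}^{\Eich}(w_i)$, of path length $g_i-2$, giving $2$ if $g_i'<g_i$ and $1$ if $g_i'=g_i$. For part (iv), I would use that intersection level exactly $\ell$ means $\Ord'\supseteq\Ord_{\phi_1,\phi_2}(\ell)$ but $\Ord'\not\supseteq\Ord_{\phi_1,\phi_2}(p\ell)$ for every prime $p$ (the remark preceding the theorem). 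Since $\frac{D_1D_2-x^2}{4}$ carries $p_i,q_i$ to the powers $2e_i+1,2f_i$, Lemma \ref{lem:genorderlevel} shows one cannot raise the $p_i$- or $q_i$-exponent of the level beyond $e_i,f_i$, while the relation $\Ord_{\phi_1,\phi_2}^{\Eich}=\Ord_{\phi_1,\phi_2}^{\Eich}(\prod p_i^{e_i}\prod q_i^{f_i})$ shows it is at least $e_i,f_i$; so those exponents are forced exactly. At $w_i$, the containment $\Ord'\supseteq\Ord_{\phi_1,\phi_2}^{\Eich}(w_i^{g_i''})$ requires the length-$g_i'$ sub-path to fit inside the length-$(g_i-2g_i'')$ path, i.e. $2g_i''\le g_i-g_i'$; and for potentially bad $w_i$ the optimality obstruction of Proposition \ref{prop:optembwithlevel} forces $g_i''=0$.

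\emph{Part (v) and the main point.} Fixing a valid pair $(\mathfrak{M},\ell)$, the local--global correspondence makes the total count a product over the $w_i$ of local counts, since at every other prime the local order is uniquely determined and contributes $1$. At $w_i$, intersection level exactly $g_i''$ means $\Ord'_{w_i}\supseteq\Ord_{\phi_1,\phi_2}^{\Eich}(w_i^{g_i''})$ but $\Ord'_{w_i}\not\supseteq\Ord_{\phi_1,\phi_2}^{\Eich}(w_i^{g_i''+1})$, so the lemma with $L=g_i-2g_i''$ gives $2$ when $g_i'<g_i-2g_i''$ (equivalently $2g_i''<g_i-g_i'$) and $1$ when $2g_i''=g_i-g_i'$; for potentially bad $w_i$, where $g_i''=0$ is forced, ``not containing $\Ord_{\phi_1,\phi_2}^{\Eich}(w_i)$'' coincides with ``level exactly $0$'', so the same count applies and subsumes the potentially bad case of (iii). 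Multiplying yields $2^n$ with $n$ the number of indices satisfying the strict inequality. I expect the main point requiring care to be not any single deep step but the bookkeeping: matching ``level exactly $\ell$'' to the pair of containment conditions, verifying the nesting fact, and confirming through Theorem \ref{thm:JVlocalglobal} that the global count genuinely factors as a product of independent local choices. The substantive geometry has already been carried out in Propositions \ref{prop:Eichlerlevelandoptimality}, \ref{prop:whensuperorderadmitsopt}, and \ref{prop:optembwithlevel}.
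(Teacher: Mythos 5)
Your proposal is correct and follows essentially the same route as the paper: reduce to local counts via Theorem \ref{thm:JVlocalglobal}, invoke Theorem \ref{thm:onequatalgxlink} and Propositions \ref{prop:Eichlerlevelandoptimality}, \ref{prop:whensuperorderadmitsopt}, \ref{prop:optembwithlevel}, and count Eichler superorders at the split primes $w_i$ using the Bruhat--Tits tree, with the nesting $\Ord_{\phi_1,\phi_2}^{\Eich}(w_i^{c})\subseteq\Ord_{\phi_1,\phi_2}^{\Eich}(w_i^{c+1})$ playing the same role as in the paper. Your explicit sub-path counting lemma (yielding $2$ when $g_i'<L$ and $1$ when $g_i'=L$, independent of the position of $P'$ in $P''$) is just a repackaging of the paper's inductive count in the inverted triangle of Section \ref{sec:towersEichler}, and all the numbers agree.
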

\begin{proof}
	Part i is the content of Theorem \ref{thm:onequatalgxlink}, and the necessity of the conditions in part ii follows from Proposition \ref{prop:Eichlerlevelandoptimality}. To complete part ii, it suffices to prove it locally, and Proposition \ref{prop:whensuperorderadmitsopt} implies that there is an Eichler order of level $w_i^{g_i-2}$ whose containment must be avoided for each $i$ such that $w_i\mid\PB(D_1,D_2)$ (and no other orders need be avoided).
	
	Recall the inverted triangle of local Eichler orders, as described in Section \ref{sec:towersEichler}. The local Eichler orders containing $\Ord_{\phi_1,\phi_2,w_i}^{\Eich}$ form an inverted triangle with $g_i+1$ rows. There are $g_i+1-n$ Eichler orders of level $w_i^n$ in the $n^{\text{th}}$ row of the triangle, starting at $n=0$ and ending at $n=g_i$. Therefore if $w_i\nmid\PB(D_1,D_2)$, there are $g_i+1-g_i'$ possible Eichler orders of level $w_i^{g_i'}$. If $w_i\mid\PB(D_1,D_2)$, then there is one when $g_i'=g_i$, and on all rows above it there are two, as the order that we cannot contain has level $w_i^{g_i-2}$. In particular, this implies part ii as this is a non-zero number.
	
	By the local-global principle for orders (Theorem \ref{thm:JVlocalglobal}), the total count for global orders is the product of the local counts. The count in part iii follows from this and the previous paragraph.
	
	For parts iv, v, Proposition \ref{prop:optembwithlevel} and the discussion surrounding it imply that $\ell$ has the prime factorization as claimed. The necessity of $2g_i''\leq g_i-g_i'$ comes from the level of $\Ord_{\phi_1,\phi_2}^{\Eich}(w_i^{g_i''})$ having valuation $g_i-2g_i''$ at $w_i$. Proposition \ref{prop:optembwithlevel} also implies that if $w_i\mid\PB(D_1,D_2)$, then the valuation of $\ell$ at $w_i$ must be $0$, i.e. $g_i''=0$. 
	
	To count this, we again work locally and use the local-global principle. The local count is unchanged at the primes $w_i$ for which $w_i\mid\PB(D_1,D_2)$. For primes $w_i$ not satisfying this, we no longer have to worry about optimality. For ease of notation, if the level of the embedding pair is $w_i^k$, we say it has intersection level $k$. The Eichler order $\Ord_{\phi_1,\phi_2,w_i}^{\Eich}(w_i^n)$ has level $w_i^{g_i-2n}$, and an intersection level is at least $n$ if and only if the order contains $\Ord_{\phi_1,\phi_2,w_i}^{\Eich}(w_i^n)$. Drawing the inverted triangle as before, it follows by induction that (noting that all of the orders $\Ord_{\phi_1,\phi_2,w_i}^{\Eich}(w_i^n)$ are contained inside each other)
	\begin{itemize}
		\item In level $w_i^{g_i-2n}$, there are $2n+1$ orders, of which there are $2$ of each intersection level $0,1,\ldots,n-1$, and one of intersection level $n$;
		\item In level $w_i^{g_i-2n+1}$, there are $2n$ orders, of which there are $2$ of each intersection level $0,1,\ldots,n-1$.
	\end{itemize}
	In particular, there are $2$ orders of intersection level $g_i''$ when $2g_i''<g_i-g_i'$, and one when $2g_i''=g_i-g_i'$. The condition coming from $w_i\mid\PB(D_1,D_2)$ was there are two if $g_i'<g_i$, and one if we had equality. Since $g_i''=0$, this condition is absorbed by $2g_i''<g_i-g_i'$. This completes parts iv, v.
\end{proof}

\section{Proof of the main theorem}\label{sec:mainproof}

We are now ready to study $\Emb(\Ord,D_1,D_2,x)$.

\subsection{Total x-linking into a given Eichler order}

As alluded to at the start of Section \ref{sec:xlink}, we need to pass between Eichler orders containing a fixed pair of $x-$linked embeddings, and elements of $\Emb(\Ord,D_1,D_2,x)$. This is accomplished in the ``inversion theorem'', which we now set up for.

Let $F$ be $\QQ $ or $\QQ_p$, and let $B$ be a quaternion algebra over $F$ of discriminant $\mathfrak{D}$, which is indefinite if $F=\QQ $. Let $\Ord$ be an Eichler order of level $\mathfrak{M}$ in $B$. Assume that $D_1,D_2$ are positive discriminants for which $\Emb(B,D_1,D_2,x)$ is non-empty, fix $[(\phi_1,\phi_2)]\in\Emb(B,D_1,D_2,x)$, let $\ell^2\mid\frac{D_1D_2-x^2}{4}$, and define
\begin{align*}
	T_{\phi_1,\phi_2}(\mathfrak{M}):= & \{\mathrm{E}:\text{ $\mathrm{E}$ is an Eichler order of $B$ of level $\mathfrak{M}$ }\\
	& \qquad\qquad \text{for which $\phi_1,\phi_2$ give optimal embeddings into}\};\\
	T_{\phi_1,\phi_2}(\mathfrak{M},\ell):= & \{\mathrm{E}\in T_{\phi_1,\phi_2}(\mathfrak{M})\text{ such that $(\phi_1,\phi_2)$ has level $\ell$ in $\mathrm{E}$}\}.
\end{align*}

\begin{proposition}\label{prop:countemb}
	We have
	\[\vert\Emb(\Ord,D_1,D_2,x,\ell)\vert=\left\vert\dfrac{N_{B^{\times}}(\Ord)}{F^{\times}\Ord^1}\right\vert\vert T_{\phi_1,\phi_2}(\mathfrak{M},\ell)\vert,\]
	and the analogous result without the $\ell$. If $F=\QQ$, then
	\[\left\vert\dfrac{N_{B^{\times}}(\Ord)}{F^{\times}\Ord^1}\right\vert=2^{\omega(\mathfrak{D}\mathfrak{M})+1}.\]
\end{proposition}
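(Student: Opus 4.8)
The plan is to treat the two claims separately, since they are of very different natures. The index formula in the case $F=\Q$ is immediate from Proposition \ref{prop:stab}: that result identifies $N_{B^{\times}}(\Ord)/\Q^{\times}\Ord_{N=1}^{\times}$ with $\prod_{p\mid\mathfrak{D}\mathfrak{M}\infty}\Z/2\Z$, a group with one $\Z/2\Z$-factor for each rational prime dividing $\mathfrak{D}\mathfrak{M}$ together with one factor for the archimedean place. As the number of primes dividing $\mathfrak{D}\mathfrak{M}$ is $\omega(\mathfrak{D}\mathfrak{M})$, this group has order $2^{\omega(\mathfrak{D}\mathfrak{M})+1}$, which is the claimed value. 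The substance therefore lies in the first identity, which I would prove by an ``inversion'' bijection: rather than fixing the order and varying the embedding pair, I trade the varying pair for a varying order that admits the single fixed pair $(\phi_1,\phi_2)$.

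To construct the map, start with a class $[(\sigma_1,\sigma_2)]\in\Emb(\Ord,D_1,D_2,x,\ell)$. Both $(\sigma_1,\sigma_2)$ and the fixed representative $(\phi_1,\phi_2)$ are $x$-linked pairs of embeddings of $D_1,D_2$ into $B$, so by Corollary \ref{cor:allpairsconj} there is $g\in B^{\times}$, unique up to the central factor $F^{\times}$, with $g\sigma_ig^{-1}=\phi_i$ for $i=1,2$. I set $\Phi([(\sigma_1,\sigma_2)]):=g\Ord g^{-1}=:\mathrm{E}$. Scaling $g$ by $F^{\times}$ does not change $g\Ord g^{-1}$, so $\mathrm{E}$ depends only on the pair; and since conjugation by $g$ is an algebra automorphism sending $\Ord$ to $\mathrm{E}$, $\sigma_i$ to $\phi_i$, and $\sigma_1\times\sigma_2$ to $\phi_1\times\phi_2$, it preserves optimality, the level of the intersection, and the Eichler level $\mathfrak{M}$; hence $\mathrm{E}\in T_{\phi_1,\phi_2}(\mathfrak{M},\ell)$. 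The map descends to equivalence classes: if $r\in\Ord_{N=1}^{\times}$ carries $(\sigma_1,\sigma_2)$ to $(\sigma_1',\sigma_2')$ and $g'$ is chosen for the latter, then $g'r$ also conjugates $(\sigma_1,\sigma_2)$ to $(\phi_1,\phi_2)$, so $g'r\in F^{\times}g$ by the uniqueness in Corollary \ref{cor:allpairsconj}, and hence $g'\Ord g'^{-1}=g'r\Ord r^{-1}g'^{-1}=g\Ord g^{-1}$ because $r$ normalizes $\Ord$. Surjectivity is easy: any $\mathrm{E}\in T_{\phi_1,\phi_2}(\mathfrak{M},\ell)$ shares the level $\mathfrak{M}$ of $\Ord$, so $\mathrm{E}=g_0\Ord g_0^{-1}$ for some $g_0\in B^{\times}$ (all Eichler orders of a fixed level are conjugate, over both $\Q$ and $\Q_p$), and then $\sigma_i:=g_0^{-1}\phi_ig_0$ furnishes a preimage.

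The heart of the argument, and the step I expect to be the main obstacle, is the fiber count, where careful bookkeeping of left versus right cosets and of the central factor $F^{\times}$ is essential. Fixing $\mathrm{E}\in T_{\phi_1,\phi_2}(\mathfrak{M},\ell)$, a class lies in its fiber exactly when it is represented by $\sigma_i=g^{-1}\phi_ig$ for some $g$ with $g\Ord g^{-1}=\mathrm{E}$; choosing one such $g_0$, the admissible $g$ form precisely the left coset $g_0N_{B^{\times}}(\Ord)$. Two choices $g,g'$ yield equivalent pairs if and only if $g'rg^{-1}$ simultaneously centralizes $\phi_1$ and $\phi_2$ for some $r\in\Ord_{N=1}^{\times}$; here I invoke Corollary \ref{cor:allpairsconj} a second time, now with both pairs equal to $(\phi_1,\phi_2)$, to see that the invertible simultaneous centralizer of $(\phi_1,\phi_2)$ is exactly $F^{\times}$ (the relevant space is one-dimensional and contains $1$). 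Thus $g'\in g\,F^{\times}\Ord_{N=1}^{\times}$, so the fiber is $g_0N_{B^{\times}}(\Ord)$ taken modulo right multiplication by the subgroup $F^{\times}\Ord_{N=1}^{\times}\subseteq N_{B^{\times}}(\Ord)$; left-cancelling $g_0$ identifies this with the right coset space $N_{B^{\times}}(\Ord)/F^{\times}\Ord_{N=1}^{\times}$, so every fiber has exactly $\left|N_{B^{\times}}(\Ord)/F^{\times}\Ord_{N=1}^{\times}\right|$ elements. Combining constant fiber size with surjectivity gives the first identity, and repeating the argument with the level condition dropped gives the version without $\ell$.
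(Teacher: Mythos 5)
Your proposal is correct and follows essentially the same route as the paper: the same inversion map sending a class in $\Emb(\Ord,D_1,D_2,x,\ell)$ to a conjugate order in $T_{\phi_1,\phi_2}(\mathfrak{M},\ell)$, with well-definedness and the fiber count both resting on Corollary \ref{cor:allpairsconj} (including your observation that the simultaneous centralizer of $(\phi_1,\phi_2)$ is exactly $F^{\times}$), surjectivity from conjugacy of Eichler orders of equal level, and the index formula from Proposition \ref{prop:stab}. Your bookkeeping of the fiber as the coset space $g_0N_{B^{\times}}(\Ord)$ modulo $F^{\times}\Ord_{N=1}^{\times}$ is just a mild repackaging of the paper's $t=r^{-1}s$ argument.
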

\begin{proof}
	By Corollary \ref{cor:allpairsconj} and Eichler orders of the same level being conjugate, we have that $T_{\phi_1,\phi_2}(\mathfrak{M})$ is non-empty if and only if $S=\Emb(\Ord,D_1,D_2,x)$ is non-empty. In particular, we can assume that $(\phi_1,\phi_2)$ give a class in $S$, and we will use this pair to define a map $\theta:S\rightarrow T_{\phi_1,\phi_2}(\mathfrak{M})$. Given optimal embeddings $(\phi_1',\phi_2')$ representing a class in $S$, by Corollary \ref{cor:allpairsconj}, there exists an $r\in B^{\times}$ for which $r\phi_i'r^{-1}=\phi_i$ for $i=1,2$. Define
	\[\theta((\phi_1',\phi_2'))=r\Ord r^{-1}.\]
	It is clear that $r\Ord r^{-1}\in T_{\phi_1,\phi_2}(\mathfrak{M})$, but we need to check that all choices were well defined. By Corollary \ref{cor:allpairsconj}, the element $r$ is defined up to multiplication by $F^{\times}$, which does not change $r\Ord r^{-1}$. If $(\phi_1',\phi_2')\sim(\phi_1'',\phi_2'')$ in $S$, then there exists an $s\in\Ord^1$ for which $\phi_i'=s\phi_i''s^{-1}$ for $i=1,2$. The corresponding element $r$ can then be taken to be $r'=rs$, and then $r'\Ord r'^{-1}=rs\Ord s^{-1}r^{-1}=r\Ord r^{-1}$, as desired. Therefore the map $\theta$ is well defined.
	
	Next, it is clear that $\theta$ is surjective. Indeed, if $\mathrm{E}\in T_{\phi_1,\phi_2}(\mathfrak{M})$, then there exists a $b\in B^{\times}$ for which $b\mathrm{E}b^{-1}=\Ord$. Then $(\phi_1^b,\phi_2^b)\in S$, and this pair maps via $\theta$ to $\mathrm{E}$, as desired.
	
	Therefore, it suffices to show that $\theta$ is a $\left\vert\frac{N_{B^{\times}}(\Ord)}{F^{\times}\Ord^1}\right\vert$-to-one map. Assume that $\theta((\phi_1',\phi_2'))=\theta((\phi_1'',\phi_2''))$, and that the pairs correspond to $r,s$ respectively. Then $r\Ord r^{-1}=s\Ord s^{-1}$, hence $t=r^{-1}s\in N_{B^{\times}}(\Ord)$. Writing $s=rt$, it follows that $t^{-1}\phi_i't=\phi_i''$, so it suffices to determine how $t^{-1}(\phi_1',\phi_2')t$ varies as $t$ ranges over $N_{B^{\times}}(\Ord)$. For a fixed $t$, by Corollary \ref{cor:allpairsconj}, the set of elements conjugating $(\phi_1',\phi_2')$ to any form in the class of $t^{-1}(\phi_1',\phi_2')t$ is $\Ord^1 t^{-1}F^{\times}=t^{-1}F^{\times}\Ord^1$. Thus, for distinct $t_1,t_2$, they correspond to the same image if and only if 
	\[t_1^{-1}F^{\times}\Ord^1=t_2^{-1}F^{\times}\Ord^1,\]
	which is equivalent to $t_2t_1^{-1}\in F^{\times}\Ord^1$. This proves the first claim without the $\ell$. It is clear that the level of intersection remains constant under $\theta$, hence the statements remain true when we add in the level $\ell$.
	
	When $F=\QQ$, Proposition \ref{prop:stab} yields
	\[\dfrac{N_{B^{\times}}(\Ord)}{\QQ^{\times}\Ord^1}\simeq\prod_{p\mid\mathfrak{D}\mathfrak{M}\infty}\dfrac{\ZZ}{2\ZZ},\]
	which implies the final result.
\end{proof}

Combining Proposition \ref{prop:countemb} with Theorem \ref{thm:generalxlinkingexistence} produces the count of $x-$linking.

\begin{theorem}\label{thm:maincountingthm}
	Let $B$ be an indefinite quaternion algebra over $\QQ$ of discriminant $\mathfrak{D}$, let $\Ord$ be an Eichler order of level $\mathfrak{M}$, let $(D_1,D_2,x)$ be \nice, and let $\ell$ be a positive integer. Factorize
	\[\dfrac{D_1D_2-x^2}{4}=\pm\prod_{i=1}^{r}p_i^{2e_i+1}\prod_{i=1}^s q_i^{2f_i}\prod_{i=1}^t w_i^{g_i},\]
	where the $p_i$ are the primes for which $\epsilon(p_i)=-1$ that appear to an odd power, $q_i$ are the primes for which $\epsilon(q_i)=-1$ that appear to an even power, and $w_i$ are the primes for which $\epsilon(w_i)=1$. Then
	\begin{enumerate}[label=(\roman*)]
		\item The set $\Emb(\Ord,D_1,D_2,x)$ is non-empty if and only if all of the following hold:
		\begin{itemize}
			\item $\mathfrak{D}=\prod_{i=1}^r p_i$;
			\item None of the $p_i,q_i$ are \pbad;
			\item $\mathfrak{M}=\prod_{i=1}^t w_i^{g_i'}$ with $g_i'\leq g_i$.
		\end{itemize}
		\item Assume the above holds. Then
		\[\vert\Emb(\Ord,D_1,D_2,x)\vert=2^{\omega(\mathfrak{D}\mathfrak{M})+1}\displaystyle\prod_{i=1}^t
		\begin{cases} g_i+1-g_i' & \text{if $w_i\nmid\PB(D_1,D_2)$;}\\
			2 & \text{if $w_i\mid\PB(D_1,D_2)$ and $g_i'<g_i$;}\\
			1 & \text{if $w_i\mid\PB(D_1,D_2)$ and $g_i'=g_i$.}
		\end{cases}\]
		\item The set $\Emb(\Ord,D_1,D_2,x,\ell)$ is non-empty if and only if $\ell$ takes the form
		\[\ell=\prod_{i=1}^{r}p_i^{e_i}\prod_{i=1}^s q_i^{f_i}\prod_{i=1}^t w_i^{g_i''},\]
		where $2g_i''\leq g_i-g_i'$ and $g_i''=0$ if $w_i\mid\PB(D_1,D_2)$.
		\item Assume the above holds. Let $n$ be the number of indices $i$ for which $2g_i''<g_i-g_i'$. Then
		\[\vert\Emb(\Ord,D_1,D_2,x,\ell)\vert=2^{\omega(\mathfrak{D}\mathfrak{M})+n+1}.\]
	\end{enumerate}
\end{theorem}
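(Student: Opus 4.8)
The plan is to derive Theorem \ref{thm:maincountingthm} as a direct consequence of the ``inversion'' result Proposition \ref{prop:countemb} together with the order-counting Theorem \ref{thm:generalxlinkingexistence}. The former converts a count of equivalence classes of optimal embedding pairs into $\Ord$ into a count of Eichler orders (of the appropriate level, and with the appropriate intersection level) into which a single fixed pair $(\phi_1,\phi_2)$ embeds optimally, at the cost of the universal factor $\left|N_{B^{\times}}(\Ord)/(\Q^{\times}\Ord_{N=1}^{\times})\right|=2^{\omega(\mathfrak{D}\mathfrak{M})+1}$; the latter enumerates exactly those orders. So the whole argument is a matter of lining up the two statements, and the bulk of the work has already been done in Sections \ref{sec:xlink} and \ref{sec:countEO}.

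First I would dispose of the global quaternion algebra. If $B$ is not isomorphic to $\left(\frac{D_1,x^2-D_1D_2}{\Q}\right)$, then $\Emb(B,D_1,D_2,x)$ is empty by Theorem \ref{thm:onequatalgxlink}, hence so is $\Emb(\Ord,D_1,D_2,x)$ and there is nothing to count; in the nice case this is precisely the failure of the condition $\mathfrak{D}=\prod_{i=1}^r p_i$ recorded in Theorem \ref{thm:generalxlinkingexistence}(i). So I would assume $\mathfrak{D}=\prod_{i=1}^r p_i$, which allows me to fix a representative $(\phi_1,\phi_2)\in\Emb(B,D_1,D_2,x)$ and invoke Proposition \ref{prop:countemb}.

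For parts (i) and (ii): Proposition \ref{prop:countemb} shows that $\Emb(\Ord,D_1,D_2,x)$ is non-empty if and only if $T_{\phi_1,\phi_2}(\mathfrak{M})$ is, and Theorem \ref{thm:generalxlinkingexistence}(ii) translates the latter into the stated conditions (none of the $p_i,q_i$ potentially bad, and $\mathfrak{M}=\prod_{i=1}^t w_i^{g_i'}$ with $g_i'\le g_i$). When these hold, multiplying the count $|T_{\phi_1,\phi_2}(\mathfrak{M})|$ of Theorem \ref{thm:generalxlinkingexistence}(iii) by the factor $2^{\omega(\mathfrak{D}\mathfrak{M})+1}$ reproduces the formula in part (ii) verbatim. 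Parts (iii) and (iv) are identical with the level-$\ell$ refinement: the non-emptiness criterion and the shape of $\ell$ come from Theorem \ref{thm:generalxlinkingexistence}(iv), and the count is $2^{\omega(\mathfrak{D}\mathfrak{M})+1}\cdot|T_{\phi_1,\phi_2}(\mathfrak{M},\ell)|=2^{\omega(\mathfrak{D}\mathfrak{M})+1}\cdot 2^{n}=2^{\omega(\mathfrak{D}\mathfrak{M})+n+1}$, using Theorem \ref{thm:generalxlinkingexistence}(v).

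The only genuine subtlety, and the step I would write most carefully, is the bookkeeping of the non-emptiness hypotheses: I would make sure the condition $\Emb(B,D_1,D_2,x)\neq\emptyset$ (which pins down $B$, equivalently $\mathfrak{D}=\prod_{i=1}^r p_i$) is logically separated from the order-level conditions supplied by $T_{\phi_1,\phi_2}(\mathfrak{M})$, so that the ``if and only if'' in parts (i) and (iii) is clean and no case is double-counted. Beyond this, there is no hard analytic or algebraic obstacle remaining: once a representative pair is fixed, the theorem is a direct substitution of the two cited results.
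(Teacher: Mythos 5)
Your proposal is correct and is precisely the paper's own argument: the paper derives Theorem \ref{thm:maincountingthm} by combining the inversion map of Proposition \ref{prop:countemb} (supplying the universal factor $2^{\omega(\mathfrak{D}\mathfrak{M})+1}$) with the order counts of Theorem \ref{thm:generalxlinkingexistence}, exactly as you describe. Your extra care in separating the global condition $\mathfrak{D}=\prod_{i=1}^r p_i$ (via Theorem \ref{thm:onequatalgxlink}) from the order-level conditions is sound and matches the paper's implicit bookkeeping.
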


Most of Theorem \ref{thm:countxlinking} now follows by specializing Theorem \ref{thm:maincountingthm} to the case of $D_1,D_2$ being coprime, fundamental, and $\Ord$ being maximal. The only unproven claim is the final one about the signs of intersections, and this is considered in the next section.

\subsection{Orientations and sign of intersection}\label{sec:orientint}
Up until now, the orientations of optimal embeddings and the sign of intersection has been completely ignored; we now address this issue.

\begin{lemma}\label{lem:xlinkedor}
	Let $B$ be an indefinite quaternion algebra over $\QQ$, let $\Ord$ be an Eichler order of level $\mathfrak{M}$, let $(\phi_1,\phi_2)$ be $x-$linked optimal embeddings of positive discriminants $D_1,D_2$ respectively where $(D_1, D_2, x)$ is admissible, let $v\mid\mathfrak{D}\mathfrak{M}\infty$, and let $\omega_v\in N_B^{\times}(\Ord)$ be as in Proposition \ref{prop:stab}. Then $(\phi_1^{w_v},\phi_2^{w_v})$ is an $x-$linked pair of optimal embeddings into $\Ord$ with the same level as $(\phi_1,\phi_2)$. Furthermore, if $x^2<D_1D_2$, then
	\begin{itemize}
		\item If $v=\infty$ then the orientations are the same, but the sign of intersection is opposite.
		\item If $v<\infty$, then the orientations are negated at $v$ only, and the sign of intersection is the same.
	\end{itemize}
\end{lemma}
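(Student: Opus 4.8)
The plan is to separate the two assertions: the structural claim that $(\phi_1^{\omega_v},\phi_2^{\omega_v})$ is again an $x$-linked optimal pair into $\Ord$ of the same level, and the finer claim about orientations and sign. For the structural part, the only input is that $\omega_v\in N_{B^{\times}}(\Ord)$, so conjugation by $\omega_v$ is an inner automorphism of $B$ carrying $\Ord$ to itself. Thus $\phi_i^{\omega_v}=\omega_v\phi_i\omega_v^{-1}$ again maps $\qord{D_i}$ into $\Ord$, and optimality is preserved because any extension of $\phi_i^{\omega_v}$ to $\qord{D_i/p^2}$ would conjugate back by $\omega_v^{-1}$ to an extension of $\phi_i$, contradicting optimality of $\phi_i$. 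The $x$-linking number $\frac12\trd(\phi_1(\sqrt{D_1})\phi_2(\sqrt{D_2}))$ is invariant under the inner automorphism since the reduced trace is conjugation-invariant, so the conjugated pair is still $x$-linked. Finally, I would observe that $\phi_1^{\omega_v}\times\phi_2^{\omega_v}=(\phi_1\times\phi_2)^{\omega_v}$, because both sides send $x+\sqrt{x^2-D_1D_2}$ to $\omega_v\,\phi_1(\sqrt{D_1})\phi_2(\sqrt{D_2})\,\omega_v^{-1}$; since conjugation preserves the reduced discriminant of this embedding, the level (Definition \ref{def:signlevel}) is unchanged.

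For the orientation and sign statement (where $x^2<D_1D_2$, so $\phi_1\times\phi_2$ embeds the negative discriminant $\frac{x^2-D_1D_2}{\ell^2}$), the key conceptual step is to identify the sign of intersection with the orientation at $\infty$ of the product embedding $\phi_1\times\phi_2$. Indeed, by Proposition \ref{prop:countlocalfactors}(i) a negative discriminant has exactly two local classes at $\infty$, and these are precisely the positive- and negative-definite classes; so $\sg(\phi_1,\phi_2)$ is nothing but $o_\infty(\phi_1\times\phi_2)$ up to the fixed labelling convention. With this translation in hand, both halves of the statement reduce to Proposition \ref{prop:conjorient} applied to the three embeddings $\phi_1$, $\phi_2$, and $\phi_1\times\phi_2$.

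Concretely, for $v<\infty$ I would apply Proposition \ref{prop:conjorient} to $\phi_1$ and $\phi_2$ to conclude that each has its orientation negated at $v$ and nowhere else; applying the same proposition to $\phi_1\times\phi_2$ at the place $\infty\neq v$ shows $o_\infty(\phi_1\times\phi_2)$ is untouched, so the sign is preserved. For $v=\infty$, the positivity of $D_1,D_2$ forces (again by Proposition \ref{prop:countlocalfactors}(i)) a single local class at $\infty$, so the orientations of $\phi_1,\phi_2$ are rigid there, while $\omega_\infty$ fixes all finite-place orientations, giving identical orientations for the pair; but $\phi_1\times\phi_2$ has negative discriminant, so Proposition \ref{prop:conjorient} negates $o_\infty(\phi_1\times\phi_2)$, flipping the sign.

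I expect the main obstacle to be the bridge of the second paragraph, namely cleanly justifying that the sign of intersection coincides with $o_\infty(\phi_1\times\phi_2)$ and that the hypothesis of Proposition \ref{prop:conjorient} is met at $\infty$ for the product embedding. Once that identification is in place, the asymmetry between the positive discriminants $D_1,D_2$ (rigid, trivial orientation at $\infty$) and their negative-discriminant product (nontrivial orientation at $\infty$) delivers both cases simultaneously, and the remaining verifications are routine consequences of conjugation-invariance of the reduced trace and reduced discriminant.
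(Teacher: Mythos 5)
Your proposal is correct and follows essentially the same route as the paper: the paper's (much terser) proof likewise declares the preservation of $x$-linking, optimality, and level under conjugation by $\omega_v$ to be clear, derives the orientation claims from Proposition \ref{prop:conjorient}, and uses exactly your key bridge---that the sign of intersection equals the orientation at $\infty$ of the product embedding $\phi_1\times\phi_2$---before applying Proposition \ref{prop:conjorient} to $\phi_1\times\phi_2$ as well. Your write-up simply makes explicit the details (conjugation-invariance of $\trd$, the conjugate-back argument for optimality, and the identity $\phi_1^{\omega_v}\times\phi_2^{\omega_v}=(\phi_1\times\phi_2)^{\omega_v}$) that the paper leaves implicit.
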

\begin{proof}
	It is clear that $(\phi_1^{w_v},\phi_2^{w_v})$ remains $x-$linked, optimal, has the same intersection level, and the orientation follows from Proposition \ref{prop:conjorient}. Having opposite sign of intersection is equivalent to $\phi_1\times\phi_2$ swapping orientation at $\infty$ when conjugating by $w_v$, and this also follows from Proposition \ref{prop:conjorient}.
\end{proof}

In particular, any element of $\Ord^{-1}$ (reduced norm $-1$) acts as an involution on $\Emb_{o_1,o_2}(\Ord,D_1,D_2,x,\ell)$, dividing it into equal sized sets of intersection sign being $1$ and $-1$. This completes the final claim of Theorem \ref{thm:countxlinking}.

\begin{definition}
	If $o_1,o_2$ are orientations of optimal embeddings, then attaching the subscript $o_1,o_2$ to any of the sets defined as $\Emb(\Ord, D_1, D_2,\ldots)$ means we only take the pairs of optimal embeddings of the specified orientations. Thus, $\Emb_{o_1,o_2}^+(\Ord,D_1,D_2,x,\ell)$ counts the equivalence classes of pairs $[(\phi_1,\phi_2)]$ of optimal embeddings of discriminants $D_1,D_2$ and orientations $o_1,o_2$ that are $x-$linked of level $\ell$ with positive sign.
\end{definition}

\begin{lemma}\label{lem:xlinkingdivisionamongorientations}
	Let $B$ be an indefinite quaternion algebra over $\QQ$ of discriminant $\mathfrak{D}$, let $\Ord$ be an Eichler order of level $\mathfrak{M}$, and let $(D_1, D_2, x)$ be admissible. Assume that $\Emb(\Ord,D_1,D_2,x)$ is non-empty, let $o_1$ be a possible orientation of an optimal embedding of $\qord{D_1}$ into $\Ord$, and assume that $\gcd(D_1D_2, \mathfrak{M})=1$. Then there exists a $[(\phi_1,\phi_2)]\in \Emb(\Ord,D_1,D_2,x)$ for which $\phi_1$ has orientation $o_1$. For each $p\mid\mathfrak{D}\mathfrak{M}$, we also have:
	\begin{itemize}
		\item If $p\nmid D_1$, then $o_p(\phi_2)$ is uniquely determined;
		\item If $p\mid D_1$ but $p\nmid D_2$, then $o_p(\phi_2)$ can be both $1$ and $-1$.
	\end{itemize}
	Finally, there is a positive integer $N$ such that for all orientations $(o_1,o_2)$, we have
	\[\vert\Emb_{o_1,o_2}(\Ord,D_1,D_2,x,\ell)\vert\in\{0,N\},\]
	and the same result holds with $N/2$ for $\Emb_{o_1,o_2}^+(\Ord,D_1,D_2,x,\ell)$ if $x^2<D_1D_2$.
\end{lemma}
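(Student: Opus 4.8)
The plan is to realise the normaliser quotient $G := N_{B^{\times}}(\Ord)/(\Q^{\times}\Ord_{N=1}^{\times}) \cong (\Z/2\Z)^{\omega(\mathfrak{D}\mathfrak{M})+1}$ (Proposition \ref{prop:stab}) as a group acting on $\Emb(\Ord,D_1,D_2,x)$ by simultaneous conjugation, $\omega_v\cdot[(\phi_1,\phi_2)]=[(\phi_1^{\omega_v},\phi_2^{\omega_v})]$, and to read off all three assertions from how the generators move orientations and signs. By Lemma \ref{lem:xlinkedor} this action is well defined and preserves the level, so it restricts to each $\Emb(\Ord,D_1,D_2,x,\ell)$. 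Since $\gcd(D_1D_2,\mathfrak{M})=1$, at every $p\mid\mathfrak{D}\mathfrak{M}$ the embedding $\phi_i$ has a two-valued orientation exactly when $p\nmid D_i$ and a forced one when $p\mid D_i$; by Proposition \ref{prop:conjorient} and Lemma \ref{lem:xlinkedor}, a finite $\omega_p$ flips $o_p(\phi_i)$ precisely when $p\nmid D_i$ and leaves every other orientation and the intersection sign unchanged, while $\omega_{\infty}$ fixes all $o_v(\phi_i)$ and flips the sign.

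For the first assertion, I start from any $[(\psi_1,\psi_2)]\in\Emb(\Ord,D_1,D_2,x)$, nonempty by hypothesis. The orientation of $\psi_1$ agrees with the target $o_1$ at every $p\mid D_1$ (where the local class is unique), so $o_1$ and $o(\psi_1)$ can differ only at places $p\nmid D_1$; applying $\omega_p$ at each such discrepant place flips $o_p(\psi_1)$ without disturbing the others, and after finitely many steps I obtain a pair whose first embedding has orientation $o_1$.

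The second assertion is where the work lies. When $p\mid D_1$ and $p\nmid D_2$, the generator $\omega_p$ fixes $o(\phi_1)$ (as $o_p(\phi_1)$ is forced) while flipping the two-valued $o_p(\phi_2)$, so both values of $o_p(\phi_2)$ occur among pairs with $\phi_1$ of orientation $o_1$. When $p\nmid D_1$, I must instead show $o_p(\phi_2)$ is pinned by $o_p(\phi_1)$, and this is the main obstacle. I would argue it locally: over $\Q_p$ all $x$-linked pairs are simultaneously $B_p^{\times}$-conjugate (Corollary \ref{cor:allpairsconj}), and orientation is invariant under simultaneous conjugation of a pair together with its order; combined with Proposition \ref{prop:conjorient}, the only local move altering the orientation at $p$ is $\omega_p$, which flips $o_p(\phi_1)$ and $o_p(\phi_2)$ in tandem. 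Hence the relative orientation $o_p(\phi_1)o_p(\phi_2)$ is a local invariant of $(D_1,D_2,x)$ and $p$ alone, so fixing $o_p(\phi_1)$ fixes $o_p(\phi_2)$. The delicate point is independence of this invariant across the several Eichler orders that contain the pair, which I would verify using the explicit orientation formulas of Lemma \ref{lem:sameorlocalconditions}.

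Finally, for the third assertion let $\bar\sigma$ be the orientation map on $\Emb(\Ord,D_1,D_2,x,\ell)$ sending a class to the pair of orientations of $\phi_1,\phi_2$; it is $G$-equivariant for the flip action above. The first two assertions say exactly that the image of $\bar\sigma$ is a single orbit of the induced flip-group, so for any two attained pairs $c,c'$ there is a $g\in G$ with $g\cdot c=c'$, and since $g$ acts bijectively on the set, $|\Emb_{o_1,o_2}(\Ord,D_1,D_2,x,\ell)|$ equals one positive number $N$ for every attained $(o_1,o_2)$ and $0$ otherwise. For the signed refinement, $\omega_{\infty}$ preserves $(o_1,o_2)$ and flips the sign, and when $x^2<D_1D_2$ it has no fixed points (a class cannot equal its own sign-reversal), so it is a free involution on $\Emb_{o_1,o_2}$ pairing positive-sign with negative-sign classes; thus exactly half have positive sign, giving $N/2$ for $\Emb_{o_1,o_2}^{+}$.
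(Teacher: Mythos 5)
Your proposal matches the paper's proof in most respects (the $\omega_p$-transitivity argument for the first assertion, the $p\mid D_1$, $p\nmid D_2$ case via $\omega_p$ with $o_p(\phi_1)=0$, the equivariant fiber-counting for $N$, and the fixed-point-free $\omega_\infty$ involution for the sign-halving), but there is a genuine gap at the central step: the claim that $o_p(\phi_2)$ is pinned by $o_p(\phi_1)$ when $p\nmid D_1$. Your reduction via Corollary \ref{cor:allpairsconj} produces $r\in B_p^{\times}$ with $r(\phi_1,\phi_2)r^{-1}=(\phi_1',\phi_2')$, but nothing forces $r$ to normalize $\Ord_p$: the order $r\Ord_p r^{-1}$ is merely \emph{some} Eichler order of the correct level containing the new pair, and by Theorem \ref{thm:generalxlinkingexistence}(v) there can be several such orders. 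Worse, the set of elements commuting with an $x$-linked pair is exactly $\Q_p^{\times}$ (the one-dimensional space $V$ of Corollary \ref{cor:allpairsconj}), so when $r\Ord_p r^{-1}\neq\Ord_p$ there is \emph{no} element conjugating one order to the other while fixing the pair; orientations with respect to two distinct orders containing the same pair therefore cannot be compared by any normalizer move. Your sentence ``the only local move altering the orientation at $p$ is $\omega_p$'' thus presupposes precisely the invariance you are trying to establish, and the step you defer (``I would verify using the explicit orientation formulas of Lemma \ref{lem:sameorlocalconditions}'') is not a routine check — it is the entire content of the assertion.

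The paper avoids this circularity by bypassing conjugation altogether and computing the invariant of Lemma \ref{lem:sameorlocalconditions} directly from the trace condition. In the division case it writes $B_p=\left(\frac{D_1,e}{\Q_p}\right)$ with $\phi_{1,p}(\sqrt{D_1})=i$ and $v_p(e)$ odd; then $f=\frac{x}{D_1}$ and $p\mid\nrd(j),\nrd(k)$ force $\phi_{2,p}(\sqrt{D_2})\equiv\frac{x}{D_1}i\pmod{\mathfrak{p}}$ (for $p$ odd, with the analogous computation at $p=2$), so the residue of $\phi_2$ — hence its orientation — depends only on $x$, $D_1$ and the chosen $o_p(\phi_1)$. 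In the split case with $\Ord_p$ the standard Eichler order of level $p^e$, reducing both embeddings to upper-triangular matrices modulo $p^{e_1}$ (where $e_1=e+v_2(p)$) with diagonal entries $a,-a$ and $c,-c$ gives $x\equiv ac\pmod{p^{e_1}}$, and $p\nmid a$ (else $p\mid D_1$) yields $c\equiv x/a\pmod{p^{e_1}}$, again pinning $o_p(\phi_2)$ by Lemma \ref{lem:sameorlocalconditions}. If you carried out the verification you postponed, it would reduce to exactly these congruence computations, so the direct route is both necessary and shorter.
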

\begin{proof}
	Start with $[(\phi_1',\phi_2')]\in \Emb(\Ord,D_1,D_2,x)$, and from Lemma \ref{lem:xlinkedor} we can conjugate the pair by $\omega_p$ for $p\mid\mathfrak{D}\mathfrak{M}$ to get $(\phi_1,\phi_2)$ with $\phi_1$ having orientation $o_1$.
	
	If $p\mid D_1$ but $p\nmid D_2$, the local orientation result follows from from conjugating the embeddings by $\omega_p$, as $o_p(\phi_1)=0$.
	
	Next, assume $p\nmid D_1$. It suffices to prove this lemma locally, so first assume we have $p\mid\mathfrak{D}$, i.e. $B_p$ is division. As in the proof of Lemma \ref{lem:localdivisionxlinking}, write $B_p=\left(\frac{D_1,e}{\QQ_p}\right)$, with $\phi_{1,p}(\sqrt{D_1})=i$, $(D_1,e)_p=-1$, and $v_p(e)$ being necessarily odd. Assume $p$ is odd, let $\phi_{2,p}(\sqrt{D_2})=fi+gj+hk$ for $f,g,h\in\ZZ_p$, and the trace condition gives that $f=\frac{x}{D_1}$. Let $\mathfrak{p}$ be the maximal order in $\Ord_p$, and since $p\mid\nrd(j),\nrd(k)$,
	\[\phi_{2,p}(\sqrt{D_2})\equiv \frac{x}{D_1}i\pmod{\mathfrak{p}},\]
	which only depends on $x,D_1$. Therefore by Lemma \ref{lem:sameorlocalconditions}, the local orientation of $\phi_2$ at $p$ is fixed. If $p=2$, then the analogous computations involving $\phi_{2,p}\left(\frac{p_{D_2}+\sqrt{D_2}}{2}\right)$ imply the result.
	
	Otherwise, assume that $B_p=\Mat(2,\QQ_p)$, and $\Ord_p$ is the standard Eichler level of order $p^e$ with $e>0$. Let $e_1=e+v_2(p)$, and then working modulo $p^{e_1}$ we write
	\[\phi_1(\sqrt{D_1})\equiv\left(\begin{matrix}a&b\\0&-a\end{matrix}\right)\pmod{p^{e_1}},\qquad\phi_2(\sqrt{D_2})\equiv\left(\begin{matrix}c&d\\0&-c\end{matrix}\right)\pmod{p^{e_1}}.\]
	Therefore $x\equiv ac\pmod{p^{e_1}}$, and since $p\nmid a$ (else $p\mid D_1$), we have $c\equiv\frac{x}{a}\pmod{p^{e_1}}$. By Lemma \ref{lem:sameorlocalconditions}, the local orientation of $\phi_2$ at $p$ is fixed.
	
	Finally, the above shows that we can pass between all pairs $(o_1,o_2)$ for which $\Emb_{o_1,o_2}(\Ord, D_1, D_2, x, \ell)$ is non-empty via conjugation by $\omega_p$ for $p\mid\mathfrak{D}\mathfrak{M}$, hence these sets all have the same size. If $x^2<D_1D_2$, then exactly half of the pairs in a given set have positive intersection sign, which completes the lemma.
\end{proof}

We can say even more about how the possible $x$'s divide across a pair of orientations.

\begin{proposition}\label{prop:orxsinglemod2dm}
	Let $B$ be an indefinite quaternion algebra over $\QQ$ of discriminant $\mathfrak{D}$, let $\Ord$ be an Eichler order of level $\mathfrak{M}$, let $D_1,D_2$ be positive discriminants, and let $o_1,o_2$ be possible orientations of optimal embeddings of discriminants $D_1,D_2$ into $\Ord$. Then there exists an integer $x_{o_1,o_2}$ such that for all optimal embeddings $\phi_i\in\Emb_{o_i}(\Ord,D_i)$ ($i=1,2$), we have
	\[x_{o_1,o_2}\equiv\frac{1}{2}\trd\left(\phi_1(\sqrt{D_1})\phi_2(\sqrt{D_2})\right)\pmod{2\mathfrak{D}\mathfrak{M}}.\]
	In particular, the possible $x-$linkings across an orientation pair are all equivalent modulo $2\mathfrak{D}\mathfrak{M}$.
\end{proposition}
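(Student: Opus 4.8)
The plan is to reduce the whole statement to a single congruence for $\trd(v_1v_2)$, where $v_i := \phi_i\!\left(\frac{p_{D_i}+\sqrt{D_i}}{2}\right)\in\Ord$. First I would record the identity relating $x$ to these integral generators. Writing $\phi_i(\sqrt{D_i}) = 2v_i - p_{D_i}$ and using $\trd(v_i)=p_{D_i}$, a direct expansion of $\phi_1(\sqrt{D_1})\phi_2(\sqrt{D_2}) = (2v_1 - p_{D_1})(2v_2 - p_{D_2})$ collapses to
\[\frac{1}{2}\trd\!\left(\phi_1(\sqrt{D_1})\phi_2(\sqrt{D_2})\right) = 2\trd(v_1 v_2) - p_{D_1}p_{D_2}.\]
Since $p_{D_1}p_{D_2}$ depends only on $D_1,D_2$, it suffices to prove that $\trd(v_1 v_2)\bmod\mathfrak{D}\mathfrak{M}$ is determined by the orientations $o_1,o_2$: the leading factor of $2$ then promotes a congruence modulo $\mathfrak{D}\mathfrak{M}$ into the desired congruence for $x$ modulo $2\mathfrak{D}\mathfrak{M}$, and this is exactly what accounts for the extra factor of $2$ in the modulus.

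Since the reduced trace commutes with completion, by the Chinese Remainder Theorem it is enough to fix a prime $p\mid\mathfrak{D}\mathfrak{M}$ and show that $\trd(v_{1,p}v_{2,p})\bmod p^{v_p(\mathfrak{D}\mathfrak{M})}$ depends only on the local classes $o_p(\phi_1),o_p(\phi_2)$. The mechanism is that the reduction of $v_{i,p}$ recorded by $o_p(\phi_i)$ is invariant under the conjugation defining local equivalence (as in the proof of Lemma \ref{lem:sameorlocalconditions}), hence is a well-defined function of $o_p(\phi_i)$, whether or not $p\mid D_i$. I would then split into the two standard cases. If $p\mid\mathfrak{D}$, then $\Ord_p$ is maximal with $\Ord_p/\mathfrak{p}\cong\mathbb{F}_{p^2}$, and because the reduced characteristic polynomial reduces to the characteristic polynomial over $\mathbb{F}_p$, we get $\trd(v_1v_2)\equiv\Tr_{\mathbb{F}_{p^2}/\mathbb{F}_p}(\overline{v_1}\,\overline{v_2})\pmod{p}$; as $v_p(\mathfrak{D})=1$ and $\overline{v_i}\in\mathbb{F}_{p^2}$ is precisely the invariant of $o_p(\phi_i)$ from Lemma \ref{lem:sameorlocalconditions}(i), the left side is determined. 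If $p\mid\mathfrak{M}$, then $\Ord_p$ is the standard Eichler order of level $p^e$, so each $v_i$ is upper triangular modulo $p^e$; the diagonal of a product of such matrices is the product of the diagonals modulo $p^e$, giving
\[\trd(v_1 v_2)\equiv (v_1)_{11}(v_2)_{11} + \bigl(p_{D_1}-(v_1)_{11}\bigr)\bigl(p_{D_2}-(v_2)_{11}\bigr)\pmod{p^e},\]
where I used $(v_i)_{22}=p_{D_i}-(v_i)_{11}$. This depends only on $(v_1)_{11},(v_2)_{11}\bmod p^e$, which are the invariants attached to $o_p(\phi_i)$ by Lemma \ref{lem:sameorlocalconditions}(ii).

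Assembling the local statements over all $p\mid\mathfrak{D}\mathfrak{M}$ via CRT shows that $\trd(v_1v_2)\bmod\mathfrak{D}\mathfrak{M}$, and hence $x\bmod 2\mathfrak{D}\mathfrak{M}$, is a function of $(o_1,o_2)$ alone; I would define $x_{o_1,o_2}$ to be this common value.

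The one place that needs care — and the main obstacle if one instead argues directly with $\phi_i(\sqrt{D_i})$ and $x=\frac12\trd$ — is the prime $p=2$, where dividing the trace by $2$ and the extra power of $2$ in the target modulus interact awkwardly (for instance in the Eichler case the naive computation leaves a term $2^{e-1}(\cdots)\bmod 2^{e+1}$ whose dependence on orientation is unclear). Passing to the integral generators $v_i$ sidesteps this entirely: the identity $x = 2\trd(v_1v_2)-p_{D_1}p_{D_2}$ already carries the correct power of $2$, so the subcases $2\mid\mathfrak{D}$ and $2\mid\mathfrak{M}$ are dispatched by exactly the same two computations as for odd primes, with no separate casework and no assumption of niceness or coprimality.
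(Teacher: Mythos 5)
Your proof is correct and follows essentially the same route as the paper's: the paper also argues prime-by-prime at $p\mid\mathfrak{D}\mathfrak{M}$, using commutativity of $\Ord_p/\mathfrak{p}\simeq\mathbb{F}_{p^2}$ at ramified primes and the invariance of the diagonal modulo $p^e$ (Equations \ref{eq:conjugateembor} and \ref{eq:conjugateembor2}) at Eichler primes, with the same integral generators $v_i$ entering through $\trd(v_1v_2)=\frac{p_{D_1}p_{D_2}+x}{2}$, which is precisely your identity $x=2\trd(v_1v_2)-p_{D_1}p_{D_2}$. Your only departure is organizational: front-loading that identity and passing to $\trd(v_1v_2)\bmod\mathfrak{D}\mathfrak{M}$ makes the $p=2$ bookkeeping uniform and automatic, where the paper instead handles it ad hoc (the norm argument giving $8\mid(x'-x)^2$ when $2\mid\mathfrak{D}$, the modulus $p^{e+v_2(p)}$ when $p\mid\mathfrak{M}$, and the closing parity remark when $\mathfrak{D}\mathfrak{M}$ is odd).
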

\begin{proof}
	Fix another pair $\phi_i'\in\Emb_{o_i}(\Ord,D_i)$, and say that $\phi_1,\phi_2$ are $x-$linked and $\phi_1',\phi_2'$ are $x'-$linked. It suffices to show that $x\equiv x'\pmod{2\mathfrak{D}\mathfrak{M}}$. We can work locally, so start with $p\mid\mathfrak{D}$, and assume that $\phi_i,\phi_i'$ now land in $\Ord_p$. Let $\mathfrak{p}$ be the unique maximal order of $\Ord_p$, and as the embeddings have the same orientation, there exists $u_1,u_2\in\Ord_p^1$ for which $\phi_i'=\phi_i^{u_i}$ for $i=1,2$. Since $\Ord_p/\mathfrak{p}\simeq\mathbb{F}_{p^2}$ is commutative, when working modulo $\mathfrak{p}$ we can rearrange terms freely. Thus
	\[
		\phi_1^{u_1}\left(\frac{p_{D_1}+\sqrt{D_1}}{2}\right)\phi_2^{u_2}\left(\frac{p_{D_2}+\sqrt{D_2}}{2}\right)\equiv 
		\phi_1\left(\frac{p_{D_1}+\sqrt{D_1}}{2}\right)\phi_2\left(\frac{p_{D_2}+\sqrt{D_2}}{2}\right)\pmod{\mathfrak{p}}.
	\]
	Taking reduced traces implies that
	\[\dfrac{p_{D_1}p_{D_2}+x'}{2}\equiv\dfrac{p_{D_1}p_{D_2}+x}{2}\pmod{\mathfrak{p}}.\]
	If $p\neq 2$, it follows that $x'\equiv x\pmod{\mathfrak{p}}$, whence $x'\equiv x\pmod{p}$ by subtracting and taking the norm. If $p=2$, then $x'\equiv x\pmod{2\mathfrak{p}}$, and so subtracting and taking norms gives $8\mid (x'-x)^2$, hence $x\equiv x'\pmod{4}$.
	
	Next, assume that $p^e\mid\mid\mathfrak{M}$ with $e>0$, and assume that $\Ord_p$ is the standard Eichler order of level $p^e$. As the embeddings have the same orientation, there exists $u_1,u_2\in\Ord_p^1$ for which $\phi_i'=\phi_i^{u_i}$ for $i=1,2$. Explicitly write
	\begin{equation}\label{eqn:conjugateembor}
		\phi_i\left(\frac{p_{D_i}+\sqrt{D_i}}{2}\right)=\left(\begin{matrix} a_i & b_i\\p^ec_i & p_{D_i}-a_i\end{matrix}\right),\qquad u_i=\left(\begin{matrix} f_i & g_i\\p^eh_i & k_i\end{matrix}\right).
	\end{equation}
	It follows that $f_ik_i\equiv 1\pmod{p^e}$. Modulo $p^e$, we compute
	\begin{equation}\label{eqn:conjugateembor2}
		\phi_i^{u_i}\left(\frac{p_{D_i}+\sqrt{D_i}}{2}\right)\equiv\left(\begin{matrix} a_i & f_i(p_{D_i}g_i-2g_ia_i+f_ib_i)\\0 & p_{D_i}-a_i\end{matrix}\right)\pmod{p^e}.
	\end{equation}
	By taking the explicit expressions for $\phi_i\left(\frac{p_{D_i}+\sqrt{D_i}}{2}\right)$, doubling and subtracting $p_{D_i}$, and multiplying together, we find that
	\[x\equiv(2a_1-p_{D_1})(2a_2-p_{D_2})\equiv x'\pmod{p^{e+v_2(p)}},\]
	as claimed.
	
	Combining the above shows that $x\equiv x'\pmod{2\mathfrak{D}\mathfrak{M}}$ if $2\mid\mathfrak{D}\mathfrak{M}$, and $x\equiv x'\pmod{\mathfrak{D}\mathfrak{M}}$ otherwise. In this case, $x\equiv p_{D_1}p_{D_2}\equiv x'\pmod{2}$, so the same conclusion follows.
\end{proof}

If $D_1$ is coprime to $\mathfrak{D}\mathfrak{M}$, then Lemma \ref{lem:xlinkingdivisionamongorientations} and Proposition \ref{prop:orxsinglemod2dm} can be used to show that for $o_1$ fixed, the integers $x_{o_1,o_2}$ are all distinct modulo $2\mathfrak{D}\mathfrak{M}$ across all orientations $o_2$. If $D_1$ has factors in common with $\mathfrak{D}\mathfrak{M}$, this no longer needs to be true at those primes. Furthermore, not all $x$'s satisfying the congruence condition will necessarily appear as $x-$linkings, as this depends on the actual factorization of $\frac{D_1D_2-x^2}{4}$, and not just on congruences. For example, this number will always be divisible by $\mathfrak{D}\mathfrak{M}$, but prime factors of $\mathfrak{D}$ could appear to even powers.

Lemma \ref{lem:xlinkingdivisionamongorientations} allows us to count the sizes of $\Emb_{o_1,o_2}^+(\Ord,D_1,D_2,x,\ell)$, by dividing $\vert\Emb^+(\Ord, D_1, D_2, x, \ell)\vert$ across the total number of orientations. We record this in the final corollary.

\begin{corollary}\label{cor:countembwitheverything}
	Let $B$ be an indefinite quaternion algebra over $\QQ$ of discriminant $\mathfrak{D}$, let $\Ord$ be an Eichler order of level $\mathfrak{M}$, let $(D_1,D_2,x)$ be \nice, and let $\ell$ be a positive integer. Factorize
	\[\dfrac{D_1D_2-x^2}{4}=\pm\prod_{i=1}^{r}p_i^{2e_i+1}\prod_{i=1}^s q_i^{2f_i}\prod_{i=1}^t w_i^{g_i},\]
	where the $p_i$ are the primes for which $\epsilon(p_i)=-1$ that appear to an odd power, $q_i$ are the primes for which $\epsilon(q_i)=-1$ that appear to an even power, and $w_i$ are the primes for which $\epsilon(w_i)=1$. Assume that
	\begin{itemize}
		\item $\mathfrak{D}=\prod_{i=1}^r p_i$;
		\item None of the $p_i$ or $q_i$ are \pbad;
		\item $\mathfrak{M}=\prod_{i=1}^t w_i^{g_i'}$ with $g_i'\leq g_i$ and $\gcd(\mathfrak{M},D_1D_2)=1$;
		\item $\ell=\prod_{i=1}^{r}p_i^{e_i}\prod_{i=1}^s q_i^{f_i}\prod_{i=1}^t w_i^{g_i''}$, where $2g_i''\leq g_i-g_i'$ and $g_i''=0$ if $w_i\mid\PB(D_1,D_2)$.
	\end{itemize}
	Let $n$ be the number of indices $i$ for which $2g_i''<g_i-g_i'$. If $x^2<D_1D_2$, then for every pair of orientations $(o_1,o_2)$, we have
	\[\vert\Emb_{o_1,o_2}^+(\Ord,D_1,D_2,x,\ell)\vert=2^{n}\text{ or }0.\]
	If $x^2>D_1D_2$, then the same result holds without the $+$ and $n$ replaced by $n+1$.
\end{corollary}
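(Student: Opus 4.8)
The plan is to pin down the common fiber size $N$ produced by Lemma \ref{lem:xlinkingdivisionamongorientations} by dividing the global count of Theorem \ref{thm:maincountingthm} by the number of orientation pairs that actually occur. The orientation analysis is already essentially done in the surrounding lemmas, so the corollary is mostly a matter of bookkeeping.

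First I would record the two inputs. Under the stated hypotheses Theorem \ref{thm:maincountingthm}(iv) gives
\[|\Emb(\Ord,D_1,D_2,x,\ell)|=2^{\omega(\mathfrak{D}\mathfrak{M})+n+1},\]
while Lemma \ref{lem:xlinkingdivisionamongorientations} produces a single integer $N$ with $|\Emb_{o_1,o_2}(\Ord,D_1,D_2,x,\ell)|\in\{0,N\}$ for every orientation pair, and $|\Emb^+_{o_1,o_2}(\Ord,D_1,D_2,x,\ell)|=N/2$ when $x^2<D_1D_2$. Writing $k$ for the number of orientation pairs with nonzero count and summing over all pairs yields $kN=2^{\omega(\mathfrak{D}\mathfrak{M})+n+1}$. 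Thus it suffices to show $k=2^{\omega(\mathfrak{D}\mathfrak{M})}$, which forces $N=2^{n+1}$; then $N/2=2^{n}$ settles the $x^2<D_1D_2$ case, and using $N=2^{n+1}$ directly (there is no sign to halve when $x^2>D_1D_2$) settles the other case with $n$ replaced by $n+1$.

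The core step is the computation of $k$. By Lemma \ref{lem:xlinkingdivisionamongorientations} the orientation pairs with nonzero count form a single orbit of the group $\langle\omega_p:p\mid\mathfrak{D}\mathfrak{M}\rangle\cong(\Z/2\Z)^{\omega(\mathfrak{D}\mathfrak{M})}$, so I would argue that this orbit is free, hence of size $2^{\omega(\mathfrak{D}\mathfrak{M})}$, by checking that each generator genuinely moves the pair. Fix $p\mid\mathfrak{D}\mathfrak{M}$. Since $p\mid\frac{D_1D_2-x^2}{4}$ by Corollary \ref{cor:finitequat}, niceness forbids $p\mid\gcd(D_1,D_2)$, so at least one of $\phi_1,\phi_2$ is coprime to $p$ and therefore carries a nontrivial local orientation at $p$; Proposition \ref{prop:conjorient} then shows that conjugation by $\omega_p$ negates that local orientation, so the pair $(o_1,o_2)$ changes in its $p$-component. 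As the generators act on independent coordinates (one per prime of $\mathfrak{D}\mathfrak{M}$), no nontrivial element of the group fixes a point, the action is free, and $k=2^{\omega(\mathfrak{D}\mathfrak{M})}$.

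The delicate point — and the place where the hypotheses are spent — is exactly this local analysis at each $p\mid\mathfrak{D}\mathfrak{M}$: one needs ``none of the $p_i,q_i$ potentially bad'' to guarantee that a ramified prime dividing some $D_i$ in fact divides $D_i^{\fund}$, so that its local orientation is unique and the flip lands on the other embedding, and one needs $\gcd(\mathfrak{M},D_1D_2)=1$ to guarantee that the level primes $w_i$ are coprime to both discriminants. Combined with niceness, these ensure that at every $p\mid\mathfrak{D}\mathfrak{M}$ there are precisely two compatible local orientation pairs interchanged by $\omega_p$, which is what yields freeness. Once freeness is established, the identity $kN=2^{\omega(\mathfrak{D}\mathfrak{M})+n+1}$ finishes the proof in both regimes.
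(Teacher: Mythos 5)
Your proposal is correct and follows essentially the same route as the paper: the paper likewise takes the count $2^{\omega(\mathfrak{D}\mathfrak{M})+n+1}$ from Theorem \ref{thm:maincountingthm}, uses Lemma \ref{lem:xlinkingdivisionamongorientations} to see that exactly two local orientation pairs admit $x$-linking at each $p\mid\mathfrak{D}\mathfrak{M}$ (dividing out the factor $2^{\omega(\mathfrak{D}\mathfrak{M})}$), and then halves for the sign when $x^2<D_1D_2$. Your orbit-freeness bookkeeping via the $\omega_p$ action is just a slightly more formal packaging of that same division step.
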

\begin{proof}
	By Theorem \ref{thm:maincountingthm}, the count without the orientations or $+$ is $2^{\omega(\mathfrak{D}\mathfrak{M})+n+1}$. If $p\mid\mathfrak{D}\mathfrak{M}$, then since the triple is nice and $\gcd(\mathfrak{M},D_1D_2)=1$, Lemma \ref{lem:xlinkingdivisionamongorientations} implies that there are precisely $2$ pairs $(o_p(\phi_1),o_p(\phi_2))$ which admit $x-$linking. Hence we divide by $2$ for all such $p$, eliminating the factor of $2^{\omega(\mathfrak{D}\mathfrak{M})}$. Finally, if $x^2<D_1D_2$, exactly half of the embeddings have positive sign, which implies the result.
\end{proof}

Corollary \ref{cor:countembwitheverything} approaches the limits of what we can do with this approach. When non-empty, the set $\Emb_{o_1,o_2}^+(\Ord,D_1,D_2,x,\ell)$ has size $2^n$, and distributes itself across the $h^+(D_1)h^+(D_2)$ pairs of equivalence classes of the specified orientations. A rough description of what we can say about this distribution is as follows:

\begin{itemize}
	\item Fix $[(\phi_1,\phi_2)]\in\Emb_{o_1,o_2}^+(\Ord,D_1,D_2,x, \ell)$. Then the map $\theta$ found in Proposition \ref{prop:countemb} combined with the work on $T_{\phi_1,\phi_2}(\mathfrak{M})$ allows us to describe possible values of $\nrd(r)$ for $r\in\Ord$ such that $[(\phi_1^r,\phi_2^r)]\in\Emb_{o_1,o_2}^+(\Ord,D_1,D_2,x, \ell)$ and $[(\phi_1^r,\phi_2^r)]\neq[(\phi_1,\phi_2)]$; they are essentially products of powers of prime divisors $p$ of $\frac{D_1D_2-x^2}{4}$ with $\epsilon(p)=1$.
	\item The integers represented by the element of the class group $\Cl^+(D_i)$ taking $\phi_i$ to $\phi_i^r$ correspond to the norms of elements in $\Ord$ conjugating $\phi_i$ to $\phi_i^r$ (see Sections 4.4 and 4.5 of \cite{JRthe}).
	\item In particular, the distribution relates to the representations of products of primes $p\mid\frac{D_1D_2-x^2}{4}$ with $\epsilon(p)=1$ by binary quadratic forms of discriminants $D_1,D_2$.
\end{itemize}

Of course, even if we could make this more formal and explicit, it does not tell us how the distinct $x$ values interact, which is important for intersection numbers.

\section{Examples}
We present a few examples that illustrate the results of Theorem \ref{thm:maincountingthm} and Corollary \ref{cor:countembwitheverything}. All computations were done in PARI/GP (\cite{PARI}), and the code to replicate these examples can be found in \cite{Qquad}.

\begin{example}\label{ex:1ofcountingformula}
	Let $D_1=5$ and $D_2=381$, so that $D_1,D_2$ are coprime and fundamental. Since $43<\sqrt{5\cdot 381}<44$, to compute which algebras admit non-trivial intersections of $D_1,D_2$, it suffices to compute $\frac{5\cdot 381-x^2}{4}$ for odd $\vert x\vert\leq 43$, and find $\epsilon(p)$ for all prime divisors. The values of $\epsilon(p)$ with $p\leq 80$ are in Table \ref{tab:ex1ep}.
	
	\begin{table}[hbt]
		\begin{center}
			\caption{$\epsilon(p)$ for $D_1=5$, $D_2=381$, $p\leq 80$.}\label{tab:ex1ep}
			\renewcommand{\arraystretch}{1.3}
			\begin{tabular}{|c|c|c|c|c|c|c|c|c|c|c|c|c|c|c|c|} 
				\hline
				$p$ & $2$ & $3$ & $5$ & $7$ & $17$ & $19$ & $29$ & $31$ & $43$ & $47$ & $59$ & $61$ & $67$ & $79$\\ \hline
				$\epsilon(p)$ & $-1$ & $-1$ & $1$ & $-1$ & $-1$ & $1$ & $1$ & $1$ & $-1$ & $-1$ & $1$ & $1$ & $-1$ & $1$\\ \hline
			\end{tabular}
		\end{center}
	\end{table}
	
	Table \ref{tab:ex1possx} displays the possible ramifications of the quaternion algebras, along with the corresponding positive $x$'s (since $x$ and $-x$ correspond to the same algebra).
	
	\begin{table}[htb]
		\begin{center}
			\caption{Quaternion algebras admitting non-trivial intersections in a maximal order for discriminants $5$ and $381$.}\label{tab:ex1possx}
			\renewcommand{\arraystretch}{1.3}
			\begin{tabular}{|c|c|c|c|c|c|c|} 
				\hline
				Ramifying primes & $\emptyset$           & $2,3$          & $2,7$          & $2,17$ & $2,43$ & $2,47$ \\ \hline
				Positive $x$'s   & $7,17,25,31$ & $3,9,21,27,39$ & $13,29,41,43$  & $35$   & $23$ & $5$   \\ \hline\hline
				Ramifying primes & $2,67$ & $2,193$ & $2,223$ & $3,7$ & $3,17$ & $7,17$ \\ \hline
				Positive $x$'s   & $37$   & $19$    & $11$    & $15$  & $33$   & $1$ \\ \hline
			\end{tabular}
		\end{center}
	\end{table}
	
	Let's focus on $B=\left(\frac{3,-1}{\QQ}\right)$, which is ramified at $2,3$. Let $\Ord=\langle1,i,j,\frac{1+i+j+k}{2}\rangle_{\ZZ}$, which is maximal. There are four orientations and $h^+(5)=1$, hence by Proposition \ref{prop:countembD} there are $4$ embedding classes of discriminant $5$. Since $h^+(381)=2$ and $3\mid 381$, there are two orientations, and $4$ total embedding classes of discriminant $381$. Representative embeddings are given in Table \ref{tab:ex1embs}.
	
	\begin{table}[htb]
		\begin{center}
			\caption{Optimal embedding classes for $D=5,381$.}\label{tab:ex1embs}
			\renewcommand{\arraystretch}{1.3}
			\begin{tabular}{ |c|c|c|c|c| } 
				\hline
				$D$ & $o_2(\phi)$ & $o_3(\phi)$ & $\phi\left(\frac{p_D+\sqrt{D}}{2}\right)$ \\ \hline
				$5$ & $1$ & $1$ & $\frac{1-i-j+k}{2}$ \\ \hline
				$5$ & $-1$ & $1$ & $\frac{1-i-j-k}{2}$ \\ \hline
				$5$ & $1$ & $-1$ & $\frac{1+i+j+k}{2}$ \\ \hline
				$5$ & $-1$ & $-1$ & $\frac{1+i+j-k}{2}$ \\ \hline
				$381$ & $1$ & $0$ & $\frac{1-11i-3j+3k}{2}$ \\ \hline
				$381$ & $1$ & $0$ & $\frac{1+9i-3j+7k}{2}$ \\ \hline
				$381$ & $-1$ & $0$ & $\frac{1-11i-3j-3k}{2}$ \\ \hline
				$381$ & $-1$ & $0$ & $\frac{1+9i-3j-7k}{2}$ \\ \hline
			\end{tabular}
		\end{center}
	\end{table}
	
	The possible $x$'s have $\vert x\vert=\{3,9,21,27,39\}$. For each $x$, we factor $\frac{5\cdot 381-x^2}{4}$ in Table \ref{tab:ex1factor}, and determine the possible levels.
	
	\begin{table}[htb]
		\begin{center}
			\caption{Factorization of $\frac{5\cdot 381-x^2}{4}$ for $\vert x\vert=\{3,9,21,27,39\}$.}\label{tab:ex1factor}
			\renewcommand{\arraystretch}{1.3}
			\begin{tabular}{|c|c|c|c|c|c|c|} 
				\hline
				$\vert x\vert$ & $\frac{5\cdot 381-x^2}{4}$ & $\prod p_i^{e_i}$ & $\prod q_i^{f_i}$ & $\prod w_i^{g_i}$ & Possible levels & $n$ \\ \hline
				$3$   & $474$                 & $2^1 3^1$         &                   & $79^1$ & $1$ & $1$ \\ \hline
				$9$   & $456$                 & $2^3 3^1$         &                   & $19^1$ & $2$ & $1$ \\ \hline
				$21$  & $366$                 & $2^1 3^1$         &                   & $61^1$ & $1$ & $1$ \\ \hline
				$27$  & $294$                 & $2^1 3^1$         &  $7^2$            &        & $7$ & $0$ \\ \hline
				$39$  & $96$                  & $2^5 3^1$         &                   &        & $4$ & $0$ \\ \hline
			\end{tabular}
		\end{center}
	\end{table}
	
	It turns out that each $x$ corresponds to a unique level, though this need not be the case in general. This data says that $\vert\Emb_{o_1,o_2}^+(\Ord,5,381,x,\ell)\vert$ should be $0$ or $2$ for $\vert x\vert\in\{3,9,21\}$, and $0$ or $1$ for the $\vert x\vert\in\{27,39\}$. Let $\phi_1$ be the first embedding of discriminant $5$ as given in Table \ref{tab:ex1embs}, and let $\sigma_1,\sigma_2$ be the first two embeddings of discriminant $381$ as given in the same table. For each intersection of $\phi_1$ with $\sigma_i$, we take a pair $(\phi_1',\sigma_i)$ representing the intersection, and record the data in Table \ref{tab:ex1ints} (the signed level is the product of the sign and the level).
	
	\begin{table}[htb]
		\begin{center}
			\caption{Intersection of $\phi_1$ with $\sigma_1,\sigma_2$.}\label{tab:ex1ints}
			\renewcommand{\arraystretch}{1.3}
			\begin{tabular}{|c|c|c||c|c|c|c|} 
				\hline
				\multicolumn{3}{|c||}{Intersections with $\sigma_1$} & \multicolumn{3}{c|}{Intersections with $\sigma_2$} \\ \hline
				$\phi_1'\left(\frac{1+\sqrt{5}}{2}\right)$ & $x$   & Signed level & $\phi_1'\left(\frac{1+\sqrt{5}}{2}\right)$ & $x$ & Signed level \\ \hline
				$\frac{1-13i-55j-29k}{2}$                  & $3$   & $-1$         & $\frac{1+i-j-k}{2}$                        & $3$   & $1$ \\ \hline
				$\frac{1-13i+197j-113k}{2} $               & $3$   & $-1$         & $\frac{1+101i+359j-181k}{2}$               & $3$   & $1$ \\ \hline
				$\frac{1+31i+131j+69k}{2}$                 & $-9$  & $2$          & $\frac{1-i-j+k}{2}$                        & $-9$  & $-2$ \\ \hline
				$\frac{1+31i-469j+269k}{2} $               & $-9$  & $2$          & $\frac{1-41i-145j+73k}{2}$                 & $-9$  & $-2$ \\ \hline
				$\frac{1-87i-373j-197k}{2}$                & $-21$ & $-1$         & $\frac{1+i+5j-3k}{2}$                      & $-21$ & $1$\\ \hline
				$\frac{1-711i-3031j-1599k}{2}$             & $-21$ & $-1$         & $\frac{1+11i+41j-21k}{2}$                  & $-21$ & $1$ \\ \hline
				$\frac{1+223i+953j+503k}{2}$               & $27$  & $7$          & $\frac{1-3i-13j+7k}{2}$                    & $27$  & $-7$ \\ \hline
				$\frac{1-i-j+k}{2}$                        & $39$  & $4$          & $\frac{1-29i+71j+29k}{2}$                  & $39$  & $-4$ \\ \hline
			\end{tabular}
		\end{center}
	\end{table}
	
	This data agrees with the theoretical claim. It also satisfies Proposition \ref{prop:orxsinglemod2dm}, since the $x-$values are all equivalent modulo $2\mathfrak{D}\mathfrak{M}=12$. For the other orientation of $381$, we have essentially the same data, except the $x$'s are all negated.
	
\end{example}

For another interesting example, we consider a non-maximal Eichler order, and compare it to the results for the maximal order.

\begin{example}
	Let $D_1=73$, $D_2=937$, and $x=89$. Then $D_1,D_2$ are coprime, fundamental, and have class number $1$ each. Let $B=\left(\frac{7,5}{\QQ}\right)$, which is ramified at $5,7$. Let $\Ord$ be a maximal order and $\Ord'$ an Eichler order of level $3$, given by
	\[\Ord=\left\langle 1,\frac{1+j}{2},i,\frac{1+i+j+k}{2}\right\rangle_{\ZZ}, \qquad \Ord'=\left\langle 1,i,\frac{1+3j}{2},\frac{1+i+j+k}{2}\right\rangle_{\ZZ}.\]
	There are $4$ embedding classes into $\Ord$ and $8$ embedding classes into $\Ord'$ of each discriminant, each corresponding to a distinct orientation. Since
	\[\dfrac{73\cdot 937-89^2}{4}=(5^1 7^1)()(2^4 3^3),\]
	with $\epsilon(5)=\epsilon(7)=-1$ and $\epsilon(2)=\epsilon(3)=1$ (the empty parentheses indicate the absence of $q_i$'s), the sets $\Emb( X,73,937,89)$ should be non-empty for $X= \Ord, \Ord'$. Fix the optimal embeddings
	\[\phi_1\left(\frac{1+\sqrt{73}}{2}\right)=\frac{1-2i+3j}{2},\qquad\phi_2\left(\frac{1+\sqrt{937}}{2}\right)=\frac{1+14i+5j-4k}{2},\]
	which land in and are optimal with respect to both $\Ord$ and $\Ord'$. Since 
	\[\frac{1}{2}\trd(\phi_1(\sqrt{73})\phi_2(\sqrt{937}))=-121\equiv 89\pmod{2\cdot 3\cdot 5\cdot 7},\]
	$\Int_{\Ord}(\phi_1,\phi_2)$ and $\Int_{\Ord'}(\phi_1,\phi_2)$ should have $89-$linkage. As the class numbers are both one, this is all of the $89-$linkage for the given orientations. Corollary \ref{cor:countembwitheverything} predicts the levels and counts, which is recorded in Table \ref{tab:ex2theorlevel}.
	
	\begin{table}[htb]
		\begin{center}
			\caption{Theoretical prediction for counts of levels.}\label{tab:ex2theorlevel}
			\renewcommand{\arraystretch}{1.3}
			\begin{tabular}{|c|c|c|} 
				\hline
				$\ell$ & $\vert\Emb_{o_1,o_2}^+(\Ord,73,937,89,\ell)\vert$ & $\vert\Emb_{o_1,o_2}^+(\Ord',73,937,89,\ell)\vert$ \\ \hline
				$1$    & $4$                                             & $4$ \\ \hline
				$2$    & $4$                                             & $4$ \\ \hline
				$3$    & $4$                                             & $2$ \\ \hline
				$4$    & $2$                                             & $2$ \\ \hline 
				$6$    & $4$                                             & $2$ \\ \hline
				$12$   & $2$                                             & $1$ \\ \hline
			\end{tabular}
		\end{center}
	\end{table}
	
	The difference in counts comes only at $w_i=3$, where $2g_i''<g_i-g_i'=3-g_i'$ is true for $g_i''=0,1$ when the level is maximal, but is only true for $g_i''=0$ when $g_i'=1$, the Eichler order of level $3$.
	
	We compute the $89-$linkage of $\phi_1,\phi_2$. For each intersection with positive sign, we take a representative pair $(\phi_1,\phi_2')$, and record $\phi_2'$ and the level in Tables \ref{tab:ex2ints1} and \ref{tab:ex2ints2}.
	
	\begin{table}[htb]
		\begin{center}
			\caption{Positive $89-$linking of $\phi_1$ with $\phi_2$ in $\Ord$.}\label{tab:ex2ints1}
			\renewcommand{\arraystretch}{1.3}
			\begin{tabular}{|c|c||c|c|c|} 
				\hline
				$\phi_2'\left(\frac{1+\sqrt{937}}{2}\right)$ & $\ell$ & $\phi_2'\left(\frac{1+\sqrt{937}}{2}\right)$ & $\ell$ \\ \hline
				$\frac{1+22559i+21061j-12851k}{2}$           & $1$    & $\frac{1+119i+117j-69k}{2}$                  & $3$    \\ \hline
				$\frac{1+1769i+1657j-1009k}{2}$              & $1$    & $\frac{1+1428689i+1333449j-813783k}{2}$      & $3$    \\ \hline
				$\frac{1+1769i+1657j+1009k}{2}$              & $1$    & $\frac{1+14i+19j-8k}{2}$                     & $4$    \\ \hline
				$\frac{1+22559i+21061j+12851k}{2}$           & $1$    & $\frac{1+14i+19j+8k}{2}$                     & $4$    \\ \hline
				$\frac{1+584i+551j+334k}{2}$                 & $2$    & $\frac{1+6907484i+6446991j-3934506k}{2}$     & $6$    \\ \hline
				$\frac{1+584i+551j-334k}{2}$                 & $2$    & $\frac{1+4664i+4359j+2658k}{2}$              & $6$    \\ \hline
				$\frac{1+44i+47j+26k}{2}$                    & $2$    & $\frac{1+6907484i+6446991j+3934506k}{2}$     & $6$    \\ \hline
				$\frac{1+44i+47j-26k}{2}$                    & $2$    & $\frac{1+4664i+4359j-2658k}{2}$              & $6$    \\ \hline
				$\frac{1+119i+117j+69k}{2}$                  & $3$    & $\frac{1+179534i+167571j-102264k}{2}$        & $12$   \\ \hline
				$\frac{1+1428689i+1333449j+813783k}{2}$      & $3$    & $\frac{1+179534i+167571j+102264k}{2}$        & $12$   \\ \hline
			\end{tabular}
		\end{center}
	\end{table}
	
	\begin{table}[htb]
		\begin{center}
			\caption{Positive $89-$linking of $\phi_1$ with $\phi_2$ in $\Ord'$.}\label{tab:ex2ints2}
			\renewcommand{\arraystretch}{1.3}
			\begin{tabular}{|c|c||c|c|c|} 
				\hline
				$\phi_2'\left(\frac{1+\sqrt{937}}{2}\right)$ & $\ell$  & $\phi_2'\left(\frac{1+\sqrt{937}}{2}\right)$ & $\ell$ \\ \hline
				$\frac{1+1769i+1657j+1009k}{2}$              & $1$     & $\frac{1+1428689i+1333449j+813783k}{2}$      & $3$    \\ \hline
				$\frac{1+119i+117j+69k}{2}$                  & $1$     & $\frac{1+119i+117j-69k}{2}$                  & $3$    \\ \hline
				$\frac{1+1428689i+1333449j-813783k}{2}$      & $1$     & $\frac{1+14i+19j-8k}{2}$                     & $4$    \\ \hline
				$\frac{1+22559i+21061j-12851k}{2}$           & $1$     & $\frac{1+179534i+167571j+102264k}{2}$        & $4$    \\ \hline
				$\frac{1+44i+47j+26k}{2}$                    & $2$     & $\frac{1+6907484i+6446991j-3934506k}{2}$     & $6$    \\ \hline
				$\frac{1+584i+551j-334k}{2}$                 & $2$     & $\frac{1+4664i+4359j-2658k}{2}$              & $6$    \\ \hline
				$\frac{1+6907484i+6446991j+3934506k}{2}$     & $2$     & $\frac{1+179534i+167571j-102264k}{2}$        & $12$   \\ \hline
				$\frac{1+4664i+4359j+2658k}{2}$              & $2$     &                                              &        \\ \hline
			\end{tabular}
		\end{center}
	\end{table}
	
	This data agrees with Table \ref{tab:ex2theorlevel}.
	
\end{example}

For a final example, we introduce a non-fundamental discriminant.

\begin{example}
	Let $D_1=241$ and $D_2=2736$, which are coprime, and let $x=324$. Note that $D_1$ is fundamental, but $D_2=2^2 3^2 76$, where $76$ is fundamental. Take $B=\left(\frac{77,-1}{\QQ}\right)$, which is ramified at $7,11$. Let $\Ord=\langle 1,\frac{1+i}{2},j,\frac{j+k}{2}\rangle_{\ZZ}$, which is maximal. We have $h^+(241)=1$ and $h^+(2736)=4$, and consider the $5$ optimal embeddings in Table \ref{tab:ex3embs} (one being of discriminant $241$, and the other $4$ being one entire orientation of discriminant $2736$).
	
	\begin{table}[hb]
		\begin{center}
			\caption{Optimal embedding classes for $D=241,2736$.}\label{tab:ex3embs}
			\renewcommand{\arraystretch}{1.4}
			\begin{tabular}{|c|c|c|c|c|c|} 
				\hline
				Label      & $D$ & $o_{7}(\phi)$ & $o_{11}(\phi)$ & $\phi\left(\frac{p_D+\sqrt{D}}{2}\right)$ \\ \hline
				$\phi$     & $241$  & $1$ & $1$ & $\frac{1+i-12j+2k}{2}$   \\ \hline
				$\sigma_1$ & $2736$ & $1$ & $1$ & $\frac{2i-50j+8k}{2}$    \\ \hline
				$\sigma_2$ & $2736$ & $1$ & $1$ & $\frac{10i-281j+31k}{2}$ \\ \hline
				$\sigma_3$ & $2736$ & $1$ & $1$ & $\frac{2i-50j-8k}{2}$    \\ \hline
				$\sigma_4$ & $2736$ & $1$ & $1$ & $\frac{10i-281j-31k}{2}$ \\ \hline
			\end{tabular}
		\end{center}
	\end{table}
	
	Factorize
	\[\dfrac{241\cdot2736-324^2}{4}=(7^1 11^1)()(2^3 3^2 5^2),\]
	where $\epsilon(7)=\epsilon(11)=-1$ and $\epsilon(2)=\epsilon(3)=\epsilon(5)=1$. As $\PB(241,2736)=2\cdot 3$, the primes $2,3$ are \pbad $\,$ and therefore cannot occur in the intersection level. In particular, for $324-$linking, the only valid intersection levels are $1,5$ (whereas if $D_1,D_2$ were fundamental, we could get all divisors of $30$). The table of predicted levels and counts is found in Table \ref{tab:ex3theorlevel}.
	
	\begin{table}[htb]
		\begin{center}
			\caption{Theoretical prediction for counts of levels.}\label{tab:ex3theorlevel}
			\renewcommand{\arraystretch}{1.4}
			\begin{tabular}{|c|c|} 
				\hline
				$\ell$ & $\vert\Emb_{o_1,o_2}^+(\Ord,241,2736,324,\ell)\vert$ \\ \hline
				$1$    & $8$ \\ \hline
				$5$    & $4$ \\ \hline
			\end{tabular}
		\end{center}
	\end{table}
	
	Since 
	\[\frac{1}{2}\trd(\phi(\sqrt{241})\sigma_1(\sqrt{2736}))=786\equiv 324\pmod{2\cdot 7\cdot 11},\]
	intersections of $\phi$ with $\sigma_i$ should exhibit the above $324-$linking behaviour. We compute the possible positive $324-$linking between $\phi$ and $\sigma_i$ for $i=1,2,3,4$, and represent each intersection by a pair $(\phi',\sigma_i)$. The corresponding data is found in Table \ref{tab:ex3ints}.
	
	\begin{table}[htb]
		\begin{center}
			\caption{Positive $324-$linking of $\phi_1$ with $\sigma_i$.}\label{tab:ex3ints}
			\renewcommand{\arraystretch}{1.4}
			\begin{tabular}{|c|c|c||c|c|c|c|} 
				\hline
				$i$ & $\phi_1'\left(\frac{1+\sqrt{241}}{2}\right)$ & $\ell$ & $i$ & $\phi_1'\left(\frac{1+\sqrt{241}}{2}\right)$ & $\ell$ \\ \hline
				$1$ & $\frac{1+51079i+839827j-80937k}{2}$          & $1$ & $3$ & $\frac{1-5i-89j-9k}{2}$                         & $1$ \\ \hline
				$1$ & $\frac{1+39i-397j+23k}{2}$                   & $1$ & $3$ & $\frac{1-449i+4531j+255k}{2}$                   & $1$ \\ \hline
				$1$ & $\frac{1+2433i-24575j+1387k}{2}$             & $5$ & $3$ & $\frac{1-7i+65j+3k}{2}$                         & $5$ \\ \hline
				$2$ & $\frac{1-17i+1220j-138k}{2}$                 & $1$ & $4$ & $\frac{1-87657i+1615987j+161959k}{2}$           & $1$ \\ \hline
				$2$ & $\frac{1+259i-4786j+480k}{2}$                & $1$ & $4$ & $\frac{1-21i+373j+37k}{2}$                      & $1$ \\ \hline
				$2$ & $\frac{1+5i-89j+9k}{2}$                      & $5$ & $4$ & $\frac{1-1395i+25706j+2576k}{2}$                & $5$ \\ \hline
			\end{tabular}
		\end{center}
	\end{table}
	This data agrees with the theoretical claim.
	
\end{example}

\appendix

\section{Hermite normal form calculation}\label{app:hnfcalc}
We calculate the determinant of the row-space of the matrix
\[M=\left(\begin{matrix} 1 & 0 & 0 & 0\\ \frac{p_1}{2} & \frac{1}{2} & 0 & 0\\ \frac{p_2}{2} & 0 & \frac{1}{2} & 0\\0 & 0 & 0 & \frac{1}{2} \\\frac{p_1p_2+x}{4} & \frac{p_2}{4} & \frac{p_1}{4} & \frac{\ell}{4}\\0 & \frac{-x}{4\ell} & \frac{D_1}{4\ell} & \frac{p_1}{4}\\0 & \frac{-D_2}{4\ell} & \frac{x}{4\ell} & \frac{p_2}{4}\\ \frac{x^2-D_1D_2}{8\ell} & \frac{-p_2x-p_1D_2}{8\ell} & \frac{p_1x+p_2D_1}{8\ell} & \frac{p_1p_2+x}{8}\\ \end{matrix}\right),\]
where: 
\begin{itemize}
	\item $D_1,D_2$ are discriminants with parities $p_1,p_2$ respectively;
	\item $\gcd\left(D_1,D_2,D_1D_2-x^2\right)=1$
	\item $4\ell^2\mid D_1D_2-x^2$.
\end{itemize}
Let $L$ be this rowspace, and label the rows $r_1,\ldots,r_8$. Since $L\supseteq\ZZ^4$ and $\ZZ^4$ has determinant $1$, we see that the determinant of $L$ is $\frac{1}{N}$ for some positive integer $N$. Our aim is to show that $N=16\ell$. We can compute $N$ by tensoring our space with $\ZZ_p$ for all primes $p$, and determining the power of $p$ dividing the determinant of the corresponding $\ZZ_p$ lattice.

Note that all denominators of $M$ divide $8\ell$. Hence $p\nmid 2\ell$ implies that $L_p=\ZZ_p^4$, and so $v_p(N)=0$, as desired.

Next, assume that $p\mid 2\ell$ is odd. Thus $p\mid\ell\mid D_1D_2-x^2$, which implies that $D_1$ and $D_2$ are not both divisible by $p$. The first four rows of $M_p$ span $\ZZ_p^4$, and the fifth row is already in this span. Since $\ell\mid\ell^2\mid D_1D_2-x^2$, by removing the powers of $2$ and applying row operations, the last three rows (labeled $r_6',r_7',r_8'$ in order) become
\[\left(\begin{matrix}0 & \frac{-x}{\ell} & \frac{D_1}{\ell} & 0\\0 & \frac{-D_2}{\ell} & \frac{x}{\ell} & 0\\0 & \frac{-p_2x-p_1D_2}{\ell} & \frac{p_1x+p_2D_1}{\ell} & 0\end{matrix}\right).\]
First, $r_8'=p_2r_6'+p_1r_7'$, so we can ignore $r_8'$. Next, we have 
\[xr_6'-D_1r_7',D_2r_6'-xr_7'\in\langle r_1,r_2,r_3,r_4\rangle_{\ZZ_p}.\]
Without loss of generality assume that $p\nmid D_1$, whence $r_7'\in\langle r_1,r_2,r_3,r_4,r_6'\rangle_{\ZZ_p}$. Then $r_3\in\langle r_1,r_2,r_4,r_6'\rangle_{\ZZ_p}$, and thus our basis is spanned by
\[\left(\begin{matrix}1 & 0 & 0 & 0\\0 & 1 & 0 & 0\\0 & \frac{-x}{\ell} & \frac{D_1}{\ell} & 0\\ 0 & 0 & 0 & 1\end{matrix}\right).\]
The power of $p$ dividing the denominator of this determinant is $v_p(\ell)=v_p(16\ell)$, as desired.

The remaining case is $p=2$. Let $v_2(\ell)=k\geq 0$, write $\ell=2^k\ell'$ with $\ell'$ odd, and without loss of generality, assume that $D_1$ is odd. Working over $\ZZ_2$, we multiply out by odd factors to obtain the row-space
\[M_1=\left(\begin{matrix} 1 & 0 & 0 & 0\\ \frac{1}{2} & \frac{1}{2} & 0 & 0\\ \frac{p_2}{2} & 0 & \frac{1}{2} & 0\\0 & 0 & 0 & \frac{1}{2} \\\frac{p_2+x}{4} & \frac{p_2}{4} & \frac{1}{4} & \ell'2^{k-2}\\0 & \frac{-x}{2^{k+2}} & \frac{D_1}{2^{k+2}} & \frac{\ell'}{4}\\0 & \frac{-D_2}{2^{k+2}} & \frac{x}{2^{k+2}} & \frac{p_2\ell'}{4}\\ \frac{x^2-D_1D_2}{2^{k+3}} & \frac{-p_2x-D_2}{2^{k+3}} & \frac{x+p_2D_1}{2^{k+3}} & \frac{(p_2+x)\ell'}{8}\\ \end{matrix}\right).\]
We now find the span of the first $5$ rows, and successively add in rows $6$ through $8$ in the various cases.
\begin{itemize}
	\item If $D_2$ is even,
	\begin{itemize}
		\item If $k=0$, 
		\begin{itemize}
			\item If $2\mid\mid x$, rows $1$ to $5$ give
			\[\left(\begin{matrix}\frac{1}{2} & \frac{1}{2} & 0 & 0\\0 & \frac{1}{2} & \frac{1}{4} & \frac{1}{4}\\0 & 0 & \frac{1}{2} & 0\\0 & 0 & 0 & \frac{1}{2}\end{matrix}\right).\]
			Rows $6$ and $7$ already lie in this span, and row $8$ shifts to $\left(\begin{smallmatrix}\frac{x^2-D_1D_2}{8} & \frac{-D_2}{8} & \frac{1}{4} & \frac{1}{4}\end{smallmatrix}\right)$. If $4\mid\mid D_2$, it follows that $8\mid D_1D_2-x^2$, and after a $\ZZ_2^4$ shift, row $8$ becomes $\left(\begin{smallmatrix}0 & \frac{1}{2} & \frac{1}{4} & \frac{1}{4}\end{smallmatrix}\right)$, which is already in the span. Otherwise, $8\mid D_2$, and by a $\ZZ_2^4$ shift we arrive at $\left(\begin{smallmatrix}\frac{1}{2} & 0 & \frac{1}{4} & \frac{1}{4}\end{smallmatrix}\right)$. Thus rows $1$ through $5$ sufficed, we get the determinant $2^{-4}$, so the power of two dividing the denominator is $4=k+4$, as desired.
			\item If $4\mid x$, rows $1$ to $5$ give
			\[\left(\begin{matrix}\frac{1}{2} & \frac{1}{2} & 0 & 0\\0 & 1 & 0 & 0\\0 & 0 & \frac{1}{4} & \frac{1}{4}\\0 & 0 & 0 & \frac{1}{2}\end{matrix}\right),\]
			and the last three rows already lie in this span. The determinant is again $2^{-4}$, as desired.
		\end{itemize}
		\item If $k=1$, then $16\mid D_1D_2-x^2$.
		\begin{itemize}
			\item If $2\mid\mid x$, then $4\mid\mid D_2$ necessarily. The first $5$ rows give
			\[\left(\begin{matrix}\frac{1}{2} & \frac{1}{2} & 0 & 0\\0 & \frac{1}{2} & \frac{1}{4} & 0\\0 & 0 & \frac{1}{2} & 0\\0 & 0 & 0 & \frac{1}{2}\end{matrix}\right).\]
			Shifting the sixth row gives $\left(0,\pm\frac{1}{4},\frac{1}{8},\frac{1}{4}\right)$ (using $D_1\equiv 1\pmod{4}$), which can replace row two, giving
			\[\left(\begin{matrix}\frac{1}{2} & \frac{1}{2} & 0 & 0\\0 & \pm\frac{1}{4} & \frac{1}{8} & \frac{1}{4}\\0 & 0 & \frac{1}{2} & 0\\0 & 0 & 0 & \frac{1}{2}\end{matrix}\right).\]
			The seventh and eighth rows lie in this span, and the determinant is $2^{-5}$, as desired.
			\item If $4\mid x$, then $16\mid D_2$ necessarily. The first $5$ rows give
			\[\left(\begin{matrix}\frac{1}{2} & \frac{1}{2} & 0 & 0\\0 & 1 & 0 & 0\\0 & 0 & \frac{1}{4} & 0\\0 & 0 & 0 & \frac{1}{2}\end{matrix}\right).\]
			Rows $7$ and $8$ already lie in this span, and row $6$ shifts to $\left(0,\frac{-x}{8},\frac{1}{8},\frac{1}{4}\right)$. If $4\mid\mid x$ we can replace the second row, and if $8\mid x$ we can replace the third row, giving
			\[\left(\begin{matrix}\frac{1}{2} & \frac{1}{2} & 0 & 0\\0 & \pm\frac{1}{2} & \frac{1}{8} & \frac{1}{4}\\0 & 0 & \frac{1}{4} & 0\\0 & 0 & 0 & \frac{1}{2}\end{matrix}\right)\text{ and }\left(\begin{matrix}\frac{1}{2} & \frac{1}{2} & 0 & 0\\0 & 1 & 0 & 0\\0 & 0 & \frac{1}{8} & \frac{1}{4}\\0 & 0 & 0 & \frac{1}{2}\end{matrix}\right).\]
			respectively. This gives determinant $2^{-5}$, as desired.
		\end{itemize}
		\item If $k\geq 2$, then $64\mid 2^{2k+2}\mid D_1D_2-x^2$. The last three rows shift to
		\[\left(\begin{matrix}0 & \frac{-x}{2^{k+2}} & \frac{D_1}{2^{k+2}} & \frac{\ell'}{4}\\0 & \frac{-D_2}{2^{k+2}} & \frac{x}{2^{k+2}} & 0\\0 & \frac{-D_2}{2^{k+3}} & \frac{x}{2^{k+3}} & \frac{x\ell'}{8}\end{matrix}\right).\]
		Since $xr_6-D_1r_7$ lies in the span of the first four rows, so we can eliminate $r_7$ from consideration. Similarly, $\frac{x}{2}r_6-D_1r_8$ also lies in this span, so we can eliminate $r_8$ from consideration too; only the first $6$ rows are left.
		\begin{itemize}
			\item If $2\mid\mid x$, rows $1$ to $5$ give us
			\[\left(\begin{matrix}\frac{1}{2} & 0 & \frac{1}{4} & 0\\0 & \frac{1}{2} & \frac{1}{4} & 0\\0 & 0 & \frac{1}{2} & 0\\0 & 0 & 0 & \frac{1}{2}\end{matrix}\right).\]
			We can replace $r_2$ with $r_6$ giving
			\[\left(\begin{matrix}\frac{1}{2} & 0 & \frac{1}{4} & 0\\0 & \frac{-x}{2^{k+2}} & \frac{D_1}{2^{k+2}} & \frac{\ell'}{4}\\0 & 0 & \frac{1}{2} & 0\\0 & 0 & 0 & \frac{1}{2}\end{matrix}\right),\]
			which has determinant $\frac{-x}{2^{k+5}}$, as desired (since $v_2(x)=1$).
			\item If $4\mid x$, rows $1$ to $5$ give us
			\[\left(\begin{matrix}\frac{1}{2} & \frac{1}{2} & 0 & 0\\0 & 1 & 0 & 0\\0 & 0 & \frac{1}{4} & 0\\0 & 0 & 0 & \frac{1}{2}\end{matrix}\right).\]
			In this case we can replace $r_3$ with $r_6$, giving
			\[\left(\begin{matrix}\frac{1}{2} & \frac{1}{2} & 0 & 0\\0 & 1 & 0 & 0\\0 & \frac{-x}{2^{k+2}} & \frac{D_1}{2^{k+2}} & \frac{\ell'
				}{4}\\0 & 0 & 0 & \frac{1}{2}\end{matrix}\right),\] 
			which has determinant $\frac{D_1}{2^{k+4}}$, as desired (since $D_1$ is odd).
		\end{itemize}
	\end{itemize}
	\item If $D_2$ is odd,
	\begin{itemize}
		\item If $k=0$, then the first $5$ rows give us
		\[\left(\begin{matrix}\frac{1}{2} & \frac{1}{2} & 0 & 0\\0 & \frac{-x}{4} & \frac{1}{4} & \frac{1}{4}\\0 & 0 & 1 & 0\\0 & 0 & 0 & \frac{1}{2}\end{matrix}\right).\]
		The last three rows lie in this span, so we get determinant $2^{-4}$, as desired.
		\item If $k\geq 1$, the first five rows give
		\[\left(\begin{matrix}\frac{1}{2} & \frac{1}{2} & 0 & 0\\0 & \frac{1}{4} & \frac{-x}{4} & 0\\0 & 0 & 1 & 0\\0 & 0 & 0 & \frac{1}{2}\end{matrix}\right).\]
		The second row can be replaced by the seventh, giving
		\[\left(\begin{matrix}\frac{1}{2} & \frac{1}{2} & 0 & 0\\0 & \frac{-D_2}{2^{k+2}} & \frac{x}{2^{k+2}} & \frac{1}{4}\\0 & 0 & 1 & 0\\0 & 0 & 0 & \frac{1}{2}\end{matrix}\right).\]
		This span also contains the sixth and eighth rows, hence is a valid basis. The $2-$adic valuation of this determinant is $-(k+4)$, as desired.
	\end{itemize}
\end{itemize}

\bibliographystyle{alpha}
\bibliography{../references}
\end{document}